\documentclass[11pt,reqno]{amsart}

\usepackage{amsmath,amsthm,amsfonts,amssymb,young}
\usepackage{array, boldline, makecell, booktabs}
\usepackage{bm}
\usepackage{ytableau}
\usepackage{tikz}
\usepackage{float}
\usetikzlibrary{backgrounds}
\usepackage{mathtools}
\usepackage[colorlinks]{hyperref}
\usepackage{cleveref}
\numberwithin{equation}{section}    
\usepackage[all]{xy}
\usepackage{graphicx,import}
\usepackage{amscd}
\usepackage{color}
\usepackage{enumerate}
\usepackage{cases}
\usepackage{cite}
\usepackage[T1]{fontenc}

\AtBeginDocument{%
  \setlength{\abovedisplayskip}{3pt plus 1pt minus 1pt}%
  \setlength{\belowdisplayskip}{3pt plus 1pt minus 1pt}%
  \setlength{\abovedisplayshortskip}{0pt plus 1pt}%
  \setlength{\belowdisplayshortskip}{3pt plus 1pt minus 1pt}%
}
\newtheorem{thm}{Theorem}[section]
\newtheorem{lem}[thm]{Lemma}
\newtheorem{prop}[thm]{Proposition}
\newtheorem{cor}[thm]{Corollary}
\theoremstyle{definition}
\newtheorem{exam}[thm]{Example}
\crefname{exam}{example}{examples}
\Crefname{exam}{Example}{Examples}
\newtheorem{defn}[thm]{Definition}

\newtheorem{remark}[thm]{Remark}

\newcommand\EH{\operatorname{EH}}
\newcommand\des{\operatorname{des}}
\newcommand\asc{\operatorname{asc}}
\newcommand\rev{\operatorname{rev}}
\newcommand\EWord{\mathsf{EWord}}
\newcommand\Word{\mathsf{Word}}
\newcommand\MCSel{\operatorname{MCSel}}
\newcommand\val{\operatorname{val}}
\newcommand\ESort{\operatorname{ESort}}
\newcommand\xt{(\vec{x};\vec{t})}
\newcommand\FL{\mathfrak{L}}
\newcommand\kappacan{\kappa^{\mathrm{can}}}
\newcommand\CSel{\operatorname{CSel}}
\newcommand\ICSel{\operatorname{ICSel}}
\newcommand\col{\operatorname{col}}
\newcommand\rad{\xrightarrow{\mathrm{d}}}
\newcommand\rav{\xrightarrow{\mathrm{v}}}
\newcommand\trl{\triangleleft}
\newcommand\wt{\operatorname{wt}}

\newcommand\supp{\mathsf{supp}}
\newcommand\Codes{\mathsf{Codes}}

\newcommand\IN{\mathsf{IN}}
\newcommand\For{\mathsf{For}}
\newcommand\Red{\operatorname{Red}}
\newcommand\Syl{\operatorname{Syl}}
\newcommand\ERed{\operatorname{ERed}}

\newcommand{\CC}{\mathbb{C}}
\newcommand{\ZZ}{\mathbb{Z}}
\newcommand{\RR}{\mathbb{R}}
\newcommand\Path{\mathsf{Path}}
\newcommand\PP{\mathfrak{P}}

\let\oldemph\emph
\renewcommand{\emph}[1]{\textcolor{blue}{\oldemph{#1}}}

\renewcommand\vec[1]{\mathbf{#1}}

\title[A lattice path model for Schubert and forest polynomials]{A lattice path model for Schubert and forest polynomials: positivity and combinatorial interpretations}
\author{Hojoon Lee}
\address{Department of Mathematics, Sungkyunkwan University, Suwon, South Korea}
\email{hojoon1101@g.skku.edu}
\subjclass[2020]{Primary - 05E05, 05E14}

\begin{document}

\maketitle
\begin{abstract}
In this paper, we give combinatorial interpretations of (i) the Graham-positive
coefficients in the expansion of double Schur polynomials in the double
fundamental quasisymmetric basis and (ii) the coefficients in the expansion
of Grothendieck polynomials in the grove basis.
To obtain these interpretations, we construct a common lattice path model
for Schubert, Grothendieck, and double Schubert polynomials and their
respective quasisymmetric counterparts: forest, grove, and double forest
polynomials.
The first interpretation answers a question of Bergeron, Gagnon, Nadeau,
Spink, and Tewari asking for an explicit combinatorial interpretation of
the coefficients. It also yields formulas
for the equivariant degrees of the quasisymmetric Schubert cycles
introduced by Bergeron, Gagnon, Spink, and Tewari.
The specialization of the second interpretation recovers
Nadeau and Tewari's Schubert-to-forest formula, expressed using the
rightmost representatives of their Omega-equivalence classes.
\end{abstract}

\section{Introduction}

Schubert calculus connects the geometry of flag varieties with the
combinatorics of symmetric function theory. A polynomial in
\(x_1,\ldots,x_n\) is \emph{symmetric} if it is invariant under all
permutations of the variables. The \emph{complete flag variety}
\(\mathrm{Fl}_n\) is the space of chains of vector subspaces of \( \CC^n \) of the form
\[
  0=F_0\subset F_1\subset\cdots\subset F_n=\CC^n,
  \qquad \dim F_i=i.
\]
The Bruhat decomposition of \(\mathrm{Fl}_n\) gives Schubert classes
indexed by the symmetric group \(S_n\), and Borel's presentation \cite{Borel1953} identifies its
cohomology ring with the coinvariant algebra
\[
  H^\bullet(\mathrm{Fl}_n)
  \cong\mathbb Z[x_1,\ldots,x_n]/\operatorname{Sym}_n^+,
\]
where the ideal \( \operatorname{Sym}_n^+ \) is generated by symmetric polynomials with zero constant
term. This ring is a free \(\mathbb Z\)-module of rank \(n!\).
Schubert polynomials give distinguished representatives of its Schubert
classes \cite{LS1982}. Grothendieck polynomials \cite{LS1982Hopf} and
double Schubert polynomials \cite{Lascoux1982Chern} extend this picture
to \(K\)-theory and equivariant cohomology, respectively.

For \(1\le r<n\), retaining only the subspace \(F_r\) from each complete
flag gives the \emph{Grassmannian} \(\operatorname{Gr}(r;n)\), the
variety of all \(r\)-dimensional vector subspaces of \(\mathbb C^n\).
The natural projection
\(\pi_r:\mathrm{Fl}_n\to\operatorname{Gr}(r;n)\) sends a flag
\((F_i)_{i=0}^n\) to \(F_r\). Schur polynomials represent the
Schubert classes of \(\operatorname{Gr}(r;n)\), and double Schur
polynomials represent the corresponding \(T\)-equivariant classes,
where \(T=(\mathbb C^\times)^n\) is the diagonal torus
\cite{LS1982,KnutsonTao2003}. In both cases, the indices are partitions
contained in an \(r\times(n-r)\) rectangle.

Quasisymmetric polynomials, introduced by Gessel \cite{Gessel1984},
form a ring containing the symmetric polynomials. A polynomial in
\(x_1,\ldots,x_n\) is \emph{quasisymmetric} if, for every sequence of
positive integers \(a_1,\ldots,a_k\), the coefficient of
\(x_{i_1}^{a_1}\cdots x_{i_k}^{a_k}\) is independent of the choice of
\(1\le i_1<\cdots<i_k\le n\).
A quasisymmetric counterpart to the complete flag variety is the
quasisymmetric flag variety of Bergeron, Gagnon, Nadeau, Spink,
and Tewari \cite{BGNST2026QFV}. This is a toric complex
\(\mathrm{QFl}_n\subseteq\mathrm{Fl}_n\) whose torus fixed points
are indexed by noncrossing partitions and whose cohomology ring is
\[
  H^\bullet(\mathrm{QFl}_n)
  \cong \mathbb Z[x_1,\ldots,x_n]/\operatorname{QSym}_n^+.
\]
Here the ideal \( \operatorname{QSym}_n^+ \) is generated by quasisymmetric polynomials with zero
constant term, and the rank is the \(n\)-th Catalan number
\cite{ABB2004}. Forest polynomials, introduced by Nadeau and Tewari
\cite{NT2024}, represent a basis of
\(H^\bullet(\mathrm{QFl}_n)\) that is Kronecker dual to the
fundamental homology classes of quasisymmetric Schubert cell closures
\cite[Theorem~12.12]{BGNST2026QFV}. Their \(K\)-theoretic and equivariant
analogues are grove polynomials \cite{NST2026Grove} and double forest
polynomials \cite{BGNST2025a}, respectively.

The restriction of \(\pi_r\) to \(\mathrm{QFl}_n\) has image
\(\operatorname{QGr}(r;n)\), called the \emph{quasisymmetric
Grassmannian} \cite{BGST2026QG}.
The relevant subfamilies of forest and double forest polynomials are
fundamental and double fundamental quasisymmetric polynomials,
respectively \cite{BGNST2025a}. Indexed by compositions whose ribbons
fit in the \(r\times(n-r)\) rectangle, these polynomials represent the
ordinary and \(T\)-equivariant cohomology bases Kronecker dual to the
corresponding homology classes of quasisymmetric Schubert cell closures
\cite{BGST2026QG}.

These families give the following correspondences for flag varieties
and Grassmannians:
\begin{center}
\small
\setlength{\arraycolsep}{3pt}
\renewcommand{\arraystretch}{1.15}
\begin{minipage}[t]{.51\textwidth}
\centering
\(
\begin{array}{@{}c|ccc@{}}
 & H^\bullet & K^\bullet & H_T^\bullet\\ \hline
\mathrm{Fl}_n
 & \text{Schubert} & \text{Grothendieck}
 & \text{\makecell{double Schubert}}\\[1mm]
\mathrm{QFl}_n
 & \text{forest} & \text{grove}
 & \text{\makecell{double forest}}
\end{array}
\)
\end{minipage}\hfill
\begin{minipage}[t]{.46\textwidth}
\centering
\(
\begin{array}{@{}c|cc@{}}
 & H^\bullet & H_T^\bullet\\ \hline
\operatorname{Gr}(r;n)
 & \text{Schur} & \text{\makecell{double Schur}}\\[1mm]
\operatorname{QGr}(r;n)
 & \text{fundamental} & \text{\makecell{double fundamental}}
\end{array}
\)
\end{minipage}
\end{center}
Each column also carries a positive expansion from the classical side
to the quasisymmetric side. Schubert polynomials expand positively in
the forest basis \cite[Theorem~1.4]{NT2024}. Grothendieck polynomials
expand positively in the grove basis
\cite[Corollary~4.8]{NST2026Grove}. Double Schubert polynomials
expand Graham-positively in the double forest basis
\cite[Theorem~11.7]{BGNST2025a}.
Likewise, Schur polynomials expand positively in the fundamental
quasisymmetric basis \cite{Gessel1984}, and double Schur polynomials
expand Graham-positively in the double fundamental quasisymmetric
basis \cite{BGNST2025a,BGST2026QG}.
In the \(K\)-theoretic statement, \(\beta\) is the deformation
parameter, \(\ell(w)\) is the number of inversions of \(w\), and
\(|F|\) is the number of internal nodes of the indexing forest \(F\).
Positivity refers to the coefficients after factoring out
\(\beta^{|F|-\ell(w)}\); at \(\beta=-1\), this factor gives the
usual alternating sign. Graham-positivity means that the coefficients
are nonnegative integral polynomials in the simple roots
\(t_{i+1}-t_i\).

Despite these positivity results, relatively few combinatorial
interpretations of the expansion coefficients are known.
For the Schur-to-fundamental expansion, Gessel's formula counts standard
Young tableaux of a given shape with a prescribed descent set
\cite{Gessel1984}. For the Schubert-to-forest expansion, Nadeau and
Tewari's \(\Omega^\bullet\)-correspondence gives a combinatorial
interpretation \cite[Theorem~1.4]{NT2024}.
For the \(K\)-theoretic and equivariant expansions, direct combinatorial
interpretations remain much less developed.

Moreover, on the Schubert side, ordinary, \(K\)-theoretic, and equivariant
Schubert polynomials can all be computed using closely related
combinatorial models, including pipe dreams \cite{FK1996,KM2005},
bumpless pipe dreams \cite{LLS2021,Weigandt2021}, and subword models
\cite{KM2005,BGNST2025a}. On the quasisymmetric side, the cited
models use compatible labelings of indexed forests for forest
polynomials \cite{NT2024} and set-valued labelings for grove
polynomials \cite{NST2026Grove}, whereas double forest polynomials are described
by the vine model \cite{BGNST2025a}. To interpret the transition
coefficients, we compare these descriptions in a common combinatorial
framework. Motivated by the Catalan rank
of the quasisymmetric coinvariant ring, we construct a lattice path model
that provides such a framework.

\subsection{Main results}

Our first main result is a common lattice path model for all six families associated with the flag varieties. The model also covers all of the four families associated with Grassmannian cases. It describes forest and double forest polynomials (\Cref{thm:1}), grove polynomials (\Cref{thm:grove-multicell-model}), Schubert and double Schubert
polynomials (\Cref{thm:double-schubert-lattice-path}), and Grothendieck
polynomials (\Cref{thm:grothendieck-lattice-path}).
For the ordinary and double polynomials, the common mechanism is a
weight-preserving bijection between subwords and cell selections on
lattice paths (\Cref{lem:subword-path-bijection}), applied to canonical
forest labelings and reduced word labelings. The \(K\)-theoretic
models use injective cell selections for Hecke words
(\Cref{lem:hecke-path-bijection}) and multi-cell selections for grove
polynomials (\Cref{thm:grove-multicell-model}).

Our second main result gives combinatorial formulas for
(i) the coefficients in the double fundamental expansion of double Schur
polynomials, expressed as weighted sums over admissible deletion sets in
reduced words (\Cref{cor:graham-pos}), and
(ii) the coefficients in the grove expansion of Grothendieck
polynomials, expressed as counts of endpoint-decreasing Hecke words
(\Cref{thm:grothendieck-grove-labeling-positivity}).
Specializing (ii) recovers Nadeau and Tewari's Schubert-to-forest
formula \cite[Theorem~1.4]{NT2024}, expressed in terms of
endpoint-decreasing reduced words
(\Cref{thm:ordinary-forest-expansion}), while specializing (i) yields
the fundamental expansion of Schur polynomials in terms of
increasing-run decompositions of reduced words (\Cref{rem:Gessel-recover}).
\Cref{fig:introduction-transitions} compares these consequences of
the paper, written above the arrows, with earlier results, written
below them.

\begin{figure}[htbp]
  \centering
  \begin{tikzpicture}[
      x=1cm,y=1.2cm,
      family/.style={font=\small,inner sep=3pt,fill=white},
      arrowlabel/.style={font=\scriptsize,inner sep=2pt,fill=white},
      known/.style={->,dashed,draw=black!60,line width=0.5pt,
        preaction={draw=white,line width=3pt}},
      result/.style={->,draw=blue!65!black,line width=0.9pt,
        preaction={draw=white,line width=3pt}}
    ]
    \node[family] (groth) at (0.6,3.7) {Grothendieck};
    \node[family] (schub) at (2,1.95) {Schubert};
    \node[family] (schur) at (2,0.5) {Schur};
    \node[family] (ds) at (4.6,4.8) {double Schubert};
    \node[family] (dschur) at (4.6,2.85) {double Schur};

    \node[family] (grove) at (9.5,3.7) {grove};
    \node[family] (forest) at (10.9,1.95) {forest};
    \node[family] (fund) at (10.9,0.5) {fundamental};
    \node[family] (df) at (13.5,4.8) {double forest};
    \node[family] (dfund) at (13.5,2.85) {double fundamental};

    \begin{scope}[on background layer]
      \draw[black!70,line width=0.55pt]
        (groth.center)--(schub.center)
        (schub.center)--(ds.center)--(dschur.center)
          --(schur.center)--(schub.center)
        (grove.center)--(forest.center)
        (forest.center)--(df.center)--(dfund.center)
          --(fund.center)--(forest.center);
    \end{scope}

    \draw[known] (ds.east)--(df.west)
      node[midway,below,arrowlabel]
        {Graham-positivity via straightening \cite{BGNST2025a}};
    \draw[result] (groth.east)--(grove.west)
      node[midway,above,arrowlabel,text=blue!65!black] {Hecke words [\Cref{thm:grothendieck-grove-labeling-positivity}]}
      node[midway,below,arrowlabel]
        {Positivity via extractors \cite{NST2026Grove}};
    \draw[result] (dschur.east)--(dfund.west)
      node[midway,above,arrowlabel,text=blue!65!black] {weighted deletions [\Cref{cor:graham-pos}]}
      node[midway,below,arrowlabel] {Graham-positivity via straightening \cite{BGNST2025a}};
    \draw[result] (schub.east)--(forest.west)
      node[midway,above,arrowlabel,text=blue!65!black] {endpoint-decreasing representatives [\Cref{thm:ordinary-forest-expansion}]}
      node[midway,below,arrowlabel] {\( \Omega \)-equivalence class of reduced words \cite{NT2024}};
    \draw[result] (schur.east)--(fund.west)
    node[midway,above,arrowlabel,text=blue!65!black] {run decompositions [\Cref{rem:Gessel-recover}]}
    node[midway,below,arrowlabel] {tableau descent sets \cite{Gessel1984}};

    \node[font=\small\bfseries] at (2,-0.3) {\( \mathrm{Fl}_n \) and \( \mathrm{Gr}(r;n) \)};
    \node[font=\small\bfseries] at (11,-0.3) {\( \mathrm{QFl}_n \) and \( \mathrm{QGr}(r;n) \)};
  \end{tikzpicture}
  \caption{Positive expansions. Labels above arrows describe consequences
  of this paper, and labels below describe previous results.}
  \label{fig:introduction-transitions}
\end{figure}

The extractor and straightening methods in
\Cref{fig:introduction-transitions} establish positivity and provide
procedures for computing the coefficients \cite{NST2026Grove,BGNST2025a}.
Our formulas give explicit combinatorial interpretations of the two coefficients.

\subsection{Description of the main results}
\label{subsec:main-result-descriptions}

We first state the equivariant and \(K\)-theoretic coefficient
formulas, followed by their specializations. Fix \(n\geq1\)
and write \(\vec x_n=(x_1,\ldots,x_n)\) and
\(\vec t=(t_1,t_2,\ldots)\), where the \(t_i\) are the equivariant
parameters. For a basis element \(B\),
write \([B]f\) for its coefficient in the corresponding expansion of
\(f\).

\medskip\noindent\textbf{Double Schur to double fundamental.}
Let \(\lambda\) be a partition with at most \(n\) parts, and write
\(s_\lambda(\vec x_n;\vec t)\) for the double Schur polynomial of
shape \(\lambda\). Let \(w_\lambda\) be the associated Grassmannian
permutation, as in \Cref{subsec:schur-fundamental}. For each reduced word
\(a=(a_1,\ldots,a_m)\in\Red(w_\lambda^{-1})\), we consider
\(a_1,\ldots,a_m\) in order, keeping \(a_r\) or deleting it when
permitted by the rule in \Cref{sec:double-schur-successor-replay}.
Deletability and the positive-root weight are evaluated after all
earlier deletions. A deletion set \(D\) produced this way is admissible
if it leaves at most \(n\) increasing runs; let \(\mathcal A_n(a)\)
denote the set of such \(D\). If these runs have lengths
\((\ell_1,\ldots,\ell_q)\), their basis index is
\[
  c_D=(0^{n-q},\ell_q,\ldots,\ell_1),
\]
and \(\wt_a(D)\) is the product of the deletion weights.
For the empty word, the index is \((0^n)\) and the weight is \(1\).
The possible indices form the set \(\operatorname{Pad}_n\) of sequences of length \(n\) with leading zeros followed by positive parts.
Write \(\mathfrak F_c(\vec x_n;\vec t)\) for the corresponding
double fundamental polynomial.

\begin{thm}\label{thm:intro-double-schur-coefficients}
For every partition \(\lambda\) with at most \(n\) parts and every
\(c\in\operatorname{Pad}_n\),
\[
  [\mathfrak F_c]s_\lambda\xt
  =\sum_{a\in\Red(w_\lambda^{-1})}
     \ \sum_{\substack{D\in\mathcal A_n(a)\\c_D=c}}
        \wt_a(D).
\]
In particular, these coefficients are sums of products of positive
roots \(t_j-t_i\) with \(i<j\).
\end{thm}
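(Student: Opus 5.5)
The plan is to read the theorem off the lattice path model rather than compute coefficients directly. \Cref{thm:double-schubert-lattice-path} expresses \(s_\lambda\xt=\mathfrak S_{w_\lambda}\xt\) as a weighted sum over cell selections on lattice paths. Via \Cref{lem:subword-path-bijection}, these selections correspond to subwords of reduced words \(a\in\Red(w_\lambda^{-1})\). \Cref{thm:1} expresses each double fundamental polynomial \(\mathfrak F_c\) (a double forest polynomial for a zigzag-type forest) as a sum over selections on the same kind of paths, built from canonical forest labelings. So both sides of the expansion live in one set of weighted path configurations. The task is to partition the Schubert-side configurations into blocks. Each block, after factoring out a monomial in the \(t_j-t_i\), should be exactly the configuration set of one \(\mathfrak F_{c_D}\).

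First, I would fix \(a\) and run the successor-replay procedure of \Cref{sec:double-schur-successor-replay} letter by letter. At each position \(r\) there are two options. Keeping \(a_r\) means the corresponding cell is selected. Deleting \(a_r\) means an \(\vec x\)-variable step is traded for an equivariant factor \(t_j-t_i\), recorded as the deletion weight. The kept letters form a word with increasing runs of lengths \(\ell_1,\dots,\ell_q\). Next, I would show that the remaining path data, namely the positions of the \(x\)-steps on the runs, range exactly over the canonical-labeling configurations of \(\mathfrak F_{c_D}\) from \Cref{thm:1}. The padding \(c_D=(0^{n-q},\ell_q,\dots,\ell_1)\) reflects that each run occupies one row of the path diagram read bottom-up. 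The condition \(q\le n\) is exactly what makes these configurations fit inside \(n\) rows, which is why only admissible \(D\in\mathcal A_n(a)\) contribute. Summing over \(a\) and \(D\) then gives the stated identity. Because the \(\mathfrak F_c\) form a basis of \(\mathbb Z[\vec t][\vec x_n]\) restricted to the relevant span, this identity determines the coefficients uniquely. Positivity is then immediate: each deletion weight is by construction a root \(t_j-t_i\) with \(i<j\), evaluated after the earlier deletions.

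The main obstacle is the bijectivity of this decomposition. Every cell selection for \(s_\lambda\) must arise from exactly one pair \((a,D)\) together with one \(\mathfrak F_{c_D}\)-configuration, with weights multiplying correctly. The delicate points are that deletability and weights depend on earlier deletions, and that different reduced words can produce overlapping run patterns. To handle this, I would define the inverse map explicitly. Scan a Schubert configuration from left to right, and greedily declare a letter deleted precisely when its cell's weight carries a \(t\)-factor not accounted for by the forest configuration. Then check by induction on \(r\) that this inverse reproduces the replay rule. If the greedy inverse fails, the fallback is an induction on \(\ell(w_\lambda)\) using divided differences \(\partial_i\). Both sides satisfy compatible recursions: the double forest polynomials satisfy the trimming operator recursions of \cite{BGNST2025a}, and the replay sum can be decomposed according to the last letter of \(a\). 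With this approach, the identity would be checked recursion by recursion.
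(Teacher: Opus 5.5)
Your outer reduction matches the paper. It writes $s_\lambda\xt=\sum_a\FL_a$, expands each $\FL_a$, and reads off coefficients by uniqueness of the double fundamental expansion, and positivity does come from the form of $\Delta$. The gap is the core step.

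You propose to partition the Schubert-side cell selections into blocks, each equal (after a root monomial) to the configuration set of one $\mathfrak F_{c_D}$. But the identity $\FL_a=\sum_D\wt_a(D)\mathfrak F_{c_D}$ is not cancellation-free at the level of configurations, so no such partition or weight-preserving bijection exists in general. Here is a counterexample:
\begin{itemize}
\item Take $n=2$ and $\lambda=(2,2)$. The reduced words of $w_\lambda^{-1}$ are $(2,3,1,2)$ and $(2,1,3,2)$, and the second has no nonzero selections.
\item The word path of $a=(2,3,1,2)$ has exactly two nonzero selections, with columns $(3,2,1,1)$ and $(3,3,1,1)$. Both take column $1$ in row $4$, of weight $x_1-t_2$, and they sum to $(x_1-t_1)(x_1-t_2)(x_2-t_1)(x_2-t_2)$.
\item The right side is $\mathfrak F_{(2,2)}+(t_4-t_2)\mathfrak F_{(1,2)}$, where $\mathfrak F_{(2,2)}=(x_1-t_1)(x_1-t_4)(x_2-t_1)(x_2-t_2)$. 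This is not divisible by $x_1-t_2$, and it involves $t_4$, which appears in neither configuration.
\end{itemize}
So no subset of the configurations sums to either term. The identity comes from the splitting $x_1-t_2=(x_1-t_4)+(t_4-t_2)$; in larger cases genuine signed cancellation also occurs.

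Your greedy inverse therefore has nothing well-defined to detect. Every cell weight contains some $t$, and here the deletion root $t_4-t_2$ divides neither configuration weight. A smaller point: a run of the kept word corresponds to the south steps on one vertical line $x=2i$ of $P(c_D)$, not to a row.

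The missing idea is to compare two labelings of a fixed path rather than to match configurations. The paper's argument runs as follows.
\begin{itemize}
\item It introduces mixed labelings $\kappa_{E,r}$ of $P_{a_E}$: rows before $r$ carry their $\Omega$-parking positions, and rows from $r$ on carry $a_s$.
\item With all other cells fixed, changing the label of row $r$ from $a_r$ to the parking position $p$ changes the weight by $\pm(t_{a_r}-t_p)$. The sign is given by the parity of the selected column (\Cref{fig:fixed-selection-relabeling}).
\item Adjacent columns then cancel in pairs. Only the lower endpoint of the admissible column range survives, and this happens precisely at a deletable double ascent or double descent.
\item Removing that row (\Cref{lem:successor-replay-deletion-bijection}) yields $\FL_{E,i_k}-\FL_{E,i_k+1}=\Delta_{a_E}(k)\FL_{E\setminus\{i_k\},i_k+1}$ (\Cref{lem:successor-replay-straightening}).
\item Telescoping over $r=1,\dots,m$ generates the keep/delete tree. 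This is how deletability and weights get evaluated after earlier deletions; \Cref{lem:ferrers-interval-replay} supplies the current interval $[L,R]$ from prefix ascent and descent counts.
\item The terminal state is $\mathfrak F_{c_D}$ when $\des(a_E)<n$ (\Cref{cor:double-fundamental-path}), and $0$ otherwise (\Cref{lem:nonpositive-D-boundary}).
\end{itemize}

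Your divided-difference fallback does not fill the gap. You do not say which recursion the replay sums satisfy, or how it would match the characterization of the double fundamental basis, so it is a direction rather than an argument.
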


The formula is proved in \Cref{cor:graham-pos}; the stronger
wordwise Graham-positivity statement is
\Cref{thm:double-schur-successor-replay}. It gives a direct weighted
counting rule in the double Schur case, complementing the manifestly
positive straightening algorithm for the double Schubert-to-double
forest expansion in \cite{BGNST2025a}. In particular, it answers
\cite[Question~1.2(2)]{BGNST2025a}, which asks for an explicit
combinatorial interpretation of the double Schur-to-double fundamental
coefficients. Through the quasisymmetric Grassmannian of
Bergeron, Gagnon, Spink, and Tewari \cite{BGST2026QG}, the formula
also computes equivariant degrees of quasisymmetric Schubert cycles.
Their Theorem~8.8 identifies these coefficients with equivariant
degree pairings, and their Theorem~8.9 gives a manifestly positive
computation of the degrees. Our formula expresses them as weighted
counts of admissible subsets of reduced words
(\Cref{cor:qgr-deletion-degree}).

\medskip\noindent\textbf{Grothendieck to grove.}
Let \(G_w^{(\beta)}(\vec x_n)\) be the Grothendieck polynomial
of \(w\in S_{n+1}\). We express its grove coefficients as counts of
Hecke words. For a word \(b=(b_1,\ldots,b_m)\) in \([n]\), start
with the identity permutation and read the letters in order.
At step \(r\), interchange the entries in positions \(b_r\) and
\(b_r+1\) if this increases the number of inversions; otherwise,
leave the permutation unchanged. The word \(b\) is a \emph{Hecke word}
of \(w\) if the resulting permutation is \(w\).

Apply Nadeau and Tewari's \(\Omega\)-parking
\cite[Sections~1.2 and 4.2]{NT2024} to \(b\), using the block
description in \Cref{subsec:endpoint-decreasing}.
Immediately after the \(r\)-th insertion, let \(e_r(b)\) be the
left endpoint of the maximal interval of occupied positions
containing the new block. We call \(b\) \emph{endpoint-decreasing}
if \(e_1(b)\geq\cdots\geq e_m(b)\). When all endpoints are positive,
the \emph{endpoint code} \(c^{\mathrm{end}}(b)
=(c_i^{\mathrm{end}}(b))_{i\geq1}\) counts how often each endpoint occurs,
so
\[
  c_i^{\mathrm{end}}(b)=\bigl|\{r\in[m]:e_r(b)=i\}\bigr|.
\]
Let \(\EH_n^+(w)\) consist of the endpoint-decreasing Hecke words
\(b\) of \(w\) in \([n]\) of length at most \(\binom{n+1}{2}\)
such that \(e_r(b)>0\) for \(1\leq r\leq m\) and
\(b_r\neq b_{r+1}\) for \(1\leq r<m\).
For a finitely supported code \(c=(c_i)_{i\geq1}\) of nonnegative
integers, write \(\widetilde\PP_c^{(\beta)}\) for the corresponding
grove polynomial and put \(|c|=\sum_i c_i\).

\begin{thm}\label{thm:intro-grothendieck-grove}
For every \(w\in S_{n+1}\) and every code \(c\) supported on
\([n]\) with \(|c|\geq\ell(w)\),
\[
  [\widetilde\PP_c^{(\beta)}]G_w^{(\beta)}
  =\beta^{|c|-\ell(w)}
    \bigl|\{b\in\EH_n^+(w^{-1}):c^{\mathrm{end}}(b)=c\}\bigr|.
\]
The coefficient is zero when \(|c|<\ell(w)\).
\end{thm}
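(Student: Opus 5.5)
We prove the formula by putting both sides into the same lattice path model and comparing them cell by cell. By \Cref{thm:grothendieck-lattice-path}, combined with \Cref{lem:hecke-path-bijection}, \(G_w^{(\beta)}(\vec x_n)\) is a generating function over injective cell selections on lattice paths. Each selection corresponds to a Hecke word \(b\) of \(w^{-1}\) with letters in \([n]\), and has weight \(\beta^{m-\ell(w)}\) times a monomial, where \(m\) is the length of \(b\). By \Cref{thm:grove-multicell-model}, each grove polynomial \(\widetilde\PP_c^{(\beta)}\) is a generating function over multi-cell selections on the paths attached to the forest of code \(c\).

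The plan is to partition the Grothendieck-side selections into blocks, one block for each \(b\in\EH_n^+(w^{-1})\). Each block should be in weight-preserving bijection with the grove-side selections for the code \(c^{\mathrm{end}}(b)\). Summing over blocks then gives
\[
G_w^{(\beta)}=\sum_b \beta^{|c^{\mathrm{end}}(b)|-\ell(w)}\,\widetilde\PP^{(\beta)}_{c^{\mathrm{end}}(b)},
\]
and grouping by code gives the formula. Since every Hecke word of \(w^{-1}\) has length at least \(\ell(w)\), no code with \(|c|<\ell(w)\) arises. The vanishing claim then follows from the linear independence of grove polynomials.

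Concretely, I would proceed in three steps.

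\textbf{Step 1: a canonical representative.} Run \(\Omega\)-parking (\Cref{subsec:endpoint-decreasing}) on an arbitrary Hecke word \(b'\) of \(w^{-1}\). I would show that sliding letters into the parked blocks, and merging a letter with an adjacent equal letter, produces a unique endpoint-decreasing word \(b\) with distinct consecutive letters. This is the \(K\)-theoretic analogue of the rightmost \(\Omega\)-representative. The adjacent-equal merge is the Hecke relation \(s_is_i\to s_i\). It raises the \(\beta\)-exponent by one and corresponds to one extra cell in a multi-cell selection.

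\textbf{Step 2: the fiber matches a grove polynomial.} The fiber over \(b\) consists of the cell selections that reduce to \(b\). I would show that its generating function equals the grove multi-cell generating function for the forest of code \(c^{\mathrm{end}}(b)\). The main tool is that each \(e_r(b)\) records the root of the tree receiving the \(r\)-th insertion. With this, the grove flagging and set-valued conditions can be checked to coincide with the lattice-path conditions.

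\textbf{Step 3: the side conditions.} Positivity \(e_r>0\) encodes that letters stay in \([n]\), that is, that only variables \(x_1,\ldots,x_n\) appear. The length bound \(m\le\binom{n+1}{2}\) is automatic for such words, which ensures finiteness.

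The main obstacle is Step 2, specifically the \(K\)-theoretic part. The distinctness condition \(b_r\ne b_{r+1}\) together with the endpoint-decreasing condition must exactly account for the multi-cell (set-valued) grove labels, with no double counting. I would first verify this in the case \(\beta=0\), where it should reduce to \Cref{thm:ordinary-forest-expansion} and the Nadeau--Tewari \(\Omega\)-correspondence. Then I would handle extra cells by induction on \(m-\ell(w)\): strip the last repeated-letter cell and check that endpoint-decreasingness is preserved.
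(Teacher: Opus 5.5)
Your architecture matches the paper's. You expand $G_w^{(\beta)}$ by Theorem~\ref{thm:grothendieck-lattice-path}, group the nonzero injective selections into classes indexed by $b\in\EH_n^+(w^{-1})$, match each class weight-preservingly with $\MCSel((P_b^{\mathrm{end}})^\circ)$, and finish with Theorem~\ref{thm:grove-multicell-model} and $|c^{\mathrm{end}}(b)|=|b|\geq\ell(w)$.

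\textbf{The gap is Step~2.} That matching is the whole content of the theorem, and the proposal only asserts it. You need an explicit rule sending the columns of the merged cells into one set-valued row $A_r$, and a proof of three facts:
\begin{enumerate}[(i)]
  \item $\max A_s\le\min A_r$ when $S_r\rav S_s$;
  \item $\max A_s<\min A_r$ when $S_r\rad S_s$;
  \item $1\le\max A_s\le e_s(b)$.
\end{enumerate}

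\textbf{The missing idea is $\Omega$-multiparking of the pairs $(a_t,\col_t(Q))$.} Read the pairs in decreasing column order, breaking ties by increasing letter. A pair whose letter equals the letter on its component's marked block is stacked on that block. Any other pair parks as in ordinary $\Omega$-parking.
\begin{itemize}
  \item A stack grows only while its bottom block is marked, and the mark on $B_r$ is removed exactly by the insertion of $B_s$ with $r\trl_b s$.
  \item Hence every column in stack $r$ is at least every column in stack $s$, and strictly larger when $b_r>b_s$ because of the tie-break. This gives (i) and (ii).
  \item The recursion \eqref{eq:effective-flag-recursion} then gives (iii).
\end{itemize}

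\textbf{Step~3 misidentifies positivity.} The bound (iii) is where positivity of the endpoints of $b$ comes from. It has nothing to do with letters lying in $[n]$, which holds by definition.

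\textbf{The inverse also needs three facts.}
\begin{itemize}
  \item Pairs $(b_r,q)$ from different rows are distinct. This is where $b_r\neq b_{r+1}$ enters: letters strictly increase along vertical arrows, so any chain joining equal letters contains a diagonal arrow.
  \item Reading these pairs by decreasing $q$ gives a linear extension of $\mathcal H(b)$ in which no bottom insertion meets an equal marked letter.
  \item The resulting word has Demazure product $w^{-1}$, because each stacked letter commutes with all intervening letters and is absorbed by $s_j*s_j=s_j$.
\end{itemize}

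\textbf{Your proposed induction on $m-\ell(w)$ does not get around this.} Deleting a merged letter from an endpoint-decreasing word can destroy endpoint-decreasingness. Take $a=(5,3,3,5,3,4,5)$ from Example~\ref{ex:bundle-labeling-change}. Deleting its third letter, which is stacked there, gives $(5,3,5,3,4,5)$ with endpoints $(5,3,5,3,3,3)$. So the word path and its rows change, and the selection must be transported to a new path, not merely rechecked.

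Moreover, repetition alone does not decide which cells merge. The final $5$ of that word is a repeated letter, yet it starts its own stack. Deciding which cells merge, and into which row, is exactly what the multiparking construction does.
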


The formula is proved in
\Cref{thm:grothendieck-grove-labeling-positivity} and gives a counting
interpretation of the positivity established in
\cite[Corollary~4.8]{NST2026Grove}.

\medskip\noindent\textbf{Schubert to forest.}
Setting \(\beta=0\) gives the Schubert polynomial
\(G_w^{(0)}(\vec x_n)=\mathfrak S_w(\vec x_n)\) and the forest
polynomial \(\widetilde\PP_c^{(0)}=\PP_c\) indexed by \(c\).
A Hecke word \(b\) of \(w^{-1}\) contributes the factor
\(\beta^{|b|-\ell(w)}\), so precisely the reduced words survive.
For \(w\ne\mathrm{id}\), let \(\ERed^+(w)\) be the set of reduced
words \(a\) of \(w\) satisfying
\(e_1(a)\geq\cdots\geq e_{\ell(w)}(a)>0\), and set
\(\ERed^+(\mathrm{id})=\{\varnothing\}\).

\begin{cor}\label{cor:intro-schubert-forest}
For every \(w\in S_{n+1}\) and every code \(c\) supported on \([n]\),
\[
  [\PP_c]\mathfrak S_w
  =\bigl|\{a\in\ERed^+(w^{-1}):c^{\mathrm{end}}(a)=c\}\bigr|.
\]
\end{cor}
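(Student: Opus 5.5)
The plan is to specialize \Cref{thm:intro-grothendieck-grove} at \(\beta=0\) and track which Hecke words survive. By the definitions, \(G_w^{(0)}(\vec x_n)=\mathfrak S_w(\vec x_n)\) and \(\widetilde\PP_c^{(0)}=\PP_c\). So the grove expansion of \(G_w^{(\beta)}\) specializes termwise to the forest expansion of \(\mathfrak S_w\), giving \([\PP_c]\mathfrak S_w=\bigl([\widetilde\PP_c^{(\beta)}]G_w^{(\beta)}\bigr)|_{\beta=0}\). This uses the fact that the grove polynomials are a basis whose coefficients are polynomials in \(\beta\), and that at \(\beta=0\) they reduce to the forest basis, so the coefficient extraction commutes with the specialization.

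\textbf{Case \(|c|<\ell(w)\).} The coefficient vanishes by the last sentence of \Cref{thm:intro-grothendieck-grove}. On the combinatorial side, every reduced word of \(w^{-1}\) has length \(\ell(w)\), and \(|c^{\mathrm{end}}(a)|=|a|\) because each letter contributes exactly one endpoint. Hence the set on the right is empty, and both sides are zero.

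\textbf{Case \(|c|>\ell(w)\).} The factor \(\beta^{|c|-\ell(w)}\) vanishes at \(\beta=0\). The right side is again empty, since \(|c^{\mathrm{end}}(a)|=\ell(w)\) for any reduced word \(a\).

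\textbf{Case \(|c|=\ell(w)\).} The coefficient equals the number of \(b\in\EH_n^+(w^{-1})\) with \(c^{\mathrm{end}}(b)=c\). Such a \(b\) has length \(|c|=\ell(w^{-1})\). A Hecke word whose length equals the length of its Demazure product is exactly a reduced word, because each letter must increase the inversion count. Conversely, a reduced word \(a\) of \(w^{-1}\) satisfies the remaining conditions defining \(\EH_n^+\) automatically. Its length \(\ell(w)\le\binom{n+1}{2}\), since \(w\in S_{n+1}\). Consecutive letters are distinct, since \(s_is_i\) is not reduced. Its letters lie in \([n]\). Endpoint-decreasingness and positivity of the endpoints are exactly the conditions defining \(\ERed^+\). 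So the counted set coincides with \(\{a\in\ERed^+(w^{-1}):c^{\mathrm{end}}(a)=c\}\).

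\textbf{The identity permutation.} For \(w=\mathrm{id}\) the only Hecke word of length \(0\) is the empty word. It has code \(0\), and it matches the convention \(\ERed^+(\mathrm{id})=\{\varnothing\}\).

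The main point needing care is the specialization step: justifying that taking the coefficient of the basis element commutes with setting \(\beta=0\). I would argue this from the fact that the transition coefficients are polynomials in \(\beta\) (indeed monomials times integers, by the theorem). Both expansions are finite. Setting \(\beta=0\) in the identity \(G_w^{(\beta)}=\sum_c[\widetilde\PP_c^{(\beta)}]G_w^{(\beta)}\,\widetilde\PP_c^{(\beta)}\) then gives an expansion of \(\mathfrak S_w\) in the forest polynomials, which are linearly independent. Uniqueness of coefficients finishes the proof.
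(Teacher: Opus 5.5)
Your proposal is correct and follows essentially the same route as the paper: specialize the Grothendieck-to-grove expansion (\Cref{thm:intro-grothendieck-grove}, i.e.\ \Cref{thm:grothendieck-grove-labeling-positivity}) at \(\beta=0\). Only Hecke words of length \(\ell(w)\) survive, and these are exactly the reduced words, which automatically have distinct adjacent letters and length at most \(\binom{n+1}{2}\). You then conclude from \(G_w^{(0)}=\mathfrak S_w\), \(\widetilde\PP_c^{(0)}=\PP_c\), and the linear independence of forest polynomials; your case split on \(|c|\) versus \(\ell(w)\) is just more detailed bookkeeping of that specialization.
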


Nadeau and Tewari's \(\Omega^\bullet\)-correspondence expresses the
Schubert-to-forest expansion as a sum over \(\Omega\)-equivalence
classes of reduced words \cite[Theorem~1.4]{NT2024}.
By \eqref{eq:endpoint-sort-omega-classes}, each class contributing
a nonzero forest polynomial has a unique representative
\(a\in\ERed^+(w^{-1})\), and the forest of that class has code
\(c^{\mathrm{end}}(a)\). Thus the coefficient \( [\PP_c]\mathfrak S_w \) counts the
classes with forest code \(c\) by their endpoint-decreasing
representatives.

\medskip\noindent\textbf{Schur to fundamental.}
Setting \(\vec t=\vec0\) specializes \(s_\lambda(\vec x_n;\vec t)\)
to the Schur polynomial \(s_\lambda(\vec x_n)\).
Every nonempty deletion set has weight zero, since each deletion
contributes a factor \(t_j-t_i\). For
\(a\in\Red(w_\lambda^{-1})\), let
\((\ell_1,\ldots,\ell_{q(a)})\) be the lengths of the maximal
increasing runs of \(a\), read from left to right.
The empty deletion set has weight \(1\) and is admissible exactly
when \(q(a)\leq n\); in that case, put
\[
  c_\varnothing(a)=(0^{n-q(a)},\ell_{q(a)},\ldots,\ell_1).
\]
For the empty word, set \(q(\varnothing)=0\) and
\(c_\varnothing(\varnothing)=(0^n)\).

\begin{cor}\label{cor:intro-schur-fundamental}
For every partition \(\lambda\) with at most \(n\) parts,
\[
  s_\lambda(\vec x_n)
  =\sum_{\substack{a\in\Red(w_\lambda^{-1})\\q(a)\leq n}}
     \mathfrak F_{c_\varnothing(a)}(\vec x_n;\vec0).
\]
\end{cor}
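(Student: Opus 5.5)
The plan is to obtain \Cref{cor:intro-schur-fundamental} by specializing \Cref{thm:intro-double-schur-coefficients} at \(\vec t=\vec0\). We work one \(c\in\operatorname{Pad}_n\) at a time. The first step is to evaluate the right-hand side of \Cref{thm:intro-double-schur-coefficients} at \(\vec t=\vec 0\). Each deletion contributes a factor \(t_j-t_i\) to \(\wt_a(D)\), so every nonempty \(D\in\mathcal A_n(a)\) contributes \(0\). Only \(D=\varnothing\) survives, with weight \(1\).

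The empty deletion set is always produced by the replay procedure, since at each step we may keep \(a_r\). It is admissible exactly when the maximal increasing runs of \(a\) number at most \(n\), that is, when \(q(a)\le n\). In that case its index is \(c_\varnothing=(0^{n-q(a)},\ell_{q(a)},\ldots,\ell_1)=c_\varnothing(a)\). For \(a=\varnothing\), which occurs only when \(\lambda=\varnothing\), the index is \((0^n)\) and the weight is \(1\). Hence
\[
  \Bigl([\mathfrak F_c]s_\lambda\xt\Bigr)\Big|_{\vec t=\vec 0}
  =\bigl|\{a\in\Red(w_\lambda^{-1}):q(a)\le n,\ c_\varnothing(a)=c\}\bigr|.
\]

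The second step is to check that the coefficient extraction commutes with the specialization. Since \(s_\lambda\xt=\sum_{c}([\mathfrak F_c]s_\lambda)\,\mathfrak F_c\xt\) is an identity in \(\ZZ[\vec x_n;\vec t]\), we may set \(\vec t=\vec 0\) on both sides. This uses the cited fact that \(s_\lambda(\vec x_n;\vec 0)=s_\lambda(\vec x_n)\). The result expresses \(s_\lambda(\vec x_n)\) as \(\sum_c(\text{count above})\,\mathfrak F_c(\vec x_n;\vec0)\). Regrouping the sum by words \(a\) instead of by indices \(c\) gives exactly the displayed formula. No independence of the \(\mathfrak F_c(\vec x_n;\vec0)\) is needed, since the conclusion is an equality of polynomials rather than a uniqueness of coefficients.

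The only delicate point is the claim that \(D=\varnothing\) is always a legitimate output of the replay process with weight exactly \(1\), and that its run structure is that of \(a\) itself. I expect this to follow directly from the rule in \Cref{sec:double-schur-successor-replay}: deletion is optional, and the weight is an empty product. I would verify this against that section, together with the convention on padding and the reversed order of the runs in \(c_D\).
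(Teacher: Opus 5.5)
Your proof is correct and is essentially the paper's argument: both specialize the equivariant deletion formula at \(\vec t=\vec0\), where every nonempty deletion set has weight zero and the empty one survives with weight \(1\) exactly when \(q(a)\le n\). The only cosmetic difference is the order of operations. The paper specializes the wordwise expansion of \(\FL_a\) in \Cref{thm:double-schur-successor-replay} and then sums over \(a\) using \eqref{eq:double-schur-lattice-path}, whereas you specialize the already-summed coefficient formula; the two routes are interchangeable.
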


Each summand is the ordinary fundamental quasisymmetric polynomial
indexed by the reversed run composition
\((\ell_{q(a)},\ldots,\ell_1)\).
Under the heap correspondence, the increasing-run boundaries are
the descent positions of the associated standard Young tableau
of shape \(\lambda\). As explained in \Cref{rem:Gessel-recover},
reversing the variable order and using the symmetry of \(s_\lambda\)
converts the right hand side of the formula in \Cref{cor:intro-schur-fundamental} into Gessel's tableau expansion
\cite{Gessel1984}.

\subsection{Organization}

The paper is organized as follows. \Cref{sec:preliminaries} recalls
the polynomial families and their subword models.
\Cref{sec:lattice-paths} introduces the lattice path model and successor relations. \Cref{sec:forest-path-models} proves the forest
and grove formulas. \Cref{sec:schubert-path-models} first recalls
Nadeau and Tewari's \(\Omega^\bullet\)-correspondence, then develops
its marked-block realization, word paths, and endpoint sorting to
prove the Schubert and Grothendieck formulas.
\Cref{sec:change-of-basis} gives the four coefficient interpretations.

\subsection*{Acknowledgements}
We are grateful to Younggwang Cho for discussions that helped shape the construction of the lattice path model. We thank Jang Soo Kim and Philippe Nadeau for their careful reading of the draft and valuable feedback, and Jing Liu for introducing us to this problem. We are particularly grateful to Hunter Spink for many helpful and inspiring conversations.

\subsection*{Declaration on Generative AI}
During the preparation of this work, the author used OpenAI's ChatGPT (GPT-5.5, GPT-5.6 Sol, and GPT-6 Astra) for literature searches, generating SageMath code for computational experiments, improving the grammar and readability of the manuscript, and generating TikZ code for figures. The computational experiments were used to explore examples, identify and test conjectural patterns, and provide experimental evidence relevant to various lemmas, propositions, and theorems.
Conversations with ChatGPT were helpful and inspiring. The author solely developed the lattice path model and the mathematical ideas underlying the statements and proofs in this paper. All references suggested or located with AI assistance were independently verified, and all AI-generated code, computations, and manuscript text were reviewed and checked by the author. The author takes full responsibility for the correctness and content of the paper.

\section{Preliminaries}\label{sec:preliminaries}

We recall the forest labelings and subword formulas used in the lattice
path constructions. Write \([n]=\{1,\ldots,n\}\), with
\([0]=\varnothing\), and let \(\vec x=(x_1,x_2,\ldots)\) and
\(\vec t=(t_1,t_2,\ldots)\). The \(t_i\) are the equivariant
variables. All polynomials involve only finitely many variables.

\subsection{Indexed forests and codes}
We follow \cite{NT2024,NST2024,BGNST2025a}. A \emph{binary plane tree}
is a rooted tree in which every vertex is either a leaf or an internal
node with two ordered children. Denote the one-vertex tree by \(*\).
An \emph{indexed forest} is a sequence \(F=(T_1,T_2,\ldots)\) of binary
plane trees, all but finitely many of which equal \(*\). Let \(\For\)
be the set of indexed forests, let \(\IN(F)\) be the set of internal
nodes, and write \(|F|=|\IN(F)|\). The empty forest is
\(\varnothing=(*,*,\ldots)\).

Label the leaves from left to right by \(\mathbb N\). The support
\(\supp(F)\) consists of the labels of leaves belonging to nontrivial
trees, and
\[
  \For_n=\{F\in\For:\supp(F)\subseteq[n]\}.
\]
Let \(\Codes\) be the set of finitely supported sequences
\(c=(c_1,c_2,\ldots)\) of nonnegative integers. Put
\[
  \supp(c)=\{i:c_i>0\},\qquad
  |c|=\sum_i c_i,\qquad
  \Codes_n=\{c\in\Codes:\supp(c)\subseteq[n]\}.
\]
We omit trailing zeros when convenient. The \emph{code} of \(F\) is
\[
  c(F)=(c_i)_{i\geq1},\qquad
  c_i=|\{v\in\IN(F):\rho_F(v)=i\}|.
\]
The map \(c:\For\to\Codes\) is a bijection, with inverse
\(c\mapsto F(c)\).

Two labels on internal nodes will be used. The \emph{flag} \(\rho_F(v)\) is the label of the leftmost
leaf below \(v\), reached by repeatedly following left edges.
In particular, \(c(F)\in\Codes_n\) if and only if
\(\rho_F(v)\leq n\) for every internal node \(v\).
The \emph{canonical label}
\(\ell_F(v)\) is the label of the rightmost leaf in the left subtree
of \(v\).
A \emph{Sylvester word} of \(F\) lists its canonical labels in an
order in which every parent precedes its internal children. We write
\(\Syl(F)\) for the set of these words.

\begin{figure}
  \centering
  \begin{tikzpicture}[
      x=0.72cm,
      y=0.72cm,
      internal/.style={circle,fill=black,inner sep=1.7pt},
      rho/.style={text=red,font=\scriptsize}
    ]
    \draw[gray!55] (0.7,0)--(9.35,0);
    \foreach \x/\lab in {1/1,2/2,3/3,4/4,5/5,6/6,7/7,8/8,9/9} {
      \draw[semithick] (\x-0.10,-0.10)--(\x+0.10,0.10);
      \draw[semithick] (\x-0.10,0.10)--(\x+0.10,-0.10);
      \node[below=3pt,font=\scriptsize] at (\x,0) {\lab};
    }

    \node[internal] (root) at (3.5,1.8) {};
    \node[internal] (left) at (2.5,0.6) {};
    \node[internal] (right) at (4.5,0.6) {};
    \draw (root)--(left) (root)--(right);
    \draw (left)--(2,0) (left)--(3,0);
    \draw (right)--(4,0) (right)--(5,0);

    \node[internal] (small) at (7.5,0.6) {};
    \draw (small)--(7,0) (small)--(8,0);

    \node[rho] at (3.5,2.07) {\(2\)};
    \node[rho] at (2.25,0.84) {\(2\)};
    \node[rho] at (4.75,0.84) {\(4\)};
    \node[rho] at (7.5,0.87) {\(7\)};
    \node[right] at (9.35,0) {\(\cdots\)};
  \end{tikzpicture}
  \caption{An indexed forest \(F\). The crosses represent leaves, and the
  red labels are the values of \(\rho_F\) on the internal nodes.}
  \label{fig:indexed-forest-code}
\end{figure}

The forest in \Cref{fig:indexed-forest-code} has code
\(c(F)=(0,2,0,1,0,0,1)\). Its canonical labels are
\(2,3,4,7\); a Sylvester word must place \(3\) before both \(2\)
and \(4\).

\subsection{Forest and grove polynomials}
A \emph{compatible labeling} of \(F\) is a map
\(f:\IN(F)\to\mathbb Z_{>0}\) such that \(f(v)\leq\rho_F(v)\)
for every internal node \(v\), and
\begin{equation}\label{eq:ordinary-forest-labeling}
  \begin{aligned}  
  f(v)&\leq f(v_L)&&\text{if}\ v_L\in\IN(F), \\
  f(v)&<f(v_R)&&\text{if}\ v_R\in\IN(F),
  \end{aligned}
\end{equation}
where \(v_L,v_R\) are the left and right children of \(v\), respectively.
For \(F\in\For\), the \emph{forest polynomial} \(\PP_F\) of
\(F\) is \cite{NT2024,NST2024}
\begin{equation}\label{eq:forest-polynomial}
  \PP_F(\vec x)=\sum_f\prod_{v\in\IN(F)}x_{f(v)},
\end{equation}
where the sum runs over all compatible labelings. We also
write \(\PP_c=\PP_{F(c)}\).

Grove polynomials replace the labels by nonempty finite sets. A
\emph{grove labeling} assigns such a set
\(\kappa(v)\subset\mathbb Z_{>0}\) to each internal node \(v\), with
\(\max\kappa(v)\leq\rho_F(v)\), and satisfies
\begin{equation}\label{eq:grove-labeling-compatibility}
\begin{aligned}
  \max\kappa(v)&\leq\min\kappa(v_L)
    &&\text{if}\ v_L\in\IN(F),\\
  \max\kappa(v)&<\min\kappa(v_R)
    &&\text{if}\ v_R\in\IN(F).
\end{aligned}
\end{equation}
For \(F\in\For\) and an indeterminate \(\beta\), the
\emph{grove polynomial} \(\widetilde\PP_F^{(\beta)}\) of \(F\)
is \cite{NST2026Grove}
\begin{equation}\label{eq:grove-labeling-polynomial}
  \widetilde\PP_F^{(\beta)}(\vec x)
  =\sum_{\kappa}
     \beta^{|\kappa|-|F|}
     \prod_{v\in\IN(F)}\prod_{q\in\kappa(v)}x_q,
  \qquad
  |\kappa|=\sum_{v\in\IN(F)}|\kappa(v)|.
\end{equation}
We also write \(\widetilde\PP_c^{(\beta)}
=\widetilde\PP_{F(c)}^{(\beta)}\).
At \(\beta=0\), only singleton labels contribute, so
\(\widetilde\PP_F^{(0)}=\PP_F\). Both polynomials equal \(1\) for
the empty forest.

\begin{exam}\label{ex:forest-grove-labelings}
For the forest \(F\) in \Cref{fig:indexed-forest-code}, the
compatible labeling on the left of \Cref{fig:forest-grove-labelings}
contributes \(x_2^2x_3x_5\) to \(\PP_F\).
The grove labeling on the right has \(|\kappa|=8\), so it contributes
\[
  \beta^{8-4}x_1^2x_2^2x_3^2x_4x_5
  =\beta^4x_1^2x_2^2x_3^2x_4x_5
\]
to \(\widetilde\PP_F^{(\beta)}\).
The parent label \(\{1,2\}\) has maximum \(2\), which is at most
the minimum of the left-child label \(\{2\}\) and strictly less than
the minimum of the right-child label \(\{3,4\}\).
\end{exam}
\begin{figure}
  \centering
  \begin{tikzpicture}[x=.62cm,y=.7cm,
    internal/.style={circle,fill=black,inner sep=1.5pt},
    every node/.style={font=\scriptsize}]
    \foreach \shift/\kind in {0/compatible,11/grove} {
      \begin{scope}[xshift=\shift*.62cm]
        \draw[gray!55] (.7,0)--(9.35,0);
        \foreach \i in {1,...,9} {
          \node at (\i,0) {\(\times\)};
          \node[below=3pt] at (\i,0) {\(\i\)};
        }
        \node[internal] (r\kind) at (3.5,1.8) {};
        \node[internal] (l\kind) at (2.5,.6) {};
        \node[internal] (u\kind) at (4.5,.6) {};
        \node[internal] (v\kind) at (7.5,.6) {};
        \draw (r\kind)--(l\kind) (r\kind)--(u\kind)
          (l\kind)--(2,0) (l\kind)--(3,0)
          (u\kind)--(4,0) (u\kind)--(5,0)
          (v\kind)--(7,0) (v\kind)--(8,0);
        \node at (5,3) {\kind\ labeling};
      \end{scope}
    }
    \node[blue!75!black,above=3pt] at (3.5,1.8) {\(2\)};
    \node[blue!75!black,above left=2pt] at (2.5,.6) {\(2\)};
    \node[blue!75!black,above right=2pt] at (4.5,.6) {\(3\)};
    \node[blue!75!black,above=3pt] at (7.5,.6) {\(5\)};
    \begin{scope}[xshift=6.82cm]
      \node[blue!75!black,above=3pt] at (3.5,1.8) {\(\{1,2\}\)};
      \node[blue!75!black,above left=2pt] at (2.5,.6) {\(\{2\}\)};
      \node[blue!75!black,above right=2pt] at (4.5,.6) {\(\{3,4\}\)};
      \node[blue!75!black,above=3pt] at (7.5,.6) {\(\{1,3,5\}\)};
    \end{scope}
  \end{tikzpicture}
  \caption{A compatible labeling and a grove labeling of the
    forest in \Cref{fig:indexed-forest-code}. Blue labels record the
    choices contributing the two terms in \Cref{ex:forest-grove-labelings}.}
  \label{fig:forest-grove-labelings}
\end{figure}

\subsection{Subword formulas for double polynomials}
\label{subsec:vine-subword}
Let \(S_n\) be the symmetric group on \([n]\), with
\(S_n\subset S_{n+1}\) given by adjoining the fixed point \(n+1\).
We write \(S_\infty=\bigcup_{n\geq1}S_n\) for the group of finitely
supported permutations of \(\mathbb N\), generated by the simple
transpositions \(s_i=(i,i+1)\). For \(w\in S_\infty\), let
\(\ell(w)\) denote the Coxeter length of \(w\). The set of reduced
words of \(w\) is
\[
  \Red(w)=\{(a_1,\ldots,a_{\ell(w)})\in\mathbb N^{\ell(w)}:
                    w=s_{a_1}\cdots s_{a_{\ell(w)}}\}.
\]
Fix \(n\geq1\). For \(1\leq q\leq n\), define
\[
  \omega_{[n]}^{(q)}
  =n^{(q)}(n-1)^{(q)}\cdots q^{(q)}
   \overline{q+1}^{(q)}\overline{q+2}^{(q)}\cdots\overline n^{(q)},
  \qquad
  \widetilde\omega_{[n]}
  =\omega_{[n]}^{(1)}\cdots\omega_{[n]}^{(n)}.
\]
A letter \(B\) of \(\widetilde\omega_{[n]}\) has the form
\(j^{(q)}\) with \(q\leq j\leq n\), or
\(\overline j^{(q)}\) with \(q<j\leq n\).
The value \(\val(B)\) is the underlying integer \(j\), obtained by
forgetting the bar and superscript, and \(\wt(B)\) denotes the
polynomial weight assigned to \(B\) in the subword formulas.
As described in \Cref{fig:vine-array}, arrange the unbarred and barred parts of \( \omega_{[n]}^{(q)} \) in rows
\(2q-1\) and \(2q\), respectively, and let \(d(B)\) denote the
row containing \(B\). The value, weight, and row index are given by
\[
  \begin{array}{c|ccc}
    B & \val(B) & \wt(B) & d(B) \\ \hline
    j^{(q)} & j & x_q-t_j & 2q-1 \\
    \overline j^{(q)} & j & t_j-t_q & 2q.
\end{array}
\]

The \emph{vine order} on the letters of
\(\widetilde\omega_{[n]}\) is defined by \(B\prec B'\) if
\(B\) occurs before \(B'\) in the word.

\Cref{fig:vine-array} displays the letters of
\(\widetilde\omega_{[4]}\) in the left array and the corresponding
weights in the right array. The vine order is obtained by
reading the rows from top to bottom, with odd rows read from right
to left and even rows from left to right.
\begin{figure}
  \centering
  \begin{minipage}[t]{.34\textwidth}
    \centering
    Letters\par\smallskip
\begin{tikzpicture}[x=.9cm,y=.55cm,every node/.style={font=\scriptsize}]
\node[left,text=gray] at (-0.12,-0.5) {1};
\fill[purple!9] (0,-1) rectangle (1,0);
\draw[gray!55,line width=.3pt] (0,-1) rectangle (1,0);
\node at (0.5,-0.5) {\(1^{(1)}\)};
\fill[purple!9] (1,-1) rectangle (2,0);
\draw[gray!55,line width=.3pt] (1,-1) rectangle (2,0);
\node at (1.5,-0.5) {\(2^{(1)}\)};
\fill[purple!9] (2,-1) rectangle (3,0);
\draw[gray!55,line width=.3pt] (2,-1) rectangle (3,0);
\node at (2.5,-0.5) {\(3^{(1)}\)};
\fill[purple!9] (3,-1) rectangle (4,0);
\draw[gray!55,line width=.3pt] (3,-1) rectangle (4,0);
\node at (3.5,-0.5) {\(4^{(1)}\)};
\node[left,text=gray] at (-0.12,-1.5) {2};
\fill[orange!13] (1,-2) rectangle (2,-1);
\draw[gray!55,line width=.3pt] (1,-2) rectangle (2,-1);
\node at (1.5,-1.5) {\(\overline{2}^{(1)}\)};
\fill[orange!13] (2,-2) rectangle (3,-1);
\draw[gray!55,line width=.3pt] (2,-2) rectangle (3,-1);
\node at (2.5,-1.5) {\(\overline{3}^{(1)}\)};
\fill[orange!13] (3,-2) rectangle (4,-1);
\draw[gray!55,line width=.3pt] (3,-2) rectangle (4,-1);
\node at (3.5,-1.5) {\(\overline{4}^{(1)}\)};
\node[left,text=gray] at (-0.12,-2.5) {3};
\fill[purple!9] (1,-3) rectangle (2,-2);
\draw[gray!55,line width=.3pt] (1,-3) rectangle (2,-2);
\node at (1.5,-2.5) {\(2^{(2)}\)};
\fill[purple!9] (2,-3) rectangle (3,-2);
\draw[gray!55,line width=.3pt] (2,-3) rectangle (3,-2);
\node at (2.5,-2.5) {\(3^{(2)}\)};
\fill[purple!9] (3,-3) rectangle (4,-2);
\draw[gray!55,line width=.3pt] (3,-3) rectangle (4,-2);
\node at (3.5,-2.5) {\(4^{(2)}\)};
\node[left,text=gray] at (-0.12,-3.5) {4};
\fill[orange!13] (2,-4) rectangle (3,-3);
\draw[gray!55,line width=.3pt] (2,-4) rectangle (3,-3);
\node at (2.5,-3.5) {\(\overline{3}^{(2)}\)};
\fill[orange!13] (3,-4) rectangle (4,-3);
\draw[gray!55,line width=.3pt] (3,-4) rectangle (4,-3);
\node at (3.5,-3.5) {\(\overline{4}^{(2)}\)};
\node[left,text=gray] at (-0.12,-4.5) {5};
\fill[purple!9] (2,-5) rectangle (3,-4);
\draw[gray!55,line width=.3pt] (2,-5) rectangle (3,-4);
\node at (2.5,-4.5) {\(3^{(3)}\)};
\fill[purple!9] (3,-5) rectangle (4,-4);
\draw[gray!55,line width=.3pt] (3,-5) rectangle (4,-4);
\node at (3.5,-4.5) {\(4^{(3)}\)};
\node[left,text=gray] at (-0.12,-5.5) {6};
\fill[orange!13] (3,-6) rectangle (4,-5);
\draw[gray!55,line width=.3pt] (3,-6) rectangle (4,-5);
\node at (3.5,-5.5) {\(\overline{4}^{(3)}\)};
\node[left,text=gray] at (-0.12,-6.5) {7};
\fill[purple!9] (3,-7) rectangle (4,-6);
\draw[gray!55,line width=.3pt] (3,-7) rectangle (4,-6);
\node at (3.5,-6.5) {\(4^{(4)}\)};
\draw[semithick] (0,0)--(4,0)--(4,-7)--(3,-7)--(3,-5)--(2,-5)--(2,-3)--(1,-3)--(1,-1)--(0,-1)--(0,0);
\node[left,text=gray] at (-.12,.35) {\(d\)};
\node[above] at (0.5,0.1) {\(j=1\)};
\node[above] at (1.5,0.1) {\(j=2\)};
\node[above] at (2.5,0.1) {\(j=3\)};
\node[above] at (3.5,0.1) {\(j=4\)};
\end{tikzpicture}
  \end{minipage}\qquad
  \begin{minipage}[t]{.43\textwidth}
    \centering
    Weights\par\smallskip
\begin{tikzpicture}[x=1.25cm,y=.55cm,every node/.style={font=\scriptsize}]
\node[left,text=gray] at (-0.12,-0.5) {1};
\fill[purple!9] (0,-1) rectangle (1,0);
\draw[gray!55,line width=.3pt] (0,-1) rectangle (1,0);
\node at (0.5,-0.5) {\(x_1-t_1\)};
\fill[purple!9] (1,-1) rectangle (2,0);
\draw[gray!55,line width=.3pt] (1,-1) rectangle (2,0);
\node at (1.5,-0.5) {\(x_1-t_2\)};
\fill[purple!9] (2,-1) rectangle (3,0);
\draw[gray!55,line width=.3pt] (2,-1) rectangle (3,0);
\node at (2.5,-0.5) {\(x_1-t_3\)};
\fill[purple!9] (3,-1) rectangle (4,0);
\draw[gray!55,line width=.3pt] (3,-1) rectangle (4,0);
\node at (3.5,-0.5) {\(x_1-t_4\)};
\node[left,text=gray] at (-0.12,-1.5) {2};
\fill[orange!13] (1,-2) rectangle (2,-1);
\draw[gray!55,line width=.3pt] (1,-2) rectangle (2,-1);
\node at (1.5,-1.5) {\(t_2-t_1\)};
\fill[orange!13] (2,-2) rectangle (3,-1);
\draw[gray!55,line width=.3pt] (2,-2) rectangle (3,-1);
\node at (2.5,-1.5) {\(t_3-t_1\)};
\fill[orange!13] (3,-2) rectangle (4,-1);
\draw[gray!55,line width=.3pt] (3,-2) rectangle (4,-1);
\node at (3.5,-1.5) {\(t_4-t_1\)};
\node[left,text=gray] at (-0.12,-2.5) {3};
\fill[purple!9] (1,-3) rectangle (2,-2);
\draw[gray!55,line width=.3pt] (1,-3) rectangle (2,-2);
\node at (1.5,-2.5) {\(x_2-t_2\)};
\fill[purple!9] (2,-3) rectangle (3,-2);
\draw[gray!55,line width=.3pt] (2,-3) rectangle (3,-2);
\node at (2.5,-2.5) {\(x_2-t_3\)};
\fill[purple!9] (3,-3) rectangle (4,-2);
\draw[gray!55,line width=.3pt] (3,-3) rectangle (4,-2);
\node at (3.5,-2.5) {\(x_2-t_4\)};
\node[left,text=gray] at (-0.12,-3.5) {4};
\fill[orange!13] (2,-4) rectangle (3,-3);
\draw[gray!55,line width=.3pt] (2,-4) rectangle (3,-3);
\node at (2.5,-3.5) {\(t_3-t_2\)};
\fill[orange!13] (3,-4) rectangle (4,-3);
\draw[gray!55,line width=.3pt] (3,-4) rectangle (4,-3);
\node at (3.5,-3.5) {\(t_4-t_2\)};
\node[left,text=gray] at (-0.12,-4.5) {5};
\fill[purple!9] (2,-5) rectangle (3,-4);
\draw[gray!55,line width=.3pt] (2,-5) rectangle (3,-4);
\node at (2.5,-4.5) {\(x_3-t_3\)};
\fill[purple!9] (3,-5) rectangle (4,-4);
\draw[gray!55,line width=.3pt] (3,-5) rectangle (4,-4);
\node at (3.5,-4.5) {\(x_3-t_4\)};
\node[left,text=gray] at (-0.12,-5.5) {6};
\fill[orange!13] (3,-6) rectangle (4,-5);
\draw[gray!55,line width=.3pt] (3,-6) rectangle (4,-5);
\node at (3.5,-5.5) {\(t_4-t_3\)};
\node[left,text=gray] at (-0.12,-6.5) {7};
\fill[purple!9] (3,-7) rectangle (4,-6);
\draw[gray!55,line width=.3pt] (3,-7) rectangle (4,-6);
\node at (3.5,-6.5) {\(x_4-t_4\)};
\draw[semithick] (0,0)--(4,0)--(4,-7)--(3,-7)--(3,-5)--(2,-5)--(2,-3)--(1,-3)--(1,-1)--(0,-1)--(0,0);
\node[left,text=gray] at (-.12,.35) {\(d\)};
\node[above] at (0.5,0.1) {\(j=1\)};
\node[above] at (1.5,0.1) {\(j=2\)};
\node[above] at (2.5,0.1) {\(j=3\)};
\node[above] at (3.5,0.1) {\(j=4\)};
\end{tikzpicture}
  \end{minipage}
  \caption{Letters of \(\widetilde\omega_{[4]}\) and their weights.
    Row \(d=2q-1\) contains unbarred letters, and row \(d=2q\)
    contains barred letters.}
  \label{fig:vine-array}
\end{figure}

A \emph{subword}
\(\pi=(B_1,\ldots,B_m)\) of \(\widetilde\omega_{[n]}\) is obtained by selecting letters and retaining
their vine order. We write \(\pi\subseteq\widetilde\omega_{[n]}\) to indicate that \(\pi\) is a subword. For \(\pi=(B_1,\ldots,B_m)\subseteq\widetilde\omega_{[n]}\), we define
\[
  \val(\pi)=(\val(B_1),\ldots,\val(B_m)),\qquad
  |\pi|=m,\qquad
  \wt(\pi)=\prod_{r=1}^m\wt(B_r).
\]
The empty subword has an empty value word and weight \(1\). These
conventions also apply to the staircase word in the next subsection.

For \(w\in S_{n+1}\) and \(F\in\For_{n+1}\), define
\[
\begin{aligned}
  \mathcal R(\widetilde\omega_{[n]};w)
    &=\{\pi\subseteq\widetilde\omega_{[n]}:\val(\pi)\in\Red(w)\},\\
  \mathcal R(\widetilde\omega_{[n]};F)
    &=\{\pi\subseteq\widetilde\omega_{[n]}:\val(\pi)\in\Syl(F)\}.
\end{aligned}
\]
Thus the two formulas differ only in the condition on the value
word.

For \(w\in S_{n+1}\), the \emph{double Schubert polynomial}
\(\mathfrak S_w\) of \(w\) is \cite{BGNST2025a}
\begin{equation}\label{eq:double-schubert-subword}
  \mathfrak S_w(\vec x;\vec t)
  =\sum_{\pi\in\mathcal R(\widetilde\omega_{[n]};w)}\wt(\pi).
\end{equation}

For \(F\in\For_{n+1}\), the \emph{double forest polynomial}
\(\PP_F\) of \(F\) is \cite{BGNST2025a}
\begin{equation}\label{eq:double-forest-subword}
  \PP_F(\vec x;\vec t)
  =\sum_{\pi\in\mathcal R(\widetilde\omega_{[n]};F)}\wt(\pi).
\end{equation}

Setting \(\vec t=\vec0\) gives the Schubert polynomial \(\mathfrak S_w(\vec x)\) and the forest polynomial \(\PP_F(\vec x)\), respectively.
We also write \(\PP_c(\vec x;\vec t)=\PP_{F(c)}(\vec x;\vec t)\).

The subword formulas admit the vine diagrams of
\cite[\S5.3 and \S6.1]{BGNST2025a}, using the tiles in
\Cref{fig:vine-examples}. For a forest subword, at most one box
is selected in each column. Put a T-shaped tile in each selected box, a horizontal
tile in each unselected box above a T-shaped tile in the same column, and an odd or
even elbow in each remaining box according to its row parity.
The branching pattern recovers the indexed forest.
For a Schubert subword, put a cross in every selected box and elbows
in all other boxes. The strands connect bottom position \(i\) to
top position \(w(i)\), with no two strands crossing more than once.
The gray boxes in \Cref{fig:vine-examples} supply only the routing
elbows outside the array and carry no weights.

\begin{exam}\label{ex:forest-vine-subword}
Let \(F_0\) be the forest with code \((1,0,2)\).
Its canonical labels are \(1,3,4\), and
\[
  \Syl(F_0)=\{(1,4,3),(4,1,3),(4,3,1)\}.
\]
The subword
\(\pi_F=(1^{(1)},\overline4^{(1)},3^{(3)})\)
has value word \((1,4,3)\), so it contributes
\[
  \wt(\pi_F)=(x_1-t_1)(t_4-t_1)(x_3-t_3)
\]
to \(\PP_{F_0}(\vec x;\vec t)\).
Its T tiles are in boxes \((d,j)=(1,1),(2,4),(5,3)\), giving the
forest vine on the left of \Cref{fig:vine-examples}.
\end{exam}

\begin{exam}\label{ex:schubert-vine-subword}
Let \(w=21453=s_1s_3s_4\in S_5\).
The subword \(\pi_w=(1^{(1)},3^{(2)},\overline4^{(3)})\)
has reduced value word \((1,3,4)\), so it contributes
\[
  \wt(\pi_w)=(x_1-t_1)(x_2-t_3)(t_4-t_3)
\]
to \(\mathfrak S_w(\vec x;\vec t)\).
The crosses are in boxes \((d,j)=(1,1),(3,3),(6,4)\).
The right-hand vine in \Cref{fig:vine-examples} has bottom labels
\(2,1,4,5,3\), as required by the strand connection rule.
\end{exam}

\begin{figure}
  \centering
\begin{tikzpicture}[x=.62cm,y=.62cm,vine/.style={black!75,line width=.75pt,line cap=round},chosen/.style={blue!75!black,line width=1.15pt},every node/.style={font=\scriptsize,align=center}]
\begin{scope}[xshift=0cm]
\draw[gray!60] (0,0) rectangle (1,1);
\draw[chosen] (0,0.5)--(1,0.5);
\draw[chosen] (0.5,0)--(0.5,1);
\node[below] at (.5,-.08) {cross};
\end{scope}
\begin{scope}[xshift=1.55cm]
\draw[gray!60] (0,0) rectangle (1,1);
\draw[vine] (0,0.5) .. controls (0.276,0.5) and (0.5,0.724) .. (0.5,1);
\draw[vine] (0.5,0) .. controls (0.5,0.276) and (0.724,0.5) .. (1,0.5);
\node[below] at (.5,-.08) {odd elbow};
\end{scope}
\begin{scope}[xshift=3.1cm]
\draw[gray!60] (0,0) rectangle (1,1);
\draw[vine] (0,0.5) .. controls (0.276,0.5) and (0.5,0.276) .. (0.5,0);
\draw[vine] (0.5,1) .. controls (0.5,0.724) and (0.724,0.5) .. (1,0.5);
\node[below] at (.5,-.08) {even elbow};
\end{scope}
\begin{scope}[xshift=4.65cm]
\draw[gray!60] (0,0) rectangle (1,1);
\draw[chosen] (0,0.5)--(1,0.5);
\draw[chosen] (0.5,0)--(0.5,0.5);
\node[below] at (.5,-.08) {T};
\end{scope}
\begin{scope}[xshift=6.2cm]
\draw[gray!60] (0,0) rectangle (1,1);
\draw[chosen] (0,0.5)--(1,0.5);
\node[below] at (.5,-.08) {horizontal};
\end{scope}
\end{tikzpicture}
  \par\medskip
  \begin{minipage}[c]{.24\textwidth}
    \centering
    \(c(F_0)=(1,0,2)\)\par\medskip
\begin{tikzpicture}[x=.66cm,y=.65cm,
  internal/.style={circle,fill=black,inner sep=1.6pt},
  every node/.style={font=\scriptsize}]
  \draw[gray!55] (.65,0)--(5.35,0);
  \foreach \i in {1,...,5} {
    \node at (\i,0) {\(\times\)};
    \node[below=3pt] at (\i,0) {\(\i\)};
  }
  \node[internal] (a) at (1.5,.85) {};
  \node[internal] (b) at (3.5,.85) {};
  \node[internal] (c) at (4,1.8) {};
  \draw (a)--(1,0) (a)--(2,0)
    (b)--(3,0) (b)--(4,0)
    (c)--(b) (c)--(5,0);
  \node[blue!75!black,above=3pt] at (a) {\(1\)};
  \node[blue!75!black,above left=2pt] at (b) {\(3\)};
  \node[blue!75!black,above=3pt] at (c) {\(4\)};
\end{tikzpicture}
  \end{minipage}\hspace{.02\textwidth}%
  \begin{minipage}[c]{.23\textwidth}
    \centering
    \strut Forest vine\par\smallskip
\begin{tikzpicture}[x=.54cm,y=.54cm,
  vine/.style={black!75,line width=.65pt,line cap=round},
  chosen/.style={blue!75!black,line width=1.15pt,line cap=round},
  every node/.style={font=\scriptsize}]
\path[use as bounding box] (-.8,-7.85) rectangle (4.9,.9);
\fill[blue!10] (0,-1) rectangle (1,0);
\draw[gray!45,line width=.25pt] (0,-1) rectangle (1,0);
\draw[chosen] (0,-0.5)--(1,-0.5);
\draw[chosen] (0.5,-1)--(0.5,-0.5);
\fill[blue!75!black] (0.5,-0.5) circle (1.35pt);
\draw[gray!45,line width=.25pt] (1,-1) rectangle (2,0);
\draw[vine] (1,-0.5) .. controls (1.276,-0.5) and (1.5,-0.276) .. (1.5,0);
\draw[vine] (1.5,-1) .. controls (1.5,-0.724) and (1.724,-0.5) .. (2,-0.5);
\draw[gray!45,line width=.25pt] (2,-1) rectangle (3,0);
\draw[vine] (2,-0.5)--(3,-0.5);
\draw[gray!45,line width=.25pt] (3,-1) rectangle (4,0);
\draw[vine] (3,-0.5)--(4,-0.5);
\fill[gray!18] (0,-2) rectangle (1,-1);
\draw[gray!45,line width=.25pt] (0,-2) rectangle (1,-1);
\draw[vine] (0,-1.5) .. controls (0.276,-1.5) and (0.5,-1.724) .. (0.5,-2);
\draw[vine] (0.5,-1) .. controls (0.5,-1.276) and (0.724,-1.5) .. (1,-1.5);
\draw[gray!45,line width=.25pt] (1,-2) rectangle (2,-1);
\draw[vine] (1,-1.5) .. controls (1.276,-1.5) and (1.5,-1.724) .. (1.5,-2);
\draw[vine] (1.5,-1) .. controls (1.5,-1.276) and (1.724,-1.5) .. (2,-1.5);
\draw[gray!45,line width=.25pt] (2,-2) rectangle (3,-1);
\draw[vine] (2,-1.5)--(3,-1.5);
\fill[blue!10] (3,-2) rectangle (4,-1);
\draw[gray!45,line width=.25pt] (3,-2) rectangle (4,-1);
\draw[chosen] (3,-1.5)--(4,-1.5);
\draw[chosen] (3.5,-2)--(3.5,-1.5);
\fill[blue!75!black] (3.5,-1.5) circle (1.35pt);
\fill[gray!18] (0,-3) rectangle (1,-2);
\draw[gray!45,line width=.25pt] (0,-3) rectangle (1,-2);
\draw[vine] (0,-2.5) .. controls (0.276,-2.5) and (0.5,-2.276) .. (0.5,-2);
\draw[vine] (0.5,-3) .. controls (0.5,-2.724) and (0.724,-2.5) .. (1,-2.5);
\draw[gray!45,line width=.25pt] (1,-3) rectangle (2,-2);
\draw[vine] (1,-2.5) .. controls (1.276,-2.5) and (1.5,-2.276) .. (1.5,-2);
\draw[vine] (1.5,-3) .. controls (1.5,-2.724) and (1.724,-2.5) .. (2,-2.5);
\draw[gray!45,line width=.25pt] (2,-3) rectangle (3,-2);
\draw[vine] (2,-2.5)--(3,-2.5);
\draw[gray!45,line width=.25pt] (3,-3) rectangle (4,-2);
\draw[vine] (3,-2.5) .. controls (3.276,-2.5) and (3.5,-2.276) .. (3.5,-2);
\draw[vine] (3.5,-3) .. controls (3.5,-2.724) and (3.724,-2.5) .. (4,-2.5);
\fill[gray!18] (0,-4) rectangle (1,-3);
\draw[gray!45,line width=.25pt] (0,-4) rectangle (1,-3);
\draw[vine] (0,-3.5) .. controls (0.276,-3.5) and (0.5,-3.724) .. (0.5,-4);
\draw[vine] (0.5,-3) .. controls (0.5,-3.276) and (0.724,-3.5) .. (1,-3.5);
\fill[gray!18] (1,-4) rectangle (2,-3);
\draw[gray!45,line width=.25pt] (1,-4) rectangle (2,-3);
\draw[vine] (1,-3.5) .. controls (1.276,-3.5) and (1.5,-3.724) .. (1.5,-4);
\draw[vine] (1.5,-3) .. controls (1.5,-3.276) and (1.724,-3.5) .. (2,-3.5);
\draw[gray!45,line width=.25pt] (2,-4) rectangle (3,-3);
\draw[vine] (2,-3.5)--(3,-3.5);
\draw[gray!45,line width=.25pt] (3,-4) rectangle (4,-3);
\draw[vine] (3,-3.5) .. controls (3.276,-3.5) and (3.5,-3.724) .. (3.5,-4);
\draw[vine] (3.5,-3) .. controls (3.5,-3.276) and (3.724,-3.5) .. (4,-3.5);
\fill[gray!18] (0,-5) rectangle (1,-4);
\draw[gray!45,line width=.25pt] (0,-5) rectangle (1,-4);
\draw[vine] (0,-4.5) .. controls (0.276,-4.5) and (0.5,-4.276) .. (0.5,-4);
\draw[vine] (0.5,-5) .. controls (0.5,-4.724) and (0.724,-4.5) .. (1,-4.5);
\fill[gray!18] (1,-5) rectangle (2,-4);
\draw[gray!45,line width=.25pt] (1,-5) rectangle (2,-4);
\draw[vine] (1,-4.5) .. controls (1.276,-4.5) and (1.5,-4.276) .. (1.5,-4);
\draw[vine] (1.5,-5) .. controls (1.5,-4.724) and (1.724,-4.5) .. (2,-4.5);
\fill[blue!10] (2,-5) rectangle (3,-4);
\draw[gray!45,line width=.25pt] (2,-5) rectangle (3,-4);
\draw[chosen] (2,-4.5)--(3,-4.5);
\draw[chosen] (2.5,-5)--(2.5,-4.5);
\fill[blue!75!black] (2.5,-4.5) circle (1.35pt);
\draw[gray!45,line width=.25pt] (3,-5) rectangle (4,-4);
\draw[vine] (3,-4.5) .. controls (3.276,-4.5) and (3.5,-4.276) .. (3.5,-4);
\draw[vine] (3.5,-5) .. controls (3.5,-4.724) and (3.724,-4.5) .. (4,-4.5);
\fill[gray!18] (0,-6) rectangle (1,-5);
\draw[gray!45,line width=.25pt] (0,-6) rectangle (1,-5);
\draw[vine] (0,-5.5) .. controls (0.276,-5.5) and (0.5,-5.724) .. (0.5,-6);
\draw[vine] (0.5,-5) .. controls (0.5,-5.276) and (0.724,-5.5) .. (1,-5.5);
\fill[gray!18] (1,-6) rectangle (2,-5);
\draw[gray!45,line width=.25pt] (1,-6) rectangle (2,-5);
\draw[vine] (1,-5.5) .. controls (1.276,-5.5) and (1.5,-5.724) .. (1.5,-6);
\draw[vine] (1.5,-5) .. controls (1.5,-5.276) and (1.724,-5.5) .. (2,-5.5);
\fill[gray!18] (2,-6) rectangle (3,-5);
\draw[gray!45,line width=.25pt] (2,-6) rectangle (3,-5);
\draw[vine] (2,-5.5) .. controls (2.276,-5.5) and (2.5,-5.724) .. (2.5,-6);
\draw[vine] (2.5,-5) .. controls (2.5,-5.276) and (2.724,-5.5) .. (3,-5.5);
\draw[gray!45,line width=.25pt] (3,-6) rectangle (4,-5);
\draw[vine] (3,-5.5) .. controls (3.276,-5.5) and (3.5,-5.724) .. (3.5,-6);
\draw[vine] (3.5,-5) .. controls (3.5,-5.276) and (3.724,-5.5) .. (4,-5.5);
\fill[gray!18] (0,-7) rectangle (1,-6);
\draw[gray!45,line width=.25pt] (0,-7) rectangle (1,-6);
\draw[vine] (0,-6.5) .. controls (0.276,-6.5) and (0.5,-6.276) .. (0.5,-6);
\draw[vine] (0.5,-7) .. controls (0.5,-6.724) and (0.724,-6.5) .. (1,-6.5);
\fill[gray!18] (1,-7) rectangle (2,-6);
\draw[gray!45,line width=.25pt] (1,-7) rectangle (2,-6);
\draw[vine] (1,-6.5) .. controls (1.276,-6.5) and (1.5,-6.276) .. (1.5,-6);
\draw[vine] (1.5,-7) .. controls (1.5,-6.724) and (1.724,-6.5) .. (2,-6.5);
\fill[gray!18] (2,-7) rectangle (3,-6);
\draw[gray!45,line width=.25pt] (2,-7) rectangle (3,-6);
\draw[vine] (2,-6.5) .. controls (2.276,-6.5) and (2.5,-6.276) .. (2.5,-6);
\draw[vine] (2.5,-7) .. controls (2.5,-6.724) and (2.724,-6.5) .. (3,-6.5);
\draw[gray!45,line width=.25pt] (3,-7) rectangle (4,-6);
\draw[vine] (3,-6.5) .. controls (3.276,-6.5) and (3.5,-6.276) .. (3.5,-6);
\draw[vine] (3.5,-7) .. controls (3.5,-6.724) and (3.724,-6.5) .. (4,-6.5);
\draw[vine] (0,-0.5) .. controls (-0.5,-0.5) and (-0.5,-1.5) .. (0,-1.5);
\draw[vine] (0,-2.5) .. controls (-0.5,-2.5) and (-0.5,-3.5) .. (0,-3.5);
\draw[vine] (0,-4.5) .. controls (-0.5,-4.5) and (-0.5,-5.5) .. (0,-5.5);
\draw[vine] (4,-1.5) .. controls (4.5,-1.5) and (4.5,-2.5) .. (4,-2.5);
\draw[vine] (4,-3.5) .. controls (4.5,-3.5) and (4.5,-4.5) .. (4,-4.5);
\draw[vine] (4,-5.5) .. controls (4.5,-5.5) and (4.5,-6.5) .. (4,-6.5);
\draw[vine] (4,-0.5) .. controls (4.276,-0.5) and (4.5,-0.276) .. (4.5,0);
\draw[vine] (0,-6.5) .. controls (-0.276,-6.5) and (-0.5,-6.724) .. (-0.5,-7);
\draw[vine] (1.5,0)--(1.5,0.12);
\draw[vine] (4.5,0)--(4.5,0.12);
\draw[vine] (-0.5,-7)--(-0.5,-7.12);
\node[below] at (-0.5,-7.12) {1};
\draw[vine] (0.5,-7)--(0.5,-7.12);
\node[below] at (0.5,-7.12) {2};
\draw[vine] (1.5,-7)--(1.5,-7.12);
\node[below] at (1.5,-7.12) {3};
\draw[vine] (2.5,-7)--(2.5,-7.12);
\node[below] at (2.5,-7.12) {4};
\draw[vine] (3.5,-7)--(3.5,-7.12);
\node[below] at (3.5,-7.12) {5};
\end{tikzpicture}
  \end{minipage}\hspace{.02\textwidth}%
  \begin{minipage}[c]{.23\textwidth}
    \centering
    \strut Schubert vine\par\smallskip
\begin{tikzpicture}[x=.54cm,y=.54cm,
  vine/.style={black!75,line width=.65pt,line cap=round},
  chosen/.style={blue!75!black,line width=1.15pt,line cap=round},
  every node/.style={font=\scriptsize}]
\path[use as bounding box] (-.8,-7.85) rectangle (4.9,.9);
\fill[blue!10] (0,-1) rectangle (1,0);
\draw[gray!45,line width=.25pt] (0,-1) rectangle (1,0);
\draw[chosen] (0,-0.5)--(1,-0.5);
\draw[chosen] (0.5,-1)--(0.5,0);
\draw[gray!45,line width=.25pt] (1,-1) rectangle (2,0);
\draw[vine] (1,-0.5) .. controls (1.276,-0.5) and (1.5,-0.276) .. (1.5,0);
\draw[vine] (1.5,-1) .. controls (1.5,-0.724) and (1.724,-0.5) .. (2,-0.5);
\draw[gray!45,line width=.25pt] (2,-1) rectangle (3,0);
\draw[vine] (2,-0.5) .. controls (2.276,-0.5) and (2.5,-0.276) .. (2.5,0);
\draw[vine] (2.5,-1) .. controls (2.5,-0.724) and (2.724,-0.5) .. (3,-0.5);
\draw[gray!45,line width=.25pt] (3,-1) rectangle (4,0);
\draw[vine] (3,-0.5) .. controls (3.276,-0.5) and (3.5,-0.276) .. (3.5,0);
\draw[vine] (3.5,-1) .. controls (3.5,-0.724) and (3.724,-0.5) .. (4,-0.5);
\fill[gray!18] (0,-2) rectangle (1,-1);
\draw[gray!45,line width=.25pt] (0,-2) rectangle (1,-1);
\draw[vine] (0,-1.5) .. controls (0.276,-1.5) and (0.5,-1.724) .. (0.5,-2);
\draw[vine] (0.5,-1) .. controls (0.5,-1.276) and (0.724,-1.5) .. (1,-1.5);
\draw[gray!45,line width=.25pt] (1,-2) rectangle (2,-1);
\draw[vine] (1,-1.5) .. controls (1.276,-1.5) and (1.5,-1.724) .. (1.5,-2);
\draw[vine] (1.5,-1) .. controls (1.5,-1.276) and (1.724,-1.5) .. (2,-1.5);
\draw[gray!45,line width=.25pt] (2,-2) rectangle (3,-1);
\draw[vine] (2,-1.5) .. controls (2.276,-1.5) and (2.5,-1.724) .. (2.5,-2);
\draw[vine] (2.5,-1) .. controls (2.5,-1.276) and (2.724,-1.5) .. (3,-1.5);
\draw[gray!45,line width=.25pt] (3,-2) rectangle (4,-1);
\draw[vine] (3,-1.5) .. controls (3.276,-1.5) and (3.5,-1.724) .. (3.5,-2);
\draw[vine] (3.5,-1) .. controls (3.5,-1.276) and (3.724,-1.5) .. (4,-1.5);
\fill[gray!18] (0,-3) rectangle (1,-2);
\draw[gray!45,line width=.25pt] (0,-3) rectangle (1,-2);
\draw[vine] (0,-2.5) .. controls (0.276,-2.5) and (0.5,-2.276) .. (0.5,-2);
\draw[vine] (0.5,-3) .. controls (0.5,-2.724) and (0.724,-2.5) .. (1,-2.5);
\draw[gray!45,line width=.25pt] (1,-3) rectangle (2,-2);
\draw[vine] (1,-2.5) .. controls (1.276,-2.5) and (1.5,-2.276) .. (1.5,-2);
\draw[vine] (1.5,-3) .. controls (1.5,-2.724) and (1.724,-2.5) .. (2,-2.5);
\fill[blue!10] (2,-3) rectangle (3,-2);
\draw[gray!45,line width=.25pt] (2,-3) rectangle (3,-2);
\draw[chosen] (2,-2.5)--(3,-2.5);
\draw[chosen] (2.5,-3)--(2.5,-2);
\draw[gray!45,line width=.25pt] (3,-3) rectangle (4,-2);
\draw[vine] (3,-2.5) .. controls (3.276,-2.5) and (3.5,-2.276) .. (3.5,-2);
\draw[vine] (3.5,-3) .. controls (3.5,-2.724) and (3.724,-2.5) .. (4,-2.5);
\fill[gray!18] (0,-4) rectangle (1,-3);
\draw[gray!45,line width=.25pt] (0,-4) rectangle (1,-3);
\draw[vine] (0,-3.5) .. controls (0.276,-3.5) and (0.5,-3.724) .. (0.5,-4);
\draw[vine] (0.5,-3) .. controls (0.5,-3.276) and (0.724,-3.5) .. (1,-3.5);
\fill[gray!18] (1,-4) rectangle (2,-3);
\draw[gray!45,line width=.25pt] (1,-4) rectangle (2,-3);
\draw[vine] (1,-3.5) .. controls (1.276,-3.5) and (1.5,-3.724) .. (1.5,-4);
\draw[vine] (1.5,-3) .. controls (1.5,-3.276) and (1.724,-3.5) .. (2,-3.5);
\draw[gray!45,line width=.25pt] (2,-4) rectangle (3,-3);
\draw[vine] (2,-3.5) .. controls (2.276,-3.5) and (2.5,-3.724) .. (2.5,-4);
\draw[vine] (2.5,-3) .. controls (2.5,-3.276) and (2.724,-3.5) .. (3,-3.5);
\draw[gray!45,line width=.25pt] (3,-4) rectangle (4,-3);
\draw[vine] (3,-3.5) .. controls (3.276,-3.5) and (3.5,-3.724) .. (3.5,-4);
\draw[vine] (3.5,-3) .. controls (3.5,-3.276) and (3.724,-3.5) .. (4,-3.5);
\fill[gray!18] (0,-5) rectangle (1,-4);
\draw[gray!45,line width=.25pt] (0,-5) rectangle (1,-4);
\draw[vine] (0,-4.5) .. controls (0.276,-4.5) and (0.5,-4.276) .. (0.5,-4);
\draw[vine] (0.5,-5) .. controls (0.5,-4.724) and (0.724,-4.5) .. (1,-4.5);
\fill[gray!18] (1,-5) rectangle (2,-4);
\draw[gray!45,line width=.25pt] (1,-5) rectangle (2,-4);
\draw[vine] (1,-4.5) .. controls (1.276,-4.5) and (1.5,-4.276) .. (1.5,-4);
\draw[vine] (1.5,-5) .. controls (1.5,-4.724) and (1.724,-4.5) .. (2,-4.5);
\draw[gray!45,line width=.25pt] (2,-5) rectangle (3,-4);
\draw[vine] (2,-4.5) .. controls (2.276,-4.5) and (2.5,-4.276) .. (2.5,-4);
\draw[vine] (2.5,-5) .. controls (2.5,-4.724) and (2.724,-4.5) .. (3,-4.5);
\draw[gray!45,line width=.25pt] (3,-5) rectangle (4,-4);
\draw[vine] (3,-4.5) .. controls (3.276,-4.5) and (3.5,-4.276) .. (3.5,-4);
\draw[vine] (3.5,-5) .. controls (3.5,-4.724) and (3.724,-4.5) .. (4,-4.5);
\fill[gray!18] (0,-6) rectangle (1,-5);
\draw[gray!45,line width=.25pt] (0,-6) rectangle (1,-5);
\draw[vine] (0,-5.5) .. controls (0.276,-5.5) and (0.5,-5.724) .. (0.5,-6);
\draw[vine] (0.5,-5) .. controls (0.5,-5.276) and (0.724,-5.5) .. (1,-5.5);
\fill[gray!18] (1,-6) rectangle (2,-5);
\draw[gray!45,line width=.25pt] (1,-6) rectangle (2,-5);
\draw[vine] (1,-5.5) .. controls (1.276,-5.5) and (1.5,-5.724) .. (1.5,-6);
\draw[vine] (1.5,-5) .. controls (1.5,-5.276) and (1.724,-5.5) .. (2,-5.5);
\fill[gray!18] (2,-6) rectangle (3,-5);
\draw[gray!45,line width=.25pt] (2,-6) rectangle (3,-5);
\draw[vine] (2,-5.5) .. controls (2.276,-5.5) and (2.5,-5.724) .. (2.5,-6);
\draw[vine] (2.5,-5) .. controls (2.5,-5.276) and (2.724,-5.5) .. (3,-5.5);
\fill[blue!10] (3,-6) rectangle (4,-5);
\draw[gray!45,line width=.25pt] (3,-6) rectangle (4,-5);
\draw[chosen] (3,-5.5)--(4,-5.5);
\draw[chosen] (3.5,-6)--(3.5,-5);
\fill[gray!18] (0,-7) rectangle (1,-6);
\draw[gray!45,line width=.25pt] (0,-7) rectangle (1,-6);
\draw[vine] (0,-6.5) .. controls (0.276,-6.5) and (0.5,-6.276) .. (0.5,-6);
\draw[vine] (0.5,-7) .. controls (0.5,-6.724) and (0.724,-6.5) .. (1,-6.5);
\fill[gray!18] (1,-7) rectangle (2,-6);
\draw[gray!45,line width=.25pt] (1,-7) rectangle (2,-6);
\draw[vine] (1,-6.5) .. controls (1.276,-6.5) and (1.5,-6.276) .. (1.5,-6);
\draw[vine] (1.5,-7) .. controls (1.5,-6.724) and (1.724,-6.5) .. (2,-6.5);
\fill[gray!18] (2,-7) rectangle (3,-6);
\draw[gray!45,line width=.25pt] (2,-7) rectangle (3,-6);
\draw[vine] (2,-6.5) .. controls (2.276,-6.5) and (2.5,-6.276) .. (2.5,-6);
\draw[vine] (2.5,-7) .. controls (2.5,-6.724) and (2.724,-6.5) .. (3,-6.5);
\draw[gray!45,line width=.25pt] (3,-7) rectangle (4,-6);
\draw[vine] (3,-6.5) .. controls (3.276,-6.5) and (3.5,-6.276) .. (3.5,-6);
\draw[vine] (3.5,-7) .. controls (3.5,-6.724) and (3.724,-6.5) .. (4,-6.5);
\draw[vine] (0,-0.5) .. controls (-0.5,-0.5) and (-0.5,-1.5) .. (0,-1.5);
\draw[vine] (0,-2.5) .. controls (-0.5,-2.5) and (-0.5,-3.5) .. (0,-3.5);
\draw[vine] (0,-4.5) .. controls (-0.5,-4.5) and (-0.5,-5.5) .. (0,-5.5);
\draw[vine] (4,-1.5) .. controls (4.5,-1.5) and (4.5,-2.5) .. (4,-2.5);
\draw[vine] (4,-3.5) .. controls (4.5,-3.5) and (4.5,-4.5) .. (4,-4.5);
\draw[vine] (4,-5.5) .. controls (4.5,-5.5) and (4.5,-6.5) .. (4,-6.5);
\draw[vine] (4,-0.5) .. controls (4.276,-0.5) and (4.5,-0.276) .. (4.5,0);
\draw[vine] (0,-6.5) .. controls (-0.276,-6.5) and (-0.5,-6.724) .. (-0.5,-7);
\draw[vine] (0.5,0)--(0.5,0.12);
\node[above] at (0.5,0.12) {1};
\draw[vine] (1.5,0)--(1.5,0.12);
\node[above] at (1.5,0.12) {2};
\draw[vine] (2.5,0)--(2.5,0.12);
\node[above] at (2.5,0.12) {3};
\draw[vine] (3.5,0)--(3.5,0.12);
\node[above] at (3.5,0.12) {4};
\draw[vine] (4.5,0)--(4.5,0.12);
\node[above] at (4.5,0.12) {5};
\draw[vine] (-0.5,-7)--(-0.5,-7.12);
\node[below] at (-0.5,-7.12) {2};
\draw[vine] (0.5,-7)--(0.5,-7.12);
\node[below] at (0.5,-7.12) {1};
\draw[vine] (1.5,-7)--(1.5,-7.12);
\node[below] at (1.5,-7.12) {4};
\draw[vine] (2.5,-7)--(2.5,-7.12);
\node[below] at (2.5,-7.12) {5};
\draw[vine] (3.5,-7)--(3.5,-7.12);
\node[below] at (3.5,-7.12) {3};
\end{tikzpicture}
  \end{minipage}
  \caption{The five vine tiles of \cite[\S5.3 and \S6.1]{BGNST2025a}
    and the diagrams for \Cref{ex:forest-vine-subword} and \Cref{ex:schubert-vine-subword}.
    Blue T-shaped tiles encode \(\pi_F\), and blue crosses encode \(\pi_w\).
    The left forest has its canonical labels in blue. In the Schubert
    vine, top labels record strand identities and the bottom labels list
    \(w(1),\ldots,w(5)\).}
  \label{fig:vine-examples}
\end{figure}

The following observation translates the order of letters into the
inequalities used in the path bijections.

\begin{lem}\label{lem:vine-letter-order}
Let \(B,B'\) be distinct letters of \(\widetilde\omega_{[n]}\) with values
\(j,i\), respectively. Then
\begin{equation}\label{eq:vine-letter-order}
B\prec B'\quad\Longleftrightarrow\quad
\begin{array}{c|cc}
 &d(B)\text{ odd}&d(B)\text{ even}\\ \hline
 j\geq i&d(B')\geq d(B)&d(B')>d(B)\\
 j<i&d(B')>d(B)&d(B')\geq d(B).
\end{array}
\end{equation}
\end{lem}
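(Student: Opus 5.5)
The plan is to read the vine order directly off the description of $\widetilde\omega_{[n]}$ as a concatenation of rows. By construction, the letters of row $d$ all come before those of row $d'$ whenever $d<d'$. Inside a row, the order depends on parity: an odd row $2q-1$ is the segment $n^{(q)}(n-1)^{(q)}\cdots q^{(q)}$, so its values decrease; an even row $2q$ is the segment $\overline{q+1}^{(q)}\cdots\overline n^{(q)}$, so its values increase. A letter is determined by its row and its value, so two distinct letters in the same row have distinct values. These facts give the basic criterion:
\[
B\prec B' \iff d(B)<d(B'),\ \text{or } d(B)=d(B') \text{ and } \begin{cases} j>i & d(B)\text{ odd},\\ j<i & d(B)\text{ even}.\end{cases}
\]

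I will then check the table one cell at a time, splitting first on the parity of $d(B)$ and then on whether $j\ge i$ or $j<i$.

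Take $d(B)$ odd and $j\ge i$. If $d(B')=d(B)$, distinctness forces $j>i$, and in an odd row this means $B$ comes first. So $B\prec B'$ holds exactly when $d(B')\ge d(B)$.

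Take $d(B)$ odd and $j<i$. Now equality of rows would put $B'$ before $B$, so the row must strictly increase: $B\prec B'$ exactly when $d(B')>d(B)$.

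The even case is the mirror image. If $j\ge i$, equal rows force $j>i$, which puts $B$ after $B'$ in an increasing row, so the condition is $d(B')>d(B)$. If $j<i$, equal rows put $B$ first, so the condition is $d(B')\ge d(B)$.

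In every cell the reverse implication holds as well. When $d(B')>d(B)$, the letter $B'$ lies in a later segment. When the rows are equal, the in-row comparison above settles the order.

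I do not expect a real obstacle. The one point that needs care is the equal-row subcase of the "$j\ge i$" cells. There I must use distinctness of the letters to rule out $j=i$, because two letters in one row cannot share a value.
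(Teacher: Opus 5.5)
Your proof is correct and follows the paper's argument: you compare rows first, then use the decreasing order of values in odd rows and the increasing order in even rows to decide the equal-row case. Your explicit remark that distinct letters in the same row cannot share a value, which turns $j\geq i$ into $j>i$ there, is a detail the paper leaves implicit.
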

\begin{proof}
If \(d(B')>d(B)\), then \(B\prec B'\), whereas if
\(d(B')<d(B)\), then \(B'\prec B\).  If \(d(B')=d(B)\), the letters
belong to the same part of the same block.  Their values decrease in an
unbarred part and increase in a barred part.  Hence \(B\prec B'\) holds
in the equality case precisely when \(d(B)\) is odd and \(j\geq i\), or
when \(d(B)\) is even and \(j<i\).  These cases give
\eqref{eq:vine-letter-order}.
\end{proof}

\subsection{The Grothendieck subword formula}
The Grothendieck subword formula uses the unbarred letters of
\(\widetilde\omega_{[n]}\), with weights specialized at
\(\vec t=\vec0\), and replaces the reduced word condition by a
condition on the Demazure product. For a word
\(a=(a_1,\ldots,a_m)\) in positive integers, set
\(w^{(0)}=\mathrm{id}\) and define the \emph{Demazure product}
recursively by
\[
  w^{(r)}=
  \begin{cases}
    w^{(r-1)}s_{a_r},&\ell(w^{(r-1)}s_{a_r})>\ell(w^{(r-1)}),\\
    w^{(r-1)},&\ell(w^{(r-1)}s_{a_r})<\ell(w^{(r-1)}),
  \end{cases}
  \qquad
  \operatorname{Dem}(a)=w^{(m)}.
\]
A word \(a\) is a \emph{Hecke word} of \(w\) if
\(\operatorname{Dem}(a)=w\).

For \(n\geq1\), define the \emph{staircase word} and its letter
weights by
\[
  \omega_n^+
  =\prod_{q=1}^{n}\bigl(n^{(q)}(n-1)^{(q)}\cdots q^{(q)}\bigr),
  \qquad \wt\bigl(j^{(q)}\bigr)=x_q,
\]
where the product denotes concatenation in increasing order of \(q\).
The restriction of the vine order to the letters of \(\omega_n^+\)
is denoted by \(\prec_+\).
For \(w\in S_{n+1}\), let
\[
  \mathcal R^K(\omega_n^+;w)
  =\{\pi\subseteq\omega_n^+:\operatorname{Dem}(\val(\pi))=w\}.
\]
For \(w\in S_{n+1}\), the \emph{\(\beta\)-Grothendieck polynomial}
\(G_w^{(\beta)}\) of \(w\) is \cite{KM2005}
\begin{equation}\label{eq:grothendieck-hecke-subword}
  G_w^{(\beta)}(\vec x)
  =\sum_{\pi\in\mathcal R^K(\omega_n^+;w)}
     \beta^{|\pi|-\ell(w)}\wt(\pi).
\end{equation}
At \(\beta=0\), only subwords with reduced value words contribute,
giving \(G_w^{(0)}=\mathfrak S_w\). The specialization \(\beta=-1\)
gives the usual Grothendieck polynomial.

\begin{exam}\label{ex:grothendieck-hecke-subword}
Let \(w=21453\), as in \Cref{ex:schubert-vine-subword}. The staircase subword
\[
  \pi_K=(1^{(1)},3^{(2)},4^{(3)},4^{(4)})\subseteq\omega_4^+
\]
has value word \((1,3,4,4)\). The prefix \((1,3,4)\) is a reduced
word of \(w\), and appending the final \(4\) leaves the Demazure
product unchanged. Hence \(\operatorname{Dem}(\val(\pi_K))=w\).
Since \(|\pi_K|=4\) and \(\ell(w)=3\), the subword \(\pi_K\)
contributes \(\beta x_1x_2x_3x_4\) to \(G_w^{(\beta)}\).
\end{exam}

\subsection{Double Schur and double fundamental polynomials}
\label{subsec:schur-fundamental}
A partition of length at most \(n\) is written
\(\lambda=(\lambda_1\geq\cdots\geq\lambda_n\geq0)\), with
\(|\lambda|=\sum_i\lambda_i\). Its Ferrers diagram is
\[
  D(\lambda)=\{(i,j):1\leq i\leq n,\ 1\leq j\leq\lambda_i\}.
\]
Its elements are called \emph{boxes}. A \emph{descent} of a permutation
\(w\) is an index \(i\) with \(w(i)>w(i+1)\). A permutation is
\emph{Grassmannian} if it has at most one descent. Let
\(w_\lambda\in S_{n+\lambda_1}\) have its first \(n\) values given by
\[
  w_\lambda(i)=i+\lambda_{n+1-i}\qquad(1\leq i\leq n),
\]
with the unused values placed increasingly afterward. Its only possible
descent is at \(n\).
For a partition \(\lambda\) with at most \(n\) parts, the
\emph{double Schur polynomial} \(s_\lambda\) of \(\lambda\) is
\cite{LS1981,LS1982}
\begin{equation}\label{eq:grassmannian-double-schur}
  s_\lambda(x_1,\ldots,x_n;\vec t)
  =\mathfrak S_{w_\lambda}(\vec x;\vec t).
\end{equation}
At \(\vec t=\vec0\), this is the Schur polynomial \( s_\lambda(\vec x_n) \).

A \emph{padded composition} of length \(n\) is a sequence
\[
  c=(0^{n-\ell},\gamma_1,\ldots,\gamma_\ell),\qquad
  0\leq\ell\leq n,\quad \gamma_1,\ldots,\gamma_\ell>0.
\]
Let \(\operatorname{Pad}_n\) be the set of these sequences, regarded
as codes by appending zeros. For \(c\in\operatorname{Pad}_n\), the
\emph{double fundamental quasisymmetric polynomial} \(\mathfrak F_c\)
of \(c\) is \cite{BGNST2025a}
\begin{equation}\label{eq:double-fundamental-polynomial}
  \mathfrak F_c(x_1,\ldots,x_n;\vec t)
  :=\PP_{F(c)}(\vec x;\vec t).
\end{equation}
At \(\vec t=\vec0\), this is the fundamental quasisymmetric
polynomial indexed by \((\gamma_1,\ldots,\gamma_\ell)\); for \(c=(0^n)\), it is \(1\).

\subsection{Equivariant quasisymmetry and basis families}

A polynomial \(f\in\mathbb Z[\vec t][x_1,\ldots,x_n]\) is
\emph{equivariantly quasisymmetric} \cite{BGNST2025a} if, for every
\(1\leq i<n\),
\[
  f(x_1,\ldots,x_{i-1},t_i,x_i,\ldots,x_{n-1};\vec t)
  =f(x_1,\ldots,x_i,t_i,x_{i+1},\ldots,x_{n-1};\vec t).
\]
The resulting \(\mathbb Z[\vec t]\)-algebra is denoted by
\(\operatorname{EQSym}_n\).

The basis properties are summarized in \Cref{tab:basis-families}
\cite{LS1982,KM2005,Gessel1984,NST2024,NST2026Grove,BGNST2025a}.
Each entry lists a basis family followed by its space. For the Schubert
and Grothendieck families, \(w\) ranges over permutations whose descents
lie in \([n]\); for the forest and grove families, \(c(F)\in\Codes_n\).
The Schur and fundamental indices range over partitions of length at
most \(n\) and \(c\in\operatorname{Pad}_n\), respectively.
Write \(\operatorname{Sym}_n\) and \(\operatorname{QSym}_n\) for the
ordinary symmetric and quasisymmetric polynomial rings in
\(x_1,\ldots,x_n\), and set
\(\operatorname{ESym}_n=\mathbb Z[\vec t]\otimes_{\mathbb Z}
\operatorname{Sym}_n\).

\begin{table}
\centering
\renewcommand{\arraystretch}{1.25}
\begin{tabular}{@{}lll@{}}
\toprule
 & Schubert side & Quasisymmetric side\\
\midrule
\(\mathbb Z\)-basis
 & \makecell[l]{
     \(\mathfrak S_w(\vec x):\ \mathbb Z[x_1,\ldots,x_n]\)\\[3pt]
     \(s_\lambda(\vec x;\vec0):\ \operatorname{Sym}_n\)}
 & \makecell[l]{
     \(\PP_F(\vec x):\ \mathbb Z[x_1,\ldots,x_n]\)\\[3pt]
  \(\mathfrak F_c(\vec x;\vec0):\ \operatorname{QSym}_n\)}\\
\addlinespace[2.5pt]
  \hline
\addlinespace[2.5pt]
\(\mathbb Z[\beta]\)-basis
 & \(G_w^{(\beta)}(\vec x):\ \mathbb Z[\beta][x_1,\ldots,x_n]\)
                  & \(\widetilde\PP_F^{(\beta)}(\vec x):\ \mathbb Z[\beta][x_1,\ldots,x_n]\)\\
\addlinespace[2.5pt]
  \hline
\addlinespace[2.5pt]
\(\mathbb Z[\vec t]\)-basis
 & \makecell[l]{
     \(\mathfrak S_w(\vec x;\vec t):\ \mathbb Z[\vec t][x_1,\ldots,x_n]\)\\[3pt]
     \(s_\lambda(\vec x;\vec t):\ \operatorname{ESym}_n\)}
 & \makecell[l]{
     \(\PP_F(\vec x;\vec t):\ \mathbb Z[\vec t][x_1,\ldots,x_n]\)\\[3pt]
     \(\mathfrak F_c(\vec x;\vec t):\ \operatorname{EQSym}_n\)}\\
\bottomrule
\end{tabular}
\caption{Basis families on the Schubert and quasisymmetric sides in
ordinary, \(K\)-theoretic, and equivariant settings.}
\label{tab:basis-families}
\end{table}
The double fundamental basis statement is
\cite[Theorem~4.12]{BGNST2025a}. Since
\(\operatorname{ESym}_n\subseteq\operatorname{EQSym}_n\), every double
Schur polynomial has a unique expansion in the double fundamental basis.

\section{A lattice path model and its polynomial}\label{sec:lattice-paths}

In this section, we introduce labeled lattice paths and define their polynomials as weighted
sums over cell selections governed by successor relations among south steps.
We then establish a weight-preserving bijection with a subword model and
describe the specialization at \(\vec t=\vec0\). These constructions provide
the common framework for the forest and Schubert polynomial models in the
next two sections.

\subsection{Lattice paths and successor forests}
\label{subsec:lattice-path-model}
Write \( W=(-2,0) \) and \( S=(0,-1) \) for west and south steps, respectively. For \(n\geq1\), let \( \Path_n \) be the set of lattice paths of the form
\[
  W^\infty S^{c_n}WS^{c_{n-1}}W \cdots S^{c_1}WS^\infty, \qquad
  c_1,\ldots,c_n\in\ZZ_{\geq0}
\]
whose \( W^\infty \) and \( S^\infty \) lie on the \( x \)-axis and the \( y \)-axis, respectively. Set \(\Path=\bigcup_{n\geq1}\Path_n\).

For \(P\in\Path\), its \emph{code} is the sequence
\(c(P)=(c_i)_{i\geq1}\), where \(c_i\) is the number of south steps on
\(x=2i\).
The map \(c:\Path\to\Codes\) is a bijection; denote its inverse by
\(c\mapsto P(c)\). Also, the map restricts to a bijection
\(\Path_n\to\Codes_n\).
Number the south steps of \(P\) with positive \(x\)-coordinate from top
to bottom as
\( S_1(P),\ldots,S_{|c(P)|}(P) \)
and let \( \mathcal S(P)=\{S_1(P),\ldots,S_{|c(P)|}(P)\} \).
Thus \(S_r(P)\) runs from \(y=-r+1\) to \(y=-r\).
We omit \(P\) from the notation when
the path is clear.

A \emph{labeling} of \(P\) is a map
\(\kappa_P:\mathcal S(P)\to\ZZ_{>0}\). We identify \(\kappa_P\)
with the sequence
\[
 \kappa_P=\bigl(\kappa_P(S_1),\ldots,\kappa_P(S_{|c(P)|})\bigr).
\]
We next define two successor relations on \(\mathcal S(P)\).  If
\(S_r\) and \(S_{r+1}\) lie on the same vertical line, then
\(S_{r+1}\) is the \emph{vertical successor} of \(S_r\), and we write
\( S_r\rav S_{r+1} \).
Suppose that \(S_r\) has no vertical successor.  From its endpoint,
follow the ray in direction \((-2,-1)\) toward the \(y\)-axis.  If this
ray meets the endpoint of a south step, let \(S_s\) be the first such step.
Then \(S_s\) is the \emph{diagonal successor} of \(S_r\), and we write
\( S_r\rad S_s \).
If the ray reaches the \(y\)-axis first, then \(S_r\) has no diagonal
successor.

Interpret the successor arrows as child-to-parent relations: if
\(S_r\rav S_s\), then \(S_r\) is the left child of \(S_s\), while if
\(S_r\rad S_s\), then \(S_r\) is the right child of \(S_s\). By construction,
each step has at most one parent and at most one child of each type.
Since every successor has a larger index, the resulting directed graph
is acyclic. Thus each connected component is a rooted plane tree.

For each south step, attach a leaf as its left child if it has no left
child, and as its right child if it has no right child. This turns each
component into a binary plane tree. Take a baseline whose vertices are
labeled \(1,2,\ldots\). If a component has \(\ell\) internal nodes and
its root lies on \(x=2i\), place its \(\ell+1\) leaves, from left to right,
at the baseline vertices \( i,i+1,\ldots,i+\ell \).
Place an isolated leaf at every remaining baseline position. The resulting
indexed forest is the \emph{successor forest} \(F(P)\). Its
internal nodes are the south steps of \(P\). If \( P=W^{\infty}S^{\infty} \), then \( F(P) \) is the empty forest.
\begin{exam}\label{exa:1}
Let \(P=W^\infty SW^2S^2WSWS^\infty\in\Path_4\). Its code is
\[
  c(P)=(1,2,0,1,0,\ldots).
\]
The south steps \(S_1,S_2,S_3,S_4\) lie on
\(x=8,4,4,2\), respectively. We equip \(P\) with the labeling
\(\kappa_P=(3,2,3,4)\). The successor relations are
\(S_2\rav S_3\rad S_4\) and \(S_1\rad S_3\).
In \(F(P)\), \(S_4\) is a root with right child \(S_3\), and \(S_3\)
has left child \(S_2\) and right child \(S_1\); see \Cref{fig:4}.
\end{exam}

\begin{figure}
  \centering
  \begin{tikzpicture}[
  scale=.55,
  every node/.style={font=\footnotesize},
  stepnode/.style={draw, circle, inner sep=.8pt, minimum size=16pt}
]
  \node at (4.5,6.8) {Lattice path \(P\)};
  \node at (15,6.8) {Successor forest \(F(P)\)};

\begin{scope}[yshift=5.6cm]
  \draw[step=1cm, gray!20, thin] (0,-4) grid (8,0);
  \draw[->, gray!70] (-.3,0) -- (10.5,0) node[right] {\(x\)};
  \draw[->, gray!70] (0,-5.5) -- (0,.6) node[above] {\(y\)};
  \foreach \i in {2,4,6,8} {
    \draw[gray!70] (\i,-.08) -- (\i,.08);
    \node[above=2pt] at (\i,0) {\(\i\)};
  }
  \foreach \j in {1,2,3,4} {
    \draw[gray!70] (-.08,-\j) -- (.08,-\j);
    \node[left=2pt] at (0,-\j) {\(-\j\)};
  }
  \draw[->, black!55, semithick, densely dashed,
    shorten <=1pt, shorten >=2pt] (8,-1) -- (4,-3);
  \draw[->, black!55, semithick, densely dashed,
    shorten <=1pt, shorten >=2pt] (4,-3) -- (2,-4);
  \draw[very thick, blue]
    (9.4,0) -- (8,0) -- (8,-1) -- (4,-1)
    -- (4,-2) -- (4,-3) -- (2,-3) -- (2,-4)
    -- (0,-4) -- (0,-4.7);
  \node[blue] at (9.9,0) {\(\cdots\)};
  \node[blue] at (0,-5.15) {\(\vdots\)};
  \node[right=2pt] at (8,-.5) {\(S_1\)};
  \node[right=2pt] at (4,-1.5) {\(S_2\)};
  \node[left=2pt] at (4,-2.5) {\(S_3\)};
  \node[left=2pt] at (2,-3.5) {\(S_4\)};
\end{scope}

\begin{scope}[xshift=12cm]
  \draw[gray] (-1,0) -- (7,0);

  \coordinate (L1) at (0,0);
  \coordinate (L2) at (1.5,0);
  \coordinate (L3) at (3,0);
  \coordinate (L4) at (4.5,0);
  \coordinate (L5) at (6,0);

  \node[stepnode] (S2) at (2.25,1.4) {\(S_2\)};
  \node[stepnode] (S1) at (5.25,1.4) {\(S_1\)};
  \node[stepnode] (S3) at (3.75,4.2) {\(S_3\)};
  \node[stepnode] (S4) at (3.0,5.6) {\(S_4\)};

  \draw (S4) -- (L1);
  \draw (S4) -- (S3);

  \draw (S3) -- (S2);
  \draw (S3) -- (S1);

  \draw (S2) -- (L2);
  \draw (S2) -- (L3);

  \draw (S1) -- (L4);
  \draw (S1) -- (L5);

  \node at (L1) {\(\times\)};
  \node at (L2) {\(\times\)};
  \node at (L3) {\(\times\)};
  \node at (L4) {\(\times\)};
  \node at (L5) {\(\times\)};
  \node[below=4pt] at (L1) {\(1\)};
  \node[below=4pt] at (L2) {\(2\)};
  \node[below=4pt] at (L3) {\(3\)};
  \node[below=4pt] at (L4) {\(4\)};
  \node[below=4pt] at (L5) {\(5\)};
\end{scope}
\end{tikzpicture}
  \caption{The path \(P\) (left) and its successor forest \(F(P)\)
(right) from \Cref{exa:1}. Matching labels identify south steps with
internal nodes; dashed arrows show diagonal successors, and crosses mark leaves.}
  \label{fig:4}
\end{figure}

The following
proposition shows that the placements of distinct components do not overlap,
so the successor forest \(F(P)\) is well-defined.

\begin{prop}\label{prop:successor-forest}
For every \(P\in\Path\), the successor forest \(F(P)\) is well-defined.
\end{prop}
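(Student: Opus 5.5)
The plan is to prove a stronger structural statement by induction, from which the non-overlap of the leaf placements follows. For a south step \(S_r\) on \(x=2k\) with endpoint \((2k,-r)\), the ray in direction \((-2,-1)\) consists of points \((2k',-r')\) with \(k'+r'=k+r\). Hence the diagonal successor \(S_s\) of \(S_r\), if it exists, lies on \(x=2k'\) with \(k'+s=k+r\). The vertical successor satisfies \(k'=k\) and \(s=r+1\).

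The invariant I would establish is the following. For every south step \(S_s\) on \(x=2k\), let \(\ell(S_s)\) be the number of steps in its subtree. Then:
\begin{enumerate}[(a)]
\item the subtree of \(S_s\) consists of the consecutive steps \(S_{s-\ell(S_s)+1},\ldots,S_s\);
\item its \(\ell(S_s)+1\) leaves, read left to right, correspond to baseline positions \(k,k+1,\ldots,k+\ell(S_s)\);
\item the left subtree occupies positions \(k,\ldots,k+\ell_L\), and the right subtree is rooted on \(x=2(k+\ell_L+1)\).
\end{enumerate}
The right child in (c) would be the diagonal predecessor \(S_r\), where \(r=s-1-\ell_L\) and its line is \(x=2(k+s-r)\). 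This matches the relation \(k'+s=k+r\), read in reverse.

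I would prove (a)--(c) by induction on \(s\), reading the path from top to bottom. The key lemma needed is this: if \(S_s\) has a left child (a vertical predecessor \(S_{s-1}\)) whose subtree is the block ending at \(S_{s-1}\), then the step immediately above that block is the unique step whose diagonal ray lands at the endpoint of \(S_s\). If instead \(S_s\) is the top step on its vertical line, then the step directly above it, \(S_{s-1}\), lies further east, and one must check whether its ray hits \((2k,-s)\).

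The geometric content is that the path is a weakly monotone staircase. A ray starting from a lower-right step, travelling with slope \(1/2\) in these units, cannot cross the path before the first lattice endpoint satisfying \(k'+s=\mathrm{const}\). Making this precise, together with the \emph{first such step} clause in the definition of diagonal successor, is where I expect the main obstacle. I would handle it by comparing the function \(k+r\) along consecutive endpoints. Along the path this function changes by \(+1\) at each south step (same \(k\)) and by \(-1\) per unit west move. So the ray from \(S_r\) hits the first later endpoint at which this quantity returns to \(k+r\), which gives a ballot-type matching. Under this matching, subtrees correspond to balanced contiguous blocks.

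Once the invariant holds, the roots are the steps whose ray reaches the \(y\)-axis. By (a), the components partition \(S_1,\ldots,S_{|c(P)|}\) into consecutive blocks. Consider two consecutive roots \(S_s\) above \(S_{s'}\), with \(s<s'\), on lines \(x=2k\) and \(x=2k'\). The block of \(S_{s'}\) begins at \(S_{s+1}\). By (b) and (c) applied to the leftmost chain, the interval of the lower component ends at \(k'+\ell(S_{s'})\), and it ends before \(k\) because \(S_s\)'s ray escapes to the axis and is not captured. Hence the leaf intervals \([k',k'+\ell(S_{s'})]\) and \([k,k+\ell(S_s)]\) are disjoint, and the placement of isolated leaves at the remaining positions is consistent. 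It follows that \(F(P)\) is a well-defined indexed forest.
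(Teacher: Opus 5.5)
Your proposal is correct and uses the same mechanism as the paper's proof. The statistic $\delta_r=h_r+r$ rises by at most one from row to row, and $\delta_{r+1}\leq\delta_r$ when $S_r$ has no vertical successor. Diagonal successors are therefore first returns of $\delta$, so no successor arrow jumps over a root, and your final inequality $k'+(s'-s)<k$ is exactly the paper's $\delta_{r_k}<\delta_{r_{k-1}}$. Your subtree-level invariants (a)--(c) are more than well-definedness needs: the paper proves only the root-level interval claim here and obtains the subtree statements afterwards in \Cref{cor:1} and \Cref{lem:forest-path-incidence}.

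One correction to your key lemma: the step just above the left-subtree block is only the \emph{candidate} right child of $S_s$, and its ray need not land there. For example, for $c(P)=(2,0,0,1)$ the left child of $S_3$ is $S_2$, yet the ray from $S_1$ misses the endpoint $(2,-3)$ of $S_3$. State the lemma as ``if $S_s$ has a right child, it is that step.''
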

\begin{proof}
It suffices to show that the rooted binary plane trees do not overlap.
If \(F(P)\) has fewer than two roots, then it is well-defined. Suppose
there are \(N\geq2\) roots \(S_{r_1},\ldots,S_{r_N}\), where
\(r_1<\cdots<r_N\). Write \(S_r\) on \(x=2h_r\) and set
\(\delta_r=h_r+r\). Since \((h_r)\) is weakly decreasing,
\(\delta_{r+1}\leq\delta_r+1\). If \(S_s\rad S_p\), then \(p\)
is the first later index with \(\delta_p=\delta_s\). Since \(S_s\)
has no vertical successor, \(\delta_{s+1}\leq\delta_s\); the integer
bound therefore gives \(\delta_j<\delta_s\) for \(s<j<p\).

We first claim that the component rooted at \(S_{r_k}\) consists
precisely of \(S_{r_{k-1}+1},\ldots,S_{r_k}\), setting \( r_0=0 \). By definition of the vertical successor and the diagonal successor, every
internal node \(S_s\) in this component satisfies \(s\leq r_k\).
Suppose that there is a diagonal successor relation
\(S_s\rad S_p\) with \(s<r_k<p\). Then
\(\delta_{r_k}<\delta_p\). Since \(S_{r_k}\) has no vertical
successor, \(\delta_{r_k+1}\leq\delta_{r_k}\). The integer bound
therefore forces a first return \(\delta_q=\delta_{r_k}\) with
\(r_k<q<p\), giving \(S_{r_k}\rad S_q\). This contradicts the fact
that \(S_{r_k}\) is a root, and proves the claim.

It follows that the component rooted at \(S_{r_k}\) has
\(r_k-r_{k-1}\) internal nodes. Hence its rightmost leaf is labeled by
\(h_{r_k}+r_k-r_{k-1}\) on the baseline. For \(2\leq k\leq N\),
the same first-return argument, applied to the root \(S_{r_{k-1}}\),
gives \(\delta_{r_k}<\delta_{r_{k-1}}\). Therefore
\(h_{r_k}+r_k-r_{k-1}<h_{r_{k-1}}\). Since \(h_{r_{k-1}}\) is the position of the leftmost leaf of the component rooted at
\(S_{r_{k-1}}\) on the baseline, the two components do not overlap. Thus consecutive components do not overlap, and hence \(F(P)\) is well-defined.
\end{proof}
The component description in the proof of
\Cref{prop:successor-forest} extends to rooted subtrees as follows.

\begin{cor}\label{cor:1}
Let \(P\in\Path\), and suppose that \(F(P)\) has \(N\geq1\)
nontrivial components with roots \(S_{r_1},\ldots,S_{r_N}\), where
\(r_1<\cdots<r_N\). Set \(r_0=0\).
\begin{enumerate}[(i)]
  \item For each \(1\leq k\leq N\), the component rooted at
    \(S_{r_k}\) has internal-node set
    \(\{S_r:r_{k-1}<r\leq r_k\}\).
  \item For each internal node \(S_p\), there is \(q\in[p]\) such that
    the subtree rooted at \(S_p\) has internal-node set
    \(\{S_r:q\leq r\leq p\}\). If \(S_s\rad S_p\), then the right
    and left subtrees of \(S_p\) have internal-node sets
    \[
      \{S_r:q\leq r\leq s\}
      \quad\text{and}\quad
      \{S_r:s<r<p\},
    \]
    respectively.
\end{enumerate}
\end{cor}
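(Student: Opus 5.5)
We argue by strong induction on \(p\), reusing the quantity \(\delta_r=h_r+r\) and the first-return property from the proof of \Cref{prop:successor-forest}. Part (i) is exactly the claim proved there: every node in the component rooted at \(S_{r_k}\) has index at most \(r_k\). Moreover, no successor arrow crosses a root index, which follows from the first-return argument. Together these give the internal-node set \(\{S_r:r_{k-1}<r\le r_k\}\). So the real work is (ii).

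For (ii), fix an internal node \(S_p\). Its children, if present, are a left child \(S_{p-1}\) (when \(S_{p-1}\rav S_p\)) and a right child \(S_s\) (when \(S_s\rad S_p\)). By induction, each child's subtree is an interval ending at that child's index.

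First take the case where both children exist. We need the right subtree to be an interval \([q,s]\) and the left subtree to be exactly \((s,p)\), that is, the interval for \(S_{p-1}\) should start at \(s+1\). Two facts give this.

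\begin{itemize}
\item The left subtree contains no index \(\le s\). Suppose some descendant \(S_u\) with \(u\le s\) has parent \(S_v\) with \(s<v<p\), and take \(S_u\rad S_v\) to be the arrow crossing \(s\). By the first-return property, \(\delta_s<\delta_v\). On the other hand, \(\delta_j<\delta_s\) for \(s<j<p\), so this is a contradiction. A vertical arrow cannot cross \(s\) either: \(S_s\) has no vertical successor, since it has a diagonal one.
\item Every index in \((s,p)\) lies under \(S_{p-1}\). Each \(S_j\) with \(s<j<p\) has a parent, because \(\delta\) must climb back up to \(\delta_s\) before index \(p\). That parent has index at most \(p-1\): a diagonal return to a value \(\delta_j<\delta_s\) happens before the first return to \(\delta_s\), and a vertical successor increases the index by one. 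Following parents from \(S_j\), we stay in \((s,p)\) until we reach \(S_{p-1}\), since the chain cannot jump out by the first bullet.
\end{itemize}

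The right subtree is then the interval given by the inductive hypothesis for \(S_s\).

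If only the left child exists, the subtree of \(S_p\) is the left child's interval together with \(p\). If only the right child exists, then \(S_{p-1}\) is not a child of \(S_p\). In that case \(s=p-1\): the first return to \(\delta_p\) must occur immediately, since \(\delta_{s+1}\le\delta_s\) and \(\delta_p=\delta_s\). So the subtree is again an interval ending at \(p\). With no children, set \(q=p\).

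The main obstacle is the first bullet, that no arrow crosses \(s\). It is handled by the strict inequality \(\delta_j<\delta_s\) on the open interval \((s,p)\), in the same way the proof of \Cref{prop:successor-forest} handled crossings of root indices.
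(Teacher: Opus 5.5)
Your argument is correct in substance, but it runs the induction in the opposite direction from the paper. The paper inducts top-down on depth from the component roots, with part (i) as the base case. So the subtree of \(S_p\) is already known to be an interval \(\{S_q,\ldots,S_p\}\). A single non-crossing fact then shows the left subtree avoids indices below \(s\): there are no two successor relations \(S_{p'}\to S_{r'}\) and \(S_{q'}\to S_{s'}\) with \(p'<q'<r'<s'\). Since the right subtree has indices at most \(s\), the split of \(\{S_q,\ldots,S_{p-1}\}\) is forced, and an empty left subtree needs no separate case.

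Your bottom-up strong induction on \(p\) never uses (i). In fact (i) could be recovered from (ii), since the component intervals must partition the row indices. The price is your second bullet: every \(S_j\) with \(s<j<p\) must have a parent of index at most \(p-1\). You also need a separate case where only the right child exists.

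That last case is the one step whose justification fails as written. The facts \(\delta_{s+1}\leq\delta_s\) and \(\delta_p=\delta_s\) hold whenever \(S_s\rad S_p\), including when both children exist and \(s<p-1\). So they cannot force \(s=p-1\).

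You need the case hypothesis. If \(s<p-1\), then \(\delta_{p-1}<\delta_s=\delta_p\leq\delta_{p-1}+1\), so \(\delta_{p-1}=\delta_p-1\). That is, \(h_{p-1}=h_p\), which gives \(S_{p-1}\rav S_p\) and contradicts the assumption that \(S_p\) has no left child. With this one-line fix the proof is complete.
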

\begin{proof}
The assertion about components was proved in
\Cref{prop:successor-forest}. Retain the notation
\(\delta_r=h_r+r\) from its proof. As observed there,
\(S_r\rad S_s\) implies \(\delta_q<\delta_r=\delta_s\) for
\(r<q<s\).

We claim that there are no two successor relations from \(S_p\) to
\(S_r\) and from \(S_q\) to \(S_s\) with \(p<q<r<s\). A vertical
successor relation joins steps with consecutive indices, so both relations would have to be
diagonal. The relation \(S_p\rad S_r\) would give
\(\delta_q<\delta_p=\delta_r\), while \(S_q\rad S_s\) would give
\(\delta_r<\delta_q=\delta_s\), which is a contradiction. Thus the claim holds.

We proceed by induction on the depth from a component root. At depth zero,
if \(S_p=S_{r_k}\), then the proof of \Cref{prop:successor-forest} gives
\(\{S_r:r_{k-1}<r\leq p\}\) as the internal-node set of its rooted
subtree. Now assume that the assertion holds at some depth, and let
\(\{S_q,\ldots,S_p\}\) be the internal-node set of a subtree rooted at
a node \(S_p\) of that depth. We show that the assertion also holds for
its internal children. If \(S_p\) has no internal right child, then all
the other internal nodes belong to its left subtree, whose internal-node
set is \(\{S_q,\ldots,S_{p-1}\}\).

Suppose instead that \(S_s\rad S_p\). Since successor relations strictly increase
indices, every node in the right subtree has index at most \(s\). On the
other hand, no \(S_r\) with \(q\leq r<s\) can lie in the left subtree.
Indeed, its path to the root \(S_{p-1}\) of that subtree would contain a
successor relation from some \(S_{r'}\) to \(S_{s'}\) with \(r'<s<s'<p\),
contradicting the claim above together with \(S_s\rad S_p\). Hence the
right and left subtrees have internal-node sets
\(\{S_q,\ldots,S_s\}\) and \(\{S_{s+1},\ldots,S_{p-1}\}\),
respectively. Both are intervals, completing the induction.
\end{proof}

\subsection{Cell selections and the lattice path polynomial}

Fix a path \(P\in\Path\) with labeling \(\kappa_P\).
We use the unit-square grid in the fourth quadrant and index cells by
column first and row second.  For \(d,r\in\ZZ_{>0}\), let
\[
  C_{d,r}:=[d-1,d]\times[-r,-r+1].
\]
The \(d\)th \emph{column} is the collection
\(\{C_{d,r}:r\geq1\}\), and the \(r\)th \emph{row} is the collection
\(\{C_{d,r}:d\geq1\}\).  Thus the columns are numbered from left to right,
starting at the \(y\)-axis, and the rows are numbered from top to bottom,
starting at the \(x\)-axis. The step \(S_r\) is the unique south step in row \(r\), so
we call \(\kappa_P(S_r)\) the label of that row.
Let \(\mathcal A(P)\) be the set of unit cells above \(P\).  For a
labeling \(\kappa_P\), define the weight of
\(C_{d,r}\in\mathcal{A}(P)\) by
\[
  \wt_{\kappa_P}(C_{d,r})=
  \begin{cases}
    x_{(d+1)/2}-t_{\kappa_P(S_r)},
      & d\leq2\kappa_P(S_r)\text{ and }d\text{ is odd},\\
    t_{\kappa_P(S_r)}-t_{d/2},
      & d\leq2\kappa_P(S_r)\text{ and }d\text{ is even}, \\
    0,
      & d>2\kappa_P(S_r).
\end{cases}
\]
For the labeled path in \Cref{exa:1}, the cell weights are shown
in \Cref{fig:1}.

\begin{figure}
    \centering
    \begin{tikzpicture}[
    scale=0.475,
    dyckpath/.style={very thick, blue},
    tree/.style={blue, line width=2.4pt},
    tick/.style={black, line width=1.7pt},
    lab/.style={red, font=\large},
    orangept/.style={circle, fill=orange, draw=orange, inner sep=1.3pt},
    redpt/.style={circle, fill=red!55!black, draw=red!55!black, inner sep=1.3pt},
    xx/.style={fill=purple!20},
    tt/.style={fill=orange!20}
]
\begin{scope}[xshift=-11cm]
\fill[xx] (0,1) rectangle (1,5);
\fill[tt] (1,1) rectangle (2,5);
\fill[xx] (2,2) rectangle (3,5);
\fill[tt] (3,2) rectangle (4,5);
\fill[xx] (4,4) rectangle (5,5);
\fill[tt] (5,4) rectangle (6,5);
\fill[xx] (6,4) rectangle (7,5);
\fill[tt] (7,4) rectangle (8,5);
  \draw[step=1cm, gray!35, thin] (0,0) grid (10,5);
\draw[dyckpath]
    (10,5)
    -- ++(-2,0)
    -- ++(0,-1)
    -- ++(-2,0)
    -- ++(-2,0)
    -- ++(0,-1)
    -- ++(0,-1)
    -- ++(-2,0)
    -- ++(0,-1)
    -- ++(-2,0)
    -- ++(0,-1);
    
\node at (-2,4.5) {\footnotesize\( \kappa_P(S_1)=3 \)};
\node at (-2,3.5) {\footnotesize\( \kappa_P(S_2)=2 \)};
\node at (-2,2.5) {\footnotesize\( \kappa_P(S_3)=3 \)};
\node at (-2,1.5) {\footnotesize\( \kappa_P(S_4)=4 \)};
\end{scope}
\begin{scope}[xshift=-.5cm ,yshift=-.5cm]
  \fill[xx] (0,.5) rectangle (2,1.5);
  \node at (1,1) {\footnotesize \( x_1-t_4 \)};
  \fill[xx] (0,2) rectangle (2,3);
  \node at (1,2.5) {\footnotesize \( x_1-t_3 \)};
  \fill[xx] (0,3.5) rectangle (2,4.5);
  \node at (1,4) {\footnotesize \( x_1-t_2 \)};
  \fill[xx] (0,5) rectangle (2,6);
  \node at (1,5.5) {\footnotesize \( x_1-t_3 \)};
  \fill[tt] (2.5,.5) rectangle (4.5,1.5);
  \node at (3.5,1) {\footnotesize \( t_4-t_1 \)};
  \fill[tt] (2.5,2) rectangle (4.5,3);
  \node at (3.5,2.5) {\footnotesize \( t_3-t_1 \)};
  \fill[tt] (2.5,3.5) rectangle (4.5,4.5);
  \node at (3.5,4) {\footnotesize \( t_2-t_1 \)};
  \fill[tt] (2.5,5) rectangle (4.5,6);
  \node at (3.5,5.5) {\footnotesize \( t_3-t_1 \)};
  
  \fill[xx] (5,2) rectangle (7,3);
  \node at (6,2.5) {\footnotesize \( x_2-t_3 \)};
  \fill[xx] (5,3.5) rectangle (7,4.5);
  \node at (6,4) {\footnotesize \( x_2-t_2 \)};
  \fill[xx] (5,5) rectangle (7,6);
  \node at (6,5.5) {\footnotesize \( x_2-t_3 \)};
  \fill[tt] (7.5,2) rectangle (9.5,3);
  \node at (8.5,2.5) {\footnotesize \( t_3-t_2 \)};
  \fill[tt] (7.5,3.5) rectangle (9.5,4.5);
  \node at (8.5,4) {\footnotesize \( t_2-t_2 \)};
  \fill[tt] (7.5,5) rectangle (9.5,6);
  \node at (8.5,5.5) {\footnotesize \( t_3-t_2 \)};
  \fill[xx] (10,5) rectangle (12,6);
  \node at (11,5.5) {\footnotesize \( x_3-t_3 \)};
  \fill[tt] (12.5,5) rectangle (14.5,6);
  \node at (13.5,5.5) {\footnotesize \( t_3-t_3 \)};
  \fill[xx] (15,5) rectangle (17,6);
  \node at (16,5.5) {\footnotesize \( 0\)};
  \fill[tt] (17.5,5) rectangle (19.5,6);
  \node at (18.5,5.5) {\footnotesize \( 0 \)};
\end{scope}
\end{tikzpicture}
    \caption{The path \(P=W^\infty SW^2S^2WSWS^\infty\) labeled by
      \(\kappa_P=(3,2,3,4)\) and the corresponding weights
      \(\wt_{\kappa_P}(C_{d,r})\).}
    \label{fig:1}
  \end{figure}

Let \(Q\subseteq\mathcal A(P)\) contain exactly one cell in each row above
\(P\), and write \(\col_r(Q)=d\) when \(C_{d,r}\in Q\). We call \(Q\) a
\emph{cell selection} of \( P \) if every successor relation between \(S_r\) and
\(S_s\) satisfies the corresponding inequality in the following table:
\begin{equation}\label{arr:csel_cond}
\begin{array}{c|cc}
 & \col_s(Q)\text{ odd} & \col_s(Q)\text{ even}\\
\hline
S_r\rav S_s
  & \col_r(Q)\geq\col_s(Q)
  & \col_r(Q)>\col_s(Q)\\
S_r\rad S_s
  & \col_r(Q)>\col_s(Q)
  & \col_r(Q)\geq\col_s(Q).
\end{array}
\end{equation}
Let \(\CSel(P)\) denote the set of cell selections of \(P\).
For a cell selection \(Q\), set
\[
  \wt_{\kappa_P}(Q):=\prod_{C\in Q}\wt_{\kappa_P}(C).
\]
\begin{defn}\label{def:1}
  The \emph{lattice path polynomial} of a labeled path
  \((P,\kappa_P)\) is
  \[
    \FL_{P,\kappa_P}(\vec x;\vec t)
    :=\sum_{Q\in\CSel(P)}\wt_{\kappa_P}(Q).
  \]
\end{defn}
\begin{exam}\label{exa:3}
 For the labeled path \((P,\kappa_P)\) in \Cref{exa:1}, exactly \(12\)
cell selections have nonzero weight. As shown in \Cref{fig:2}, they split
into two families. In each diagram, one cell is chosen from each blue-outlined
block. The left diagram corresponds to
\(2\cdot1\cdot1\cdot4=8\) selections, whose total contribution is
\begin{equation}\label{eq:1}
  \bigl\{(x_1-t_4)+(t_4-t_1)\bigr\}(t_3-t_1)(x_2-t_2)
  \bigl\{(t_3-t_1)+(x_2-t_3)+(t_3-t_2)+(x_3-t_3)\bigr\}.
\end{equation}
The right diagram corresponds to the remaining
\(2\cdot1\cdot1\cdot2=4\) selections, whose total contribution is
\begin{equation}\label{eq:2}
    \bigl\{(x_1-t_4)+(t_4-t_1)\bigr\}(x_2-t_3)(x_2-t_2)
   \bigl\{(t_3-t_2)+(x_3-t_3)\bigr\}.
\end{equation}
Adding \eqref{eq:1} and \eqref{eq:2} and simplifying yields
\[
  \FL_{P,\kappa_P}(\vec{x};\vec{t})
  =(x_1-t_1)(x_2-t_1)(x_2-t_2)(x_3+t_3-t_1-t_2).
\]
  
  \begin{figure}
    \centering
    \begin{tikzpicture}[
    scale=0.43,
    dyckpath/.style={very thick, blue},
    tree/.style={blue, line width=2.4pt},
    tick/.style={black, line width=1.7pt},
    lab/.style={red, font=\large},
    xx/.style={fill=purple!20},
    tt/.style={fill=orange!20},
    sel/.style={very thick, blue!50}
]
\begin{scope}[xshift=-14cm]
\fill[xx] (0,1) rectangle (1,5);
\fill[tt] (1,1) rectangle (2,5);
\fill[xx] (2,2) rectangle (3,5);
\fill[tt] (3,2) rectangle (4,5);
\fill[xx] (4,4) rectangle (5,5);
\fill[tt] (5,4) rectangle (6,5);
\fill[xx] (6,4) rectangle (7,5);
\fill[tt] (7,4) rectangle (8,5);
  \draw[step=1cm, gray!35, thin] (0,0) grid (10,5);
\draw[sel] (0,1.07) rectangle (1.93,2);
\draw[sel] (1,2) rectangle (2,3);
\draw[sel] (2,3) rectangle (3,4);
\draw[sel] (1,4.07) rectangle (5,5);
\draw[dyckpath]
    (10,5)
    -- ++(-2,0)
    -- ++(0,-1)
    -- ++(-2,0)
    -- ++(-2,0)
    -- ++(0,-1)
    -- ++(0,-1)
    -- ++(-2,0)
    -- ++(0,-1)
    -- ++(-2,0)
    -- ++(0,-1);
\end{scope}
\begin{scope}
\fill[xx] (0,1) rectangle (1,5);
\fill[tt] (1,1) rectangle (2,5);
\fill[xx] (2,2) rectangle (3,5);
\fill[tt] (3,2) rectangle (4,5);
\fill[xx] (4,4) rectangle (5,5);
\fill[tt] (5,4) rectangle (6,5);
\fill[xx] (6,4) rectangle (7,5);
\fill[tt] (7,4) rectangle (8,5);
  \draw[step=1cm, gray!35, thin] (0,0) grid (10,5);
\draw[sel] (0,1.07) rectangle (1.93,2);
\draw[sel] (2,2.07) rectangle (3,3);
\draw[sel] (2,3) rectangle (3,4);
\draw[sel] (3,4.07) rectangle (5,5);
\draw[dyckpath]
    (10,5)
    -- ++(-2,0)
    -- ++(0,-1)
    -- ++(-2,0)
    -- ++(-2,0)
    -- ++(0,-1)
    -- ++(0,-1)
    -- ++(-2,0)
    -- ++(0,-1)
    -- ++(-2,0)
    -- ++(0,-1);
\end{scope}
\end{tikzpicture}
    \caption{The two families of nonzero cell selections for the labeled
      path in \Cref{exa:1}.  A selection chooses one cell from each
      blue-outlined block.  The left diagram encodes eight selections and the
      right diagram the remaining four.}
    \label{fig:2}
  \end{figure}
\end{exam}
\subsection{A subword--path bijection}
\label{subsec:common-subword-path}

Fix \(n\geq1\). Let \(P\in\Path\), put \(m=|\mathcal S(P)|\), and write
\(\kappa_r=\kappa_P(S_r)\in[n]\).
Let \(\mathcal H(P)=([m],\prec_P)\) be the poset in which \(s\) covers
\(r\) if and only if \(S_r\rav S_s\) or \(S_r\rad S_s\).
We define
\[
  \CSel^\times(P,\kappa_P)
  =\{Q\in\CSel(P):\wt_{\kappa_P}(Q)\neq0\}.
\]
We will use labelings with the following properties:
\begin{enumerate}[(i)]
  \item \(\kappa_r<\kappa_s\) if \(S_r\rav S_s\), and
    \(\kappa_r>\kappa_s\) if \(S_r\rad S_s\);
  \item If \(S_r\) is the topmost south step on \(x=2i\), then
    \(\kappa_r\leq i\);
  \item rows with equal labels are comparable in \(\mathcal H(P)\).
\end{enumerate}
For a word \(a=(a_1,\ldots,a_m)\), write
\(\rev(a)=(a_m,\ldots,a_1)\). Define the corresponding subword model by
\[
  \mathcal R(P,\kappa_P)
  =\left\{\pi\subseteq\widetilde\omega_{[n]}:
    \begin{array}{l}
      \rev(\val(\pi))=(\kappa_{\theta(1)},\ldots,
                          \kappa_{\theta(m)})\\
      \text{for a linear extension }(\theta(1),\ldots,\theta(m))
        \text{ of }\mathcal H(P)
    \end{array}\right\}.
\]
By (iii), equal labels occur in every linear extension in their
row order, so \(\theta\) is uniquely determined by \(\pi\).
For \(\pi=(\pi_1,\ldots,\pi_m)\in\mathcal R(P,\kappa_P)\), put
\(B_{\theta(k)}=\pi_{m-k+1}\) and define
\[
  \Phi_{P,\kappa_P}(\pi)
  =\{C_{d(B_r),r}:r\in[m]\}.
\]

\Cref{fig:common-weight-preserving-bijection} shows the correspondence
for \(n=4\), \(c(P)=(0,0,2,1)\), \(\kappa_P=(4,3,4)\), and
\(\pi=(4^{(2)},3^{(2)},\overline4^{(2)})\).
Here \(S_1\rad S_2\rav S_3\), so \(\theta=(1,2,3)\).

\begin{figure}
  \centering
\begin{minipage}[c]{.46\textwidth}
\centering
\begin{tikzpicture}[x=.56cm,y=.76cm,every node/.style={font=\scriptsize},
  path/.style={blue!70!black,line width=1.25pt,line cap=round},
  gridline/.style={gray!45,line width=.3pt}]
\foreach \r/\H/\selected/\rowlabel/\rowcolor in
  {1/8/4/4/orange!80!black,2/6/3/3/blue!70!black,3/6/3/4/teal!70!black} {
  \fill[gray!8] (0,-\r) rectangle (8,1-\r);
  \fill[white] (0,-\r) rectangle (\H,1-\r);
  \foreach \j in {0,...,7}
    \draw[gridline] (\j,-\r) rectangle (\j+1,1-\r);
  \fill[fill=\rowcolor!12]
    (\selected-1,-\r) rectangle (\selected,1-\r);
  \draw[draw=\rowcolor,line width=1.1pt]
    (\selected-1,-\r) rectangle (\selected,1-\r);
  \node[text=\rowcolor] at (\selected-.5,.5-\r)
    {\(C_{\selected,\r}\)};
  \draw[draw=\rowcolor,densely dotted]
    (\selected,.5-\r)--(\H,.5-\r);
  \node[right,text=\rowcolor] at (\H+.12,.5-\r)
    {\(S_{\r}:\,\rowlabel\)};
  \node[left,text=gray] at (-.12,.5-\r) {\(\r\)};
}
\foreach \j in {1,...,8}
  \node[above,text=gray] at (\j-.5,.08) {\(\j\)};
\node[above left,text=gray] at (-.12,.08) {\(r\backslash d\)};
\draw[path] (8,0)--(8,-1)--(6,-1)--(6,-3)--(0,-3);
\end{tikzpicture}
\end{minipage}\hfill
\begin{minipage}[c]{.14\textwidth}
\centering
\begin{tikzpicture}[every node/.style={font=\scriptsize}]
\draw[<-,semithick] (0,.2)--(1.55,.2)
  node[midway,above] {\(\Phi_{P,\kappa_P}\)};
\draw[->,semithick] (0,-.2)--(1.55,-.2)
  node[midway,below] {\(\Phi_{P,\kappa_P}^{-1}\)};
\end{tikzpicture}
\end{minipage}\hfill
\begin{minipage}[c]{.34\textwidth}
\centering
\begin{tikzpicture}[x=1.00cm,y=.50cm,every node/.style={font=\scriptsize}]
\foreach \q in {1,...,4} {
  \pgfmathtruncatemacro{\d}{2*\q-1}
  \foreach \j in {\q,...,4} {
    \fill[purple!7] (\j-1,-\d) rectangle (\j,1-\d);
    \draw[gray!45,line width=.3pt] (\j-1,-\d) rectangle (\j,1-\d);
    \def\lettercolor{black!55}
    \ifnum\q=2\relax
      \ifnum\j=3\relax
        \def\lettercolor{blue!70!black}
        \fill[blue!70!black!12] (\j-1,-\d) rectangle (\j,1-\d);
        \draw[blue!70!black,line width=1.1pt]
          (\j-1,-\d) rectangle (\j,1-\d);
      \fi
      \ifnum\j=4\relax
        \def\lettercolor{teal!70!black}
        \fill[teal!70!black!12] (\j-1,-\d) rectangle (\j,1-\d);
        \draw[teal!70!black,line width=1.1pt]
          (\j-1,-\d) rectangle (\j,1-\d);
      \fi
    \fi
    \node[text=\lettercolor] at (\j-.5,.5-\d) {\(\j^{(\q)}\)};
  }
  \ifnum\q<4\relax
    \pgfmathtruncatemacro{\d}{2*\q}
    \pgfmathtruncatemacro{\firstbar}{\q+1}
    \foreach \j in {\firstbar,...,4} {
      \fill[orange!9] (\j-1,-\d) rectangle (\j,1-\d);
      \draw[gray!45,line width=.3pt] (\j-1,-\d) rectangle (\j,1-\d);
      \def\lettercolor{black!55}
      \ifnum\q=2\relax
        \ifnum\j=4\relax
          \def\lettercolor{orange!80!black}
          \fill[orange!80!black!12] (\j-1,-\d) rectangle (\j,1-\d);
          \draw[orange!80!black,line width=1.1pt]
            (\j-1,-\d) rectangle (\j,1-\d);
        \fi
      \fi
      \node[text=\lettercolor] at (\j-.5,.5-\d)
        {\(\overline\j^{(\q)}\)};
    }
  \fi
}
\draw[semithick] (0,0)--(4,0)--(4,-7)--(3,-7)--(3,-5)
  --(2,-5)--(2,-3)--(1,-3)--(1,-1)--(0,-1)--cycle;
\foreach \d in {1,...,7}
  \node[left,text=gray] at (-.12,.5-\d) {\(\d\)};
\foreach \j in {1,...,4}
  \node[above,text=gray] at (\j-.5,.08) {\(\j\)};
\node[above left,text=gray] at (-.12,.08) {\(d\backslash j\)};
\end{tikzpicture}
\end{minipage}
\par\medskip
\((B_1,B_2,B_3)=(\overline4^{(2)},3^{(2)},4^{(2)})\)
\par\smallskip
\(\wt(\pi)=\wt_{\kappa_P}(Q)
  =(t_4-t_2)(x_2-t_3)(x_2-t_4)\)
\caption{The bijection \(\Phi_{P,\kappa_P}\) for \(n=4\).
Matching colors pair letters with selected cells; the inverse reads the
letters in vine order.}
\label{fig:common-weight-preserving-bijection}
\end{figure}

\begin{lem}\label{lem:subword-path-bijection}
Under assumptions \textup{(i)--(iii)}, the map
\(\Phi_{P,\kappa_P}\) is a weight-preserving bijection from
\(\mathcal R(P,\kappa_P)\) to \(\CSel^\times(P,\kappa_P)\).
Consequently,
\[
  \FL_{P,\kappa_P}(\vec x;\vec t)
  =\sum_{\pi\in\mathcal R(P,\kappa_P)}\wt(\pi).
\]
\end{lem}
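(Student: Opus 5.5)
The plan is to check that \(\Phi_{P,\kappa_P}\) matches letters to cells row by row, that the vine order of letters matches the cell-selection inequalities \eqref{arr:csel_cond}, and then to build the inverse.

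\textbf{Step 1: letters versus nonzero cells in one row.} Fix a row \(r\) with label \(\kappa_r\). The letters of \(\widetilde\omega_{[n]}\) with value \(\kappa_r\) are \(\kappa_r^{(q)}\) for \(q\leq\kappa_r\), in row \(d=2q-1\), and \(\overline{\kappa_r}^{(q)}\) for \(q<\kappa_r\), in row \(d=2q\). So \(B\mapsto d(B)\) is a bijection from these letters onto the set of columns \(d\) with \(d\leq 2\kappa_r\) and \(d\neq 2\kappa_r\).

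Comparing with the table of \(\wt(B)\), the letter \(B\) has weight \(x_q-t_{\kappa_r}\) or \(t_{\kappa_r}-t_q\). This is exactly \(\wt_{\kappa_P}(C_{d(B),r})\). The remaining columns \(d=2\kappa_r\) and \(d>2\kappa_r\) give weight \(t_{\kappa_r}-t_{\kappa_r}=0\) or \(0\).

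Hence, once the selected cells are known to lie in \(\mathcal A(P)\), nonzero-weight cells in row \(r\) correspond bijectively and weight-preservingly to letters of value \(\kappa_r\).

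\textbf{Step 2: order conditions match the successor inequalities.} Let \(\pi\in\mathcal R(P,\kappa_P)\). Since \(\pi\) lists the letters in vine order and \(\rev(\val(\pi))\) follows the linear extension \(\theta\), we get \(B_{\theta(k+1)}\prec B_{\theta(k)}\) for every \(k\). In particular, whenever \(s\) covers \(r\) in \(\mathcal H(P)\), we have \(B_s\prec B_r\).

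Apply \Cref{lem:vine-letter-order} with \(B=B_s\), \(B'=B_r\), \(j=\kappa_s\), \(i=\kappa_r\).
\begin{itemize}
\item If \(S_r\rav S_s\), then (i) gives \(j>i\). The lemma yields \(d(B_r)\geq d(B_s)\) when \(d(B_s)\) is odd, and \(d(B_r)>d(B_s)\) when it is even.
\item If \(S_r\rad S_s\), then (i) gives \(j<i\). The lemma yields strict inequality when \(d(B_s)\) is odd and weak inequality when it is even.
\end{itemize}
These are exactly the conditions in \eqref{arr:csel_cond} with \(\col_r=d(B_r)\). The argument is reversible, so for cells of nonzero weight, the conditions \eqref{arr:csel_cond} at each cover are equivalent to \(B_s\prec B_r\) at each cover.

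\textbf{Step 3: the selected cells lie above \(P\).} This is the step I expect to need real care: I must show \(d(B_r)\leq 2h_r\), where \(S_r\) lies on \(x=2h_r\). I argue by induction on \(r\).
\begin{itemize}
\item If \(S_r\) is the topmost step on its vertical line, then (ii) gives \(\kappa_r\leq h_r\), so \(d(B_r)<2\kappa_r\leq 2h_r\).
\item Otherwise \(S_{r-1}\rav S_r\) with \(h_{r-1}=h_r\). The vertical inequality from Step 2 gives \(d(B_r)\leq d(B_{r-1})\leq 2h_r\) by induction.
\end{itemize}
The same argument applies to any \(Q\) satisfying \eqref{arr:csel_cond}, and it uses only the vertical relations. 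Steps 1–3 together show that \(\Phi_{P,\kappa_P}(\pi)\) lies in \(\CSel^\times(P,\kappa_P)\) and that \(\wt_{\kappa_P}(\Phi_{P,\kappa_P}(\pi))=\wt(\pi)\).

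\textbf{Step 4: the inverse.} Given \(Q\in\CSel^\times(P,\kappa_P)\), Step 1 recovers a unique letter \(B_r\) for each row \(r\). By Step 2, \(B_s\prec B_r\) for every cover.

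To form a subword, the letters \(B_r\) must be distinct. If \(B_r=B_s\) with \(r\neq s\), then \(\kappa_r=\kappa_s\). By (iii) the rows are comparable, say \(r\prec_P s\) via a chain of covers. Each cover joins rows with different labels by (i), so each corresponding \(\prec\) is strict between distinct letters. Transitivity then gives \(B_s\prec B_r\), a contradiction.

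So the \(B_r\) are distinct. Listing them in vine order gives a subword \(\pi\). Its reversal is a linear extension of \(\mathcal H(P)\), because \(\prec\) reverses every cover. Thus \(\pi\in\mathcal R(P,\kappa_P)\) and \(\Phi_{P,\kappa_P}(\pi)=Q\).

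The extension \(\theta\) is unique by the remark after (iii), so this inverse is well defined and \(\Phi_{P,\kappa_P}\) is a bijection. Finally, summing the weights, and noting that zero-weight selections do not contribute to \(\FL_{P,\kappa_P}\), gives the displayed identity.
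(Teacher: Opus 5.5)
Your proposal is correct and follows essentially the same route as the paper's proof. Both arguments match letters of value \(\kappa_r\) with nonzero cells in row \(r\), translate the vine order into \eqref{arr:csel_cond} via \Cref{lem:vine-letter-order} and \textup{(i)}, use the vertical chain and \textup{(ii)} to place the selected cells above \(P\), and use \textup{(iii)} to make the reconstructed letters distinct. You leave implicit that the inverse applied to \(\Phi_{P,\kappa_P}(\pi)\) returns \(\pi\), i.e.\ injectivity; this follows at once from your Step~1, since a letter is determined by its value and its row index \(d\).
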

\begin{proof}
For \(\pi\in\mathcal R(P,\kappa_P)\), let \(B_r\) denote the letter
assigned to row \(r\). Then \(\val(B_r)=\kappa_r\), and
\(B_s\prec B_r\) whenever \(S_r\) has successor \(S_s\).
By (i) and \Cref{lem:vine-letter-order}, these letter inequalities
are equivalent to \eqref{arr:csel_cond}.
To check that the selected cell in row \(r\) lies above \(P\),
let \(S_t\) be the topmost south step on the vertical line \(x=2i\)
containing \(S_r\). The vertical chain from \(S_t\) to \(S_r\) and (ii) give
\[
  d(B_r)\leq d(B_t)<2\kappa_t\leq2i.
\]
Moreover, \(d(B_r)<2\kappa_r\) makes its weight nonzero. Hence
\(\Phi_{P,\kappa_P}(\pi)\) is a nonzero cell selection of \(P\).

Conversely, let \(Q\in\CSel^\times(P,\kappa_P)\).
Nonzero weight gives \(\col_r(Q)<2\kappa_r\), so there is a unique
letter \(B_r\in\widetilde\omega_{[n]}\) of value \(\kappa_r\)
with \(d(B_r)=\col_r(Q)\).
Rows related by a successor relation have different labels by (i),
so \Cref{lem:vine-letter-order} and \eqref{arr:csel_cond} give
\(B_s\prec B_r\) along every successor relation.
If \(\kappa_r\neq\kappa_s\), then \(B_r\neq B_s\).
If \(\kappa_r=\kappa_s\) with \(r<s\), then \(r\prec_P s\) by (iii).
A successor chain from \(S_r\) to \(S_s\) gives \(B_s\prec B_r\).
Thus all the letters \(B_r\) are distinct.
Read the \(B_r\) in vine order to obtain a subword \(\pi\).
Their reversed order is a linear extension of \(\mathcal H(P)\),
so \(\pi\in\mathcal R(P,\kappa_P)\).
Both constructions retain the row label and the column index of each
letter, and (iii) fixes the matching of equal occurrences. They are
therefore inverse. Finally,
\(\wt_{\kappa_P}(C_{d(B_r),r})=\wt(B_r)\), proving weight
preservation and the summation identity. The empty path and subword
satisfy the same assertions by convention.
\end{proof}

\subsection{\texorpdfstring{The specialization \(\vec t=\vec0\)}
{The specialization t=0}}
\label{subsec:ordinary-lattice-path-polynomial}

At \(\vec t=\vec0\), cells in even columns have weight zero.
For \(P\in\Path\), replace each west step by \((-1,0)\) to obtain
\(P^\circ\), retaining its south steps and their labels \(\kappa_P\).
The compression \(C_{2q-1,r}\mapsto C_{q,r}\) preserves vertical
successors and sends rays of direction \((-2,-1)\) to rays of
direction \((-1,-1)\), so diagonal successors also correspond.
Write \(C_{q,r}\) for the unit cell in column \(q\) and row \(r\)
of the compressed grid.
Its weight at \(\vec t=\vec0\) is
\begin{equation}\label{eq:compressed-cell-weight}
  \wt_{\kappa_P}(C_{q,r})
  :=
  \begin{cases}
    x_q, & q\leq\kappa_P(S_r),\\
    0,   & q>\kappa_P(S_r).
  \end{cases}
\end{equation}

Let \(Q^\circ\subseteq\mathcal A(P^\circ)\) contain
exactly one cell in each row and write \(\col_r(Q^\circ)=q\) when
\(C_{q,r}\in Q^\circ\). We call \(Q^\circ\) a \emph{cell selection} of
\(P^\circ\) if
\begin{equation}\label{eq:compressed-cell-inequ}
\begin{array}{ll}
  \col_r(Q^\circ)\geq\col_s(Q^\circ),
    & \text{if }S_r\rav S_s,\\
  \col_r(Q^\circ)>\col_s(Q^\circ),
    & \text{if }S_r\rad S_s.
\end{array}
\end{equation}
Let \(\CSel(P^\circ)\) denote the set of cell selections of \(P^\circ\).

\begin{defn}\label{def:ordinary-lattice-path-polynomial}
The \emph{lattice path polynomial} of the labeled path
\((P^\circ,\kappa_P)\) is
\[
  \FL_{P^\circ,\kappa_P}(\vec x)
  :=
  \sum_{Q^\circ\in\CSel(P^\circ)}\wt_{\kappa_P}(Q^\circ).
\]
\end{defn}

\begin{prop}\label{prop:compressed-specialization}
For every labeled path \((P,\kappa_P)\),
\[
  \FL_{P,\kappa_P}(\vec x;\vec0)
  =
  \FL_{P^\circ,\kappa_P}(\vec x).
\]
\end{prop}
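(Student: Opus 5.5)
We compare the two sums term by term. At \(\vec t=\vec0\), a cell \(C_{d,r}\) in an even column has weight \(t_{\kappa_P(S_r)}-t_{d/2}\mapsto0\). Hence only cell selections \(Q\in\CSel(P)\) all of whose columns are odd can contribute to \(\FL_{P,\kappa_P}(\vec x;\vec0)\). For such \(Q\), define \(Q^\circ\) by replacing each \(C_{2q-1,r}\) with the compressed cell \(C_{q,r}\). The plan is to show that \(Q\mapsto Q^\circ\) is a bijection from odd-column cell selections of \(P\) to \(\CSel(P^\circ)\) that preserves weights at \(\vec t=\vec0\).

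\textbf{Step 1 (cells above the path).} The step \(S_r\) lies on \(x=2i\) in \(P\) and on \(x=i\) in \(P^\circ\). So \(C_{d,r}\in\mathcal A(P)\) iff \(d\le 2i\), and \(C_{q,r}\in\mathcal A(P^\circ)\) iff \(q\le i\). For \(d=2q-1\), the condition \(2q-1\le 2i\) is equivalent to \(q\le i\). The compression is therefore a bijection between odd-column cells above \(P\) and cells above \(P^\circ\), row by row.

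\textbf{Step 2 (weights).} The weight of \(C_{2q-1,r}\) at \(\vec t=\vec0\) is \(x_q\) if \(2q-1\le 2\kappa_P(S_r)\), that is \(q\le\kappa_P(S_r)\), and \(0\) otherwise. This agrees with \eqref{eq:compressed-cell-weight}, so \(\wt_{\kappa_P}(Q)|_{\vec t=\vec0}=\wt_{\kappa_P}(Q^\circ)\).

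\textbf{Step 3 (successor conditions).} By the remark preceding the proposition, the compression preserves both vertical and diagonal successors. To verify the diagonal case: the ray of direction \((-2,-1)\) from the endpoint \((2i,-r)\) of \(S_r\) meets the endpoint \((2i-2k,-r-k)\) of \(S_{r+k}\) iff \(S_{r+k}\) lies on \(x=2(i-k)\). Likewise, the ray of direction \((-1,-1)\) in \(P^\circ\) meets the endpoint of \(S_{r+k}\) iff \(S_{r+k}\) lies on \(x=i-k\). The same first such \(k\) is selected in both cases, and the \(y\)-axis is reached at the same parameter \(k=i\).

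When \(\col_s(Q)\) is odd, \eqref{arr:csel_cond} requires \(\col_r(Q)\ge\col_s(Q)\) for vertical successors and \(\col_r(Q)>\col_s(Q)\) for diagonal successors. With both columns odd, writing \(\col=2q-1\) turns these into exactly \eqref{eq:compressed-cell-inequ}, since the map \(q\mapsto 2q-1\) is strictly increasing. Thus \(Q\in\CSel(P)\) iff \(Q^\circ\in\CSel(P^\circ)\).

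\textbf{Conclusion.} Combining the steps, the sum over \(\CSel(P)\) at \(\vec t=\vec0\) reduces to the sum over odd-column selections, which matches the sum over \(\CSel(P^\circ)\) term by term. This gives the proposition; the empty path is trivial. The only point needing care is Step 3, the correspondence of diagonal successors under compression, which the computation above handles directly.
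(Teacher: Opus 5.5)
Your proof is correct and follows the paper's argument: even-column cells vanish at \(\vec t=\vec0\), and the odd-column compression \(C_{2q-1,r}\mapsto C_{q,r}\) is a weight-preserving bijection onto \(\CSel(P^\circ)\), because the odd-column case of \eqref{arr:csel_cond} becomes \eqref{eq:compressed-cell-inequ}. You also write out checks the paper leaves implicit, namely which cells lie above each path and why diagonal successors correspond under the ray rescaling.
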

\begin{proof}
The odd-column compression is a weight-preserving bijection with
\(\CSel(P^\circ)\), by \eqref{arr:csel_cond} and
\eqref{eq:compressed-cell-inequ}; selections using even columns have
weight zero.
\end{proof}
\section{A lattice path model for forest polynomials}
\label{sec:forest-path-models}

For every \(P\in\Path\), we define a canonical labeling \(\kappacan_P\)
and prove
\(\FL_{P,\kappacan_P}(\vec x;\vec t)=\PP_{F(P)}(\vec x;\vec t)\).
We also express forest and grove polynomials as weighted sums over
cell selections and multi-cell selections of \(P^\circ\), respectively.

\subsection{A lattice path model for double forest polynomials}

For \(P\in\Path\), define its \emph{canonical labeling}
\(\kappacan_P\) recursively.  If \(S_r\) is the topmost south step on
\(x=2h\), set \(\kappacan_P(S_r)=h\). Otherwise \(S_{r-1}\rav S_r\), and \( \kappacan_P(S_r) \) is the smallest positive integer greater than \( \kappacan_P(S_{r-1}) \) that has not appeared among
\( \kappacan_P(S_1),\ldots, \kappacan_P(S_{r-1}) \).
\Cref{lem:forest-path-incidence} below shows that the canonical labeling
of \(P\) agrees with the canonical labeling of \(F(P)\)
under the correspondence between south steps and internal nodes.

For an indexed forest \(F\), we define
\( P(F):=P(c(F)) \). Fix an indexed forest \(F\) and write
\(\ell_F(v)\) for the canonical label of \(v\in\IN(F)\). For each
\(h\geq1\), order the internal nodes with \(\rho_F(v)=h\) by increasing
\(\ell_F(v)\), and match them with the south steps of \(P=P(F)\) on \(x=2h\)
from top to bottom. This defines a map
\[
  S_F:\IN(F)\longrightarrow\mathcal S(P).
\]
When \(F\) is fixed, we write \(S(v)\) for \(S_F(v)\).

\begin{lem}\label{lem:forest-path-incidence}
For every indexed forest \(F\), the map
\(S_F:\IN(F)\to\mathcal S(P(F))\) is a bijection satisfying
\[
  \kappacan_{P(F)}(S_F(v))=\ell_F(v)
  \qquad (v\in\IN(F)),
\]
and it identifies \(F(P(F))\) with \(F\) as indexed forests. In
particular, the maps \(P:\For\to\Path\) and \(F:\Path\to\For\) are inverse
bijections.
\end{lem}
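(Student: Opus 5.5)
The approach is to describe the top-to-bottom order of south steps of \(P(F)\) as an explicit traversal of \(F\). Then I would check that vertical and diagonal successors reproduce left and right children, and that the recursive definition of \(\kappacan\) reproduces \(\ell_F\).

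Since \(c(P(F))=c(F)\), the line \(x=2h\) carries exactly as many south steps as there are nodes with \(\rho_F(v)=h\). So \(S_F\) is a bijection by construction. It lists internal nodes by decreasing \(\rho_F\), and by increasing \(\ell_F\) within each \(\rho_F\)-class.

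I would record two elementary facts about binary plane trees.
\begin{itemize}
  \item \(\ell_F(v)=\rho_F(v)+|\IN(\text{left subtree of }v)|\).
  \item If \(v_R\) is an internal right child, then \(\rho_F(v_R)=\ell_F(v)+1\); if \(v_L\) is an internal left child, then \(\rho_F(v_L)=\rho_F(v)\) and \(\ell_F(v_L)<\ell_F(v)\).
\end{itemize}

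The first key step is to define the traversal order recursively: trees are taken from right to left, and within a tree the order is right subtree, then left subtree, then root. I would prove by induction that this order coincides with the sort by \((-\rho_F,\ell_F)\). Nodes of a right subtree have \(\rho_F\) larger than every node of the parent's left subtree and of the parent itself. Nodes in trees further right likewise have larger \(\rho_F\). Equal \(\rho_F\) forces nodes to lie on a common left spine, where \(\ell_F\) increases toward the root. Thus \(S_F\) realizes exactly the ordering in which each subtree is an index interval ending at its root, with the right subtree preceding the left subtree, as in \Cref{cor:1}.

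Next I would verify the successor relations in \(P(F)\).
\begin{itemize}
  \item \emph{Vertical successors.} In this order the left child \(v_L\) is the last node of the left subtree, so it immediately precedes \(v\) on the same vertical line. Hence \(S(v_L)\rav S(v)\). Conversely, consecutive steps on the same line must be of this form, since a node with an internal left child is never topmost on its line.
  \item \emph{Diagonal successors.} Use \(\delta_r=h_r+r\) from the proof of \Cref{prop:successor-forest}. If \(S(v_R)=S_s\) and \(S(v)=S_p\), the interval description gives \(p-s=|\IN(\text{left subtree})|+1=\ell_F(v)+1-\rho_F(v)\). Hence \(\delta_s=\delta_p\).
  \item \emph{First return.} It remains to check that \(\delta_j<\delta_s\) for \(s<j<p\). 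The steps \(S_j\) strictly between form the left subtree of \(v\), and I would bound \(\delta_j\) by induction on that subtree using the same identity.
  \item \emph{Roots.} Roots have no diagonal successor, because the next trees lie strictly to the left; this is the non-overlap inequality from \Cref{prop:successor-forest} read in reverse.
\end{itemize}
Together these show that the successor forest of \(P(F)\), with leaves placed at \(\rho\)-positions, is \(F\). Bijectivity of \(c\) then makes \(P\) and \(F\) inverse.

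Finally I would match the labels. A topmost step on \(x=2h\) corresponds to a node with no internal left child, so \(\ell_F=h\). For \(S(v_L)\rav S(v)\), the values \(\ell_F(v_L)+1,\ldots,\ell_F(v)-1\) are the \(\ell\)-values of the right subtree of \(v_L\), which appear earlier. Every \(\ell\)-value is a distinct leaf position, so \(\ell_F(v)\) is the smallest unused value exceeding \(\ell_F(v_L)\). Here the used values are those of earlier nodes, and \(\ell_F(v)\) itself is used only by \(v\).

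I expect the main obstacle to be the diagonal first-return inequality, together with the careful bookkeeping of which \(\ell\)-values are already used. I would handle both through the interval description and an induction along left spines.
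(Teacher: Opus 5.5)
Your proposal is correct. Your final step, matching \(\kappacan\) with \(\ell_F\) row by row, is the paper's argument: the integers strictly between \(\ell_F(v_L)\) and \(\ell_F(v)\) are the canonical labels of the right subtree of \(v_L\), all of larger flag. The difference lies in how you prove \(F(P(F))=F\).

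\textbf{The paper's route.} The paper never reconstructs the successor relations of \(P(F)\) from \(F\). Instead it shows \(\rho_{F(P)}(S_r)=h_r\), by an argument valid for any path \(P\), propagating from the roots. Vertical successors change neither side. A diagonal successor \(S_r\rad S_s\) changes both sides by \(s-r\): the flag side because \Cref{cor:1} gives the left subtree of \(S_s\) exactly \(s-r\) leaves, and the path side because \(\delta_r=\delta_s\). Hence \(c(F(P))=c(P)\), and \(F(P(F))=F\) follows from bijectivity of \(c:\For\to\Codes\).

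\textbf{Your route.} You identify the row order of \(P(F)\) with the right--left--root traversal (the paper's later rightmost reading). You then check geometrically that left children give vertical successors, right children give diagonal successors, and roots give no successors.

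\textbf{Comparison.} The paper's route is shorter, because the first-return analysis was already done once, path to forest, in \Cref{prop:successor-forest} and \Cref{cor:1}. It also yields the general identity \(c(F(P))=c(P)\). Yours is self-contained on the forest side and makes explicit that the identification is through \(S_F\), which the paper leaves implicit. The price is that you must redo the first-return and root estimates in the forest-to-path direction.

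\textbf{Completing your estimates.} These go through by the induction you indicate. Suppose a subtree occupies rows \(q,\ldots,t\) with its root in row \(t\). Then \(\delta_j\leq\delta_t\) for \(q\leq j\leq t\), since the right child's row \(t_R\) has \(\delta_{t_R}=\delta_t\) and the left child's row \(t-1\) has \(\delta_{t-1}=\delta_t-1\).
\begin{itemize}
\item Applied to the left subtree of \(v\), this gives \(\delta_j\leq\delta_p-1<\delta_s\) for \(s<j<p\).
\item Combined with the non-overlap inequality \(\delta_{r_k}<\delta_{r_{k-1}}\) for consecutive roots, it shows that roots have no diagonal successor.
\end{itemize}
You should also state that \(S(v_R)\) and the root steps have no vertical successor. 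This follows from your converse vertical claim, since these nodes are not left children. Because each step has at most one successor, your checks then determine the entire successor structure.
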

\begin{proof}
Put \(P=P(F)\).
Write \(c(F)=(c_i)_{i\geq1}\). For each \(h\), there are \(c_h\)
internal nodes \(v\) with \(\rho_F(v)=h\) and \(c_h\) south steps of
\(P\) on \(x=2h\). Hence \(S_F\) is a bijection.

We next show that \(F(P)=F\). Let \(S_r\) lie on \(x=2h_r\).
For every successor relation from \(S_r\) to \(S_s\),
\[
  \rho_{F(P)}(S_r)-\rho_{F(P)}(S_s)
  =h_r-h_s.
\]
This is immediate for a vertical successor. For a diagonal successor,
both sides equal \(s-r\), since \Cref{cor:1} shows that the left subtree
of \(S_s\) has \(s-r\) leaves.
For each root \(S_r\) of a component, by construction of \(F(P)\), we have
\(\rho_{F(P)}(S_r)=h_r\). If \(S_r\) has successor \(S_s\) and
\(\rho_{F(P)}(S_s)=h_s\), the preceding equality gives
\(\rho_{F(P)}(S_r)=h_r\). Induction from the roots to their
descendants proves this equality for every internal node.
Consequently, \(c(F(P))=c(P)=c(F)\), and the code bijection
\cite[Proposition~3.3]{NT2024} gives \(F(P)=F\).
Since \(P:\For\to\Path\) is a bijection, the maps \(P\) and \(F\) are
inverse.

It remains to compare the canonical labels. We use induction on the row
index \(r\). Let \(v\) satisfy \(S(v)=S_r\), and set
\(h=\rho_F(v)\). If \(S_r\) is the topmost south step on \(x=2h\),
then \(v\) is the bottom node of its left-child chain, so
\(\ell_F(v)=h\). Hence
\(\kappacan_P(S_r)=\ell_F(v)\).

Otherwise \(S_{r-1}\rav S_r\). Let \(v'\) satisfy
\(S(v')=S_{r-1}\). By induction,
\(\kappacan_P(S_{r-1})=\ell_F(v')\). For every integer
\(j\) with \(\ell_F(v')<j<\ell_F(v)\), the right subtree of \(v'\)
contains a unique internal node \(z\) with \(\ell_F(z)=j\). Since
\(\rho_F(z)>h\), the south step \(S(z)\) occurs before \(S_r\), and the
induction hypothesis gives \(\kappacan_P(S(z))=j\).
Thus all integers strictly between \(\ell_F(v')\) and \(\ell_F(v)\)
have already been used, whereas \(\ell_F(v)\) has not. Therefore
\(\kappacan_P(S_r)=\ell_F(v)\), completing the induction.
\end{proof}

For the canonical labeling of \(P\), we write
\(\FL_{P,\kappacan_P}\xt\) simply as \(\FL_P\xt\).

\begin{thm}\label{thm:1}
For every \(P\in\Path\), we have
\( \FL_P\xt=\PP_{F(P)}\xt \).
\end{thm}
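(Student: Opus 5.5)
The plan is to reduce \Cref{thm:1} to \Cref{lem:subword-path-bijection} applied to the canonical labeling, and then to compare with the subword formula \eqref{eq:double-forest-subword}. Put \(F=F(P)\). By \Cref{lem:forest-path-incidence}, \(P=P(F)\), and \(S_F\) identifies internal nodes of \(F\) with south steps of \(P\). Under this identification \(\kappacan_P\) corresponds to the canonical labels \(\ell_F\), and the successor relations \(\rav\), \(\rad\) correspond to left-child and right-child edges. Choose \(n\) with \(F\in\For_{n+1}\); then all canonical labels lie in \([n]\).

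The first task is to verify hypotheses (i)--(iii) for \(\kappacan_P\).

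For (i), a left child \(v_L\) of \(v\) has \(\ell_F(v_L)<\ell_F(v)\), because the rightmost leaf of the left subtree of \(v_L\) lies strictly left of that of \(v\). A right child \(v_R\) has \(\ell_F(v_R)>\ell_F(v)\), because its left subtree lies to the right of the left subtree of \(v\). Under the child-to-parent convention \(S_r\rav S_s\) means \(S_r\) is the left child of \(S_s\); this gives \(\kappa_r<\kappa_s\). Similarly \(S_r\rad S_s\) gives \(\kappa_r>\kappa_s\).

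For (ii), the topmost step on \(x=2i\) receives label \(i\) by definition of \(\kappacan\).

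For (iii), canonical labels of distinct internal nodes are distinct, since \(\ell_F\) maps \(\IN(F)\) injectively into leaf positions. So (iii) is vacuous.

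Next, I would identify \(\mathcal R(P,\kappacan_P)\) with \(\mathcal R(\widetilde\omega_{[n]};F)\). The poset \(\mathcal H(P)\) has children below parents. A linear extension \(\theta\) therefore lists children before parents, and its reverse lists parents before children. Reading labels along the reversed extension thus gives exactly a Sylvester word of \(F\). Conversely, every Sylvester word, read backwards, gives a linear extension, and the extension is unique because the labels are distinct. Hence \(\rev(\val(\pi))\) is a label sequence along a linear extension exactly when \(\val(\pi)\in\Syl(F)\). The two subword sets therefore coincide. Combining \Cref{lem:subword-path-bijection} with \eqref{eq:double-forest-subword} then gives the theorem.

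One point needs care. \Cref{lem:subword-path-bijection} equates \(\FL\) with the sum over \(\CSel^\times\), whereas \(\FL_P\) is defined as a sum over all of \(\CSel(P)\). This is harmless, because zero-weight selections contribute nothing.

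The step I expect to be the main obstacle is checking the left/right inequalities in (i) carefully against the orientation conventions, namely which successor is the left child. If needed, I would argue directly from the interval description in \Cref{cor:1}(ii) combined with \Cref{lem:forest-path-incidence}.
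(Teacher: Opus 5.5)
Your proposal is correct and follows the paper's proof essentially step for step. Like the paper, you verify hypotheses (i)--(iii) of \Cref{lem:subword-path-bijection} for \(\kappacan_P\) via \Cref{lem:forest-path-incidence}, identify \(\mathcal R(P,\kappacan_P)\) with \(\mathcal R(\widetilde\omega_{[n]};F)\) by reading labels along reversed linear extensions, and conclude with \eqref{eq:double-forest-subword}. Your explicit check of \(\ell_F(v_L)<\ell_F(v)<\ell_F(v_R)\) fills in a step that the paper states without justification.
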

\begin{proof}
Put \(F=F(P)\) and choose \(n\) such that \(F\in\For_{n+1}\).
By \Cref{lem:forest-path-incidence}, the canonical labels on \(P\)
are the distinct labels \(\ell_F(v)\). In \(F\), the node \(S_r\)
is the left child of \(S_s\) if \(S_r\rav S_s\), and the right child
if \(S_r\rad S_s\). Thus the labels increase along vertical arrows
and decrease along diagonal arrows. The topmost step on \(x=2h\)
has label \(h\) by definition. Thus all the hypotheses of
\Cref{lem:subword-path-bijection} hold.
Moreover, \(s\) covers \(r\) in \(\mathcal H(P)\) if and only if
\(S_s\) is the parent of \(S_r\) in \(F\). Reading the canonical
labels along reversed linear extensions therefore gives precisely
the Sylvester words. Hence
\[
  \mathcal R(P,\kappacan_P)
  =\mathcal R(\widetilde\omega_{[n]};F).
\]
By \Cref{lem:subword-path-bijection} and \eqref{eq:double-forest-subword},
\[
  \FL_P(\vec x;\vec t)
  =\sum_{\pi\in\mathcal R(\widetilde\omega_{[n]};F)}\wt(\pi)
  =\PP_F(\vec x;\vec t). \qedhere
\]
\end{proof}

For \(F\in\For_{n+1}\) and \(P=P(F)\), write
\(\Phi_F:=\Phi_{P,\kappacan_P}\).
This specialization sends the letter \(B_v\) of value \(\ell_F(v)\)
to the cell \(C_{d(B_v),r}\) in the row \(S_F(v)=S_r\).

The double fundamental polynomials recalled in
\Cref{subsec:schur-fundamental} therefore have the following path model.

\begin{cor}\label{cor:double-fundamental-path}
For every \(c\in\operatorname{Pad}_n\),
\( \mathfrak{F}_c(x_1,\ldots,x_n;\vec t) =\FL_{P(c)}(\vec x;\vec t) \).
\end{cor}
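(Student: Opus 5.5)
This is a direct specialization of \Cref{thm:1}. By definition \eqref{eq:double-fundamental-polynomial}, \(\mathfrak F_c(x_1,\ldots,x_n;\vec t)=\PP_{F(c)}(\vec x;\vec t)\), where \(c\in\operatorname{Pad}_n\) is regarded as a code by appending zeros. So the plan is to identify the forest \(F(c)\) with the successor forest \(F(P(c))\). Then \Cref{thm:1} applied to \(P=P(c)\) gives \(\FL_{P(c)}\xt=\PP_{F(P(c))}\xt\).

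The key step is the identity \(F(P(c))=F(c)\). By \Cref{lem:forest-path-incidence}, the maps \(P:\For\to\Path\) and \(F:\Path\to\For\) are inverse bijections, with \(P(F)=P(c(F))\). Take \(F=F(c)\). Then \(P(F(c))=P(c(F(c)))=P(c)\), since \(c\mapsto F(c)\) inverts the code map. Applying \(F\) gives \(F(P(c))=F(P(F(c)))=F(c)\). Chaining the equalities yields
\[
  \FL_{P(c)}\xt=\PP_{F(P(c))}\xt=\PP_{F(c)}\xt=\mathfrak F_c(x_1,\ldots,x_n;\vec t).
\]

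Two small points remain to check, neither a genuine obstacle. First, the notation \(\FL_{P(c)}\) means the lattice path polynomial for the canonical labeling \(\kappacan_{P(c)}\), which is the convention fixed before \Cref{thm:1}. Second, the code \(c\) is supported in \([n]\), so \(P(c)\in\Path_n\) and \(F(c)\in\For_{n+1}\). These are exactly the conditions under which the subword formula \eqref{eq:double-forest-subword} used in \Cref{thm:1} is stated, so no change of \(n\) is needed.

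The degenerate case \(c=(0^n)\) is handled by convention. Here \(P(c)=W^\infty S^\infty\) has no south steps, so the only cell selection is the empty one, and \(\FL_{P(c)}=1\). This agrees with \(\mathfrak F_{(0^n)}=1\).
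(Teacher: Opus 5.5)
Your argument is correct and matches the paper, which reads the corollary off from \Cref{thm:1} at \(P=P(c)\), using \(F(P(c))=F(c)\) from \Cref{lem:forest-path-incidence} and the definition \(\mathfrak F_c=\PP_{F(c)}\). One side remark is wrong but harmless: \(F(c)\) need not lie in \(\For_{n+1}\), since \(c\in\Codes_n\) bounds only the flags and not the leaf support (for \(c=(0^{n-1},3)\) the tree has leaves \(n,\ldots,n+3\)); this does not matter because \Cref{thm:1} holds for every \(P\in\Path\), with its auxiliary \(n\) chosen independently of the \(n\) in \(\operatorname{Pad}_n\).
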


\subsection{Specialization to ordinary forest polynomials}
\label{subsec:forest-correspondence}

For the canonical labeling of \(P\), transported to \(P^\circ\), we
abbreviate the lattice path polynomial
\(\FL_{P^\circ,\kappacan_P}(\vec x)\) to
\(\FL_{P^\circ}(\vec x)\).

Combining \Cref{thm:1} and \Cref{prop:compressed-specialization} gives, for every
\(P\in\Path\),
\begin{equation}\label{eq:ordinary-forest-path-model}
  \PP_{F(P)}(\vec x)
  =\FL_P(\vec x;\vec0)
  =\FL_{P^\circ}(\vec x).
\end{equation}
Thus ordinary forest polynomials are realized by the compressed paths
\(P^\circ\), whose west steps have unit length.

We now relate \eqref{eq:ordinary-forest-path-model} to the inequalities for a compatible labeling \eqref{eq:ordinary-forest-labeling}. Fix an indexed forest \(F\), and
put \(P=P(F)\). For a cell selection of \(P^\circ\), denote the selected
column in the row containing \(S(v)\) by \(f(v)\). By
\Cref{lem:forest-path-incidence}, the successor conditions \eqref{eq:compressed-cell-inequ} become
\[
  \begin{aligned}  
  f(v)&\leq f(v_L)&&\text{if}\ v_L\in\IN(F), \\
  f(v)&<f(v_R)&&\text{if}\ v_R\in\IN(F).
  \end{aligned}
\]
Since \(S(v)\) lies on \(x=\rho_F(v)\) in \(P^\circ\), we have
\(f(v)\leq\rho_F(v)\). Therefore the map sending a cell selection to
\(f\) is a bijection onto the compatible labelings in
\eqref{eq:ordinary-forest-labeling}. The bijection preserves weights because
the selected cell in row \(S(v)\) has weight \(x_{f(v)}\).
In particular, if
\(c\in\operatorname{Pad}_n\), then
\[
  \mathfrak{F}_c(x_1,\ldots,x_n;\vec0)
  =\FL_{P(c)^\circ}(\vec x),
\]
which is the corresponding fundamental quasisymmetric polynomial.

\subsection{A lattice path model for grove polynomials}

Fix \(P\in\Path\), and write \(m=|\mathcal S(P)|\). For
\(Q\subseteq\mathcal A(P^\circ)\), define
\( Q_r=\{C_{q,r}\in Q:q\geq1\} \) for \(r\in[m]\) and \(\col(Q_r)=\{q:C_{q,r}\in Q_r\}
\).
We call \(Q\) a \emph{multi-cell selection} of \(P^\circ\) if every
\(Q_r\) is nonempty and
\begin{equation}\label{eq:grove-multicell-compatibility}
\begin{aligned}
  S_r\rav S_s&\quad\Longrightarrow\quad
     \max\col(Q_s)\leq\min\col(Q_r),\\
  S_r\rad S_s&\quad\Longrightarrow\quad
     \max\col(Q_s)<\min\col(Q_r).
\end{aligned}
\end{equation}
Let \(\MCSel(P^\circ)\) denote the set of multi-cell selections.

\begin{thm}\label{thm:grove-multicell-model}
For \(P\in\Path\), let \(m=|\mathcal S(P)|\). Then the grove polynomial \( \widetilde\PP_{F(P)}^{(\beta)}(\vec x) \) is
\begin{equation}\label{eq:grove-multicell-formula}
  \widetilde\PP_{F(P)}^{(\beta)}(\vec x)
  =
  \sum_{Q\in\MCSel(P^\circ)}
    \beta^{|Q|-m}
    \prod_{C\in Q}\wt_{\kappacan_P}(C).
\end{equation}
\end{thm}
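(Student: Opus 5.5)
The plan is to build a weight-preserving bijection between multi-cell selections of \(P^\circ\) and grove labelings of \(F=F(P)\), extending the argument of \Cref{subsec:forest-correspondence} from single cells to sets of cells. By \Cref{lem:forest-path-incidence}, \(F(P(F))=F\), so \(S_F\) identifies internal nodes of \(F\) with south steps of \(P\). Under this identification, \(S_r\rav S_s\) means that \(S_r\) is the left child of \(S_s\), and \(S_r\rad S_s\) means that \(S_r\) is the right child. Given \(Q\in\MCSel(P^\circ)\), I define \(\kappa(v)=\col(Q_r)\) for \(S(v)=S_r\). This is a nonempty finite subset of \(\mathbb Z_{>0}\), and the assignment \(Q\mapsto\kappa\) is injective because \(Q\) is the disjoint union of its rows.

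Next I check the conditions. The compatibility conditions \eqref{eq:grove-multicell-compatibility} translate verbatim into \eqref{eq:grove-labeling-compatibility}, with \(v=S_s\) the parent and \(v_L\) or \(v_R=S_r\) the child. For the bound \(\max\kappa(v)\le\rho_F(v)\), I use the fact that \(S(v)\) lies on \(x=\rho_F(v)\) in \(P^\circ\). Hence a cell \(C_{q,r}\) lies above \(P^\circ\) in row \(r\) exactly when \(q\le\rho_F(v)\), so \(Q\subseteq\mathcal A(P^\circ)\) is equivalent to this bound. The inverse map sends a grove labeling \(\kappa\) to \(Q=\{C_{q,r}:q\in\kappa(v),\ S(v)=S_r\}\), and the same equivalences show that \(Q\) is a multi-cell selection. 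This gives the bijection.

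For the weights, \(|Q|=\sum_v|\kappa(v)|=|\kappa|\) and \(m=|F|\), so the powers of \(\beta\) agree. The cell weight is \(\wt_{\kappacan_P}(C_{q,r})=x_q\) when \(q\le\kappacan_P(S_r)\), and is zero otherwise. This is the step that needs care, since the zero weights must not kill any term. I would show that every selected column is at most the canonical label, that is, \(\max\kappa(v)\le\ell_F(v)\), as a consequence of the compatibility conditions. The argument goes down the left-child chain. If \(v\) has no internal left child, then \(\ell_F(v)=\rho_F(v)\) and the bound is immediate. Otherwise, \(\max\kappa(v)\le\min\kappa(v_L)\le\rho_F(v_L)=\rho_F(v)\le\ell_F(v)\), using \(\rho_F(v)\le\ell_F(v)\) in general.

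Actually the simpler observation suffices. The node \(v\) always satisfies \(\rho_F(v)\le\ell_F(v)\), because the leftmost leaf lies weakly left of the rightmost leaf of the left subtree. Since \(\max\kappa(v)\le\rho_F(v)\), every selected cell has nonzero weight \(x_q\). So \(\prod_{C\in Q}\wt(C)=\prod_v\prod_{q\in\kappa(v)}x_q\), and summing over the bijection yields \eqref{eq:grove-multicell-formula} via \eqref{eq:grove-labeling-polynomial}.

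The only potential obstacle is confirming that the column convention in \(P^\circ\) matches \(\rho_F\) exactly, i.e.\ that the compressed path has step \(S(v)\) on \(x=\rho_F(v)\). This is already used in \Cref{subsec:forest-correspondence}, so I would cite that argument.
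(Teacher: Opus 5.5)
Your proposal is correct and follows essentially the same route as the paper: identify internal nodes with south steps via \(S_F\) and Lemma~\ref{lem:forest-path-incidence}, match the compatibility conditions through left child \(\leftrightarrow\) vertical successor and right child \(\leftrightarrow\) diagonal successor, and identify \(\max\kappa(v)\le\rho_F(v)\) with \(Q\subseteq\mathcal A(P^\circ)\). The weights agree because \(q\le\rho_F(v)\le\ell_F(v)=\kappacan_P(S_r)\), which gives cell weight \(x_q\), together with \(|Q|=|\kappa|\) and \(m=|F|\); your left-child-chain detour is valid but unnecessary, as you yourself note.
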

\begin{proof}
  Put \(F=F(P)\). For a grove labeling \(\kappa\) of \(F\), define
  \[
    Q_r=\{C_{q,r}:q\in\kappa(v)\}
    \qquad\text{when }S_r=S_F(v).
  \]
  By \Cref{lem:forest-path-incidence}, \(S_r\) lies on
  \(x=\rho_F(v)\) in \(P^\circ\). Hence
\(\max\kappa(v)\leq\rho_F(v)\) is equivalent to
\(Q_r\subseteq\mathcal A(P^\circ)\). Moreover an internal left child
corresponds to a vertical successor, while an internal right child
corresponds to a diagonal successor. Thus the two conditions in
\eqref{eq:grove-labeling-compatibility} are exactly those in
\eqref{eq:grove-multicell-compatibility}. Therefore
  \(\kappa\mapsto Q\) is a bijection from the grove labelings of \(F\) to
  \(\MCSel(P^\circ)\).

  If \(C_{q,r}\in Q_r\), then
  \( q\leq\rho_F(v)\leq\ell_F(v)=\kappacan_P(S_r) \),
  so \(\wt_{\kappacan_P}(C_{q,r})=x_q\). Moreover,
  \( |Q|=|\kappa| \) and \( m=|F| \).
  Hence the summand indexed by \(\kappa\) in
  \eqref{eq:grove-labeling-polynomial} equals the summand indexed by \(Q\)
  in \eqref{eq:grove-multicell-formula}.
  \end{proof}

\section{A lattice path model for Schubert polynomials}
\label{sec:schubert-path-models}
\raggedbottom

In this section, we express double Schubert
polynomials as sums of lattice path polynomials indexed by endpoint-decreasing
reduced words, using \(\Omega\)-parking and endpoint sorting. Ordinary Schubert polynomials are obtained at
\(\vec t=\vec0\), and Grothendieck polynomials are expressed using
endpoint-decreasing Hecke words and injective cell selections.

\subsection{The \texorpdfstring{\(\Omega^\bullet\)}{Omega}-correspondence}
\label{subsec:nt-omega}

In this subsection, we summarize the results of Nadeau and Tewari \cite{NT2024} used below.

In order to define the \( \Omega \)-parking procedure, we allow leaves to be indexed by \(\ZZ\),
with only finitely many nontrivial trees. Following
\cite{BGNST2025a}, set
\[
  L(F)=\{\ell_F(v):v\in\IN(F)\}.
\]
Equivalently, \(i\in L(F)\) if and only if the leaves indexed by
\(i\) and \(i+1\) belong to the same tree of \(F\).
This is the support used in \cite{NT2024}. In particular, for
\(F\in\For\) and \(n\geq1\),
\[
  F\in\For_n\quad\Longleftrightarrow\quad L(F)\subseteq[n-1].
\]
For an internal node \(v\), let \(\operatorname{INT}(v,F)\) be the
set of canonical labels of internal nodes in the subtree rooted at
\(v\). This is an interval of integers with minimum \(\rho_F(v)\).
For a labeled forest \(\mathsf P\) with underlying forest \(F\), set
\(\operatorname{INT}(v,\mathsf P)=\operatorname{INT}(v,F)\).

We use decorated letters \(i^{[k]}\), where \(i\in\ZZ\) and
\(k\geq1\), ordered lexicographically by \((i,k)\), and set
\(\val(i^{[k]})=i\). The original correspondence accepts any word
whose decorated letters are pairwise distinct.
For an integer word \(a=(a_1,\ldots,a_m)\), we follow the
convention in \cite[Section~5.1]{NT2024} by reading from left to
right and replacing the \(k\)-th occurrence of \(i\) by \(i^{[k]}\).
The resulting word
\(\widetilde a=(\widetilde a_1,\ldots,\widetilde a_m)\)
has pairwise distinct letters.

\begin{defn}\label{def:omega-correspondence}
The \emph{\(\Omega^\bullet\)-correspondence} associates the decorated
word \(\widetilde a\) with a pair of labeled forests, its \(P\)-symbol
and \(Q\)-symbol. We first construct their common underlying forest
by inserting the letters of \(\widetilde a\) from left to right.
Start with one leaf at every integer and no internal nodes.
At step \(r\), let \(F\) be the current forest.
When \(a_r\in L(F)\), let \([L,R]\) be the maximal interval in
\(L(F)\) containing \(a_r\), and let \(v_s\), with \(s<r\), be
the root of the corresponding tree. Set
\[
  p=
  \begin{cases}
    a_r, & a_r\notin L(F),\\
    L-1, & a_r\in L(F)\text{ and }\widetilde a_r<\widetilde a_s,\\
    R+1, & a_r\in L(F)\text{ and }\widetilde a_r>\widetilde a_s.
  \end{cases}
\]

Create a new internal node \(v_r\). To choose its left child,
first check whether \(p-1\in L(F)\). If so, find the tree
containing the internal node with canonical label \(p-1\), and
attach its root as the left child of \(v_r\). If not, attach the leaf indexed by \(p\) as its left child.
Next check whether \(p+1\in L(F)\). If so, attach the root of
the tree containing the internal node with canonical label \(p+1\)
as the right child of \(v_r\). If not, attach the leaf indexed by \(p+1\) as its right child.
Then \(v_r\) has canonical label \(p\), so this insertion adds \(p\) to \(L(F)\).

Let \(F_\Omega(a)\) be the forest obtained after all insertions.
Labeling each \(v_r\) by \(\widetilde a_r\) gives the
\emph{\(P\)-symbol} \(\mathsf P_\Omega(a)\), and labeling it by
\(r\) gives the \emph{\(Q\)-symbol} \(\mathsf Q_\Omega(a)\).
Thus
\[
  \Omega^\bullet(\widetilde a)
    =(\mathsf P_\Omega(a),\mathsf Q_\Omega(a)).
\]
\end{defn}

In \( P_\Omega(a) \), each internal left
child has a smaller label than its parent, and each internal right
child has a larger label. The value of the label at \(v\) belongs
to \(\operatorname{INT}(v,F_\Omega(a))\).
In \( Q_\Omega(a) \), every integer in \([m]\) is used exactly once
as a label, and each parent has a larger label than its internal children. Two words \(a,b\) are \emph{\(\Omega\)-equivalent} if \(\mathsf P_\Omega(a)=\mathsf P_\Omega(b)\).

The words are obtained from the \(P\)-symbol \(\mathsf P_\Omega(a)\) by taking the node order of a Sylvester word of \(F_\Omega(a)\), reversing it,
and reading the corresponding labels of \(\mathsf P_\Omega(a)\)
with decorations removed. These orders are exactly the \emph{linear extensions} of the internal-node poset with children preceding parents. Equal values lie on a
chain, so every such reading preserves their occurrence order.

We define the \emph{rightmost reading} of a forest \(F\) as follows.
We read its nontrivial trees from right to left. Within each tree, we
recursively read the internal nodes of the right subtree before
those of the left subtree and read the root last.
The flags \(\rho_F(v)\) appear in weakly decreasing order along
this reading. Nodes with the same flag lie on a left-child chain
and are read from descendant to ancestor.

Let \(\mathsf{IndFor}_+\) be the set of indexed forests whose
canonical labels are all positive, equivalently
\(L(F)\subseteq\ZZ_{>0}\). Forests outside this set
have zero forest polynomial.

\begin{exam}\label{ex:nt-omega-insertion}
Let \(a=(4,3,1,2,4,3,4)\), so
\[
  \widetilde a=(4^{[1]},3^{[1]},1^{[1]},2^{[1]},
    4^{[2]},3^{[2]},4^{[3]}).
\]
In \Cref{fig:nt-omega-insertion}, the first three letters produce
two nontrivial trees, rooted at \(3^{[1]}\) and \(1^{[1]}\).
Inserting \(2^{[1]}\) joins them under a new root, whose left and
right children are the nodes labeled \(1^{[1]}\) and \(3^{[1]}\),
respectively. This node is labeled \(4\) in the \(Q\)-symbol
because it is created at the fourth step.
\end{exam}

\begin{figure}
\centering
\begin{tikzpicture}[x=1cm,y=1cm]
\begin{scope}[shift={(0.2cm,6.3cm)},x=0.68cm,y=0.68cm]
\draw[gray!65,line width=.35pt] (-.2,0)--(7.2,0);
\draw[black!80,line width=.4pt] (-0.065,-.065)--(0.065,.065) (-0.065,.065)--(0.065,-.065);
\node[font=\scriptsize,anchor=north,inner sep=1pt] at (0,-.10) {0};
\draw[black!80,line width=.4pt] (0.935,-.065)--(1.065,.065) (0.935,.065)--(1.065,-.065);
\node[font=\scriptsize,anchor=north,inner sep=1pt] at (1,-.10) {1};
\draw[black!80,line width=.4pt] (1.935,-.065)--(2.065,.065) (1.935,.065)--(2.065,-.065);
\node[font=\scriptsize,anchor=north,inner sep=1pt] at (2,-.10) {2};
\draw[black!80,line width=.4pt] (2.935,-.065)--(3.065,.065) (2.935,.065)--(3.065,-.065);
\node[font=\scriptsize,anchor=north,inner sep=1pt] at (3,-.10) {3};
\draw[black!80,line width=.4pt] (3.935,-.065)--(4.065,.065) (3.935,.065)--(4.065,-.065);
\node[font=\scriptsize,anchor=north,inner sep=1pt] at (4,-.10) {4};
\draw[black!80,line width=.4pt] (4.935,-.065)--(5.065,.065) (4.935,.065)--(5.065,-.065);
\node[font=\scriptsize,anchor=north,inner sep=1pt] at (5,-.10) {5};
\draw[black!80,line width=.4pt] (5.935,-.065)--(6.065,.065) (5.935,.065)--(6.065,-.065);
\node[font=\scriptsize,anchor=north,inner sep=1pt] at (6,-.10) {6};
\draw[black!80,line width=.4pt] (6.935,-.065)--(7.065,.065) (6.935,.065)--(7.065,-.065);
\node[font=\scriptsize,anchor=north,inner sep=1pt] at (7,-.10) {7};
\draw[blue!75!black,line width=.55pt] (4.5,0.5)--(4,0);
\draw[blue!75!black,line width=.55pt] (4.5,0.5)--(5,0);
\fill[blue!75!black] (4.5,0.5) circle[radius=1.4pt];
\node[font=\fontsize{7}{8}\selectfont,blue!75!black,anchor=south,inner sep=.3pt] at (4.5,0.63) {\(4^{[1]}\)};
\end{scope}
\begin{scope}[shift={(5.55cm,6.3cm)},x=0.68cm,y=0.68cm]
\draw[gray!65,line width=.35pt] (-.2,0)--(7.2,0);
\draw[black!80,line width=.4pt] (-0.065,-.065)--(0.065,.065) (-0.065,.065)--(0.065,-.065);
\node[font=\scriptsize,anchor=north,inner sep=1pt] at (0,-.10) {0};
\draw[black!80,line width=.4pt] (0.935,-.065)--(1.065,.065) (0.935,.065)--(1.065,-.065);
\node[font=\scriptsize,anchor=north,inner sep=1pt] at (1,-.10) {1};
\draw[black!80,line width=.4pt] (1.935,-.065)--(2.065,.065) (1.935,.065)--(2.065,-.065);
\node[font=\scriptsize,anchor=north,inner sep=1pt] at (2,-.10) {2};
\draw[black!80,line width=.4pt] (2.935,-.065)--(3.065,.065) (2.935,.065)--(3.065,-.065);
\node[font=\scriptsize,anchor=north,inner sep=1pt] at (3,-.10) {3};
\draw[black!80,line width=.4pt] (3.935,-.065)--(4.065,.065) (3.935,.065)--(4.065,-.065);
\node[font=\scriptsize,anchor=north,inner sep=1pt] at (4,-.10) {4};
\draw[black!80,line width=.4pt] (4.935,-.065)--(5.065,.065) (4.935,.065)--(5.065,-.065);
\node[font=\scriptsize,anchor=north,inner sep=1pt] at (5,-.10) {5};
\draw[black!80,line width=.4pt] (5.935,-.065)--(6.065,.065) (5.935,.065)--(6.065,-.065);
\node[font=\scriptsize,anchor=north,inner sep=1pt] at (6,-.10) {6};
\draw[black!80,line width=.4pt] (6.935,-.065)--(7.065,.065) (6.935,.065)--(7.065,-.065);
\node[font=\scriptsize,anchor=north,inner sep=1pt] at (7,-.10) {7};
\draw[black,line width=.55pt] (4.5,0.5)--(4,0);
\draw[black,line width=.55pt] (4.5,0.5)--(5,0);
\draw[blue!75!black,line width=.55pt] (4,1)--(3,0);
\draw[blue!75!black,line width=.55pt] (4,1)--(4.5,0.5);
\fill[black] (4.5,0.5) circle[radius=1.4pt];
\node[font=\fontsize{7}{8}\selectfont,black,anchor=west,inner sep=.3pt] at (4.67,0.5) {\(4^{[1]}\)};
\fill[blue!75!black] (4,1) circle[radius=1.4pt];
\node[font=\fontsize{7}{8}\selectfont,blue!75!black,anchor=south,inner sep=.3pt] at (4,1.13) {\(3^{[1]}\)};
\end{scope}
\begin{scope}[shift={(10.9cm,6.3cm)},x=0.68cm,y=0.68cm]
\draw[gray!65,line width=.35pt] (-.2,0)--(7.2,0);
\draw[black!80,line width=.4pt] (-0.065,-.065)--(0.065,.065) (-0.065,.065)--(0.065,-.065);
\node[font=\scriptsize,anchor=north,inner sep=1pt] at (0,-.10) {0};
\draw[black!80,line width=.4pt] (0.935,-.065)--(1.065,.065) (0.935,.065)--(1.065,-.065);
\node[font=\scriptsize,anchor=north,inner sep=1pt] at (1,-.10) {1};
\draw[black!80,line width=.4pt] (1.935,-.065)--(2.065,.065) (1.935,.065)--(2.065,-.065);
\node[font=\scriptsize,anchor=north,inner sep=1pt] at (2,-.10) {2};
\draw[black!80,line width=.4pt] (2.935,-.065)--(3.065,.065) (2.935,.065)--(3.065,-.065);
\node[font=\scriptsize,anchor=north,inner sep=1pt] at (3,-.10) {3};
\draw[black!80,line width=.4pt] (3.935,-.065)--(4.065,.065) (3.935,.065)--(4.065,-.065);
\node[font=\scriptsize,anchor=north,inner sep=1pt] at (4,-.10) {4};
\draw[black!80,line width=.4pt] (4.935,-.065)--(5.065,.065) (4.935,.065)--(5.065,-.065);
\node[font=\scriptsize,anchor=north,inner sep=1pt] at (5,-.10) {5};
\draw[black!80,line width=.4pt] (5.935,-.065)--(6.065,.065) (5.935,.065)--(6.065,-.065);
\node[font=\scriptsize,anchor=north,inner sep=1pt] at (6,-.10) {6};
\draw[black!80,line width=.4pt] (6.935,-.065)--(7.065,.065) (6.935,.065)--(7.065,-.065);
\node[font=\scriptsize,anchor=north,inner sep=1pt] at (7,-.10) {7};
\draw[black,line width=.55pt] (4.5,0.5)--(4,0);
\draw[black,line width=.55pt] (4.5,0.5)--(5,0);
\draw[black,line width=.55pt] (4,1)--(3,0);
\draw[black,line width=.55pt] (4,1)--(4.5,0.5);
\draw[blue!75!black,line width=.55pt] (1.5,0.5)--(1,0);
\draw[blue!75!black,line width=.55pt] (1.5,0.5)--(2,0);
\fill[black] (4.5,0.5) circle[radius=1.4pt];
\node[font=\fontsize{7}{8}\selectfont,black,anchor=west,inner sep=.3pt] at (4.67,0.5) {\(4^{[1]}\)};
\fill[black] (4,1) circle[radius=1.4pt];
\node[font=\fontsize{7}{8}\selectfont,black,anchor=south,inner sep=.3pt] at (4,1.13) {\(3^{[1]}\)};
\fill[blue!75!black] (1.5,0.5) circle[radius=1.4pt];
\node[font=\fontsize{7}{8}\selectfont,blue!75!black,anchor=south,inner sep=.3pt] at (1.5,0.63) {\(1^{[1]}\)};
\end{scope}
\begin{scope}[shift={(0.2cm,3.55cm)},x=0.68cm,y=0.68cm]
\draw[gray!65,line width=.35pt] (-.2,0)--(7.2,0);
\draw[black!80,line width=.4pt] (-0.065,-.065)--(0.065,.065) (-0.065,.065)--(0.065,-.065);
\node[font=\scriptsize,anchor=north,inner sep=1pt] at (0,-.10) {0};
\draw[black!80,line width=.4pt] (0.935,-.065)--(1.065,.065) (0.935,.065)--(1.065,-.065);
\node[font=\scriptsize,anchor=north,inner sep=1pt] at (1,-.10) {1};
\draw[black!80,line width=.4pt] (1.935,-.065)--(2.065,.065) (1.935,.065)--(2.065,-.065);
\node[font=\scriptsize,anchor=north,inner sep=1pt] at (2,-.10) {2};
\draw[black!80,line width=.4pt] (2.935,-.065)--(3.065,.065) (2.935,.065)--(3.065,-.065);
\node[font=\scriptsize,anchor=north,inner sep=1pt] at (3,-.10) {3};
\draw[black!80,line width=.4pt] (3.935,-.065)--(4.065,.065) (3.935,.065)--(4.065,-.065);
\node[font=\scriptsize,anchor=north,inner sep=1pt] at (4,-.10) {4};
\draw[black!80,line width=.4pt] (4.935,-.065)--(5.065,.065) (4.935,.065)--(5.065,-.065);
\node[font=\scriptsize,anchor=north,inner sep=1pt] at (5,-.10) {5};
\draw[black!80,line width=.4pt] (5.935,-.065)--(6.065,.065) (5.935,.065)--(6.065,-.065);
\node[font=\scriptsize,anchor=north,inner sep=1pt] at (6,-.10) {6};
\draw[black!80,line width=.4pt] (6.935,-.065)--(7.065,.065) (6.935,.065)--(7.065,-.065);
\node[font=\scriptsize,anchor=north,inner sep=1pt] at (7,-.10) {7};
\draw[black,line width=.55pt] (4.5,0.5)--(4,0);
\draw[black,line width=.55pt] (4.5,0.5)--(5,0);
\draw[black,line width=.55pt] (4,1)--(3,0);
\draw[black,line width=.55pt] (4,1)--(4.5,0.5);
\draw[black,line width=.55pt] (1.5,0.5)--(1,0);
\draw[black,line width=.55pt] (1.5,0.5)--(2,0);
\draw[blue!75!black,line width=.55pt] (3,2)--(1.5,0.5);
\draw[blue!75!black,line width=.55pt] (3,2)--(4,1);
\fill[black] (4.5,0.5) circle[radius=1.4pt];
\node[font=\fontsize{7}{8}\selectfont,black,anchor=west,inner sep=.3pt] at (4.67,0.5) {\(4^{[1]}\)};
\fill[black] (4,1) circle[radius=1.4pt];
\node[font=\fontsize{7}{8}\selectfont,black,anchor=west,inner sep=.3pt] at (4.17,1) {\(3^{[1]}\)};
\fill[black] (1.5,0.5) circle[radius=1.4pt];
\node[font=\fontsize{7}{8}\selectfont,black,anchor=east,inner sep=.3pt] at (1.33,0.5) {\(1^{[1]}\)};
\fill[blue!75!black] (3,2) circle[radius=1.4pt];
\node[font=\fontsize{7}{8}\selectfont,blue!75!black,anchor=south,inner sep=.3pt] at (3,2.13) {\(2^{[1]}\)};
\end{scope}
\begin{scope}[shift={(5.55cm,3.55cm)},x=0.68cm,y=0.68cm]
\draw[gray!65,line width=.35pt] (-.2,0)--(7.2,0);
\draw[black!80,line width=.4pt] (-0.065,-.065)--(0.065,.065) (-0.065,.065)--(0.065,-.065);
\node[font=\scriptsize,anchor=north,inner sep=1pt] at (0,-.10) {0};
\draw[black!80,line width=.4pt] (0.935,-.065)--(1.065,.065) (0.935,.065)--(1.065,-.065);
\node[font=\scriptsize,anchor=north,inner sep=1pt] at (1,-.10) {1};
\draw[black!80,line width=.4pt] (1.935,-.065)--(2.065,.065) (1.935,.065)--(2.065,-.065);
\node[font=\scriptsize,anchor=north,inner sep=1pt] at (2,-.10) {2};
\draw[black!80,line width=.4pt] (2.935,-.065)--(3.065,.065) (2.935,.065)--(3.065,-.065);
\node[font=\scriptsize,anchor=north,inner sep=1pt] at (3,-.10) {3};
\draw[black!80,line width=.4pt] (3.935,-.065)--(4.065,.065) (3.935,.065)--(4.065,-.065);
\node[font=\scriptsize,anchor=north,inner sep=1pt] at (4,-.10) {4};
\draw[black!80,line width=.4pt] (4.935,-.065)--(5.065,.065) (4.935,.065)--(5.065,-.065);
\node[font=\scriptsize,anchor=north,inner sep=1pt] at (5,-.10) {5};
\draw[black!80,line width=.4pt] (5.935,-.065)--(6.065,.065) (5.935,.065)--(6.065,-.065);
\node[font=\scriptsize,anchor=north,inner sep=1pt] at (6,-.10) {6};
\draw[black!80,line width=.4pt] (6.935,-.065)--(7.065,.065) (6.935,.065)--(7.065,-.065);
\node[font=\scriptsize,anchor=north,inner sep=1pt] at (7,-.10) {7};
\draw[black,line width=.55pt] (4.5,0.5)--(4,0);
\draw[black,line width=.55pt] (4.5,0.5)--(5,0);
\draw[black,line width=.55pt] (4,1)--(3,0);
\draw[black,line width=.55pt] (4,1)--(4.5,0.5);
\draw[black,line width=.55pt] (1.5,0.5)--(1,0);
\draw[black,line width=.55pt] (1.5,0.5)--(2,0);
\draw[black,line width=.55pt] (3,2)--(1.5,0.5);
\draw[black,line width=.55pt] (3,2)--(4,1);
\draw[blue!75!black,line width=.55pt] (3.5,2.5)--(3,2);
\draw[blue!75!black,line width=.55pt] (3.5,2.5)--(6,0);
\fill[black] (4.5,0.5) circle[radius=1.4pt];
\node[font=\fontsize{7}{8}\selectfont,black,anchor=west,inner sep=.3pt] at (4.67,0.5) {\(4^{[1]}\)};
\fill[black] (4,1) circle[radius=1.4pt];
\node[font=\fontsize{7}{8}\selectfont,black,anchor=west,inner sep=.3pt] at (4.17,1) {\(3^{[1]}\)};
\fill[black] (1.5,0.5) circle[radius=1.4pt];
\node[font=\fontsize{7}{8}\selectfont,black,anchor=east,inner sep=.3pt] at (1.33,0.5) {\(1^{[1]}\)};
\fill[black] (3,2) circle[radius=1.4pt];
\node[font=\fontsize{7}{8}\selectfont,black,anchor=east,inner sep=.3pt] at (2.83,2) {\(2^{[1]}\)};
\fill[blue!75!black] (3.5,2.5) circle[radius=1.4pt];
\node[font=\fontsize{7}{8}\selectfont,blue!75!black,anchor=south,inner sep=.3pt] at (3.5,2.63) {\(4^{[2]}\)};
\end{scope}
\begin{scope}[shift={(10.9cm,3.55cm)},x=0.68cm,y=0.68cm]
\draw[gray!65,line width=.35pt] (-.2,0)--(7.2,0);
\draw[black!80,line width=.4pt] (-0.065,-.065)--(0.065,.065) (-0.065,.065)--(0.065,-.065);
\node[font=\scriptsize,anchor=north,inner sep=1pt] at (0,-.10) {0};
\draw[black!80,line width=.4pt] (0.935,-.065)--(1.065,.065) (0.935,.065)--(1.065,-.065);
\node[font=\scriptsize,anchor=north,inner sep=1pt] at (1,-.10) {1};
\draw[black!80,line width=.4pt] (1.935,-.065)--(2.065,.065) (1.935,.065)--(2.065,-.065);
\node[font=\scriptsize,anchor=north,inner sep=1pt] at (2,-.10) {2};
\draw[black!80,line width=.4pt] (2.935,-.065)--(3.065,.065) (2.935,.065)--(3.065,-.065);
\node[font=\scriptsize,anchor=north,inner sep=1pt] at (3,-.10) {3};
\draw[black!80,line width=.4pt] (3.935,-.065)--(4.065,.065) (3.935,.065)--(4.065,-.065);
\node[font=\scriptsize,anchor=north,inner sep=1pt] at (4,-.10) {4};
\draw[black!80,line width=.4pt] (4.935,-.065)--(5.065,.065) (4.935,.065)--(5.065,-.065);
\node[font=\scriptsize,anchor=north,inner sep=1pt] at (5,-.10) {5};
\draw[black!80,line width=.4pt] (5.935,-.065)--(6.065,.065) (5.935,.065)--(6.065,-.065);
\node[font=\scriptsize,anchor=north,inner sep=1pt] at (6,-.10) {6};
\draw[black!80,line width=.4pt] (6.935,-.065)--(7.065,.065) (6.935,.065)--(7.065,-.065);
\node[font=\scriptsize,anchor=north,inner sep=1pt] at (7,-.10) {7};
\draw[black,line width=.55pt] (4.5,0.5)--(4,0);
\draw[black,line width=.55pt] (4.5,0.5)--(5,0);
\draw[black,line width=.55pt] (4,1)--(3,0);
\draw[black,line width=.55pt] (4,1)--(4.5,0.5);
\draw[black,line width=.55pt] (1.5,0.5)--(1,0);
\draw[black,line width=.55pt] (1.5,0.5)--(2,0);
\draw[black,line width=.55pt] (3,2)--(1.5,0.5);
\draw[black,line width=.55pt] (3,2)--(4,1);
\draw[black,line width=.55pt] (3.5,2.5)--(3,2);
\draw[black,line width=.55pt] (3.5,2.5)--(6,0);
\draw[blue!75!black,line width=.55pt] (3,3)--(0,0);
\draw[blue!75!black,line width=.55pt] (3,3)--(3.5,2.5);
\fill[black] (4.5,0.5) circle[radius=1.4pt];
\node[font=\fontsize{7}{8}\selectfont,black,anchor=west,inner sep=.3pt] at (4.67,0.5) {\(4^{[1]}\)};
\fill[black] (4,1) circle[radius=1.4pt];
\node[font=\fontsize{7}{8}\selectfont,black,anchor=west,inner sep=.3pt] at (4.17,1) {\(3^{[1]}\)};
\fill[black] (1.5,0.5) circle[radius=1.4pt];
\node[font=\fontsize{7}{8}\selectfont,black,anchor=east,inner sep=.3pt] at (1.33,0.5) {\(1^{[1]}\)};
\fill[black] (3,2) circle[radius=1.4pt];
\node[font=\fontsize{7}{8}\selectfont,black,anchor=east,inner sep=.3pt] at (2.83,2) {\(2^{[1]}\)};
\fill[black] (3.5,2.5) circle[radius=1.4pt];
\node[font=\fontsize{7}{8}\selectfont,black,anchor=west,inner sep=.3pt] at (3.67,2.5) {\(4^{[2]}\)};
\fill[blue!75!black] (3,3) circle[radius=1.4pt];
\node[font=\fontsize{7}{8}\selectfont,blue!75!black,anchor=south,inner sep=.3pt] at (3,3.13) {\(3^{[2]}\)};
\end{scope}
\draw[gray!70,dashed,line width=.35pt] (2.0,-.72) rectangle (13.9,3.05);
\begin{scope}[shift={(2.3cm,0.25cm)},x=0.68cm,y=0.68cm]
\draw[gray!65,line width=.35pt] (-.2,0)--(7.2,0);
\draw[black!80,line width=.4pt] (-0.065,-.065)--(0.065,.065) (-0.065,.065)--(0.065,-.065);
\node[font=\scriptsize,anchor=north,inner sep=1pt] at (0,-.10) {0};
\draw[black!80,line width=.4pt] (0.935,-.065)--(1.065,.065) (0.935,.065)--(1.065,-.065);
\node[font=\scriptsize,anchor=north,inner sep=1pt] at (1,-.10) {1};
\draw[black!80,line width=.4pt] (1.935,-.065)--(2.065,.065) (1.935,.065)--(2.065,-.065);
\node[font=\scriptsize,anchor=north,inner sep=1pt] at (2,-.10) {2};
\draw[black!80,line width=.4pt] (2.935,-.065)--(3.065,.065) (2.935,.065)--(3.065,-.065);
\node[font=\scriptsize,anchor=north,inner sep=1pt] at (3,-.10) {3};
\draw[black!80,line width=.4pt] (3.935,-.065)--(4.065,.065) (3.935,.065)--(4.065,-.065);
\node[font=\scriptsize,anchor=north,inner sep=1pt] at (4,-.10) {4};
\draw[black!80,line width=.4pt] (4.935,-.065)--(5.065,.065) (4.935,.065)--(5.065,-.065);
\node[font=\scriptsize,anchor=north,inner sep=1pt] at (5,-.10) {5};
\draw[black!80,line width=.4pt] (5.935,-.065)--(6.065,.065) (5.935,.065)--(6.065,-.065);
\node[font=\scriptsize,anchor=north,inner sep=1pt] at (6,-.10) {6};
\draw[black!80,line width=.4pt] (6.935,-.065)--(7.065,.065) (6.935,.065)--(7.065,-.065);
\node[font=\scriptsize,anchor=north,inner sep=1pt] at (7,-.10) {7};
\draw[black,line width=.55pt] (4.5,0.5)--(4,0);
\draw[black,line width=.55pt] (4.5,0.5)--(5,0);
\draw[black,line width=.55pt] (4,1)--(3,0);
\draw[black,line width=.55pt] (4,1)--(4.5,0.5);
\draw[black,line width=.55pt] (1.5,0.5)--(1,0);
\draw[black,line width=.55pt] (1.5,0.5)--(2,0);
\draw[black,line width=.55pt] (3,2)--(1.5,0.5);
\draw[black,line width=.55pt] (3,2)--(4,1);
\draw[black,line width=.55pt] (3.5,2.5)--(3,2);
\draw[black,line width=.55pt] (3.5,2.5)--(6,0);
\draw[black,line width=.55pt] (3,3)--(0,0);
\draw[black,line width=.55pt] (3,3)--(3.5,2.5);
\draw[black,line width=.55pt] (3.5,3.5)--(3,3);
\draw[black,line width=.55pt] (3.5,3.5)--(7,0);
\fill[black] (4.5,0.5) circle[radius=1.4pt];
\node[font=\fontsize{7}{8}\selectfont,black,anchor=west,inner sep=.3pt] at (4.67,0.5) {\(4^{[1]}\)};
\fill[black] (4,1) circle[radius=1.4pt];
\node[font=\fontsize{7}{8}\selectfont,black,anchor=west,inner sep=.3pt] at (4.17,1) {\(3^{[1]}\)};
\fill[black] (1.5,0.5) circle[radius=1.4pt];
\node[font=\fontsize{7}{8}\selectfont,black,anchor=east,inner sep=.3pt] at (1.33,0.5) {\(1^{[1]}\)};
\fill[black] (3,2) circle[radius=1.4pt];
\node[font=\fontsize{7}{8}\selectfont,black,anchor=east,inner sep=.3pt] at (2.83,2) {\(2^{[1]}\)};
\fill[black] (3.5,2.5) circle[radius=1.4pt];
\node[font=\fontsize{7}{8}\selectfont,black,anchor=west,inner sep=.3pt] at (3.67,2.5) {\(4^{[2]}\)};
\fill[black] (3,3) circle[radius=1.4pt];
\node[font=\fontsize{7}{8}\selectfont,black,anchor=east,inner sep=.3pt] at (2.83,3) {\(3^{[2]}\)};
\fill[black] (3.5,3.5) circle[radius=1.4pt];
\node[font=\fontsize{7}{8}\selectfont,black,anchor=south,inner sep=.3pt] at (3.5,3.63) {\(4^{[3]}\)};
\end{scope}
\begin{scope}[shift={(8.8cm,0.25cm)},x=0.68cm,y=0.68cm]
\draw[gray!65,line width=.35pt] (-.2,0)--(7.2,0);
\draw[black!80,line width=.4pt] (-0.065,-.065)--(0.065,.065) (-0.065,.065)--(0.065,-.065);
\node[font=\scriptsize,anchor=north,inner sep=1pt] at (0,-.10) {0};
\draw[black!80,line width=.4pt] (0.935,-.065)--(1.065,.065) (0.935,.065)--(1.065,-.065);
\node[font=\scriptsize,anchor=north,inner sep=1pt] at (1,-.10) {1};
\draw[black!80,line width=.4pt] (1.935,-.065)--(2.065,.065) (1.935,.065)--(2.065,-.065);
\node[font=\scriptsize,anchor=north,inner sep=1pt] at (2,-.10) {2};
\draw[black!80,line width=.4pt] (2.935,-.065)--(3.065,.065) (2.935,.065)--(3.065,-.065);
\node[font=\scriptsize,anchor=north,inner sep=1pt] at (3,-.10) {3};
\draw[black!80,line width=.4pt] (3.935,-.065)--(4.065,.065) (3.935,.065)--(4.065,-.065);
\node[font=\scriptsize,anchor=north,inner sep=1pt] at (4,-.10) {4};
\draw[black!80,line width=.4pt] (4.935,-.065)--(5.065,.065) (4.935,.065)--(5.065,-.065);
\node[font=\scriptsize,anchor=north,inner sep=1pt] at (5,-.10) {5};
\draw[black!80,line width=.4pt] (5.935,-.065)--(6.065,.065) (5.935,.065)--(6.065,-.065);
\node[font=\scriptsize,anchor=north,inner sep=1pt] at (6,-.10) {6};
\draw[black!80,line width=.4pt] (6.935,-.065)--(7.065,.065) (6.935,.065)--(7.065,-.065);
\node[font=\scriptsize,anchor=north,inner sep=1pt] at (7,-.10) {7};
\draw[black,line width=.55pt] (4.5,0.5)--(4,0);
\draw[black,line width=.55pt] (4.5,0.5)--(5,0);
\draw[black,line width=.55pt] (4,1)--(3,0);
\draw[black,line width=.55pt] (4,1)--(4.5,0.5);
\draw[black,line width=.55pt] (1.5,0.5)--(1,0);
\draw[black,line width=.55pt] (1.5,0.5)--(2,0);
\draw[black,line width=.55pt] (3,2)--(1.5,0.5);
\draw[black,line width=.55pt] (3,2)--(4,1);
\draw[black,line width=.55pt] (3.5,2.5)--(3,2);
\draw[black,line width=.55pt] (3.5,2.5)--(6,0);
\draw[black,line width=.55pt] (3,3)--(0,0);
\draw[black,line width=.55pt] (3,3)--(3.5,2.5);
\draw[black,line width=.55pt] (3.5,3.5)--(3,3);
\draw[black,line width=.55pt] (3.5,3.5)--(7,0);
\fill[black] (4.5,0.5) circle[radius=1.4pt];
\node[font=\fontsize{7}{8}\selectfont,black,anchor=west,inner sep=.3pt] at (4.67,0.5) {\(1\)};
\fill[black] (4,1) circle[radius=1.4pt];
\node[font=\fontsize{7}{8}\selectfont,black,anchor=west,inner sep=.3pt] at (4.17,1) {\(2\)};
\fill[black] (1.5,0.5) circle[radius=1.4pt];
\node[font=\fontsize{7}{8}\selectfont,black,anchor=east,inner sep=.3pt] at (1.33,0.5) {\(3\)};
\fill[black] (3,2) circle[radius=1.4pt];
\node[font=\fontsize{7}{8}\selectfont,black,anchor=east,inner sep=.3pt] at (2.83,2) {\(4\)};
\fill[black] (3.5,2.5) circle[radius=1.4pt];
\node[font=\fontsize{7}{8}\selectfont,black,anchor=west,inner sep=.3pt] at (3.67,2.5) {\(5\)};
\fill[black] (3,3) circle[radius=1.4pt];
\node[font=\fontsize{7}{8}\selectfont,black,anchor=east,inner sep=.3pt] at (2.83,3) {\(6\)};
\fill[black] (3.5,3.5) circle[radius=1.4pt];
\node[font=\fontsize{7}{8}\selectfont,black,anchor=south,inner sep=.3pt] at (3.5,3.63) {\(7\)};
\end{scope}
\node[font=\small] at (4.68,-.42) {\(\mathsf P_\Omega(a)\)};
\node[font=\small] at (11.18,-.42) {\(\mathsf Q_\Omega(a)\)};
\end{tikzpicture}
\caption{The \(\Omega^\bullet\)-correspondence for \(a=(4,3,1,2,4,3,4)\); new nodes are blue.}
\label{fig:nt-omega-insertion}
\end{figure}

\subsection{\texorpdfstring{\(\Omega\)}{Omega}-parking and word paths}
\label{subsec:endpoint-decreasing}

We realize \(\Omega\)-parking
\cite[Section 4.2]{NT2024} by marked blocks, and use it to
construct word paths. The correspondence with
\Cref{subsec:nt-omega} is recorded in \Cref{tab:omega-dictionary}
and illustrated in \Cref{fig:3}.

For \(m\geq0\), let \(\Word_m\) be the set of words of length
\(m\) in positive integers.
For \(m>0\), fix
\(a=(a_1,\ldots,a_m)\in\Word_m\).
For \(i\in\ZZ\), let
\( I_i:=[i,i+1]\times[0,1]\subset\RR^2 \)
be the block at \(i\).  For a finite set \(\mathcal I\) of blocks, a
\emph{component} is a maximal subset of the form
\( \{I_L,I_{L+1},\ldots,I_R\}\subseteq\mathcal I \).
We construct \(\mathcal I_r\) recursively, marking one block in each
component and calling it that component's \emph{marked block}. Each mark records the index of the
insertion that created it. Set
\(\mathcal I_0=\varnothing\).  For \(r=1,\ldots,m\), define
\(\mathcal I_r\) as follows.  If
\(I_{a_r}\notin\mathcal I_{r-1}\), set
\[
  \mathcal I_r=\mathcal I_{r-1}\cup\{I_{a_r}\}.
\]
Otherwise, let
\( \{I_L,I_{L+1},\ldots,I_R\} \)
be the component containing \(I_{a_r}\), and let \(s<r\) be the index
recorded on its marked block. Set
\begin{equation}\label{eq:block-rule}
  \mathcal I_r=
  \begin{cases}
    \mathcal I_{r-1}\cup\{I_{L-1}\},
      &a_r<a_s,\\
    \mathcal I_{r-1}\cup\{I_{R+1}\},
      &a_r\geq a_s.
  \end{cases}
\end{equation}
In the component containing the unique block
\(I\in\mathcal I_r\setminus\mathcal I_{r-1}\), remove all existing marks,
mark \(I\) with the index \(r\), and keep all marks in the other
components. Let \(J_r(a)\) be the set of positions of the blocks in this
component and set \(e_r(a):=\min J_r(a)\).

Each component mark records its most recent insertion.
For \(1\leq r<s\leq m\), define a relation \(\trl_a\) on \([m]\) by
\begin{equation}\label{eq:mark-removal-poset}
  r\trl_a s
  \quad\Longleftrightarrow\quad
  \text{the \(s\)-th insertion removes the mark labeled \(r\)}.
\end{equation}
Let \(\prec_a\) be its transitive closure and put
\(\mathcal H(a)=([m],\prec_a)\).

We call \(a\) \emph{endpoint-decreasing} if
\(e_1(a)\geq e_2(a)\geq\cdots\geq e_m(a)\).
The empty word is also endpoint-decreasing. Denote the set of such
words of length \(m\) by \(\EWord_m\).

For \(r\in[m]\), let \(v_r\) be the node created at step \(r\)
in the forest construction, and let \(I_{p_r}\) be the block added
at the same step by \(\Omega\)-parking.
When the new block joins an existing component, the forest insertion
makes the root of the corresponding tree a child of \(v_r\).
The mark on that component is removed, and the new mark identifies
\(v_r\). Later insertions leave the subtree rooted at \(v_r\)
unchanged. Since the positions occupied in the two constructions
agree at each step by \cite[Proposition~4.3]{NT2024}, we obtain,
for \(r,s\in[m]\),
\begin{equation}\label{eq:omega-insertion-data}
  \begin{aligned}
    J_r(a)&=\operatorname{INT}(v_r,\mathsf P_\Omega(a)),\\
    e_r(a)&=\rho_{F_\Omega(a)}(v_r),\\
    r\trl_a s&\quad\Longleftrightarrow\quad
      v_r\text{ is a child of }v_s.
  \end{aligned}
\end{equation}
The last equivalence identifies orders that respect \(\mathcal H(a)\)
with orders in which each child is read before its parent.
Reading the \(P\)-symbol in these orders gives all the words in its
\(\Omega\)-class by
\cite[Theorem~4.1 and Equation~(4.1)]{NT2024}.
To keep the endpoints weakly decreasing, we must read nodes with
larger flags first. When the flags agree, the nodes lie on a
left-child chain, so a descendant must be read before its ancestor.
These requirements determine exactly the rightmost reading.
Consequently, the rightmost reading of \(\mathsf P_\Omega(a)\),
with decorations removed, gives the unique endpoint-decreasing
representative of the \(\Omega\)-class of \(a\).

\begin{table}
  \centering
  \small
  \renewcommand{\arraystretch}{1.25}
  \begin{tabular}{@{}ll@{}}
    \toprule
    Marked blocks & \(P\)-symbol\\
    \midrule
    \(J_r(a)\) & \(\operatorname{INT}(v_r,F)\)\\
    \(e_r(a)\) & \(\rho_F(v_r)\)\\
    New block position \(p_r\)
      & \(\ell_F(v_r)\)\\
    \(r\trl_a s\) & \(v_r\) is an internal child of \(v_s\)\\
    \(\mathcal H(a)\) & Internal-node poset, children before parents\\
    \bottomrule
  \end{tabular}
  \caption{The dictionary with the \(P\)-symbol, where
    \(F=F_\Omega(a)\).}
  \label{tab:omega-dictionary}
\end{table}

In particular, the dictionary gives
\begin{equation}\label{eq:omega-positive-support}
  e_r(a)>0\text{ for all }r
  \quad\Longleftrightarrow\quad
  F_\Omega(a)\in\mathsf{IndFor}_+.
\end{equation}
For a nonempty endpoint-decreasing word, this is equivalent to
positivity of its final endpoint.

\begin{exam}\label{exa:4}
For the word \(a=(4,3,1,2,4,3,4)\) of
\Cref{ex:nt-omega-insertion}, \Cref{fig:3} pairs each \(P\)-symbol
with its marked-block state. At insertion \(4\), the two roots
\(v_2,v_3\) become children of \(v_4\), just as the marks
\(2,3\) are removed. The new component is
\(J_4(a)=\{1,2,3,4\}\), with \(e_4(a)=1\).
The full endpoint sequence is
\[
  (e_1(a),\ldots,e_7(a))=(4,3,1,1,1,0,0),
\]
so \(a\) is endpoint-decreasing.
\end{exam}

\begin{figure}
\centering
\begin{tikzpicture}[x=1cm,y=1cm]
\node[font=\small] at (.85,.55) {Insertion};
\node[font=\small] at (4.88,.55) {Nadeau--Tewari \(P\)-symbol};
\node[font=\small] at (11.45,.55) {Marked blocks};
\begin{scope}[shift={(2.5cm,-0.87cm)},x=0.68cm,y=0.68cm]
\draw[gray!65,line width=.35pt] (-.2,0)--(7.2,0);
\draw[black!80,line width=.4pt] (-0.065,-.065)--(0.065,.065) (-0.065,.065)--(0.065,-.065);
\node[font=\scriptsize,anchor=north,inner sep=1pt] at (0,-.10) {0};
\draw[black!80,line width=.4pt] (0.935,-.065)--(1.065,.065) (0.935,.065)--(1.065,-.065);
\node[font=\scriptsize,anchor=north,inner sep=1pt] at (1,-.10) {1};
\draw[black!80,line width=.4pt] (1.935,-.065)--(2.065,.065) (1.935,.065)--(2.065,-.065);
\node[font=\scriptsize,anchor=north,inner sep=1pt] at (2,-.10) {2};
\draw[black!80,line width=.4pt] (2.935,-.065)--(3.065,.065) (2.935,.065)--(3.065,-.065);
\node[font=\scriptsize,anchor=north,inner sep=1pt] at (3,-.10) {3};
\draw[black!80,line width=.4pt] (3.935,-.065)--(4.065,.065) (3.935,.065)--(4.065,-.065);
\node[font=\scriptsize,anchor=north,inner sep=1pt] at (4,-.10) {4};
\draw[black!80,line width=.4pt] (4.935,-.065)--(5.065,.065) (4.935,.065)--(5.065,-.065);
\node[font=\scriptsize,anchor=north,inner sep=1pt] at (5,-.10) {5};
\draw[black!80,line width=.4pt] (5.935,-.065)--(6.065,.065) (5.935,.065)--(6.065,-.065);
\node[font=\scriptsize,anchor=north,inner sep=1pt] at (6,-.10) {6};
\draw[black!80,line width=.4pt] (6.935,-.065)--(7.065,.065) (6.935,.065)--(7.065,-.065);
\node[font=\scriptsize,anchor=north,inner sep=1pt] at (7,-.10) {7};
\draw[blue!75!black,line width=.55pt] (4.5,0.5)--(4,0);
\draw[blue!75!black,line width=.55pt] (4.5,0.5)--(5,0);
\fill[blue!75!black] (4.5,0.5) circle[radius=1.4pt];
\node[font=\fontsize{7}{8}\selectfont,blue!75!black,anchor=south,inner sep=.3pt] at (4.5,0.63) {\(4^{[1]}\)};
\end{scope}
\begin{scope}[shift={(9cm,-0.7cm)},x=0.7cm,y=1cm]
\path[draw=gray!40,densely dotted,line width=.35pt] (0,-.21) rectangle (1,.21);
\node[font=\scriptsize,gray!80,anchor=north] at (0.5,-.23) {0};
\path[draw=gray!40,densely dotted,line width=.35pt] (1,-.21) rectangle (2,.21);
\node[font=\scriptsize,gray!80,anchor=north] at (1.5,-.23) {1};
\path[draw=gray!40,densely dotted,line width=.35pt] (2,-.21) rectangle (3,.21);
\node[font=\scriptsize,gray!80,anchor=north] at (2.5,-.23) {2};
\path[draw=gray!40,densely dotted,line width=.35pt] (3,-.21) rectangle (4,.21);
\node[font=\scriptsize,gray!80,anchor=north] at (3.5,-.23) {3};
\path[draw=blue!75!black,fill=blue!12,line width=.8pt] (4,-.21) rectangle (5,.21);
\node[font=\scriptsize,blue!75!black] at (4.5,0) {1};
\node[font=\scriptsize,gray!80,anchor=north] at (4.5,-.23) {4};
\path[draw=gray!40,densely dotted,line width=.35pt] (5,-.21) rectangle (6,.21);
\node[font=\scriptsize,gray!80,anchor=north] at (5.5,-.23) {5};
\path[draw=gray!40,densely dotted,line width=.35pt] (6,-.21) rectangle (7,.21);
\node[font=\scriptsize,gray!80,anchor=north] at (6.5,-.23) {6};
\draw[blue!75!black,line width=.45pt] (4,-.61)--(4,-.67)--(5,-.67)--(5,-.61);
\node[font=\scriptsize,anchor=north] at (3.5,-.64) {\(J_1(a)=\{4\},\quad e_1(a)=4\)};
\end{scope}
\node[font=\small,align=center] at (.85,-0.7) {\(r=1\)\\[2pt]\(4^{[1]}\)};
\node[font=\small] at (7.85,-0.7) {\(\longleftrightarrow\)};
\draw[gray!20] (0,-1.78)--(14.25,-1.78);
\begin{scope}[shift={(2.5cm,-3.03cm)},x=0.68cm,y=0.68cm]
\draw[gray!65,line width=.35pt] (-.2,0)--(7.2,0);
\draw[black!80,line width=.4pt] (-0.065,-.065)--(0.065,.065) (-0.065,.065)--(0.065,-.065);
\node[font=\scriptsize,anchor=north,inner sep=1pt] at (0,-.10) {0};
\draw[black!80,line width=.4pt] (0.935,-.065)--(1.065,.065) (0.935,.065)--(1.065,-.065);
\node[font=\scriptsize,anchor=north,inner sep=1pt] at (1,-.10) {1};
\draw[black!80,line width=.4pt] (1.935,-.065)--(2.065,.065) (1.935,.065)--(2.065,-.065);
\node[font=\scriptsize,anchor=north,inner sep=1pt] at (2,-.10) {2};
\draw[black!80,line width=.4pt] (2.935,-.065)--(3.065,.065) (2.935,.065)--(3.065,-.065);
\node[font=\scriptsize,anchor=north,inner sep=1pt] at (3,-.10) {3};
\draw[black!80,line width=.4pt] (3.935,-.065)--(4.065,.065) (3.935,.065)--(4.065,-.065);
\node[font=\scriptsize,anchor=north,inner sep=1pt] at (4,-.10) {4};
\draw[black!80,line width=.4pt] (4.935,-.065)--(5.065,.065) (4.935,.065)--(5.065,-.065);
\node[font=\scriptsize,anchor=north,inner sep=1pt] at (5,-.10) {5};
\draw[black!80,line width=.4pt] (5.935,-.065)--(6.065,.065) (5.935,.065)--(6.065,-.065);
\node[font=\scriptsize,anchor=north,inner sep=1pt] at (6,-.10) {6};
\draw[black!80,line width=.4pt] (6.935,-.065)--(7.065,.065) (6.935,.065)--(7.065,-.065);
\node[font=\scriptsize,anchor=north,inner sep=1pt] at (7,-.10) {7};
\draw[black,line width=.55pt] (4.5,0.5)--(4,0);
\draw[black,line width=.55pt] (4.5,0.5)--(5,0);
\draw[blue!75!black,line width=.55pt] (4,1)--(3,0);
\draw[blue!75!black,line width=.55pt] (4,1)--(4.5,0.5);
\fill[black] (4.5,0.5) circle[radius=1.4pt];
\node[font=\fontsize{7}{8}\selectfont,black,anchor=west,inner sep=.3pt] at (4.67,0.5) {\(4^{[1]}\)};
\fill[blue!75!black] (4,1) circle[radius=1.4pt];
\node[font=\fontsize{7}{8}\selectfont,blue!75!black,anchor=south,inner sep=.3pt] at (4,1.13) {\(3^{[1]}\)};
\end{scope}
\begin{scope}[shift={(9cm,-2.69cm)},x=0.7cm,y=1cm]
\path[draw=gray!40,densely dotted,line width=.35pt] (0,-.21) rectangle (1,.21);
\node[font=\scriptsize,gray!80,anchor=north] at (0.5,-.23) {0};
\path[draw=gray!40,densely dotted,line width=.35pt] (1,-.21) rectangle (2,.21);
\node[font=\scriptsize,gray!80,anchor=north] at (1.5,-.23) {1};
\path[draw=gray!40,densely dotted,line width=.35pt] (2,-.21) rectangle (3,.21);
\node[font=\scriptsize,gray!80,anchor=north] at (2.5,-.23) {2};
\path[draw=blue!75!black,fill=blue!12,line width=.8pt] (3,-.21) rectangle (4,.21);
\node[font=\scriptsize,blue!75!black] at (3.5,0) {2};
\node[font=\scriptsize,gray!80,anchor=north] at (3.5,-.23) {3};
\path[draw=black,fill=white,line width=.5pt] (4,-.21) rectangle (5,.21);
\node[font=\scriptsize,gray!80,anchor=north] at (4.5,-.23) {4};
\path[draw=gray!40,densely dotted,line width=.35pt] (5,-.21) rectangle (6,.21);
\node[font=\scriptsize,gray!80,anchor=north] at (5.5,-.23) {5};
\path[draw=gray!40,densely dotted,line width=.35pt] (6,-.21) rectangle (7,.21);
\node[font=\scriptsize,gray!80,anchor=north] at (6.5,-.23) {6};
\draw[blue!75!black,line width=.45pt] (3,-.61)--(3,-.67)--(5,-.67)--(5,-.61);
\node[font=\scriptsize,anchor=north] at (3.5,-.64) {\(J_2(a)=\{3,4\},\quad e_2(a)=3\)};
\end{scope}
\node[font=\small,align=center] at (.85,-2.69) {\(r=2\)\\[2pt]\(3^{[1]}\)};
\node[font=\small] at (7.85,-2.69) {\(\longleftrightarrow\)};
\draw[gray!20] (0,-3.77)--(14.25,-3.77);
\begin{scope}[shift={(2.5cm,-5.02cm)},x=0.68cm,y=0.68cm]
\draw[gray!65,line width=.35pt] (-.2,0)--(7.2,0);
\draw[black!80,line width=.4pt] (-0.065,-.065)--(0.065,.065) (-0.065,.065)--(0.065,-.065);
\node[font=\scriptsize,anchor=north,inner sep=1pt] at (0,-.10) {0};
\draw[black!80,line width=.4pt] (0.935,-.065)--(1.065,.065) (0.935,.065)--(1.065,-.065);
\node[font=\scriptsize,anchor=north,inner sep=1pt] at (1,-.10) {1};
\draw[black!80,line width=.4pt] (1.935,-.065)--(2.065,.065) (1.935,.065)--(2.065,-.065);
\node[font=\scriptsize,anchor=north,inner sep=1pt] at (2,-.10) {2};
\draw[black!80,line width=.4pt] (2.935,-.065)--(3.065,.065) (2.935,.065)--(3.065,-.065);
\node[font=\scriptsize,anchor=north,inner sep=1pt] at (3,-.10) {3};
\draw[black!80,line width=.4pt] (3.935,-.065)--(4.065,.065) (3.935,.065)--(4.065,-.065);
\node[font=\scriptsize,anchor=north,inner sep=1pt] at (4,-.10) {4};
\draw[black!80,line width=.4pt] (4.935,-.065)--(5.065,.065) (4.935,.065)--(5.065,-.065);
\node[font=\scriptsize,anchor=north,inner sep=1pt] at (5,-.10) {5};
\draw[black!80,line width=.4pt] (5.935,-.065)--(6.065,.065) (5.935,.065)--(6.065,-.065);
\node[font=\scriptsize,anchor=north,inner sep=1pt] at (6,-.10) {6};
\draw[black!80,line width=.4pt] (6.935,-.065)--(7.065,.065) (6.935,.065)--(7.065,-.065);
\node[font=\scriptsize,anchor=north,inner sep=1pt] at (7,-.10) {7};
\draw[black,line width=.55pt] (4.5,0.5)--(4,0);
\draw[black,line width=.55pt] (4.5,0.5)--(5,0);
\draw[black,line width=.55pt] (4,1)--(3,0);
\draw[black,line width=.55pt] (4,1)--(4.5,0.5);
\draw[blue!75!black,line width=.55pt] (1.5,0.5)--(1,0);
\draw[blue!75!black,line width=.55pt] (1.5,0.5)--(2,0);
\fill[black] (4.5,0.5) circle[radius=1.4pt];
\node[font=\fontsize{7}{8}\selectfont,black,anchor=west,inner sep=.3pt] at (4.67,0.5) {\(4^{[1]}\)};
\draw[black,fill=gray!30] (4,1) circle[radius=2.1pt];
\node[font=\fontsize{7}{8}\selectfont,black,anchor=south,inner sep=.3pt] at (4,1.13) {\(3^{[1]}\)};
\fill[blue!75!black] (1.5,0.5) circle[radius=1.4pt];
\node[font=\fontsize{7}{8}\selectfont,blue!75!black,anchor=south,inner sep=.3pt] at (1.5,0.63) {\(1^{[1]}\)};
\end{scope}
\begin{scope}[shift={(9cm,-4.68cm)},x=0.7cm,y=1cm]
\path[draw=gray!40,densely dotted,line width=.35pt] (0,-.21) rectangle (1,.21);
\node[font=\scriptsize,gray!80,anchor=north] at (0.5,-.23) {0};
\path[draw=blue!75!black,fill=blue!12,line width=.8pt] (1,-.21) rectangle (2,.21);
\node[font=\scriptsize,blue!75!black] at (1.5,0) {3};
\node[font=\scriptsize,gray!80,anchor=north] at (1.5,-.23) {1};
\path[draw=gray!40,densely dotted,line width=.35pt] (2,-.21) rectangle (3,.21);
\node[font=\scriptsize,gray!80,anchor=north] at (2.5,-.23) {2};
\path[draw=black,fill=gray!25,line width=.5pt] (3,-.21) rectangle (4,.21);
\node[font=\scriptsize,black] at (3.5,0) {2};
\node[font=\scriptsize,gray!80,anchor=north] at (3.5,-.23) {3};
\path[draw=black,fill=white,line width=.5pt] (4,-.21) rectangle (5,.21);
\node[font=\scriptsize,gray!80,anchor=north] at (4.5,-.23) {4};
\path[draw=gray!40,densely dotted,line width=.35pt] (5,-.21) rectangle (6,.21);
\node[font=\scriptsize,gray!80,anchor=north] at (5.5,-.23) {5};
\path[draw=gray!40,densely dotted,line width=.35pt] (6,-.21) rectangle (7,.21);
\node[font=\scriptsize,gray!80,anchor=north] at (6.5,-.23) {6};
\draw[blue!75!black,line width=.45pt] (1,-.61)--(1,-.67)--(2,-.67)--(2,-.61);
\node[font=\scriptsize,anchor=north] at (3.5,-.64) {\(J_3(a)=\{1\},\quad e_3(a)=1\)};
\end{scope}
\node[font=\small,align=center] at (.85,-4.68) {\(r=3\)\\[2pt]\(1^{[1]}\)};
\node[font=\small] at (7.85,-4.68) {\(\longleftrightarrow\)};
\draw[gray!20] (0,-5.76)--(14.25,-5.76);
\begin{scope}[shift={(2.5cm,-7.69cm)},x=0.68cm,y=0.68cm]
\draw[gray!65,line width=.35pt] (-.2,0)--(7.2,0);
\draw[black!80,line width=.4pt] (-0.065,-.065)--(0.065,.065) (-0.065,.065)--(0.065,-.065);
\node[font=\scriptsize,anchor=north,inner sep=1pt] at (0,-.10) {0};
\draw[black!80,line width=.4pt] (0.935,-.065)--(1.065,.065) (0.935,.065)--(1.065,-.065);
\node[font=\scriptsize,anchor=north,inner sep=1pt] at (1,-.10) {1};
\draw[black!80,line width=.4pt] (1.935,-.065)--(2.065,.065) (1.935,.065)--(2.065,-.065);
\node[font=\scriptsize,anchor=north,inner sep=1pt] at (2,-.10) {2};
\draw[black!80,line width=.4pt] (2.935,-.065)--(3.065,.065) (2.935,.065)--(3.065,-.065);
\node[font=\scriptsize,anchor=north,inner sep=1pt] at (3,-.10) {3};
\draw[black!80,line width=.4pt] (3.935,-.065)--(4.065,.065) (3.935,.065)--(4.065,-.065);
\node[font=\scriptsize,anchor=north,inner sep=1pt] at (4,-.10) {4};
\draw[black!80,line width=.4pt] (4.935,-.065)--(5.065,.065) (4.935,.065)--(5.065,-.065);
\node[font=\scriptsize,anchor=north,inner sep=1pt] at (5,-.10) {5};
\draw[black!80,line width=.4pt] (5.935,-.065)--(6.065,.065) (5.935,.065)--(6.065,-.065);
\node[font=\scriptsize,anchor=north,inner sep=1pt] at (6,-.10) {6};
\draw[black!80,line width=.4pt] (6.935,-.065)--(7.065,.065) (6.935,.065)--(7.065,-.065);
\node[font=\scriptsize,anchor=north,inner sep=1pt] at (7,-.10) {7};
\draw[black,line width=.55pt] (4.5,0.5)--(4,0);
\draw[black,line width=.55pt] (4.5,0.5)--(5,0);
\draw[black,line width=.55pt] (4,1)--(3,0);
\draw[black,line width=.55pt] (4,1)--(4.5,0.5);
\draw[black,line width=.55pt] (1.5,0.5)--(1,0);
\draw[black,line width=.55pt] (1.5,0.5)--(2,0);
\draw[blue!75!black,line width=.55pt] (3,2)--(1.5,0.5);
\draw[blue!75!black,line width=.55pt] (3,2)--(4,1);
\fill[black] (4.5,0.5) circle[radius=1.4pt];
\node[font=\fontsize{7}{8}\selectfont,black,anchor=west,inner sep=.3pt] at (4.67,0.5) {\(4^{[1]}\)};
\fill[black] (4,1) circle[radius=1.4pt];
\node[font=\fontsize{7}{8}\selectfont,black,anchor=west,inner sep=.3pt] at (4.17,1) {\(3^{[1]}\)};
\fill[black] (1.5,0.5) circle[radius=1.4pt];
\node[font=\fontsize{7}{8}\selectfont,black,anchor=east,inner sep=.3pt] at (1.33,0.5) {\(1^{[1]}\)};
\fill[blue!75!black] (3,2) circle[radius=1.4pt];
\node[font=\fontsize{7}{8}\selectfont,blue!75!black,anchor=south,inner sep=.3pt] at (3,2.13) {\(2^{[1]}\)};
\end{scope}
\begin{scope}[shift={(9cm,-7.01cm)},x=0.7cm,y=1cm]
\path[draw=gray!40,densely dotted,line width=.35pt] (0,-.21) rectangle (1,.21);
\node[font=\scriptsize,gray!80,anchor=north] at (0.5,-.23) {0};
\path[draw=black,fill=white,line width=.5pt] (1,-.21) rectangle (2,.21);
\node[font=\scriptsize,gray!80,anchor=north] at (1.5,-.23) {1};
\path[draw=blue!75!black,fill=blue!12,line width=.8pt] (2,-.21) rectangle (3,.21);
\node[font=\scriptsize,blue!75!black] at (2.5,0) {4};
\node[font=\scriptsize,gray!80,anchor=north] at (2.5,-.23) {2};
\path[draw=black,fill=white,line width=.5pt] (3,-.21) rectangle (4,.21);
\node[font=\scriptsize,gray!80,anchor=north] at (3.5,-.23) {3};
\path[draw=black,fill=white,line width=.5pt] (4,-.21) rectangle (5,.21);
\node[font=\scriptsize,gray!80,anchor=north] at (4.5,-.23) {4};
\path[draw=gray!40,densely dotted,line width=.35pt] (5,-.21) rectangle (6,.21);
\node[font=\scriptsize,gray!80,anchor=north] at (5.5,-.23) {5};
\path[draw=gray!40,densely dotted,line width=.35pt] (6,-.21) rectangle (7,.21);
\node[font=\scriptsize,gray!80,anchor=north] at (6.5,-.23) {6};
\draw[blue!75!black,line width=.45pt] (1,-.61)--(1,-.67)--(5,-.67)--(5,-.61);
\node[font=\scriptsize,anchor=north] at (3.5,-.64) {\(J_4(a)=\{1,\ldots,4\},\quad e_4(a)=1\)};
\end{scope}
\node[font=\small,align=center] at (.85,-7.01) {\(r=4\)\\[2pt]\(2^{[1]}\)};
\node[font=\small] at (7.85,-7.01) {\(\longleftrightarrow\)};
\draw[gray!20] (0,-8.11)--(14.25,-8.11);
\begin{scope}[shift={(2.5cm,-10.38cm)},x=0.68cm,y=0.68cm]
\draw[gray!65,line width=.35pt] (-.2,0)--(7.2,0);
\draw[black!80,line width=.4pt] (-0.065,-.065)--(0.065,.065) (-0.065,.065)--(0.065,-.065);
\node[font=\scriptsize,anchor=north,inner sep=1pt] at (0,-.10) {0};
\draw[black!80,line width=.4pt] (0.935,-.065)--(1.065,.065) (0.935,.065)--(1.065,-.065);
\node[font=\scriptsize,anchor=north,inner sep=1pt] at (1,-.10) {1};
\draw[black!80,line width=.4pt] (1.935,-.065)--(2.065,.065) (1.935,.065)--(2.065,-.065);
\node[font=\scriptsize,anchor=north,inner sep=1pt] at (2,-.10) {2};
\draw[black!80,line width=.4pt] (2.935,-.065)--(3.065,.065) (2.935,.065)--(3.065,-.065);
\node[font=\scriptsize,anchor=north,inner sep=1pt] at (3,-.10) {3};
\draw[black!80,line width=.4pt] (3.935,-.065)--(4.065,.065) (3.935,.065)--(4.065,-.065);
\node[font=\scriptsize,anchor=north,inner sep=1pt] at (4,-.10) {4};
\draw[black!80,line width=.4pt] (4.935,-.065)--(5.065,.065) (4.935,.065)--(5.065,-.065);
\node[font=\scriptsize,anchor=north,inner sep=1pt] at (5,-.10) {5};
\draw[black!80,line width=.4pt] (5.935,-.065)--(6.065,.065) (5.935,.065)--(6.065,-.065);
\node[font=\scriptsize,anchor=north,inner sep=1pt] at (6,-.10) {6};
\draw[black!80,line width=.4pt] (6.935,-.065)--(7.065,.065) (6.935,.065)--(7.065,-.065);
\node[font=\scriptsize,anchor=north,inner sep=1pt] at (7,-.10) {7};
\draw[black,line width=.55pt] (4.5,0.5)--(4,0);
\draw[black,line width=.55pt] (4.5,0.5)--(5,0);
\draw[black,line width=.55pt] (4,1)--(3,0);
\draw[black,line width=.55pt] (4,1)--(4.5,0.5);
\draw[black,line width=.55pt] (1.5,0.5)--(1,0);
\draw[black,line width=.55pt] (1.5,0.5)--(2,0);
\draw[black,line width=.55pt] (3,2)--(1.5,0.5);
\draw[black,line width=.55pt] (3,2)--(4,1);
\draw[blue!75!black,line width=.55pt] (3.5,2.5)--(3,2);
\draw[blue!75!black,line width=.55pt] (3.5,2.5)--(6,0);
\fill[black] (4.5,0.5) circle[radius=1.4pt];
\node[font=\fontsize{7}{8}\selectfont,black,anchor=west,inner sep=.3pt] at (4.67,0.5) {\(4^{[1]}\)};
\fill[black] (4,1) circle[radius=1.4pt];
\node[font=\fontsize{7}{8}\selectfont,black,anchor=west,inner sep=.3pt] at (4.17,1) {\(3^{[1]}\)};
\fill[black] (1.5,0.5) circle[radius=1.4pt];
\node[font=\fontsize{7}{8}\selectfont,black,anchor=east,inner sep=.3pt] at (1.33,0.5) {\(1^{[1]}\)};
\fill[black] (3,2) circle[radius=1.4pt];
\node[font=\fontsize{7}{8}\selectfont,black,anchor=east,inner sep=.3pt] at (2.83,2) {\(2^{[1]}\)};
\fill[blue!75!black] (3.5,2.5) circle[radius=1.4pt];
\node[font=\fontsize{7}{8}\selectfont,blue!75!black,anchor=south,inner sep=.3pt] at (3.5,2.63) {\(4^{[2]}\)};
\end{scope}
\begin{scope}[shift={(9cm,-9.53cm)},x=0.7cm,y=1cm]
\path[draw=gray!40,densely dotted,line width=.35pt] (0,-.21) rectangle (1,.21);
\node[font=\scriptsize,gray!80,anchor=north] at (0.5,-.23) {0};
\path[draw=black,fill=white,line width=.5pt] (1,-.21) rectangle (2,.21);
\node[font=\scriptsize,gray!80,anchor=north] at (1.5,-.23) {1};
\path[draw=black,fill=white,line width=.5pt] (2,-.21) rectangle (3,.21);
\node[font=\scriptsize,gray!80,anchor=north] at (2.5,-.23) {2};
\path[draw=black,fill=white,line width=.5pt] (3,-.21) rectangle (4,.21);
\node[font=\scriptsize,gray!80,anchor=north] at (3.5,-.23) {3};
\path[draw=black,fill=white,line width=.5pt] (4,-.21) rectangle (5,.21);
\node[font=\scriptsize,gray!80,anchor=north] at (4.5,-.23) {4};
\path[draw=blue!75!black,fill=blue!12,line width=.8pt] (5,-.21) rectangle (6,.21);
\node[font=\scriptsize,blue!75!black] at (5.5,0) {5};
\node[font=\scriptsize,gray!80,anchor=north] at (5.5,-.23) {5};
\path[draw=gray!40,densely dotted,line width=.35pt] (6,-.21) rectangle (7,.21);
\node[font=\scriptsize,gray!80,anchor=north] at (6.5,-.23) {6};
\draw[blue!75!black,line width=.45pt] (1,-.61)--(1,-.67)--(6,-.67)--(6,-.61);
\node[font=\scriptsize,anchor=north] at (3.5,-.64) {\(J_5(a)=\{1,\ldots,5\},\quad e_5(a)=1\)};
\end{scope}
\node[font=\small,align=center] at (.85,-9.53) {\(r=5\)\\[2pt]\(4^{[2]}\)};
\node[font=\small] at (7.85,-9.53) {\(\longleftrightarrow\)};
\draw[gray!20] (0,-10.8)--(14.25,-10.8);
\begin{scope}[shift={(2.5cm,-13.41cm)},x=0.68cm,y=0.68cm]
\draw[gray!65,line width=.35pt] (-.2,0)--(7.2,0);
\draw[black!80,line width=.4pt] (-0.065,-.065)--(0.065,.065) (-0.065,.065)--(0.065,-.065);
\node[font=\scriptsize,anchor=north,inner sep=1pt] at (0,-.10) {0};
\draw[black!80,line width=.4pt] (0.935,-.065)--(1.065,.065) (0.935,.065)--(1.065,-.065);
\node[font=\scriptsize,anchor=north,inner sep=1pt] at (1,-.10) {1};
\draw[black!80,line width=.4pt] (1.935,-.065)--(2.065,.065) (1.935,.065)--(2.065,-.065);
\node[font=\scriptsize,anchor=north,inner sep=1pt] at (2,-.10) {2};
\draw[black!80,line width=.4pt] (2.935,-.065)--(3.065,.065) (2.935,.065)--(3.065,-.065);
\node[font=\scriptsize,anchor=north,inner sep=1pt] at (3,-.10) {3};
\draw[black!80,line width=.4pt] (3.935,-.065)--(4.065,.065) (3.935,.065)--(4.065,-.065);
\node[font=\scriptsize,anchor=north,inner sep=1pt] at (4,-.10) {4};
\draw[black!80,line width=.4pt] (4.935,-.065)--(5.065,.065) (4.935,.065)--(5.065,-.065);
\node[font=\scriptsize,anchor=north,inner sep=1pt] at (5,-.10) {5};
\draw[black!80,line width=.4pt] (5.935,-.065)--(6.065,.065) (5.935,.065)--(6.065,-.065);
\node[font=\scriptsize,anchor=north,inner sep=1pt] at (6,-.10) {6};
\draw[black!80,line width=.4pt] (6.935,-.065)--(7.065,.065) (6.935,.065)--(7.065,-.065);
\node[font=\scriptsize,anchor=north,inner sep=1pt] at (7,-.10) {7};
\draw[black,line width=.55pt] (4.5,0.5)--(4,0);
\draw[black,line width=.55pt] (4.5,0.5)--(5,0);
\draw[black,line width=.55pt] (4,1)--(3,0);
\draw[black,line width=.55pt] (4,1)--(4.5,0.5);
\draw[black,line width=.55pt] (1.5,0.5)--(1,0);
\draw[black,line width=.55pt] (1.5,0.5)--(2,0);
\draw[black,line width=.55pt] (3,2)--(1.5,0.5);
\draw[black,line width=.55pt] (3,2)--(4,1);
\draw[black,line width=.55pt] (3.5,2.5)--(3,2);
\draw[black,line width=.55pt] (3.5,2.5)--(6,0);
\draw[blue!75!black,line width=.55pt] (3,3)--(0,0);
\draw[blue!75!black,line width=.55pt] (3,3)--(3.5,2.5);
\fill[black] (4.5,0.5) circle[radius=1.4pt];
\node[font=\fontsize{7}{8}\selectfont,black,anchor=west,inner sep=.3pt] at (4.67,0.5) {\(4^{[1]}\)};
\fill[black] (4,1) circle[radius=1.4pt];
\node[font=\fontsize{7}{8}\selectfont,black,anchor=west,inner sep=.3pt] at (4.17,1) {\(3^{[1]}\)};
\fill[black] (1.5,0.5) circle[radius=1.4pt];
\node[font=\fontsize{7}{8}\selectfont,black,anchor=east,inner sep=.3pt] at (1.33,0.5) {\(1^{[1]}\)};
\fill[black] (3,2) circle[radius=1.4pt];
\node[font=\fontsize{7}{8}\selectfont,black,anchor=east,inner sep=.3pt] at (2.83,2) {\(2^{[1]}\)};
\fill[black] (3.5,2.5) circle[radius=1.4pt];
\node[font=\fontsize{7}{8}\selectfont,black,anchor=west,inner sep=.3pt] at (3.67,2.5) {\(4^{[2]}\)};
\fill[blue!75!black] (3,3) circle[radius=1.4pt];
\node[font=\fontsize{7}{8}\selectfont,blue!75!black,anchor=south,inner sep=.3pt] at (3,3.13) {\(3^{[2]}\)};
\end{scope}
\begin{scope}[shift={(9cm,-12.39cm)},x=0.7cm,y=1cm]
\path[draw=blue!75!black,fill=blue!12,line width=.8pt] (0,-.21) rectangle (1,.21);
\node[font=\scriptsize,blue!75!black] at (0.5,0) {6};
\node[font=\scriptsize,gray!80,anchor=north] at (0.5,-.23) {0};
\path[draw=black,fill=white,line width=.5pt] (1,-.21) rectangle (2,.21);
\node[font=\scriptsize,gray!80,anchor=north] at (1.5,-.23) {1};
\path[draw=black,fill=white,line width=.5pt] (2,-.21) rectangle (3,.21);
\node[font=\scriptsize,gray!80,anchor=north] at (2.5,-.23) {2};
\path[draw=black,fill=white,line width=.5pt] (3,-.21) rectangle (4,.21);
\node[font=\scriptsize,gray!80,anchor=north] at (3.5,-.23) {3};
\path[draw=black,fill=white,line width=.5pt] (4,-.21) rectangle (5,.21);
\node[font=\scriptsize,gray!80,anchor=north] at (4.5,-.23) {4};
\path[draw=black,fill=white,line width=.5pt] (5,-.21) rectangle (6,.21);
\node[font=\scriptsize,gray!80,anchor=north] at (5.5,-.23) {5};
\path[draw=gray!40,densely dotted,line width=.35pt] (6,-.21) rectangle (7,.21);
\node[font=\scriptsize,gray!80,anchor=north] at (6.5,-.23) {6};
\draw[blue!75!black,line width=.45pt] (0,-.61)--(0,-.67)--(6,-.67)--(6,-.61);
\node[font=\scriptsize,anchor=north] at (3.5,-.64) {\(J_6(a)=\{0,\ldots,5\},\quad e_6(a)=0\)};
\end{scope}
\node[font=\small,align=center] at (.85,-12.39) {\(r=6\)\\[2pt]\(3^{[2]}\)};
\node[font=\small] at (7.85,-12.39) {\(\longleftrightarrow\)};
\draw[gray!20] (0,-13.83)--(14.25,-13.83);
\begin{scope}[shift={(2.5cm,-16.78cm)},x=0.68cm,y=0.68cm]
\draw[gray!65,line width=.35pt] (-.2,0)--(7.2,0);
\draw[black!80,line width=.4pt] (-0.065,-.065)--(0.065,.065) (-0.065,.065)--(0.065,-.065);
\node[font=\scriptsize,anchor=north,inner sep=1pt] at (0,-.10) {0};
\draw[black!80,line width=.4pt] (0.935,-.065)--(1.065,.065) (0.935,.065)--(1.065,-.065);
\node[font=\scriptsize,anchor=north,inner sep=1pt] at (1,-.10) {1};
\draw[black!80,line width=.4pt] (1.935,-.065)--(2.065,.065) (1.935,.065)--(2.065,-.065);
\node[font=\scriptsize,anchor=north,inner sep=1pt] at (2,-.10) {2};
\draw[black!80,line width=.4pt] (2.935,-.065)--(3.065,.065) (2.935,.065)--(3.065,-.065);
\node[font=\scriptsize,anchor=north,inner sep=1pt] at (3,-.10) {3};
\draw[black!80,line width=.4pt] (3.935,-.065)--(4.065,.065) (3.935,.065)--(4.065,-.065);
\node[font=\scriptsize,anchor=north,inner sep=1pt] at (4,-.10) {4};
\draw[black!80,line width=.4pt] (4.935,-.065)--(5.065,.065) (4.935,.065)--(5.065,-.065);
\node[font=\scriptsize,anchor=north,inner sep=1pt] at (5,-.10) {5};
\draw[black!80,line width=.4pt] (5.935,-.065)--(6.065,.065) (5.935,.065)--(6.065,-.065);
\node[font=\scriptsize,anchor=north,inner sep=1pt] at (6,-.10) {6};
\draw[black!80,line width=.4pt] (6.935,-.065)--(7.065,.065) (6.935,.065)--(7.065,-.065);
\node[font=\scriptsize,anchor=north,inner sep=1pt] at (7,-.10) {7};
\draw[black,line width=.55pt] (4.5,0.5)--(4,0);
\draw[black,line width=.55pt] (4.5,0.5)--(5,0);
\draw[black,line width=.55pt] (4,1)--(3,0);
\draw[black,line width=.55pt] (4,1)--(4.5,0.5);
\draw[black,line width=.55pt] (1.5,0.5)--(1,0);
\draw[black,line width=.55pt] (1.5,0.5)--(2,0);
\draw[black,line width=.55pt] (3,2)--(1.5,0.5);
\draw[black,line width=.55pt] (3,2)--(4,1);
\draw[black,line width=.55pt] (3.5,2.5)--(3,2);
\draw[black,line width=.55pt] (3.5,2.5)--(6,0);
\draw[black,line width=.55pt] (3,3)--(0,0);
\draw[black,line width=.55pt] (3,3)--(3.5,2.5);
\draw[blue!75!black,line width=.55pt] (3.5,3.5)--(3,3);
\draw[blue!75!black,line width=.55pt] (3.5,3.5)--(7,0);
\fill[black] (4.5,0.5) circle[radius=1.4pt];
\node[font=\fontsize{7}{8}\selectfont,black,anchor=west,inner sep=.3pt] at (4.67,0.5) {\(4^{[1]}\)};
\fill[black] (4,1) circle[radius=1.4pt];
\node[font=\fontsize{7}{8}\selectfont,black,anchor=west,inner sep=.3pt] at (4.17,1) {\(3^{[1]}\)};
\fill[black] (1.5,0.5) circle[radius=1.4pt];
\node[font=\fontsize{7}{8}\selectfont,black,anchor=east,inner sep=.3pt] at (1.33,0.5) {\(1^{[1]}\)};
\fill[black] (3,2) circle[radius=1.4pt];
\node[font=\fontsize{7}{8}\selectfont,black,anchor=east,inner sep=.3pt] at (2.83,2) {\(2^{[1]}\)};
\fill[black] (3.5,2.5) circle[radius=1.4pt];
\node[font=\fontsize{7}{8}\selectfont,black,anchor=west,inner sep=.3pt] at (3.67,2.5) {\(4^{[2]}\)};
\fill[black] (3,3) circle[radius=1.4pt];
\node[font=\fontsize{7}{8}\selectfont,black,anchor=east,inner sep=.3pt] at (2.83,3) {\(3^{[2]}\)};
\fill[blue!75!black] (3.5,3.5) circle[radius=1.4pt];
\node[font=\fontsize{7}{8}\selectfont,blue!75!black,anchor=south,inner sep=.3pt] at (3.5,3.63) {\(4^{[3]}\)};
\end{scope}
\begin{scope}[shift={(9cm,-15.59cm)},x=0.7cm,y=1cm]
\path[draw=black,fill=white,line width=.5pt] (0,-.21) rectangle (1,.21);
\node[font=\scriptsize,gray!80,anchor=north] at (0.5,-.23) {0};
\path[draw=black,fill=white,line width=.5pt] (1,-.21) rectangle (2,.21);
\node[font=\scriptsize,gray!80,anchor=north] at (1.5,-.23) {1};
\path[draw=black,fill=white,line width=.5pt] (2,-.21) rectangle (3,.21);
\node[font=\scriptsize,gray!80,anchor=north] at (2.5,-.23) {2};
\path[draw=black,fill=white,line width=.5pt] (3,-.21) rectangle (4,.21);
\node[font=\scriptsize,gray!80,anchor=north] at (3.5,-.23) {3};
\path[draw=black,fill=white,line width=.5pt] (4,-.21) rectangle (5,.21);
\node[font=\scriptsize,gray!80,anchor=north] at (4.5,-.23) {4};
\path[draw=black,fill=white,line width=.5pt] (5,-.21) rectangle (6,.21);
\node[font=\scriptsize,gray!80,anchor=north] at (5.5,-.23) {5};
\path[draw=blue!75!black,fill=blue!12,line width=.8pt] (6,-.21) rectangle (7,.21);
\node[font=\scriptsize,blue!75!black] at (6.5,0) {7};
\node[font=\scriptsize,gray!80,anchor=north] at (6.5,-.23) {6};
\draw[blue!75!black,line width=.45pt] (0,-.61)--(0,-.67)--(7,-.67)--(7,-.61);
\node[font=\scriptsize,anchor=north] at (3.5,-.64) {\(J_7(a)=\{0,\ldots,6\},\quad e_7(a)=0\)};
\end{scope}
\node[font=\small,align=center] at (.85,-15.59) {\(r=7\)\\[2pt]\(4^{[3]}\)};
\node[font=\small] at (7.85,-15.59) {\(\longleftrightarrow\)};
\end{tikzpicture}
\caption{The same insertion states as in
  \Cref{fig:nt-omega-insertion}, paired with marked \(\Omega\)-parking.
  Blue indicates the new node and new marked block; gray roots and
  gray marked blocks correspond. Each position of blocks is the canonical label
  of the corresponding internal node. The blue bracket indicates \(J_r(a)\),
  whose minimum is \(e_r(a)\).}
\label{fig:3}
\end{figure}

Let \(a=(a_1,\ldots,a_m)\in\EWord_m\) be nonempty. Define its shift by
\begin{equation}\label{eq:component-word-shift}
  s(a)=\max_{1\leq r\leq m}(|J_r(a)|-1),
\end{equation}
and put
\begin{equation}\label{eq:word-horizontal-position}
  h_r(a)=e_r(a)+s(a)
  \qquad(1\leq r\leq m).
\end{equation}
Since \(e_1(a)\geq\cdots\geq e_m(a)\), the sequence
\(h_1(a),\ldots,h_m(a)\) is weakly decreasing. Since
\(a_r\in J_r(a)\), we also have
\( 1\leq a_r\leq\max J_r(a) =e_r(a)+|J_r(a)|-1\leq h_r(a) \).
In particular, every \(h_r(a)\) is positive. For the empty word,
set \(s(\varnothing)=0\).

Define \(c(a)=(c_i(a))_{i\geq1}\in\Codes\) by
\( c_i(a)=|\{r:h_r(a)=i\}| \),
and let \(P_a=P(c(a))\in\Path\). The \emph{word labeling} \(\kappa_a\)
of \(P_a\) is given by
\[
  \kappa_a(S_r)=\kappa_{P_a}(S_r)=a_r
  \qquad(1\leq r\leq m),
\]
where \(S_1,\ldots,S_m\in\mathcal S(P_a)\). Thus \(S_r\) lies on
\(x=2h_r(a)\). We call the labeled path \((P_a,\kappa_a)\) the
\emph{word path} of \(a\).

\begin{lem}\label{lem:block-path-successors}
Let \(a=(a_1,\ldots,a_m)\in\EWord_m\). For \( 1\le r<s\le m \),
\begin{equation}\label{eq:block-path-successor}
  \begin{aligned}  
  S_r\rav S_s
  \quad&\Longleftrightarrow\quad
  r\trl_a s \ \text{and}\ a_r\leq a_s,\\
  S_r\rad S_s
  \quad&\Longleftrightarrow\quad
  r\trl_a s \ \text{and}\ a_r>a_s.
  \end{aligned}
\end{equation}
\end{lem}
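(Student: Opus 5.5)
The plan is to prove \eqref{eq:block-path-successor} in two parts. The first part is purely about \(\Omega\)-parking: it decides which children are left and which are right. The second part compares the mark-removal relation \(\trl_a\) with the geometry of \(P_a\), using \(h_r(a)=e_r(a)+s(a)\) and the dictionary \eqref{eq:omega-insertion-data}.

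\emph{Step 1 (left/right children).} By \eqref{eq:omega-insertion-data}, \(r\trl_a s\) means that \(v_r\) is an internal child of \(v_s\). I will show that \(v_r\) is the left child exactly when \(a_r\leq a_s\), and the right child exactly when \(a_r>a_s\). The new block \(I_{p_s}\) becomes adjacent to at most two old components. The component on its left becomes the left subtree of \(v_s\), and the component on its right becomes the right subtree, as in \Cref{def:omega-correspondence}. If \(v_r\) marks the component on the left, then \(a_r\leq\max J_r(a)<p_s\). One checks \(p_s\leq a_s\) or \(p_s=R+1\) with \(a_s\geq a_r\) by \eqref{eq:block-rule}, which gives \(a_r\leq a_s\) in every subcase. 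Symmetrically, if \(v_r\) marks the component on the right, then \(a_r\geq\min J_r(a)>p_s\geq a_s\) in the non-containing subcase, and \(a_r>a_s\) by the \(L-1\) rule in the containing subcase.

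\emph{Step 2 (left children are vertical successors).} Suppose \(r\trl_a s\) with \(v_r\) a left child. Then \(e_r(a)=e_s(a)\), because the flag is preserved along left edges. Take any \(t\) with \(r<t<s\). Since \(a\) is endpoint-decreasing, \(e_t(a)=e_r(a)\). So \(J_t(a)\) contains position \(e_r(a)\), which lies in the component marked \(r\). Hence insertion \(t\) already removes mark \(r\), which contradicts \(r\trl_a s\). Therefore \(s=r+1\), and \(h_r=h_{r+1}\), which says exactly that \(S_r\rav S_{r+1}\). Conversely, suppose \(S_r\rav S_{r+1}\). Then \(e_{r+1}(a)=e_r(a)\), so insertion \(r+1\) touches the component of \(r\). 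Thus \(r\trl_a r+1\), and \(v_r\) is a left child because its flag did not increase. Step 1 then gives \(a_r\leq a_{r+1}\).

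\emph{Step 3 (right children are diagonal successors).} This is the main obstacle. Suppose \(r\trl_a s\) with \(v_r\) a right child. I will show that every block in \([e_s(a),e_r(a)-1]\) was created at a time \(t\) with \(r<t\leq s\), and that every insertion at such a time lands in that interval.

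For the first claim, a block created before time \(r\) at a position \(p<e_r(a)\) would have endpoint at most \(p<e_r(a)\), which contradicts the weak decrease of endpoints.

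For the second claim, take an insertion at time \(t\) with \(r<t<s\). The component \(J_r(a)\) is untouched until time \(s\), and \(e_t(a)\leq e_r(a)\). A block placed to the right of \(J_r(a)\) would therefore force its component to contain \(J_r(a)\), which is impossible.

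Counting blocks then gives \(e_r(a)-e_s(a)=s-r\). With \(\delta_t=h_t+t\) as in the proof of \Cref{prop:successor-forest}, this says \(\delta_s=\delta_r\). The same counting, applied to the partial interval at an intermediate time \(t\), gives \(\delta_t<\delta_r\) for \(r<t<s\). Since \(S_r\) has no vertical successor by Step 2, the first-return description yields \(S_r\rad S_s\).

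For the converse, I use uniqueness. Each \(S_r\) has at most one successor, and each \(r\) has at most one \(\trl_a\)-parent. So it suffices to show that if \(r\) is a final root, meaning its mark survives to the end, then \(S_r\) has no diagonal successor. To see this, note that the blocks at positions \(<e_r(a)\) belonging to later components all lie in components separated from \(J_r(a)\) by a gap. The same count then gives \(\delta_t<\delta_r\) for all \(t>r\), so the ray from the endpoint of \(S_r\) reaches the \(y\)-axis without meeting the endpoint of any later south step. Combining this with Step 1 gives the sign conditions in \eqref{eq:block-path-successor}.
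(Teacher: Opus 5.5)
Your argument is correct, and it takes a genuinely different route from the paper's.

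The paper proves the lemma in two lines. Since \(h_r(a)=e_r(a)+s(a)\) and \(e_r(a)=\rho_{F_\Omega(a)}(v_r)\) by \eqref{eq:omega-insertion-data}, the word path \(P_a\) is the path of the translated forest \(F_\Omega(a)\). Then \Cref{lem:forest-path-incidence} identifies \(F(P_a)\) with that forest, so vertical and diagonal successors are left and right children in \(\mathsf P_\Omega(a)\). The sign conditions are read off from the label monotonicity of the \(P\)-symbol quoted from \cite{NT2024}. That short route tacitly uses that row \(r\) corresponds to \(v_r\) under \(S_F\). This holds because nodes with equal flag form a left-child chain created from the bottom up, so insertion order is increasing canonical label.

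You instead work entirely inside marked \(\Omega\)-parking:
\begin{enumerate}[(i)]
  \item Step 1 re-derives the label monotonicity from \eqref{eq:block-rule}.
  \item Step 2 uses the weak decrease of endpoints to force \(s=r+1\) for left children.
  \item Step 3 counts the blocks created at times \(r+1,\ldots,s\) to verify the first-return criterion \(\delta_s=\delta_r\), \(\delta_t<\delta_r\) directly.
  \item The uniqueness argument disposes of final roots.
\end{enumerate}
This is longer, but it bypasses \Cref{lem:forest-path-incidence} and the unstated row matching. It also shows exactly where the endpoint-decreasing hypothesis enters. In effect, you reprove for word paths that the successor forest reproduces the insertion forest.

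When writing it up, state explicitly the two facts your counts rest on:
\begin{enumerate}[(i)]
  \item the lower bound \(p_t\geq e_t(a)\geq e_s(a)\) for \(r<t\leq s\);
  \item position \(e_r(a)-1\) is unoccupied before time \(s\) (and forever, for a final root).
\end{enumerate}
The second fact is what turns the intermediate count into the strict inequality \(t-r\leq e_r(a)-e_t(a)-1\).
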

\begin{proof}
By \eqref{eq:omega-insertion-data} and
\Cref{lem:forest-path-incidence}, vertical and diagonal successors
correspond to left and right children in \(\mathsf P_\Omega(a)\),
respectively. The label values satisfy \(a_r\leq a_s\) for a left
child and \(a_r>a_s\) for a right child.
\end{proof}

For an endpoint-decreasing word \(a=(a_1,\ldots,a_m)\), define
\( \CSel_a^\times :=\CSel^\times(P_a,\kappa_a) \).
Then
\begin{equation}\label{eq:word-path-nonzero-selections}
  \CSel_a^\times
  =\left\{Q:
    \begin{array}{l}
      Q\text{ chooses one cell in each row and satisfies }
        \eqref{arr:csel_cond},\\
      \col_r(Q)<2a_r\quad(1\leq r\leq m)
    \end{array}
  \right\}.
\end{equation}
Indeed, \(\col_r(Q)<2a_r\leq2h_r(a)\) places every selected cell
above \(P_a\), and \(\col_r(Q)<2a_r\) makes its weight nonzero.
Conversely, every cell whose column index is at least \(2a_r\) has
zero weight under the word labeling. This proves
\eqref{eq:word-path-nonzero-selections}.

\begin{remark}\label{rem:word-path-shift-choice}
The shift \(s(a)\) in \eqref{eq:component-word-shift} need not be
minimal. For example, \(a=(4,3,1)\) gives \(s(a)=1\), although
a shift of zero already gives \(h_r(a)\geq a_r\) for every \(r\).
For any integer \(d\geq0\), we may also replace every \(h_r(a)\)
by \(h_r(a)+d\), keeping the word labels \(a_r\) fixed.
This uniform translation preserves the successor relations, and all
nonzero selections already satisfy
\(\col_r(Q)<2a_r\leq2h_r(a)\) by
\eqref{eq:word-path-nonzero-selections}. Thus the resulting path has
the same nonzero cell selections and weights, and defines the same
lattice path polynomial.
\end{remark}

\subsection{Endpoint sorting and linear extensions of posets}
\label{subsec:endpoint-sorting}

For words in positive integers, a \emph{commutation move} is the interchange
\[
  (\ldots,i,j,\ldots)\longleftrightarrow(\ldots,j,i,\ldots)
  \qquad\text{when } |i-j|>1.
\]
Two words are \emph{commutation-equivalent} if they are related by a sequence of commutation moves.

Let \(a=(a_1,\ldots,a_m)\in\Word_m\) and let \(\sigma\in S_m\) be the unique permutation such that
\begin{equation}\label{eq:uniq_perm}
  e_{\sigma(1)}(a)\geq\cdots\geq e_{\sigma(m)}(a),
  \qquad
  e_{\sigma(r)}(a)=e_{\sigma(s)}(a)
  \ \Longrightarrow\ 
  \sigma(r)<\sigma(s)
  \quad(1\leq r<s\leq m).
\end{equation}
Define the map 
\[
  \ESort:\bigsqcup_{m\ge0}\Word_m\rightarrow\bigsqcup_{m\ge0}\EWord_m
\]
by \( \ESort(a)=(a_{\sigma(1)},\ldots,a_{\sigma(m)}) \) and \(\ESort(\varnothing)=\varnothing\).
We call \( \ESort(a) \) the \emph{endpoint sort} of \(a\).
If \(r<s\) and \(a_r=a_s\), then
\(J_r(a)\subseteq J_s(a)\), and hence \(e_r(a)\geq e_s(a)\).
Therefore \(\ESort(a)\) preserves the relative order of equal letters.

Let \(r<s\) be incomparable in \(\mathcal H(a)\). The blocks
\(I_{a_r}\) and \(I_{a_s}\) lie in distinct components after the
\(s\)-th insertion; otherwise the successive removals give \(r\prec_a s\). Therefore
\begin{equation}\label{eq:incomparable-supports-commute}
  |a_r-a_s|>1.
\end{equation}
Moreover, every relation \(r\trl_a s\) has \(r<s\), so the
natural order \((1,\ldots,m)\) is a linear extension of
\(\mathcal H(a)\).

\begin{lem}\label{lem:endpoint-sort-well-defined}
For every \( a\in\Word_m \), the word \( \ESort(a) \) is endpoint-decreasing and commutation-equivalent to \( a \).
In particular, the map \( \ESort \) is well-defined.
\end{lem}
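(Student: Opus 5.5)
The plan is to show two things separately. First, the sorted order \(\sigma\) is a linear extension of \(\mathcal H(a)\); together with \eqref{eq:incomparable-supports-commute}, this gives commutation-equivalence. Second, the endpoints of \(\ESort(a)\) are exactly \(e_{\sigma(1)}(a),\ldots,e_{\sigma(m)}(a)\) in that order, which gives the endpoint-decreasing property.

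\textbf{Step 1: \(\sigma\) is a linear extension.} Suppose \(r\trl_a s\). Then the \(s\)-th insertion merges the component marked \(r\) into the new component. Hence \(J_r(a)\subseteq J_s(a)\), and so \(e_r(a)\geq e_s(a)\). If the inequality is strict, \(r\) precedes \(s\) in \(\sigma\). If equality holds, the tie-break in \eqref{eq:uniq_perm} places \(r<s\) first. By transitivity, \(\sigma\) respects \(\prec_a\).

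\textbf{Step 2: commutation-equivalence.} Any two linear extensions of a finite poset are connected by adjacent transpositions of incomparable elements. We start from the natural order \((1,\ldots,m)\), which is a linear extension as noted above, and pass to \(\sigma\) through linear extensions. Each step swaps two adjacent incomparable indices \(r,s\). By \eqref{eq:incomparable-supports-commute} their letters satisfy \(|a_r-a_s|>1\), so each step is a commutation move on the corresponding word.

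\textbf{Step 3: the endpoints are invariant.} This is the main obstacle. For each linear extension \(\tau\) of \(\mathcal H(a)\), I must show that \(b=(a_{\tau(1)},\ldots,a_{\tau(m)})\) satisfies \(e_k(b)=e_{\tau(k)}(a)\), and more precisely \(J_k(b)=J_{\tau(k)}(a)\).

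The route I would try first is the dictionary \eqref{eq:omega-insertion-data}. Commutation-equivalent words of the above form are readings of \(\mathsf P_\Omega(a)\) along orders with children before parents, so by \cite[Theorem~4.1]{NT2024} they lie in the same \(\Omega\)-class. Thus \(b\) has the same \(P\)-symbol, and the node created at step \(k\) of \(b\) is the node \(v_{\tau(k)}\) of \(a\). Two points need care here. First, the decorations must match; this holds because equal letters are comparable, so their occurrence order is preserved. Second, the \(\Omega\)-class statement must apply to arbitrary words, not just reduced ones; I would check in \cite{NT2024} that it holds for all words with distinct decorated letters.

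Then \(J_k(b)=\operatorname{INT}(v_{\tau(k)},F)=J_{\tau(k)}(a)\), and taking minima gives \(e_k(b)=e_{\tau(k)}(a)\). Applying this with \(\tau=\sigma\) shows that the endpoints of \(\ESort(a)\) are weakly decreasing.

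If citing \cite{NT2024} proves delicate, a fallback is a direct induction on a single commutation swap of adjacent incomparable \(r,s\). Their blocks lie in distinct components by \eqref{eq:incomparable-supports-commute}, so the two parking operations act on disjoint components. Each operation only extends its own component by one block, away from the other, so swapping them gives the same marked states with \(J_r\) and \(J_s\) exchanged. One must still check that the extensions cannot meet, i.e. the two components cannot become adjacent after both insertions. This holds because otherwise a later merge would make \(r,s\) comparable only afterward, which does not affect the state at steps \(r\) and \(s\).

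Finally, \(\ESort(a)\in\EWord_m\), so the map \(\ESort\) is well-defined.
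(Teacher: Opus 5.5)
Your proof is correct, but it takes a different route from the paper.

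\textbf{The paper's route.} The proof there is essentially a citation. Using the dictionary of \Cref{subsec:endpoint-decreasing}, it identifies \(\ESort(a)\) with the undecorated rightmost reading of \(\mathsf P_\Omega(a)\). It then invokes \cite[Proposition~4.8 and Lemma~4.9]{NT2024} to get both properties at once.

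\textbf{Your route.} You never use the rightmost reading or those two results. Instead:
\begin{enumerate}[(i)]
  \item You check directly from the marking rule that the sorting permutation \(\sigma\) is a linear extension of \(\mathcal H(a)\). This uses \(J_r(a)\subseteq J_s(a)\) when \(r\trl_a s\), together with the index tie-break.
  \item This makes the commutation-equivalence half elementary. It follows from connectivity of linear extensions under adjacent swaps of incomparable elements, together with \eqref{eq:incomparable-supports-commute}. No external input is needed here.
  \item For endpoint-decreasingness you use only \cite[Theorem~4.1 and Equation~(4.1)]{NT2024} and \eqref{eq:omega-insertion-data}. You prove \(J_k(b)=J_{\tau(k)}(a)\) for every linear-extension rereading \(b\), which is exactly \eqref{eq:linear-extension-induction-data}. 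The paper records that identity only after the lemma, derived the same way.
\end{enumerate}

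\textbf{Comparison.} Your version is more self-contained. It also makes explicit a step the paper argues only informally: that sorting by endpoints with index tie-breaks is a child-before-parent reading of the \(P\)-symbol. The cost is length. The paper's proof is shorter but rests on that identification and on two further results of \cite{NT2024}. You flag a worry about whether Theorem~4.1 covers non-reduced words; the paper states that the correspondence accepts any word with pairwise distinct decorated letters, so that worry is settled.

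\textbf{The fallback.} Your main route suffices, so the fallback is unnecessary. If you keep it, fix its justification for why the two extensions cannot meet. Two distinct components are never adjacent, since components are maximal intervals. If the \(s\)-th new block touched the component carrying mark \(r\), it would remove that mark and give \(r\trl_a s\) immediately, contradicting incomparability. No appeal to later merges is needed.
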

\begin{proof}
The correspondence in \Cref{subsec:endpoint-decreasing} identifies
\(\ESort(a)\) with the word obtained from the rightmost reading of
\(\mathsf P_\Omega(a)\) by removing the decorations.
The claim then follows from
\cite[Proposition~4.8 and Lemma~4.9]{NT2024}.
\end{proof}

Since this rightmost reading is determined by the \(P\)-symbol,
for words \(u,v\) in positive integers,
\begin{equation}\label{eq:endpoint-sort-omega-classes}
  \ESort(u)=\ESort(v)
  \quad\Longleftrightarrow\quad
  \mathsf P_\Omega(u)=\mathsf P_\Omega(v).
\end{equation}
Combining \eqref{eq:omega-insertion-data} with
\cite[Theorem~4.1 and Equation~(4.1)]{NT2024} yields the following
characterization of words with the same endpoint sort.

\begin{prop}\label{prop:endpoint-sort-linear-extensions}
Let \(a=(a_1,\ldots,a_m)\) be endpoint-decreasing, and let
\(u\) be a rearrangement of \(a\). Then \(\ESort(u)=a\) if and only if
\( u=(a_{\theta(1)},\ldots,a_{\theta(m)}) \)
for some linear extension
\((\theta(1),\ldots,\theta(m))\) of \(\mathcal H(a)\).
\end{prop}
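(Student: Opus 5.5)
The proof transports the statement to the \(P\)-symbol side via the dictionary \eqref{eq:omega-insertion-data}, and then uses Nadeau and Tewari's description of \(\Omega\)-classes together with the characterization \eqref{eq:endpoint-sort-omega-classes}.

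Since \(a\) is endpoint-decreasing, we have \(\ESort(a)=a\). Indeed, in \eqref{eq:uniq_perm} the identity permutation already sorts the endpoints weakly decreasingly, and it respects the tie-breaking rule. By \eqref{eq:endpoint-sort-omega-classes}, the condition \(\ESort(u)=a\) is therefore equivalent to \(\mathsf P_\Omega(u)=\mathsf P_\Omega(a)\), that is, to \(u\) lying in the \(\Omega\)-class of \(a\). By \cite[Theorem~4.1 and Equation~(4.1)]{NT2024}, the words in this class are exactly the readings of \(\mathsf P_\Omega(a)\), with decorations removed, along orders of its internal nodes in which each child precedes its parent.

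The next step translates these node orders into linear extensions of \(\mathcal H(a)\). The last line of \eqref{eq:omega-insertion-data} identifies \(r\trl_a s\) with \(v_r\) being a child of \(v_s\). Hence \(r\mapsto v_r\) is an order isomorphism from \(\mathcal H(a)\) onto the internal-node poset ordered with children below parents. The node \(v_r\) carries the label \(\widetilde a_r\), so its undecorated value is \(a_r\). Consequently, reading along a child-before-parent order \((v_{\theta(1)},\ldots,v_{\theta(m)})\) produces exactly \((a_{\theta(1)},\ldots,a_{\theta(m)})\) with \(\theta\) a linear extension of \(\mathcal H(a)\), and every linear extension arises this way. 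Combining this with the previous step proves both directions.

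The delicate point is that the \(\Omega\)-class is defined via decorated letters. If \(u=(a_{\theta(1)},\ldots,a_{\theta(m)})\), we must check that re-decorating \(u\) from left to right gives the same labeled forest, that is, that the \(k\)-th occurrence of \(i\) in \(u\) is read from the node labeled \(i^{[k]}\). This holds because equal values lie on a chain in the \(P\)-symbol, as noted after \Cref{def:omega-correspondence}, so every linear extension reads them in their occurrence order. One can also argue on the block side: for \(r<s\) with \(a_r=a_s\) we have \(J_r(a)\subseteq J_s(a)\), so the blocks lie in one component and hence \(r\prec_a s\). With this verified, the rearrangement hypothesis on \(u\) is automatic in the forward direction and harmless in the converse, which completes the argument.
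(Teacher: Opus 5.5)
Your proof is correct and takes the same route as the paper. The paper likewise derives the proposition by combining the dictionary \eqref{eq:omega-insertion-data}, which makes \(r\mapsto v_r\) an isomorphism of \(\mathcal H(a)\) onto the child-before-parent node poset, with \cite[Theorem~4.1 and Equation~(4.1)]{NT2024} and the class characterization \eqref{eq:endpoint-sort-omega-classes}; you additionally spell out two points the paper leaves implicit, namely \(\ESort(a)=a\) for endpoint-decreasing \(a\) and the compatibility of left-to-right re-decoration with child-before-parent readings.
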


Let \(a\) be endpoint-decreasing, let
\((\sigma(1),\ldots,\sigma(m))\) be a linear extension of
\(\mathcal H(a)\), and put
\(b=(a_{\sigma(1)},\ldots,a_{\sigma(m)})\).
Reading the same \(P\)-symbol and using
\eqref{eq:omega-insertion-data} gives
\begin{equation}\label{eq:linear-extension-induction-data}
    \ESort(b)=a,\quad J_p(b)=J_{\sigma(p)}(a), \quad
    p\trl_b q
    \Longleftrightarrow
    \sigma(p)\trl_a\sigma(q)
    \qquad(p,q\in[m]).
\end{equation}
In particular, \(e_p(b)=e_{\sigma(p)}(a)\).

\subsection{A lattice path model for double Schubert polynomials}
\label{subsec:double-schubert-path}
\leavevmode\par\noindent

For \(w\in S_\infty\), define
\[
  \ERed(w)
  :=\{a\in\Red(w):a\text{ is endpoint-decreasing}\}.
\]
Its elements are called \emph{endpoint-decreasing reduced words} of
\(w\). For every nonempty \(a\in\ERed(w)\), we write
\(\FL_{P_a,\kappa_a}\xt\) simply as \(\FL_a\xt\).
We also set \(\FL_\varnothing\xt=1\).
\begin{thm}\label{thm:double-schubert-lattice-path}
For every \(w\in S_\infty\),
\[
  \mathfrak S_w(\vec{x};\vec{t})
  =\sum_{a\in\ERed(w^{-1})}\FL_a(\vec{x};\vec{t}).
\]
\end{thm}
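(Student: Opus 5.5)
Starting from the subword formula \eqref{eq:double-schubert-subword}, I would split \(\mathcal R(\widetilde\omega_{[n]};w)\), with \(n\) large enough that \(w\in S_{n+1}\), according to the endpoint sort of the reversed value word, and then identify each piece with a lattice path polynomial via \Cref{lem:subword-path-bijection}. For \(\pi\in\mathcal R(\widetilde\omega_{[n]};w)\) we have \(\val(\pi)\in\Red(w)\), so \(u:=\rev(\val(\pi))\in\Red(w^{-1})\). Set \(a:=\ESort(u)\). By \Cref{lem:endpoint-sort-well-defined}, \(a\) is commutation-equivalent to \(u\), hence \(a\in\ERed(w^{-1})\). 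Conversely, by \Cref{prop:endpoint-sort-linear-extensions}, the reduced words \(u\) with \(\ESort(u)=a\) are exactly \(u=(a_{\theta(1)},\ldots,a_{\theta(m)})\) for linear extensions \(\theta\) of \(\mathcal H(a)\). Every such \(u\) is reduced for \(w^{-1}\), being commutation-equivalent to \(a\). This gives
\[
  \mathcal R(\widetilde\omega_{[n]};w)
  =\bigsqcup_{a\in\ERed(w^{-1})}
   \{\pi:\rev(\val(\pi))=(a_{\theta(1)},\ldots,a_{\theta(m)})
   \text{ for a linear extension }\theta\text{ of }\mathcal H(a)\}.
\]

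Next I would match each block with \(\mathcal R(P_a,\kappa_a)\). This requires \(\mathcal H(P_a)=\mathcal H(a)\), which follows from \Cref{lem:block-path-successors}: successor relations in \(P_a\) are exactly the relations \(r\trl_a s\), and covers in \(\mathcal H(P_a)\) are successor relations. Since \(\kappa_a(S_r)=a_r\), the two sets coincide, and \Cref{lem:subword-path-bijection} gives \(\sum\wt(\pi)=\FL_a\) on each block. Summing over \(a\), with the empty word for \(w=\mathrm{id}\), proves the theorem.

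The main obstacle is verifying hypotheses (i)--(iii) of \Cref{lem:subword-path-bijection} for \((P_a,\kappa_a)\) when \(a\) is reduced.

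For (i), \Cref{lem:block-path-successors} gives \(a_r\le a_s\) along vertical successors and \(a_r>a_s\) along diagonal ones. I need strict inequality in the vertical case. If \(a_r=a_s\) with \(r\trl_a s\), then \(s\) is the next insertion into that component. I would argue that reducedness forbids this: between \(r\) and \(s\) no letter touches the component or is adjacent to it, so the letters in between commute with \(s_{a_r}\), and we get a factor \(s_{a_r}s_{a_r}\) after commutations, which is impossible in a reduced word.

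For (ii), the topmost step on \(x=2i\) has label at most \(i\), since \(a_r\le h_r(a)\) by construction. Condition (ii) is in fact used only through \(\col_r(Q)<2a_r\le 2h_r\), as in \eqref{eq:word-path-nonzero-selections}, so I would check that the proof of \Cref{lem:subword-path-bijection} really needs only this weaker fact.

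For (iii), if \(a_r=a_s\) with \(r<s\), then \(J_r(a)\subseteq J_s(a)\), so the block \(I_{a_r}\) stays in the same component. The mark chain therefore links \(r\) to \(s\), giving \(r\prec_a s\).
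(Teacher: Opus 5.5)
Your proposal is correct and follows the paper's proof essentially step for step. You split the subwords by $\ESort$ of the reversed value word, identify each piece with $\mathcal R(P_a,\kappa_a)$ via \Cref{lem:block-path-successors} and \Cref{prop:endpoint-sort-linear-extensions}, and verify (i)--(iii). One simplification: in (i), a vertical successor always joins consecutive rows $r$ and $r+1$, so $a_r=a_{r+1}$ contradicts reducedness immediately and the commutation argument is not needed; your argument for (iii) is exactly the content of \eqref{eq:incomparable-supports-commute}.
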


Specializing to \(\vec t=\vec0\) and applying
\Cref{prop:compressed-specialization}, we identify each word-path summand
with its compressed lattice path polynomial. Thus the theorem also gives
a lattice path model for ordinary Schubert polynomials.

We apply \Cref{lem:subword-path-bijection} to the word paths.
If
\(b\in\Red(w^{-1})\), then \Cref{lem:endpoint-sort-well-defined} shows
that \(\ESort(b)\) is in \( \ERed(w^{-1}) \) and is commutation-equivalent to \(b\).

Fix \(w\in S_{n+1}\). Recall that
\(\rev(b)=(b_m,\ldots,b_1)\) for a word
\(b=(b_1,\ldots,b_m)\). For
\(a\in\ERed(w^{-1})\), define
\[
  \mathcal R_a(\widetilde\omega_{[n]};w)
  =\{\pi\in\mathcal R(\widetilde\omega_{[n]};w):
      \ESort\bigl(\operatorname{rev}(\val(\pi))\bigr)=a\}.
\]
When \( n \) and \( w \) are clear from context, we simply write \( \mathcal R_a \).
These sets form a disjoint union
\begin{equation}\label{eq:vine-subword-decomposition}
  \mathcal R(\widetilde\omega_{[n]};w)
  =\bigsqcup_{a\in\ERed(w^{-1})}\mathcal R_a.
\end{equation}

Write \(\Phi_a:=\Phi_{P_a,\kappa_a}\) for this specialization.
For \(\pi\in\mathcal R_a\), put \(u=\rev(\val(\pi))\) and
let \(\sigma_u\) define \(\ESort(u)\).
The linear extension in \Cref{subsec:common-subword-path} is
\((\sigma_u^{-1}(1),\ldots,\sigma_u^{-1}(m))\), so row \(r\)
receives the letter \(B_r=\pi_{m-\sigma_u(r)+1}\).
The cell--letter assignment is the same as in
\Cref{fig:common-weight-preserving-bijection}.

\begin{proof}[Proof of \Cref{thm:double-schubert-lattice-path}]
Choose \(n\) such that \(w\in S_{n+1}\), and let
\(a\in\ERed(w^{-1})\) be nonempty.
By \Cref{lem:block-path-successors}, the successor poset
\(\mathcal H(P_a)\) is \(\mathcal H(a)\), and
\(a_r\leq a_s\) along vertical successors, while \(a_r>a_s\) along
diagonal successors. A vertical successor joins consecutive rows, so equality would contradict reducedness. The word path construction gives
\(a_r\leq h_r(a)\) in every row, and
\eqref{eq:incomparable-supports-commute} makes rows labeled by the same number
comparable. Thus the labeling \(\kappa_a\) satisfies all the
hypotheses of \Cref{lem:subword-path-bijection}.

By \Cref{prop:endpoint-sort-linear-extensions}, the reversed value
words in \(\mathcal R(P_a,\kappa_a)\) are exactly the rearrangements
\(u\) with \(\ESort(u)=a\). These are commutation-equivalent to
\(a\), hence reduced words of \(w^{-1}\). Therefore, we obtain
\( \mathcal R(P_a,\kappa_a)=\mathcal R_a \).
Now \Cref{lem:subword-path-bijection}, \eqref{eq:double-schubert-subword}, and
\eqref{eq:vine-subword-decomposition} give
\[
  \mathfrak S_w\xt
  =\sum_{a\in\ERed(w^{-1})}
      \sum_{\pi\in\mathcal R_a}\wt(\pi)
  =\sum_{a\in\ERed(w^{-1})}\FL_a\xt.
\]
The empty word contributes \(1\) by convention.
\end{proof}

For a partition
\(\lambda=(\lambda_1\geq\cdots\geq\lambda_n\geq0)\), combining
\Cref{thm:double-schubert-lattice-path} with
\eqref{eq:grassmannian-double-schur} gives
\begin{equation}\label{eq:double-schur-lattice-path}
  s_\lambda(x_1,\ldots,x_n;\vec t)
  =\sum_{a\in\ERed(w_\lambda^{-1})}\FL_a(\vec x;\vec t).
\end{equation}

We next describe the word paths for Grassmannian permutations.  For a reduced word
\(a=(a_1,\ldots,a_m)\), its \emph{heap} is the poset on \([m]\)
whose order is the transitive closure of the relation \( r\prec s \) if \( r<s\) and \( |a_r-a_s|\leq1 \).
Thus a linear extension
\((r_1,\ldots,r_m)\) has label sequence
\((a_{r_1},\ldots,a_{r_m})\).  A permutation is
\emph{fully commutative} if any two of its reduced words are commutation-equivalent.  In this case their
heaps are isomorphic as labeled posets, and we call this labeled poset the heap of the permutation.

From \cite{Winkel1996}, for a partition
\(\lambda=(\lambda_1\geq\cdots\geq\lambda_n\geq0)\), the heap of \(w_\lambda^{-1}\) is isomorphic to \(D(\lambda)\), with the box
\((i,j)\) labeled by \(n-i+j\) and ordered coordinatewise:
\[
  (i,j)\leq(i',j')
  \quad\Longleftrightarrow\quad
  i\leq i'\ \text{and}\ j\leq j'.
\]
By \cite{Stembridge1996}, its reduced words are the
label sequences of the linear extensions of this heap, since \(w_\lambda^{-1}\) is fully commutative.

For a finite word \(b=(b_1,\ldots,b_\ell)\), let
\(\asc(b)\) and \(\des(b)\) denote its numbers
of ascents and descents, that is,
\( \asc(b)=|\{1\leq k<\ell:b_k<b_{k+1}\}| \) and \(
\des(b)=|\{1\leq k<\ell:b_k>b_{k+1}\}| \).
Both statistics are zero on a word of length at most one.

\begin{prop}\label{prop:grassmannian-block-construction}
Let \(\lambda\) be a partition of length at most \( n \) with
\(|\lambda|=m>0\), and let
\(a=(a_1,\ldots,a_m)\in\Red(w_\lambda^{-1})\). For every
\(1\leq r\leq m\), \(\mathcal I_r\) consists of exactly one
component, and
\( e_r(a)=n-\des(a_1\cdots a_r) \).
Consequently, every reduced word of \( w_\lambda^{-1} \) is endpoint-decreasing, so
\( \ERed(w_\lambda^{-1})=\Red(w_\lambda^{-1}) \).
\end{prop}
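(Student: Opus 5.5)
We argue by induction on \(r\), tracking both the occupied blocks \(\mathcal I_r\) and the set of labels used by the prefix \(a_1\cdots a_r\). The tool is the heap description recalled above. The heap of \(w_\lambda^{-1}\) is \(D(\lambda)\), with box \((i,j)\) labeled \(n-i+j\), and every reduced word is the label sequence of a linear extension. So each prefix \(a_1\cdots a_r\) reads the labels of an order ideal \(\mu^{(r)}\subseteq D(\lambda)\) with \(r\) boxes, and \(\mu^{(r)}\) is again a Ferrers diagram. Its label set \(\{n-i+j:(i,j)\in\mu^{(r)}\}\) is the integer interval
\[
  \Lambda_r=[\,n-\ell(\mu^{(r)})+1,\; n+\mu^{(r)}_1-1\,].
\]
The label \(a_r\) of the newly added box either already lies in \(\Lambda_{r-1}\), or equals \(\min\Lambda_{r-1}-1\) (a box starting a new row), or equals \(\max\Lambda_{r-1}+1\) (a box starting a new column). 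In every case \(a_r\in[\min\Lambda_{r-1}-1,\max\Lambda_{r-1}+1]\). In particular \(a_1=n\), the label of box \((1,1)\).

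The inductive claim is the following. \(\mathcal I_r\) is a single component \(\{I_L,\ldots,I_R\}\) with \(R-L+1=r\) and \(\Lambda_r\subseteq[L,R]\); its mark is \(r\); and \(L=e_r(a)=n-\des(a_1\cdots a_r)\). For \(r=1\) this holds with \(\mathcal I_1=\{I_n\}\).

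For the step, suppose the claim holds for \(r-1\), with component \([L,R]\). Since there is one component, its mark is \(s=r-1\). By the preliminary observation, \(a_r\in[L-1,R+1]\). There are three cases.
\begin{enumerate}[(i)]
  \item If \(a_r=L-1\), the block \(I_{L-1}\) is added. Since \(a_{r-1}\geq L>a_r\), position \(r-1\) is a descent of the prefix.
  \item If \(a_r=R+1\), the block \(I_{R+1}\) is added, and \(a_{r-1}\le R<a_r\) gives an ascent.
  \item If \(a_r\in[L,R]\), rule \eqref{eq:block-rule} compares \(a_r\) with \(a_s=a_{r-1}\). It adds \(I_{L-1}\) when \(a_r<a_{r-1}\), a descent, and \(I_{R+1}\) when \(a_r\geq a_{r-1}\). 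The case \(a_r=a_{r-1}\) is impossible in a reduced word, so the second alternative is exactly an ascent.
\end{enumerate}
In all three cases the new block is adjacent to \([L,R]\), so \(\mathcal I_r\) stays a single component of size \(r\) marked by \(r\). The interval still contains \(\Lambda_r=\Lambda_{r-1}\cup\{a_r\}\). The left endpoint drops by one exactly when position \(r-1\) is a descent, which gives \(e_r(a)=n-\des(a_1\cdots a_r)\).

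Since \(\des(a_1\cdots a_r)\) is weakly increasing in \(r\), the endpoints \(e_r(a)\) are weakly decreasing. Hence every reduced word of \(w_\lambda^{-1}\) is endpoint-decreasing, and \(\ERed(w_\lambda^{-1})=\Red(w_\lambda^{-1})\).

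The main point requiring care is the adjacency step: that a new label never falls more than one outside the current occupied interval. This is why the invariant is stated as containment of \(\Lambda_r\), which comes from the order-ideal (Ferrers shape) structure of prefixes, rather than as equality of the occupied blocks with the label set. The two sets can differ, since the occupied interval grows even when a repeated label is inserted.
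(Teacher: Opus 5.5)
Your proof is correct, but it takes a different route from the paper's proof of this proposition.

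The paper's argument is a short citation. Each prefix \(a_1\cdots a_r\) is a reduced word of the inverse of a Grassmannian permutation. The analysis in \cite[Section~6.2(a)]{NT2024} shows that its \(P\)-symbol is a single tree with least canonical label \(n-\des(a_1\cdots a_r)\). The dictionary \eqref{eq:omega-insertion-data}, which rests on \cite[Proposition~4.3]{NT2024}, then transfers this to the marked blocks.

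You instead run the marked-block parking rule directly. Your invariant is that the single occupied interval contains every label read so far and carries the mark \(r\). The Ferrers-ideal structure of the prefixes keeps each new letter within distance one of that interval. The three cases of \eqref{eq:block-rule} then separate descents from ascents, using \(a_{r-1}\neq a_r\) for reducedness.

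This is essentially the argument the paper gives later in \Cref{lem:ferrers-interval-replay}, specialized to \(D=\varnothing\). There, the containment of every \(I_{a_s}\) with \(s\leq i_k\) plays the role of your \(\Lambda_r\subseteq[L,R]\).

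What each approach buys:
\begin{itemize}
\item Your version is elementary and self-contained, and does not depend on the \(P\)-symbol dictionary. It also yields the right endpoint \(n+\asc(a_1\cdots a_r)\) at no extra cost.
\item The paper's citation is shorter and places the statement inside Nadeau and Tewari's analysis of Grassmannian \(\Omega\)-classes.
\end{itemize}
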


\begin{proof}
For each \(r\), the first \(r\) boxes of the corresponding linear
extension form a Young diagram. Hence \(a_1\cdots a_r\) is a reduced
word for the inverse of a Grassmannian permutation with descent at
\(n\). The argument in \cite[Section~6.2(a)]{NT2024} shows that its
\(P\)-symbol consists of a single tree whose least canonical label
is \(n-\des(a_1\cdots a_r)\).
By \eqref{eq:omega-insertion-data}, the occupied blocks therefore
form a single component with this left endpoint.
The endpoints are weakly decreasing, which proves the final assertion.
\end{proof}

For \( a\in\Red(w_\lambda^{-1}) \), by \Cref{lem:block-path-successors}, the successor forest of \(P_a\)
is therefore the chain
\(S_1\rightarrow S_2\rightarrow\cdots\rightarrow S_m\),
with \(S_{r-1}\rav S_r\) if \(a_{r-1}\leq a_r\) and
\(S_{r-1}\rad S_r\) if \(a_{r-1}>a_r\).
For any nonzero cell selection \(Q\),
\eqref{eq:word-path-nonzero-selections} and \eqref{arr:csel_cond} give
\(\col_r(Q)\leq\col_1(Q)<2a_1=2n\) in every row.
Thus \(\FL_a\in\mathbb Z[\vec t][x_1,\ldots,x_n]\), independently
of the horizontal shift.

\begin{prop}\label{prop:grassmannian-padded-word-path}
Let \(\lambda\) be a partition of length at most \( n \) with
\(|\lambda|=m>0\), and let \(a\in\Red(w_\lambda^{-1})\).  The code of
\(P_a\) is a padded composition of length \(n+m-1\).  Moreover, if the
\(r\)-th insertion creates the block \(I_i\), then
\( \kappacan_{P_a}(S_r)=i+m-1 \).
\end{prop}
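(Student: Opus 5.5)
The plan is to compute everything explicitly from \Cref{prop:grassmannian-block-construction}, by induction along the single growing component. First I determine the code. By \Cref{prop:grassmannian-block-construction}, each \(\mathcal I_r\) is a single component containing \(r\) blocks. Hence \(J_r(a)\) is an interval of size \(r\) with minimum \(e_r(a)=n-\des(a_1\cdots a_r)\), and the shift \eqref{eq:component-word-shift} is \(s(a)=m-1\). By \eqref{eq:word-horizontal-position} this gives
\[
  h_r(a)=n+m-1-\des(a_1\cdots a_r).
\]
Thus \(h_1(a)=n+m-1\), and \(h_r(a)\) drops by exactly one at each descent of \(a\) and stays constant at each ascent. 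Equality of consecutive letters cannot occur, since \(a\) is reduced.

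It follows that the values \(h_r(a)\) form the full interval \([\,n+m-1-d,\;n+m-1\,]\) with \(d=\des(a)\), and each value is attained by a nonempty block of consecutive rows. So \(c(a)\) is supported on a terminal interval of \([n+m-1]\) with all entries there positive. The least support position is \(n+m-1-d\geq n\geq1\), because \(d\leq m-1\). Read as a sequence of length \(n+m-1\), the code is therefore \((0^{\,n+m-2-d},\ell_{d+1},\ldots,\ell_1)\), where \(\ell_1,\ldots,\ell_{d+1}\) are the lengths of the maximal increasing runs of \(a\). This is a padded composition of length \(n+m-1\).

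For the canonical labels, I use the chain structure already recorded after \Cref{prop:grassmannian-block-construction}. Here \(S_{r-1}\rav S_r\) exactly at ascents, and \(S_{r-1}\rad S_r\) at descents. Equivalently, \(S_r\) is the topmost step on its vertical line exactly when \(r=1\) or \(a_{r-1}>a_r\). I prove by induction on \(r\) that \(\kappacan_{P_a}(S_q)=p_q+m-1\) for \(q\leq r\), where \(I_{p_q}\) is the block created at step \(q\). I also carry the auxiliary statement that the labels used so far form the set \(\{j+m-1: j\in J_r(a)\}\).

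The induction runs as follows.
\begin{itemize}
  \item For \(r=1\), the block is \(I_{a_1}=I_n\) and \(S_1\) is topmost on \(x=2h_1\). Its label is \(h_1=n+m-1=p_1+m-1\).
  \item For \(r>1\), the component \(J_{r-1}(a)\) has its mark at the block created at step \(r-1\), because there is only one component. By \eqref{eq:block-rule}, the new block is \(I_{L-1}\) if \(a_r<a_{r-1}\) and \(I_{R+1}\) if \(a_r>a_{r-1}\).
  \item In the descent case, \(p_r=L-1=e_r(a)\). Since \(S_r\) is topmost on \(x=2h_r\), its label is \(h_r=e_r(a)+m-1=p_r+m-1\).
  \item In the ascent case, \(S_{r-1}\rav S_r\). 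The label of \(S_r\) is the least unused integer exceeding \(\kappacan(S_{r-1})\). By the inductive hypothesis, the used labels form the interval \(J_{r-1}(a)+m-1\), which contains \(\kappacan(S_{r-1})\). So the least unused integer above it is \(R+1+m-1=p_r+m-1\).
\end{itemize}
In both cases the used labels again form \(J_r(a)+m-1\), which closes the induction.

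The main obstacle I expect is bookkeeping rather than a conceptual difficulty. In the ascent case one must confirm that the chosen marked block really is the step \(r-1\) block, so that the comparison in \eqref{eq:block-rule} is with \(a_{r-1}\). One must also check that the used-label set stays an interval, so that the "smallest unused greater" rule jumps exactly to the right end. If this became delicate, I would instead invoke \Cref{lem:forest-path-incidence} together with the dictionary \eqref{eq:omega-insertion-data}: there canonical labels are block positions, shifted by the uniform translation \(s(a)=m-1\) as in \Cref{rem:word-path-shift-choice}.
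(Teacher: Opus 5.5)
Your proof is correct. The code computation matches the paper's: both use \(s(a)=m-1\), \(h_r(a)=n+m-1-\des(a_1\cdots a_r)\), steps of \(0\) or \(-1\), and support \([h_m(a),n+m-1]\). You also identify the entries as the reversed run lengths.

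For the canonical labels your route differs. The paper reads off from \Cref{tab:omega-dictionary} that the block created at step \(r\) sits at the canonical label of \(v_r\) in \(F_\Omega(a)\). It then translates leaf indices by \(m-1\), so that the flags become \(h_r(a)\) and the code becomes \(c(P_a)\), and invokes \Cref{lem:forest-path-incidence} to pass from forest canonical labels to \(\kappacan_{P_a}\).

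You instead unwind the recursive definition of \(\kappacan_{P_a}\) directly, carrying the invariant that the labels used after step \(r\) form \(J_r(a)+m-1\). At a descent the row is topmost and receives \(h_r(a)=e_r(a)+m-1\). At an ascent the invariant is exactly what forces the ``smallest unused integer above \(\kappacan(S_{r-1})\)'' to be \(R+m\).

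What each route buys:
\begin{itemize}
  \item Your version is self-contained. It sidesteps a check the paper leaves implicit, namely that the matching \(S_F\) of \Cref{lem:forest-path-incidence} sends \(v_r\) to the \(r\)-th row of \(P_a\).
  \item The paper's version is shorter and is not tied to the single-component structure, so it adapts to other word paths with \(s(a)\) in place of \(m-1\). Your interval invariant genuinely uses the Grassmannian hypothesis.
\end{itemize}

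One small repair is needed. \eqref{eq:block-rule} only governs the case \(I_{a_r}\in\mathcal I_{r-1}\). The other case always occurs at \(r=2\), since \(a_2=n\pm1\notin J_1(a)=\{n\}\). In that case the new block is \(I_{a_r}\) itself. Because \(\mathcal I_r\) is a single component, \(a_r\in\{L-1,R+1\}\), and \(a_{r-1}\in[L,R]\) then gives the same dichotomy: \(I_{L-1}\) when \(a_r<a_{r-1}\) and \(I_{R+1}\) when \(a_r>a_{r-1}\). With that sentence added, your induction is complete.
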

\begin{proof}
By \Cref{prop:grassmannian-block-construction},
\(|J_r(a)|=r\) for every \(r\in[m]\), so \(s(a)=m-1\).
Thus \eqref{eq:word-horizontal-position} gives
\[
  h_r(a)=n+m-1-\des(a_1\cdots a_r).
\]
In particular, \(h_1(a)=n+m-1\), \(h_m(a)\geq n\), and successive
values differ by \(0\) or \(-1\). Hence the support of \(c(P_a)\)
is the integer interval \([h_m(a),n+m-1]\), proving that \(c(P_a)\)
is padded of length \(n+m-1\).

If the \(r\)-th insertion creates \(I_i\), then \(v_r\) has canonical
label \(i\) by \Cref{tab:omega-dictionary}.
Translating the leaf indices by \(m-1\) and applying
\Cref{lem:forest-path-incidence} gives
\(\kappacan_{P_a}(S_r)=i+m-1\).
\end{proof}

\subsection{A lattice path model for Grothendieck polynomials}

We now convert the Hecke subword formula
\eqref{eq:grothendieck-hecke-subword} into a lattice path formula.
For \(w\in S_{n+1}\), let \(\EH_n(w)\) be the set of
endpoint-decreasing words \(a=(a_1,\ldots,a_m)\), where
\(0\leq m\leq \binom{n+1}{2}\) and \(a_r\in[n]\), such that
\(\operatorname{Dem}(a)=w\).

For a nonempty \(a\in\EH_n(w)\), let
\((P_a,\kappa_a)\) be its word path and form \(P_a^\circ\) by replacing each west step by \( (-1,0) \). Define its injective cell selections by
\begin{equation}\label{eq:injective-hecke-cell-selection}
  \ICSel_a
  =\left\{Q\in\CSel(P_a^\circ):
      (a_r,\col_r(Q))\neq(a_s,\col_s(Q))\quad(r\neq s)
  \right\}.
\end{equation}
We define
\[
  \FL_a^K(\vec x)
  :=\sum_{Q\in\ICSel_a}\wt_{\kappa_a}(Q).
\]
For the empty word, from
\( \ICSel_\varnothing=\{\varnothing\} \), we have
\( \FL_\varnothing^K(\vec x)=1 \).
\begin{thm}\label{thm:grothendieck-lattice-path}
For every \(w\in S_{n+1}\),
\begin{equation}\label{eq:grothendieck-lattice-path}
  G_w^{(\beta)}(\vec x)
  =\sum_{a\in\EH_n(w^{-1})}
     \beta^{|a|-\ell(w)}\FL_a^K(\vec x).
\end{equation}
\end{thm}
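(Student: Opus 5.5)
We follow the proof of \Cref{thm:double-schubert-lattice-path}, starting from the Hecke subword formula \eqref{eq:grothendieck-hecke-subword} in place of the vine formula. For a subword \(\pi\subseteq\omega_n^+\) with \(\operatorname{Dem}(\val(\pi))=w\), put \(u=\rev(\val(\pi))\). Two facts are needed. First, \(\operatorname{Dem}(\rev(b))=\operatorname{Dem}(b)^{-1}\). Second, commutation moves preserve the Demazure product. Together with \Cref{lem:endpoint-sort-well-defined}, they show that \(\ESort(u)\in\EH_n(w^{-1})\). The length bound \(|\pi|\leq\binom{n+1}{2}=|\omega_n^+|\) holds automatically, and all letters lie in \([n]\). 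Hence \(\mathcal R^K(\omega_n^+;w)\) splits as a disjoint union of the classes \(\mathcal R^K_a=\{\pi:\ESort(\rev(\val(\pi)))=a\}\) over \(a\in\EH_n(w^{-1})\), exactly as in \eqref{eq:vine-subword-decomposition}. The factor \(\beta^{|\pi|-\ell(w)}=\beta^{|a|-\ell(w)}\) is constant on each class. It therefore suffices to prove, for each nonempty \(a\in\EH_n(w^{-1})\), that
\[
  \sum_{\pi\in\mathcal R^K_a}\wt(\pi)=\FL_a^K(\vec x).
\]

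To prove this identity, we build a compressed analogue of \(\Phi_a\). By \Cref{prop:endpoint-sort-linear-extensions}, the words \(u\) with \(\ESort(u)=a\) are the readings of \(a\) along linear extensions of \(\mathcal H(a)\). Given \(\pi\in\mathcal R^K_a\), this yields a linear extension \(\theta\). We assign the letter \(B_r=j^{(q)}\) to row \(r\) and select the cell \(C_{q,r}\) of \(P_a^\circ\); its weight is \(x_q=\wt(B_r)\). For the inverse, we use the \(\vec t=\vec0\) form of \Cref{lem:vine-letter-order}. Restricted to \(\omega_n^+\), we have \(j^{(q)}\prec_+ i^{(q')}\) iff \(q'>q\), or \(q'=q\) and \(j>i\). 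Using \Cref{lem:block-path-successors}, with \(a_r\le a_s\) on vertical arrows and \(a_r>a_s\) on diagonal arrows, the requirement that \(B_s\prec_+B_r\) along every successor arrow becomes \(\col_r\geq\col_s\) for vertical arrows and \(\col_r>\col_s\) for diagonal arrows. These are exactly the conditions \eqref{eq:compressed-cell-inequ}. Membership in \(\omega_n^+\) requires \(q\le a_r\). This gives nonzero weight, and it places the cell above \(P_a^\circ\) because \(a_r\le h_r(a)\). Injectivity of \((a_r,\col_r)\) is precisely the condition that the letters \(B_r\) be distinct letters of \(\omega_n^+\), so that they form a subword. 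Conversely, distinct letters read in \(\prec_+\) order give a subword \(\pi\). Its reversed value word must be a linear extension reading of \(\mathcal H(a)\), hence has endpoint sort \(a\), and therefore has Demazure product \(w^{-1}\) after reversal.

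The main obstacle is the step where Hecke words are no longer reduced. First, hypothesis (i) of \Cref{lem:subword-path-bijection} can fail: a vertical arrow may join equal labels \(a_r=a_s\). Second, hypothesis (iii) can fail: the argument that equal labels are comparable used \eqref{eq:incomparable-supports-commute}. That inequality still holds, because incomparable rows lie in different components, so equal labels remain comparable. The issue is that the converse direction must still produce the correct linear extension. Along a vertical arrow with \(a_r=a_s\) and \(\col_r\ge\col_s\), we need \(B_s\prec_+B_r\) strictly. Equal column would give the same letter, and this is exactly what injectivity forbids, so the strict order follows.

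One more case must be excluded. Suppose two rows with equal labels are comparable only through a longer chain, and the selection assigns them the same letter. Then the injectivity condition removes that selection; it is not discarded silently in the subword model. We need to confirm that such selections correspond to no subword, and that every genuine subword gives an injective selection. Both follow because the letters of a subword are distinct. Equal occurrences are matched in row order, which is also preserved by \(\ESort\). After these checks, the map is a weight-preserving bijection \(\mathcal R^K_a\to\ICSel_a\). Summing over \(a\in\EH_n(w^{-1})\), with the empty word contributing \(1\) when \(w=\mathrm{id}\), gives \eqref{eq:grothendieck-lattice-path}.
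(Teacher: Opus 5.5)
Your proposal is correct and follows the paper's argument essentially step for step. You use the disjoint decomposition \eqref{eq:hecke-subword-decomposition}, obtained from \(\operatorname{Dem}(\rev(b))=\operatorname{Dem}(b)^{-1}\) and endpoint sorting, and then the compressed letter--cell bijection of \Cref{lem:hecke-path-bijection}, where injectivity gives the strict order \(B_s\prec_+B_r\) along vertical arrows with equal labels. One small correction: the bijection is onto \(\ICSel_a^\times\), not all of \(\ICSel_a\); the remaining injective selections use a cell with \(\col_r(Q)>a_r\), so they have weight zero and the identity is unaffected.
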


We prove \Cref{thm:grothendieck-lattice-path} by constructing a
weight-preserving bijection.

Let \( \ICSel_a^\times\) be the set of cell selections with nonzero weight in \( \ICSel_a \).
By \eqref{eq:compressed-cell-weight}, we have
\begin{equation}\label{eq:nonzero-injective-cell-selection}
  \ICSel_a^\times =\{Q\in\ICSel_a:\col_r(Q)\leq a_r\text{ for every }r\in[m]\}.
\end{equation}
Recall that the subword formula \eqref{eq:grothendieck-hecke-subword}
sums over \(\mathcal R^K(\omega_n^+;w)\), the subwords whose value
words have Demazure product \(w\).
For \(a\in\EH_n(w^{-1})\), let
\[
  \mathcal R_a^K(\omega_n^+;w)
  =\left\{\pi\in\mathcal R^K(\omega_n^+;w):
      \ESort\bigl(\rev(\val(\pi))\bigr)=a
    \right\}.
\]
When \(n\) and \(w\) are clear, we simply write \(\mathcal R_a^K\).
Reversal satisfies
\( \operatorname{Dem}(\rev(b))=\operatorname{Dem}(b)^{-1} \).
Together with \Cref{lem:endpoint-sort-well-defined}, this gives the disjoint union
\begin{equation}\label{eq:hecke-subword-decomposition}
  \mathcal R^K(\omega_n^+;w)
  =\bigsqcup_{a\in\EH_n(w^{-1})}\mathcal R_a^K.
\end{equation}

Given \(\pi=(\pi_1,\ldots,\pi_m)\in\mathcal R_a^K\), put
\(u=\rev(\val(\pi))\), and let \(\sigma_u\) be the permutation
defining \(\ESort(u)\). For each \(r\in[m]\), write
\( \pi_{m-\sigma_u(r)+1}=a_r^{(q_r)} \),
and define \( \Phi_a^K:\mathcal R_a^K\rightarrow\ICSel_a^\times \) by
\[
  \Phi_a^K(\pi)=\{C_{q_r,r}:1\leq r\leq m\}.
\]

\begin{lem}\label{lem:hecke-path-bijection}
For every \(a\in\EH_n(w^{-1})\), the map
\( \Phi_a^K\) is a weight-preserving bijection.\end{lem}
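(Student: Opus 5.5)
The plan is to imitate the proof of \Cref{lem:subword-path-bijection}, with reduced words replaced by Hecke words, \(\widetilde\omega_{[n]}\) replaced by \(\omega_n^+\), and the vine order replaced by \(\prec_+\).

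\textbf{Step 1: the image is correct.} Let \(\pi\in\mathcal R_a^K\) and \(u=\rev(\val(\pi))\).
\begin{itemize}
\item By \Cref{prop:endpoint-sort-linear-extensions}, \((\sigma_u^{-1}(1),\ldots,\sigma_u^{-1}(m))\) is a linear extension of \(\mathcal H(a)\). This proposition has no reducedness hypothesis, so it applies to Hecke words.
\item Hence, whenever \(S_r\) has successor \(S_s\), the letter \(B_s=a_s^{(q_s)}\) precedes \(B_r=a_r^{(q_r)}\) in \(\prec_+\).
\item Restricted to unbarred letters in odd rows, \Cref{lem:vine-letter-order} says \(B_s\prec_+B_r\) if and only if \(q_r\ge q_s\) when \(a_s\ge a_r\), and \(q_r>q_s\) when \(a_s<a_r\).
\item By \Cref{lem:block-path-successors}, vertical successors have \(a_r\le a_s\) and diagonal successors have \(a_r>a_s\). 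So these conditions are exactly \eqref{eq:compressed-cell-inequ}.
\item Since \(j^{(q)}\) exists only for \(q\le j\), we get \(\col_r=q_r\le a_r\le h_r(a)\). So the cell lies above \(P_a^\circ\) and has weight \(x_{q_r}\ne0\).
\item Injectivity in the sense of \eqref{eq:injective-hecke-cell-selection} holds because the letters of a subword are distinct and \((a_r,q_r)\) determines the letter.
\item Weight preservation is immediate, since \(\wt(j^{(q)})=x_q\).
\end{itemize}

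\textbf{Step 2: the inverse.} Let \(Q\in\ICSel_a^\times\).
\begin{itemize}
\item Set \(B_r=a_r^{(\col_r(Q))}\). This is a letter of \(\omega_n^+\) by \eqref{eq:nonzero-injective-cell-selection}, and the \(B_r\) are pairwise distinct by injectivity.
\item Read the \(B_r\) in \(\prec_+\) order to get \(\pi\). The reverse of this order is a total order \(\theta\) on \([m]\).
\item Step 1 run backwards shows that successors come before their parents in \(\prec_+\). So the reversed order \(\theta\) is a linear extension of \(\mathcal H(a)\).
\item \Cref{prop:endpoint-sort-linear-extensions} then gives \(\ESort(\rev\val\pi)=a\). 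By \Cref{lem:endpoint-sort-well-defined}, \(\rev\val\pi\) is commutation-equivalent to \(a\). Demazure products are invariant under commutations, so \(\operatorname{Dem}(\val\pi)=\operatorname{Dem}(a)^{-1}=w\), and hence \(\pi\in\mathcal R_a^K\).
\end{itemize}

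\textbf{Step 3: the two maps are mutually inverse (main obstacle).} The difficulty is that \(a\) may contain repeated letters whose rows carry equal labels. We must check that \(\sigma_u\) assigns the letters of \(\pi\) to the same rows that \(\theta\) does.
\begin{itemize}
\item For \(r<s\) with \(a_r=a_s\), the rows are comparable in \(\mathcal H(a)\) by \eqref{eq:incomparable-supports-commute}, so \(r\prec_a s\).
\item Every linear extension therefore lists equal labels in row order, as already noted after the definition of \(\ESort\). So \(\theta\) is forced to agree with \(\sigma_u^{-1}\).
\item One must also rule out a successor relation with \(a_r=a_s\), which would make the strict and weak inequalities ambiguous. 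The defining condition \(b_r\ne b_{r+1}\) is not assumed for \(\EH_n\), so the plan handles this directly. A vertical successor joins consecutive rows, and a Hecke word may have \(a_r=a_{r+1}\). But then \(a_r\le a_s\) places the pair in the case \(a_s\ge a_r\), which gives the weak inequality \(q_r\ge q_s\), matching the vertical condition.
\item The strictness needed to keep equal-value letters distinct is then supplied by the injectivity condition, not by the successor inequality. Checking this compatibility is the delicate point.
\end{itemize}

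With these checks, the constructions preserve each row's value and column, so they are inverse to each other, and \(\Phi_a^K\) is a weight-preserving bijection.
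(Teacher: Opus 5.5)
Your proposal is correct and follows essentially the same route as the paper. Both arguments do the following: derive the staircase form of the successor/letter-order equivalence from \Cref{lem:vine-letter-order} and \Cref{lem:block-path-successors}; apply \Cref{prop:endpoint-sort-linear-extensions} in both directions; use injectivity to keep equal-valued letters distinct along vertical arrows; and use \eqref{eq:incomparable-supports-commute} to fix the matching of rows with equal labels, which makes the two constructions inverse.
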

\begin{proof}
The assertion is immediate for \(a=\varnothing\), so assume
\(a=(a_1,\ldots,a_m)\) is nonempty.
For \(r\trl_a s\) and distinct letters
\(B_r=a_r^{(q_r)}\), \(B_s=a_s^{(q_s)}\) of \(\omega_n^+\),
\Cref{lem:block-path-successors} and the vine order \(\prec_+\) give
\begin{equation}\label{eq:staircase-letter-order-successor}
  B_s\prec_+B_r
  \quad\Longleftrightarrow\quad
  \begin{cases}
    q_r\geq q_s,&S_r\rav S_s,\\
    q_r>q_s,&S_r\rad S_s.
  \end{cases}
\end{equation}
We use the letter reconstruction from the proof of
\Cref{lem:subword-path-bijection}. Here equal labels may occur along
vertical arrows, so the injectivity condition is needed.
For \(\pi\in\mathcal R_a^K\), the letters
\(B_r=\pi_{m-\sigma_u(r)+1}=a_r^{(q_r)}\), where
\(u=\rev(\val(\pi))\), are distinct and satisfy \(q_r\leq a_r\).
Their reversed order is a linear extension of \(\mathcal H(a)\)
by \Cref{prop:endpoint-sort-linear-extensions}. Thus
\eqref{eq:staircase-letter-order-successor} gives the compressed
cell-selection inequalities, and the pairs \((a_r,q_r)\) are distinct.
Hence \(\Phi_a^K(\pi)\in\ICSel_a^\times\).

Conversely, for \(Q\in\ICSel_a^\times\), set
\(q_r=\col_r(Q)\) and \(B_r=a_r^{(q_r)}\).
Injectivity and \eqref{eq:nonzero-injective-cell-selection} make the
\(B_r\) distinct letters of \(\omega_n^+\). Read them in vine order to obtain \(\pi\). By
\eqref{eq:staircase-letter-order-successor}, their reversed order is
a linear extension of \(\mathcal H(a)\), so
\(u=\rev(\val(\pi))\) satisfies \(\ESort(u)=a\).
Commutation preserves the Demazure product, giving
\(\operatorname{Dem}(u)=\operatorname{Dem}(a)=w^{-1}\);
the reversal identity then gives
\(\operatorname{Dem}(\val(\pi))=w\). Thus \(\pi\in\mathcal R_a^K\).

If \(a_r=a_s\) with \(r<s\), then \(r\prec_a s\) by
\eqref{eq:incomparable-supports-commute}, so \(B_s\prec_+B_r\).
The reversed reading therefore preserves the order of equal-valued
indices, and the inverse verification in \Cref{lem:subword-path-bijection}
applies. Finally,
\(\wt(\pi)=\prod_{r=1}^m x_{q_r}
=\wt_{\kappa_a}(\Phi_a^K(\pi))\), proving weight preservation.
\end{proof}

\begin{proof}[Proof of \Cref{thm:grothendieck-lattice-path}]
By \eqref{eq:grothendieck-hecke-subword},
\eqref{eq:hecke-subword-decomposition}, and
\Cref{lem:hecke-path-bijection},
\[
  \begin{aligned}
  G_w^{(\beta)}(\vec x)
  &=\sum_{\pi\in\mathcal R^K(\omega_n^+;w)}
      \beta^{|\pi|-\ell(w)}\wt(\pi) \\
  &=\sum_{a\in\EH_n(w^{-1})}
      \beta^{|a|-\ell(w)}
      \sum_{\pi\in\mathcal R_a^K}\wt(\pi)
  =\sum_{a\in\EH_n(w^{-1})}
    \beta^{|a|-\ell(w)}\FL_a^K(\vec x).
  \end{aligned}
\]
\end{proof}
\flushbottom
\section{Combinatorial interpretations of expansion coefficients}
\label{sec:change-of-basis}

In this section, we derive combinatorial formulas for expansion
coefficients from the lattice path models. We first express the
coefficients in the double fundamental expansion of double Schur
polynomials as weighted sums over admissible deletion sets of reduced
words, and discuss the additional cancellations required for general
double Schubert polynomials. We then express Grothendieck polynomials
in the grove basis using endpoint-decreasing Hecke words.
Setting \(\beta=0\) yields a counting formula for the forest expansion
of Schubert polynomials. We write \([B]f\) for the coefficient of a
basis element \(B\) in the corresponding basis expansion of \(f\).

\subsection{From double Schur to double fundamental polynomials}
\label{sec:double-schur-successor-replay}

Throughout this subsection, a \emph{positive root} means \(t_q-t_p\) with
\(p<q\) and we fix a partition \(\lambda=(\lambda_1\geq\cdots\geq\lambda_n\geq0)\).
By \Cref{prop:grassmannian-block-construction}, all reduced words of
\(w_\lambda^{-1}\) are endpoint-decreasing. Thus the sums below
range over all of \(\Red(w_\lambda^{-1})\).

Let \(b=(b_1,\ldots,b_\ell)\) be a finite word. A position
\(2\leq k\leq\ell\) is a \emph{double ascent} in \( b \) if
\(b_{k-1}<b_k<b_{k+1}\), and a \emph{double descent} in \( b \) if
\(b_{k-1}>b_k>b_{k+1}\). Here we set \(b_{\ell+1}=+\infty\)
only for these comparisons; this boundary value is not included when
counting ascents or descents. For \(\ell\geq2\), the last position is
a double ascent exactly when \(b_{\ell-1}<b_\ell\), and is never a
double descent.
An \emph{increasing run} of \(b\) is a nonempty consecutive subword
that is strictly increasing and maximal with respect to inclusion.
The unique decomposition of \(b\) into its increasing runs is called
its \emph{run decomposition}; see \cite[Definition~2.2]{Assaf2021}
for this terminology for reduced words. A nonempty word \(b\) with
distinct adjacent entries has \(\des(b)+1\) increasing runs.
A position \(2\leq k\leq\ell\) is called \emph{deletable} in \(b\)
if either
\begin{equation}\label{eq:successor-replay-delete}
\begin{aligned}
 &k\text{ is a double ascent}
 &&\text{and}\quad b_k<b_1+\asc(b_1\cdots b_k),\ \text{or}\\
 &k\text{ is a double descent}
 &&\text{and}\quad 1\le b_1-\des(b_1\cdots b_k)<b_k.
\end{aligned}
\end{equation}
For a deletable position \(k\), define its deletion weight by
\begin{equation}\label{eq:successor-replay-root}
  \Delta_b(k)=
  \begin{cases}
    t_{b_1+\asc(b_1\cdots b_k)}-t_{b_k},
      &k\text{ is a double ascent},\\
    t_{b_k}-t_{b_1-\des(b_1\cdots b_k)},
      &k\text{ is a double descent}.
  \end{cases}
\end{equation}
Thus \(\Delta_b(k)\) is a positive root.

Let \(a=(a_1,\ldots,a_m)\in\Red(w_\lambda^{-1})\), where
\(m=|\lambda|>0\). For \(U=\{r_1<\cdots<r_\ell\}\subseteq[m]\),
write \(a_U=(a_{r_1},\ldots,a_{r_\ell})\), with
\(a_\varnothing=\varnothing\).
We construct deletion sets by considering each letter of \(a\)
once, in increasing order of its original index. Whenever its
position is deletable in the remaining word, we choose either to
keep the letter or to delete it; all other letters are kept.
The set \(D\) of original indices of the deleted letters is called
a \emph{deletion set} of \(a\).
We call \(D\) an \emph{admissible deletion set} if the final word
\(a_{[m]\setminus D}\) has at most \(n\) increasing runs.
Let \(\mathcal A_n(a)\) denote the set of admissible deletion sets.
Since \(a\) has distinct adjacent letters and permitted deletions
preserve this property, the final run condition is equivalent to
\(\des(a_{[m]\setminus D})<n\).

Each keep choice has weight \(1\), and each delete choice has the
weight in \eqref{eq:successor-replay-root}, evaluated in the word
before deletion. For \(D\in\mathcal A_n(a)\), let \(\wt_a(D)\)
be the product of the weights of the corresponding choices.
If the increasing runs of \(a_{[m]\setminus D}\), read from left to right,
have lengths \((\ell_1,\ldots,\ell_q)\), set
\begin{equation}\label{eq:replay-code}
  c_D=(0^{\,n-q},\ell_q,\ldots,\ell_1)\in\operatorname{Pad}_n.
\end{equation}
For the empty word, set \(\mathcal A_n(\varnothing)=\{\varnothing\}\),
\(c_\varnothing=(0^n)\), and \(\wt_\varnothing(\varnothing)=1\).

\begin{thm}\label{thm:double-schur-successor-replay}
For every \(a\in\Red(w_\lambda^{-1})\),
\begin{equation}\label{eq:word-successor-replay-expansion}
  \FL_a(\vec x;\vec t)
  =\sum_{D\in\mathcal A_n(a)}
      \wt_a(D)\,\mathfrak F_{c_D}(x_1,\ldots,x_n;\vec t).
\end{equation}
\end{thm}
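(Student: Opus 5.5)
The plan is to prove \eqref{eq:word-successor-replay-expansion} by transforming the word labeling \(\kappa_a\) of the chain \(S_1\to\cdots\to S_m\) into a canonical labeling one row at a time. Each change of label is paid for by the deletion terms. By \Cref{prop:grassmannian-block-construction} and the discussion after it, \(P_a\) is a chain. Its arrows are \(S_{r-1}\rav S_r\) at ascents of \(a\) and \(S_{r-1}\rad S_r\) at descents, and every nonzero selection uses columns \(<2n\).

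\textbf{Step 1: identify the target.} For a word \(b\) with distinct adjacent letters, let \(P^{(b)}\) be the chain path with the same ascent/descent pattern. Let \(\kappa^{\mathrm{can}}(b)\) be the labeling that assigns \(b_1+\asc(b_1\cdots b_k)\) to row \(k\) at an ascent step and \(b_1-\des(b_1\cdots b_k)\) at a descent step. Using \Cref{prop:grassmannian-padded-word-path}, the canonical-labeling recursion, and \Cref{rem:word-path-shift-choice} to drop the uniform shift, I would check that this is exactly the canonical labeling of the padded path whose code is built from the run lengths. Concretely, each increasing run occupies one vertical line, consecutive runs lie on adjacent lines, and the first letter fixes the starting line at \(n\). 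By \Cref{thm:1} and \Cref{cor:double-fundamental-path}, the lattice path polynomial of \((P^{(b)},\kappa^{\mathrm{can}}(b))\), restricted to columns \(<2n\), is \(\mathfrak F_{c}\) with \(c\) as in \eqref{eq:replay-code}, provided the run count is at most \(n\). When there are more than \(n\) runs, the bound \(\col<2n\) and the descent condition force the polynomial to vanish. This accounts for the admissibility condition.

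\textbf{Step 2: a one-row exchange identity.} Process rows \(k=2,\dots,m\) in increasing order. Fix a labeled chain whose rows \(1,\dots,k-1\) already carry canonical labels, and suppose row \(k\) carries \(b_k\) while its canonical value is \(\kappa'\). The weights in row \(k\) are \(x_q-t_\kappa\) in odd columns and \(t_\kappa-t_q\) in even columns, so they are affine in \(t_\kappa\) and the column sums telescope. Let \(d_{k-1}\) be the selected column in row \(k-1\). The admissible range of \(d_k\) is cut out by \(d_{k-1}\) through \eqref{arr:csel_cond} and by \(2\kappa\) through nonvanishing. Summing the row-\(k\) weights over this range, I expect the following identity:
\[
\text{(sum with label }b_k)=\text{(sum with label }\kappa')+\Delta\cdot[\text{row }k\text{ removed}].
\]
Here \(\Delta=\pm(t_{\kappa'}-t_{b_k})\), which is the weight \eqref{eq:successor-replay-root}.

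The key point is what happens to the constraint between rows \(k-1\) and \(k+1\) once row \(k\) is removed. At a double ascent both arrows are vertical; at a double descent both are diagonal. In either case the two inequalities compose into the single inequality of the same type, so the remaining word is again a chain of the correct shape. When \(k\) is neither a double ascent nor a double descent, or when the inequality in \eqref{eq:successor-replay-delete} fails, the identity should hold with no correction term. In that situation the label \(b_k\) is already forced to act like \(\kappa'\) on all admissible columns: the range is capped by the neighbour's column rather than by the label. The boundary case \(k=m\) is handled by the convention \(b_{m+1}=+\infty\).

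\textbf{Step 3: iterate.} After the exchange at row \(k\), both resulting terms are chains whose first \(k\) rows, respectively first \(k-1\) rows, carry canonical labels. For the deleted branch the prefix counts \(\asc\) and \(\des\) are recomputed in the shortened word, which is exactly how the weights and deletability are evaluated after earlier deletions. Induction on the number of unprocessed letters then produces a sum over deletion sets \(D\). Each summand is weighted by \(\wt_a(D)\) and multiplies the canonically labeled chain for \(a_{[m]\setminus D}\). By Step 1 that chain equals \(\mathfrak F_{c_D}\), or \(0\) when \(D\) is not admissible.

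\textbf{Main obstacle.} The hard part is Step 2. The telescoping must be checked separately for ascent and descent steps, combined with odd and even parities of the neighbouring columns, and one must show that the correction term vanishes exactly when \eqref{eq:successor-replay-delete} fails. I would first verify the identity on a three-row chain against \Cref{exa:3}. I would then use \Cref{lem:vine-letter-order} to rewrite the column inequalities as vine-order comparisons of letters, where the telescoping becomes the standard exchange \(x_q-t_i=(x_q-t_j)+(t_j-t_i)\) applied to consecutive letters.
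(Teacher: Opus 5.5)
Your route is the paper's. You relabel the chain row by row from word labels to canonical labels (the paper's mixed labelings $\kappa_{E,r}$), cancel paired columns in the changed row, read the unpaired lower endpoint at a double ascent or double descent as the deletion term, and identify the fully canonical chain with $\mathfrak F_{c_D}$. The gap is in Step~2. You state it for an arbitrary labeled chain whose first $k-1$ rows are canonical. At that generality it is false, and the hypothesis that makes it true is never established for the words $b=a_{[m]\setminus D}$ that actually arise. Separately, to isolate the correction term you must fix row $k+1$ as well as row $k-1$. The surviving column is the lower endpoint $2\lfloor d_{k+1}/2\rfloor+1$ or $2\lceil d_{k+1}/2\rceil$, which is determined by $d_{k+1}$.

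The missing ingredient is \Cref{lem:ferrers-interval-replay}: every retained letter satisfies $b_k\in[L,R]$, where $L=b_1-\des(b_1\cdots b_k)$ and $R=b_1+\asc(b_1\cdots b_k)$. Your claim that ``the range is capped by the neighbour's column rather than by the label'' rests on it.
\begin{itemize}
\item The canonical rows give $\col_{k-1}<2e_{k-1}$. This bound is inherited from the first row of the current run, not from the label of row $k-1$, which can be as large as $R$. Hence $\col_k<2L$ at an ascent and $\col_k\le 2L$ at a descent.
\item The cutoff $2\kappa'$ never truncates this range, but the cutoff $2b_k$ does unless $b_k\ge L$. So $b_k\ge L$ is what makes the paired differences actually cancel.
\item At a double pattern, $b_k\le R$ (at an ascent) or $b_k\ge L$ (at a descent) is what makes the unpaired column contribute exactly $\Delta_b(k)$ when \eqref{eq:successor-replay-delete} holds. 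When it fails with $L\ge1$, these bounds force $b_k=\kappa'$, so nothing changes.
\end{itemize}

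Without the invariant the identity breaks. Take $n=3$ and the chain $S_1\rad S_2$ labelled $(3,1)$, with canonical value $\kappa'=2$ in row $2$. That row is a valley, so you predict no correction. Yet the two lattice path polynomials differ by $-(x_2-t_2)(x_3-t_2)$, because the cutoff $2b_2=2$ kills columns that row $1$ would allow up to $4$.

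The invariant is not formal, since the shortened words are in general not reduced words of Grassmannian permutations. For instance, $(3,2,4,1,2,5)$ from \Cref{exa:successor-replay-boundary} contains $s_2s_1s_2$ after one commutation, so \Cref{prop:grassmannian-block-construction} does not apply. One proves it by induction on $k$ from two facts:
\begin{itemize}
\item each box of $D(\lambda)$ has an earlier neighbour whose label differs by one;
\item the deletion inequalities keep every deleted letter inside the current interval, so the next retained letter lands in $[L-1,R+1]$.
\end{itemize}

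Finally, the row where the canonical value would be $0$ (the start of an $(n+1)$-st run) needs its own argument, as in \Cref{lem:nonpositive-D-boundary}. There row $k-1$ is forced into column $1$, so no column of row $k$ satisfies the diagonal condition. What produces this vanishing is the canonical run-start labels, not the bound $\col<2n$ alone.
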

Summing over \(a\) and applying \eqref{eq:double-schur-lattice-path}
gives the coefficient formula.
\begin{cor}\label{cor:graham-pos}
For every \(c\in\operatorname{Pad}_n\),
\begin{equation}\label{eq:double-schur-successor-replay-coefficient}
  [\mathfrak F_c]s_\lambda
  =\sum_{a\in\Red(w_\lambda^{-1})}
     \ \sum_{\substack{D\in\mathcal A_n(a)\\c_D=c}}
       \wt_a(D).
\end{equation}
In particular, double Schur polynomials expand Graham-positively in
the double fundamental basis.
\end{cor}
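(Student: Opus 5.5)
The corollary follows from \Cref{thm:double-schur-successor-replay} by summing over reduced words, so the plan has three parts: sum the wordwise expansions, compare coefficients using uniqueness, and then read off positivity.

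First, sum \eqref{eq:word-successor-replay-expansion} over all \(a\in\Red(w_\lambda^{-1})\). By \Cref{prop:grassmannian-block-construction}, \(\ERed(w_\lambda^{-1})=\Red(w_\lambda^{-1})\). Hence \eqref{eq:double-schur-lattice-path} gives
\[
  s_\lambda(x_1,\ldots,x_n;\vec t)=\sum_{a\in\Red(w_\lambda^{-1})}\ \sum_{D\in\mathcal A_n(a)}\wt_a(D)\,\mathfrak F_{c_D}(x_1,\ldots,x_n;\vec t).
\]
The case \(\lambda=\varnothing\) needs a separate check. There \(w_\lambda=\mathrm{id}\), the only reduced word is the empty word, and both sides equal \(\mathfrak F_{(0^n)}=1\) by the stated conventions.

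Second, group the right-hand side by the index \(c=c_D\in\operatorname{Pad}_n\). This gives an expansion of \(s_\lambda\) in the family \(\{\mathfrak F_c\}_{c\in\operatorname{Pad}_n}\) with coefficients in \(\mathbb Z[\vec t]\). By \Cref{tab:basis-families} and \cite[Theorem~4.12]{BGNST2025a}, this family is a \(\mathbb Z[\vec t]\)-basis of \(\operatorname{EQSym}_n\supseteq\operatorname{ESym}_n\), so the expansion is unique. Comparing coefficients of \(\mathfrak F_c\) then yields \eqref{eq:double-schur-successor-replay-coefficient}. The only point to check here is that every \(c_D\) in \eqref{eq:replay-code} really lies in \(\operatorname{Pad}_n\). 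This holds because \(D\) admissible means \(q\leq n\) runs, each of positive length.

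Third, for Graham-positivity, note that each \(\wt_a(D)\) is a product of deletion weights \(\Delta_b(k)\). By \eqref{eq:successor-replay-root} and the inequalities in \eqref{eq:successor-replay-delete}, each \(\Delta_b(k)\) has the form \(t_q-t_p\) with \(p<q\). Any such root is a sum of simple roots \(t_{i+1}-t_i\), so every coefficient is a nonnegative integer polynomial in the simple roots.

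The substantive content lies entirely in \Cref{thm:double-schur-successor-replay}; this derivation has no real obstacle beyond invoking uniqueness of the double fundamental expansion.
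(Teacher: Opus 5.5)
Your proposal is correct and follows the paper's argument. The paper also obtains the corollary by summing \Cref{thm:double-schur-successor-replay} over \(\Red(w_\lambda^{-1})=\ERed(w_\lambda^{-1})\) via \eqref{eq:double-schur-lattice-path}, taking uniqueness of the expansion from the double fundamental basis theorem and positivity from each \(\Delta_b(k)\) being a positive root; your checks that \(c_D\in\operatorname{Pad}_n\), that \(\lambda=\varnothing\) is covered by the conventions, and that positive roots are nonnegative sums of simple roots just make explicit what the paper leaves implicit.
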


The following example illustrates both types of deletion, their dependence
on earlier choices, and the final admissibility condition.

\begin{exam}\label{exa:successor-replay-boundary}\label{exa:dynamic-deletion}
Take \(n=3\), \(\lambda=(3,2,2)\), and
\[
  a=(3,2,4,3,1,2,5)\in\Red(w_\lambda^{-1}).
\]
Its run decomposition is \(3\mid24\mid3\mid125\), with four runs.
Initially, the deletable positions are \(4\), \(6\), and \(7\).
Position \(4\) is a double descent of weight \(t_3-t_1\), while
positions \(6\) and \(7\) are double ascents of weights
\(t_5-t_2\) and \(t_6-t_5\), respectively.
If position \(6\) is deleted, the current word is
\((3,2,4,3,1,5)\). Its last letter is still a double ascent, but
its deletion inequality becomes
\[
  5<3+\asc(3,2,4,3,1,5)=5,
\]
which fails. The same failure occurs if position \(4\) was also
deleted. Thus positions \(6\) and \(7\) cannot both be deleted.

The deletion sets \(\varnothing\), \(\{6\}\), and \(\{7\}\)
all leave four runs and are not admissible. The three admissible
sets and their contributions are
\[
\begin{array}{c|c|c|c}
 D&\text{remaining word}&c_D&\wt_a(D)\\ \hline
 \{4\}&(3,2,4,1,2,5)&(3,2,1)&t_3-t_1\\
 \{4,6\}&(3,2,4,1,5)&(2,2,1)&(t_3-t_1)(t_5-t_2)\\
 \{4,7\}&(3,2,4,1,2)&(2,2,1)&(t_3-t_1)(t_6-t_5).
\end{array}
\]
Consequently,
\[
  \FL_a
  =(t_3-t_1)\mathfrak F_{(3,2,1)}
   +(t_3-t_1)(t_6-t_2)\mathfrak F_{(2,2,1)}.
\]
To see how different reduced words contribute to one coefficient,
keep the same \(\lambda\) and consider \(c=(1,2,2)\).
The pairs \((a',D)\) with \(c_D=c\) are exactly
\[
\begin{array}{c|c|c}
 a'&D&\wt_{a'}(D)\\ \hline
 (3,4,2,5,3,1,2)&\{5,7\}&(t_3-t_1)(t_6-t_2)\\
 (3,4,2,3,1,2,5)&\{6,7\}&(t_6-t_2)(t_6-t_5)\\
 (3,4,2,3,5,1,2)&\{4,7\}&(t_5-t_3)(t_6-t_2).
\end{array}
\]
Each remaining word has increasing run lengths \((2,2,1)\).
Summing these weights in \Cref{cor:graham-pos} gives
\[
  [\mathfrak F_{(1,2,2)}]
  s_{(3,2,2)}(x_1,x_2,x_3;\vec t)
  =(t_6-t_2)(t_6-t_1).
\]
\end{exam}

The coefficient formula also computes equivariant degrees. Let
\(N>n\), let \(T=(\mathbb C^\times)^N\) be the diagonal torus, and
write \(\vec t_N=(t_1,\ldots,t_N)\). Evaluation at \(\vec t_N\)
means setting \(t_i=0\) for \(i>N\).

A \emph{ribbon} is a nonempty skew Ferrers diagram
\(D(\eta)\setminus D(\nu)\), where \(\eta,\nu\) are partitions
with \(D(\nu)\subseteq D(\eta)\), that is connected under edge
adjacency and contains no \(2\times2\) block of boxes.
For a nonzero padded composition
\(c=(0^{n-q},\gamma_1,\ldots,\gamma_q)\), its positive parts
\(\gamma=(\gamma_1,\ldots,\gamma_q)\) determine, up to translation,
the ribbon with \(\gamma_i\) boxes in row \(i\) for \(1\leq i\leq q\).
Consecutive rows overlap in exactly one column, so this ribbon has
\(q\) rows and \(|c|-q+1\) columns. When this ribbon fits in the
\(n\times(N-n)\) rectangle, let \(A_c\in\binom{[N]}{n}\)
be the subset corresponding to \(\gamma\) under the bijections in
\cite[Lemmas~3.4 and~3.7]{BGST2026QG}, and define
\[
  X_c=X_{\mathrm{NC}}^{A_c}\subseteq\operatorname{QGr}(n;N).
\]
Here \(X_{\mathrm{NC}}^{A_c}\) is the closure of the quasisymmetric
Schubert cell indexed by \(A_c\). For \(c=(0^n)\), use the empty
composition, the empty diagram, and \(A_c=[n]\).
Bergeron, Gagnon, Spink, and Tewari prove the following equivariant
Kronecker duality.

\begin{thm}[{\cite[Theorem~8.8]{BGST2026QG}}]\label{thm:qgr-kronecker-duality}
For padded compositions \(c,d\in\operatorname{Pad}_n\) whose ribbons
fit in the \(n\times(N-n)\) rectangle,
\[
  \int_{X_c}^T\mathfrak F_d(x_1,\ldots,x_n;\vec t_N)
  =\delta_{c,d}.
\]
\end{thm}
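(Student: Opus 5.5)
The plan is to prove the duality by equivariant localization on the toric complex \(\operatorname{QGr}(n;N)\), following the standard argument that double Schubert polynomials are Kronecker dual to Schubert varieties. We then reduce the statement to two properties of the double fundamental polynomials at torus-fixed points: a vanishing (upper-triangularity) property and a diagonal normalization.

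\emph{Step 1: set up the localization formula.} The \(T\)-fixed points of \(\operatorname{QGr}(n;N)\) are coordinate subspaces indexed by certain subsets \(B\in\binom{[N]}{n}\). The class of \(\mathfrak F_d(x_1,\ldots,x_n;\vec t_N)\) restricts to the fixed point \(B\) by substituting for \(x_1,\ldots,x_n\) the equivariant parameters \(t_b\), \(b\in B\), in the order prescribed by the tautological flag. For each fixed point \(B\in X_c\) we record the tangent weights of \(X_c\) at \(B\). Since \(X_c\) is a union of torus orbit closures in a toric complex, these weights are read off from the combinatorics of the ribbon of \(c\) via \cite[Lemmas~3.4 and~3.7]{BGST2026QG}. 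The Atiyah--Bott formula, applied cone by cone on the toric complex, then gives
\[
  \int_{X_c}^T f=\sum_{B\in X_c^T}\frac{f|_B}{e_B(X_c)} .
\]

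\emph{Step 2: vanishing.} Show that \(\mathfrak F_d|_B=0\) unless \(B\) lies in \(X_d\), that is, unless \(A_d\le B\) in the closure order on quasisymmetric Schubert cells. For this I would use the lattice path model of \Cref{cor:double-fundamental-path}. After the substitution \(x_q\mapsto t_{b_q}\), every cell selection of \(P(d)\) acquires a factor \(x_{(j+1)/2}-t_{\kappa}\) that vanishes unless the path \(P(d)\) fits under the path attached to \(B\). Alternatively, one can use the equivariant quasisymmetry of \(\mathfrak F_d\) to move the variables \(t_i\) past one another, reducing to the case where some factor is forced to vanish.

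\emph{Step 3: diagonal normalization.} Show that \(\mathfrak F_d|_{A_d}\) equals the product of the normal weights, so that it cancels against \(e_{A_d}(X_d)\) to give \(1\). In the lattice path model, the specialization at \(A_d\) should leave a single nonzero cell selection, namely the one hugging the path, whose weight is the product of the relevant roots.

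\emph{Step 4: conclude by degree counting.} With these three inputs, the off-diagonal terms are handled as follows. If \(A_d\not\le A_c\), every fixed point of \(X_c\) kills \(\mathfrak F_d\), so the integral is \(0\). If \(A_d<A_c\) strictly, the integral is a polynomial in \(\vec t\) of degree \(|d|-\dim X_c=|d|-|c|<0\), hence \(0\). If \(c=d\), only the fixed point \(A_c\) contributes, and it contributes \(1\).

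The main obstacle is Step 2. It requires matching the Bruhat-type closure order on noncrossing fixed points of \(\operatorname{QGr}\) with the containment of lattice paths, and controlling the substitution in the cell weights, particularly the even-column factors \(t_\kappa-t_{d/2}\).
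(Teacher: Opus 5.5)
The paper does not prove this statement. It quotes it from \cite[Theorem~8.8]{BGST2026QG} and uses it only as a black box in \Cref{cor:qgr-deletion-degree}. Judged on its own, your proposal has a genuine gap.

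Steps~1 and~4 are the standard formal reduction and are fine in outline, up to two slips. First, the support condition you need is the up-set \(\{B: A_d\le B\}\), whereas the fixed points of \(X_d\) form the down-set \(\{B: B\le A_d\}\). Second, nothing guarantees that \(X_c\) is smooth at its other fixed points, so the local terms should be equivariant multiplicities rather than \(1/e_B(X_c)\). This is harmless, since only \(A_c\), which lies in the open cell, survives.

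The real problem is that, once localization is available, the duality is \emph{equivalent} to Steps~2 and~3. Suppose \(\int_{X_c}^T\mathfrak F_d=\delta_{c,d}\) for all \(c\). Induction on \(c\) in the closure order, using that the local term at \(A_c\) is a nonzero rational function, gives \(\mathfrak F_d|_B=0\) unless \(A_d\le B\). The case \(c=d\) is then exactly the normalization. So your outline relocates the whole theorem into two unproved claims.

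Step~1 also assumes more than it justifies. \(\mathfrak F_d\) is not symmetric in \(x_1,\ldots,x_n\); for example, \(\mathfrak F_{(1,2)}(x_1,x_2;\vec 0)=x_1x_2^2\). So it is not pulled back from \(\operatorname{Gr}(n;N)\), and a coordinate \(n\)-plane carries no ``order prescribed by the tautological flag.'' That \(\mathfrak F_d\) defines a class on \(\operatorname{QGr}(n;N)\), with a well-defined substitution rule at each fixed point, is part of what \cite{BGST2026QG} constructs. You would have to prove or import it.

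Moreover, the mechanism you suggest for Steps~2 and~3 fails, because neither property holds termwise in the lattice path model. Take \(n=2\) and \(d=(0,1)\). The path \(P(d)\) has a single south step, on \(x=4\), with canonical label \(2\). The nonzero cells of its row lie in columns \(1,2,3\) and have weights \(x_1-t_2\), \(t_2-t_1\), \(x_2-t_2\), so
\[
  \mathfrak F_{(0,1)}(x_1,x_2;\vec t)=(x_1-t_2)+(t_2-t_1)+(x_2-t_2)=x_1+x_2-t_1-t_2 .
\]
Since \(X_{(0,0)}\) is the single point \(A_{(0,0)}=[2]\), the theorem requires \(\mathfrak F_{(0,1)}\) to vanish there. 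Under either substitution \(\{x_1,x_2\}\mapsto\{t_1,t_2\}\), this happens only through cancellation between the even-column term \(t_2-t_1\) and an odd-column term. The even-column term contains no \(x\)-variable, so no substitution kills it. For the same reason, at least two selections survive at every fixed point, so there is no single ``hugging'' selection as in Step~3.

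What is missing is a genuine cancellation argument. One option is a sign-reversing involution on the cell selections after substitution at each fixed point. Another is an induction that uses the equivariant quasisymmetry relations to move between fixed points. Without one of these, your outline only restates the theorem in an equivalent form.
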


Applying this duality to \Cref{cor:graham-pos} gives the following
formula.
\begin{cor}\label{cor:qgr-deletion-degree}
Put \(m=|\lambda|\), choose \(N>n\) with \(N\geq n+m\), and
let \(c\in\operatorname{Pad}_n\) satisfy \(|c|\leq m\). Then
\[
  \int_{X_c}^T s_\lambda(x_1,\ldots,x_n;\vec t_N)
  =\sum_{a\in\Red(w_\lambda^{-1})}
     \ \sum_{\substack{D\in\mathcal A_n(a)\\c_D=c}}\wt_a(D).
\]
Each summand is a product of \(m-|c|\) positive roots
\(t_j-t_i\) with \(1\leq i<j\leq N\).
\end{cor}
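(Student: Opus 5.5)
The plan is to pair both sides against the quasisymmetric Schubert cycles, using the equivariant Kronecker duality of \Cref{thm:qgr-kronecker-duality}. I take \Cref{cor:graham-pos} as given. Then
\[
  s_\lambda(x_1,\ldots,x_n;\vec t)=\sum_{d\in\operatorname{Pad}_n}[\mathfrak F_d]s_\lambda\,\mathfrak F_d(x_1,\ldots,x_n;\vec t)
\]
holds as an identity in \(\mathbb Z[\vec t][x_1,\ldots,x_n]\), with each coefficient given by \eqref{eq:double-schur-successor-replay-coefficient}. Evaluating at \(\vec t_N\) (setting \(t_i=0\) for \(i>N\)) is a ring homomorphism, so the expansion survives. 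I then integrate over \(X_c\) term by term. By linearity of \(\int^T_{X_c}\) over \(H_T^\bullet(\mathrm{pt})=\mathbb Z[\vec t_N]\), together with \Cref{thm:qgr-kronecker-duality}, only the term \(d=c\) survives. That term gives the displayed formula, provided every index \(d\) that actually occurs, and \(c\) itself, has a ribbon fitting in the \(n\times(N-n)\) rectangle.

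So the main step is a fitting check. First, \(c\) fits: it has \(q\le n\) rows and \(|c|-q+1\le m\le N-n\) columns. Second, every \(d=c_D\) with a nonzero coefficient fits. Such a \(d\) comes from a remaining word with at most \(n\) increasing runs, so its ribbon has at most \(n\) rows. Its length satisfies \(|d|=m-|D|\le m\), so it has at most \(m\le N-n\) columns.

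I also need the truncation at \(\vec t_N\) to behave well. The weights \(\Delta_b(k)\) involve indices \(b_1+\asc(\cdots)\) and \(b_k\). Since \(a_1=n\) and a word with \(m\) letters has fewer than \(m\) ascents, these indices are less than \(n+m\le N\). Hence each root \(t_j-t_i\) survives evaluation as a genuine root with \(1\le i<j\le N\). I would verify this index bound carefully, using \(a_1=n\) from \Cref{prop:grassmannian-block-construction} (since \(e_1=n\)) and the fact that letters lie in \([n+\lambda_1-1]\).

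The degree count finishes the proof. Each deletion contributes exactly one root, so \(\wt_a(D)\) is a product of \(|D|=m-|c_D|=m-|c|\) positive roots. This matches homogeneity, since \(s_\lambda\) has degree \(m\) and \(\mathfrak F_c\) has degree \(|c|\).

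The main obstacle I expect is bookkeeping. The duality in \Cref{thm:qgr-kronecker-duality} is stated only for compositions fitting the rectangle, so I must ensure no term outside its range appears. The argument above handles this by showing that all indices with nonzero coefficients fit. The hypothesis \(N\ge n+m\) is exactly what makes the column bound and the root-index bound work.
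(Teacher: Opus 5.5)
Your proposal is correct and follows essentially the same route as the paper. You expand $s_\lambda$ via \Cref{cor:graham-pos} and check that $c$ and every $c_D$ have ribbons fitting the $n\times(N-n)$ rectangle. You also check that all root indices are at most $n+m-1<N$, so truncation to $\vec t_N$ leaves the weights unchanged. You then apply \Cref{thm:qgr-kronecker-duality} and count $|D|=m-|c_D|$ roots, spelling out the row, column and index bounds that the paper's proof states in a single line.
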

\begin{proof}
The bound \(N\geq n+m\) places \(\lambda\) and all output ribbons
in the rectangle, and every root index is at most \(n+m-1<N\),
so truncation leaves the weights unchanged. The formula follows from
\Cref{thm:qgr-kronecker-duality} and \Cref{cor:graham-pos}, with
\(|D|=m-|c_D|\) positive roots contributed by each deletion set.
\end{proof}

The next lemma expresses the block intervals in terms of prefix
ascent and descent counts. For \(E=\{1=i_1<\cdots<i_\ell\}\),
we index the block intervals, endpoints, and rows of \(a_E\) by \(E\):
\(J_{i_k}(a_E)\), \(e_{i_k}(a_E)\), and \(S_{i_k}\) correspond
to the \(k\)-th retained letter.

\begin{lem}\label{lem:ferrers-interval-replay}
Let \(a=(a_1,\ldots,a_m)\in\Red(w_\lambda^{-1})\) be nonempty,
and let \(D\) be a deletion set of \(a\).
Set \(E=[m]\setminus D=\{1=i_1<\cdots<i_\ell\}\).
For every \(1\leq k\leq\ell\), the first \(k\) block insertions
for \(a_E\) form a single component with interval
\begin{equation}\label{eq:replay-interval}
  J_{i_k}(a_E)=
  [a_{1}-\des(a_{i_1}\cdots a_{i_k}),
     a_{1}+\asc(a_{i_1}\cdots a_{i_k})].
\end{equation}
\end{lem}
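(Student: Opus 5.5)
We argue by induction on \(k\), following the \(\Omega\)-parking rule \eqref{eq:block-rule} step by step for the word \(b=a_E\). The claimed interval \([b_1-\des(b_1\cdots b_k),\,b_1+\asc(b_1\cdots b_k)]\) has length \(\asc+\des+1=k\), since \(b\) has distinct adjacent letters (deletions preserve this). It therefore suffices to prove two things at each step: the new block attaches to the existing single component, and it lands at the left end when \(b_k<b_{k-1}\) and at the right end when \(b_k>b_{k-1}\). The base case \(k=1\) is immediate: \(J_{i_1}=\{a_1\}\).

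For the inductive step, suppose the first \(k-1\) insertions form one component \([L,R]\) with \(L=a_1-\des(b_1\cdots b_{k-1})\) and \(R=a_1+\asc(b_1\cdots b_{k-1})\). The marked block of this component is the one created at step \(k-1\), so the comparison in \eqref{eq:block-rule} is between \(b_k\) and \(b_{k-1}\). If \(b_k\in[L,R]\), the rule adds \(I_{L-1}\) when \(b_k<b_{k-1}\) and \(I_{R+1}\) when \(b_k>b_{k-1}\), which is exactly the claimed update of \(\des\) and \(\asc\). The same conclusion holds if \(b_k=L-1\) or \(b_k=R+1\). In that case the new block \(I_{b_k}\) is adjacent to the component, so the result is again a single component. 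We must then check that the direction agrees, for instance that \(b_k=R+1\) forces \(b_k>b_{k-1}\); this follows because \(b_{k-1}\le R\).

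The main obstacle is to show that \(b_k\in[L-1,R+1]\) always holds, so that no second component is created. We split into two cases.

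\emph{Case 1: no deletion occurred among the first \(k\) letters.} Then \(b_1\cdots b_k\) is a prefix of \(a\). \Cref{prop:grassmannian-block-construction} already gives a single component, and its proof, via \cite{NT2024}, identifies the interval. Alternatively, we can observe that \(J_k\) of a prefix is determined by the same recursion, so the formula follows as above.

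\emph{Case 2: deletions have occurred.} Here we use the heap structure. By \cite{Winkel1996}, the letters of \(a\) are the contents \(n-i+j\) of boxes of \(D(\lambda)\), read along a linear extension, and \(a_1=n\). We also use the deletability constraints \eqref{eq:successor-replay-delete}. A double ascent at \(b_k\) may be deleted only when \(b_k<b_1+\asc(\cdots)\), that is, only when \(b_k\) lies strictly inside the current interval. A double descent may be deleted only when \(b_1-\des<b_k\), with the same interior meaning. Consequently, a deletion removes a letter whose step would have extended the interval at an end the word continues to move toward. Deleting a double ascent \(b_k\) between \(b_{k-1}<b_k<b_{k+1}\) lowers \(\asc\) by one, yet the next letter \(b_{k+1}\) still exceeds \(b_{k-1}\).

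We must then show \(b_{k+1}\le R'+1\), where \(R'\) is the right endpoint after the deletion. The plan is to prove the stronger invariant that \(b_j\le R+1\) and \(b_j\ge L-1\) hold for all later retained letters. We establish this invariant by tracking the diagonal contents. Consecutive letters along a linear extension of a Ferrers heap change content by adding a box. The strict inequality in the deletion condition gives exactly one unit of slack, and this slack absorbs the unit loss in \(\asc\) or \(\des\). We expect this invariant to be the delicate part of the argument, and we would first check it directly on \Cref{exa:dynamic-deletion} before writing the general inequality chase.
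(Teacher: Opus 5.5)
Your reduction of the inductive step is sound: once \(a_{i_{k+1}}\in[L-1,R+1]\) is known, comparing with the marked block \(a_{i_k}\) via \eqref{eq:block-rule} gives the claimed update of the endpoints. Your Case 1 also follows from \Cref{prop:grassmannian-block-construction}.

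However, the step you yourself call the main obstacle is left unproved, and the mechanism you sketch for it cannot work. The invariant you propose, that all later retained letters lie in \([L-1,R+1]\), is false as stated even without deletions. For \(n=3\), \(\lambda=(3)\) and \(a=(3,4,5)\), the interval after one insertion is \([3,3]\), while \(5\notin[2,4]\). Read as a statement about the next retained letter only, it is just the claim to be proved. Likewise, consecutive letters of a linear extension of the Ferrers heap need not differ by one: in \Cref{exa:dynamic-deletion}, the letter \(2\) is followed by \(4\). So no local ``one unit of slack'' bookkeeping between a deleted letter and its neighbours in the word can control the next retained letter, which may be far from both.

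The missing idea is to strengthen the induction hypothesis so that it also remembers the deleted letters. After step \(i_k\), the single component \([L,R]\) should contain \(I_{a_s}\) for every original index \(s\leq i_k\), deleted or not.

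To propagate this, observe that when a letter \(a_r\) with \(i_k<r<i_{k+1}\) is considered, the current word begins with \(a_{i_1}\cdots a_{i_k}a_r\). So \(a_r\) sits at position \(k+1\), and \eqref{eq:successor-replay-delete} reads \(a_{i_k}<a_r<R+1\) at a double ascent and \(L-1<a_r<a_{i_k}\) at a double descent. Either way \(a_r\in[L,R]\).

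Then use the heap through predecessors rather than through consecutive letters. The box of \(a_{i_{k+1}}\) is not \((1,1)\), so \(D(\lambda)\) contains the box immediately above it or to its left. That box occurs earlier in the linear extension, and its label differs from \(a_{i_{k+1}}\) by exactly one. By the strengthened hypothesis its label lies in \([L,R]\), hence \(a_{i_{k+1}}\in[L-1,R+1]\), and the enlarged component again contains every earlier \(I_{a_s}\). This argument treats your two cases uniformly, so the case split is unnecessary.
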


\begin{proof}
We prove \eqref{eq:replay-interval} by induction on \(k\), together
with the assertion that the component contains \(I_{a_s}\) for every
\(s\leq i_k\). The case \(k=1\) follows from \(i_1=1\).

Suppose \eqref{eq:replay-interval} holds for \(k<\ell\) with the above assertion, and write
\(J_{i_k}(a_E)=[L,R]\). Each \(a_r\) with \(i_k<r<i_{k+1}\) is deleted from a word
beginning with \(a_{i_1}\cdots a_{i_k}a_r\).
Thus \eqref{eq:successor-replay-delete} gives
\[
  \begin{aligned}
    a_{i_k}<a_r<R+1
      &\qquad\text{at a double ascent},\\
    L-1<a_r<a_{i_k}
      &\qquad\text{at a double descent}.
  \end{aligned}
\]
Since \(a_{i_k}\in[L,R]\), we have
\(a_r\in[L,R]\) in both cases. The component therefore contains
\(I_{a_s}\) for every \(s<i_{k+1}\).

The box in the Ferrers diagram \(D(\lambda)\) of \( \lambda \) corresponding to \(a_{i_{k+1}}\) has a box immediately
above or to its left in \(D(\lambda)\). That box appears earlier
in the linear extension, and its label differs from
\(a_{i_{k+1}}\) by one. The preceding containment therefore gives
\(a_{i_{k+1}}\in[L-1,R+1]\), so its insertion preserves a single
component. The parking rule adds \(I_{R+1}\) when
\(a_{i_k}<a_{i_{k+1}}\) and \(I_{L-1}\) when
\(a_{i_k}>a_{i_{k+1}}\).
These extensions give \eqref{eq:replay-interval} for \(k+1\), and
the enlarged component contains \(I_{a_s}\) for every
\(s\leq i_{k+1}\), completing the induction.
\end{proof}

To prove \Cref{thm:double-schur-successor-replay}, we replace the word
labels by the positions of the corresponding inserted blocks, one row
at a time, to obtain the canonical labeling.
Fix a cell selection \(Q\) of a path \(P\).
Let \(\kappa\) and \(\kappa'\) be labelings of \(P\) that agree outside
row \(r\), with \(\kappa(S_r)=i\) and \(\kappa'(S_r)=j\).
If \(\col_r(Q)\leq2\min(i,j)\), then
only the factor from \(C_{\col_r(Q),r}\) changes, as illustrated in
\Cref{fig:fixed-selection-relabeling}.

\begin{figure}
  \centering
  \begin{tikzpicture}[x=0.4cm,y=0.4cm]
    \begin{scope}
      \node[font=\small] at (7.5,5.2) {Odd column, \(d=3\)};
      \begin{scope}[shift={(0,0)}]
        \fill[black!12] (4,2) rectangle (5,3);
        \fill[black!12] (0,0) rectangle (1,1);
        \fill[purple!22] (2,1) rectangle (3,2);
        \draw[gray!45,line width=.3pt,step=1] (0,0) grid (6,3);
        \draw[blue!65!black,line width=.6pt] (0,1) rectangle (6,2);
        \draw[black!75,line width=.8pt] (6,3)--(6,0);
        \node[font=\scriptsize,blue!70!black,anchor=east,inner sep=1pt] at (-.35,1.5) {\(r\)};
        \node[font=\small] at (3,3.9) {\(\kappa(S_r)=i\)};
        \node[font=\small] at (3,-.75) {\(x_2-t_i\)};
      \end{scope}
      \begin{scope}[shift={(9,0)}]
        \fill[black!12] (4,2) rectangle (5,3);
        \fill[black!12] (0,0) rectangle (1,1);
        \fill[purple!22] (2,1) rectangle (3,2);
        \draw[gray!45,line width=.3pt,step=1] (0,0) grid (6,3);
        \draw[blue!65!black,line width=.6pt] (0,1) rectangle (6,2);
        \draw[black!75,line width=.8pt] (6,3)--(6,0);
        \node[font=\small] at (3,3.9) {\(\kappa'(S_r)=j\)};
        \node[font=\small] at (3,-.75) {\(x_2-t_j\)};
      \end{scope}
      \draw[->,black!70,line width=.5pt] (6.7,1.5)--(8.0,1.5);
      \node[font=\small] at (7.5,-2.05) {\((x_2-t_i)-(x_2-t_j)=t_j-t_i\)};
    \end{scope}
    \begin{scope}[xshift=7.3cm]
      \node[font=\small] at (7.5,5.2) {Even column, \(d=4\)};
      \begin{scope}[shift={(0,0)}]
        \fill[black!12] (4,2) rectangle (5,3);
        \fill[black!12] (0,0) rectangle (1,1);
        \fill[orange!25] (3,1) rectangle (4,2);
        \draw[gray!45,line width=.3pt,step=1] (0,0) grid (6,3);
        \draw[blue!65!black,line width=.6pt] (0,1) rectangle (6,2);
        \draw[black!75,line width=.8pt] (6,3)--(6,0);
        \node[font=\scriptsize,blue!70!black,anchor=east,inner sep=1pt] at (-.35,1.5) {\(r\)};
        \node[font=\small] at (3,3.9) {\(\kappa(S_r)=i\)};
        \node[font=\small] at (3,-.75) {\(t_i-t_2\)};
      \end{scope}
      \begin{scope}[shift={(9,0)}]
        \fill[black!12] (4,2) rectangle (5,3);
        \fill[black!12] (0,0) rectangle (1,1);
        \fill[orange!25] (3,1) rectangle (4,2);
        \draw[gray!45,line width=.3pt,step=1] (0,0) grid (6,3);
        \draw[blue!65!black,line width=.6pt] (0,1) rectangle (6,2);
        \draw[black!75,line width=.8pt] (6,3)--(6,0);
        \node[font=\small] at (3,3.9) {\(\kappa'(S_r)=j\)};
        \node[font=\small] at (3,-.75) {\(t_j-t_2\)};
      \end{scope}
      \draw[->,black!70,line width=.5pt] (6.7,1.5)--(8.0,1.5);
      \node[font=\small] at (7.5,-2.05) {\((t_i-t_2)-(t_j-t_2)=t_i-t_j\)};
    \end{scope}
  \end{tikzpicture}
  \[
    \wt_\kappa(Q)-\wt_{\kappa'}(Q)
    =\left(\prod_{C\in Q\setminus\{C_{d,r}\}}\wt_\kappa(C)\right)f(\vec t), \qquad f(\vec t)=
    \begin{cases}
      t_j-t_i,&d\text{ odd},\\
      t_i-t_j,&d\text{ even}.
    \end{cases}
  \]
  \caption{Relabeling row \(r\) with the selected cells fixed.
    Odd and even columns give opposite weight differences.}
  \label{fig:fixed-selection-relabeling}
\end{figure}

Fix \(a=(a_1,\ldots,a_m)\in\Red(w_\lambda^{-1})\) with \(m>0\).
For \(r\in[m+1]\), let \(D\subseteq[r-1]\) be a deletion set and put
\(E=[m]\setminus D=\{1=i_1<\cdots<i_\ell\}\).
Index the rows and south steps of \(P_{a_E}\) by \(E\).
By \Cref{lem:ferrers-interval-replay}, the word \(a_E\) is
endpoint-decreasing, and each insertion removes the preceding mark.
Thus \(i_1\trl_{a_E}\cdots\trl_{a_E}i_\ell\).
By \Cref{lem:block-path-successors}, consecutive retained rows have
vertical successors at ascents and diagonal successors at descents.

If \(e_s(a_E)\leq0\) for some \(s\in E\) with \(s<r\), set
\(\FL_{E,r}=0\). Otherwise, define \(\kappa_{E,r}\) by labeling each
row \(s<r\) with the position of the block inserted at \(s\) in
\(\Omega\)-parking for \(a_E\), and each row \(s\geq r\) with
\(a_s\). Write
\[
  \FL_{E,r}:=\FL_{P_{a_E},\kappa_{E,r}}.
\]
Thus the labels in rows before \(r\) have been replaced, while the
other rows still have their word labels.

Fix a cell selection \(Q\in\CSel(P_{a_E})\) with nonzero weight under
\(\kappa_{E,r}\). For \(s\geq r\), the row label is \(a_s\), so
\(\col_s(Q)<2a_s\). Consider a row \(i_k<r\).
If \(k=1\), its label is \(e_{i_1}(a_E)\), so
\(\col_{i_1}(Q)<2e_{i_1}(a_E)\).
Suppose \(k>1\).
At a descent, the insertion adds the leftmost block, so the row
label is \(e_{i_k}(a_E)\) and
\(\col_{i_k}(Q)<2e_{i_k}(a_E)\).
At an ascent, the insertion adds the rightmost block, so
\(S_{i_{k-1}}\rav S_{i_k}\) and
\(e_{i_{k-1}}(a_E)=e_{i_k}(a_E)\).
The successor inequality gives
\(\col_{i_k}(Q)\leq\col_{i_{k-1}}(Q)\).
Let \(j\in[k]\) be the position in \(a_E\) at which the increasing
run containing \(a_{i_k}\) begins. The first row of this run is
therefore indexed by \(i_j\). This row is either \(i_1\) or follows a descent, so its
label is \(e_{i_j}(a_E)\). Along the run, all endpoints are equal
and the selected columns weakly decrease. Since \(Q\) has nonzero
weight, we obtain
\[
  \col_{i_k}(Q)\leq\col_{i_j}(Q)
  <2e_{i_j}(a_E)=2e_{i_k}(a_E).
\]

Conversely, choose one cell in each row satisfying
\eqref{arr:csel_cond} and these column bounds. Since
\(e_s(a_E)\leq a_s\leq h_s(a_E)\), every selected cell lies above
\(P_{a_E}\). For \(s<r\), the label is an insertion position in
\(J_s(a_E)\), hence is at least \(e_s(a_E)\); for \(s\geq r\),
it is \(a_s\). The column bounds therefore make each cell
weight nonzero. Consequently, the nonzero terms of \(\FL_{E,r}\)
are indexed by \(Q\in\CSel(P_{a_E})\) satisfying
\begin{equation}\label{eq:mixed-label-columns}
  \col_s(Q)<
  \begin{cases}
    2e_s(a_E),&s<r,\\
    2a_s,&s\geq r.
  \end{cases}
\end{equation}
When the zero convention applies, these inequalities admit no
selection.

For \(r=1\), no letters have been deleted and no labels have been
replaced, so
\begin{equation}\label{eq:mixed-initial-state}
  \FL_{[m],1}=\FL_a(\vec x;\vec t).
\end{equation}
For \(r=m+1\), all retained rows have their block insertion labels.
If \(\des(a_E)<n\), the endpoints are positive and \(D\) is
admissible. Their multiplicities give
\(c(F_\Omega(a_E))=c_D\) by \eqref{eq:replay-code}.
By \Cref{tab:omega-dictionary,lem:forest-path-incidence}, these
labels agree with the canonical labels of \(P(c_D)\).
The two paths have the same successor relations, and
\eqref{eq:mixed-label-columns} places every nonzero selection above
\(P(c_D)\), whose row \(s\) ends at column \(2e_s(a_E)\).
Replacing \(P_{a_E}\) by \(P(c_D)\) therefore preserves the nonzero
selections and their weights.
If \(\des(a_E)\geq n\), the last endpoint is nonpositive, so the
zero convention applies. Together with
\Cref{cor:double-fundamental-path}, this proves
\begin{equation}\label{eq:mixed-terminal-state}
  \FL_{E,m+1}=
  \begin{cases}
    \mathfrak F_{c_D}(x_1,\ldots,x_n;\vec t),
      &\des(a_E)<n,\\
    0,&\text{otherwise}.
  \end{cases}
\end{equation}

Before comparing consecutive labelings, we handle the first
nonpositive endpoint.

\begin{lem}\label{lem:nonpositive-D-boundary}
Let \(D\) be a deletion set of \(a\), and put
\(E=[m]\setminus D=\{1=i_1<\cdots<i_\ell\}\).
Suppose \(2\leq k\leq\ell\) and \(D\subseteq[i_k-1]\).
If \(S_{i_{k-1}}\rad S_{i_k}\) and \(e_{i_k}(a_E)\leq0\), then
\[
  \FL_{E,i_k}=\FL_{E,i_k+1}=0.
\]
\end{lem}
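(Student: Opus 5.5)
The plan is to show that, under the hypotheses, the column bounds \eqref{eq:mixed-label-columns} admit no cell selection of \(P_{a_E}\). The characterization of nonzero terms then gives \(\FL_{E,i_k}=0\) and \(\FL_{E,i_k+1}=0\). There is one subtlety: the zero convention defining \(\FL_{E,r}\) applies only if some \(s<r\) has \(e_s(a_E)\leq0\). For \(r=i_k+1\) it applies directly, since \(s=i_k<r\), so \(\FL_{E,i_k+1}=0\) by definition. The real content is therefore the case \(r=i_k\). Here row \(i_k\) still carries its word label \(a_{i_k}\geq1\), and the rows \(s<i_k\) have positive endpoints, because the endpoints along \(a_E\) are weakly decreasing (\Cref{lem:ferrers-interval-replay}) and \(e_{i_k}(a_E)\) is the first nonpositive one.

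First I will compute the endpoints. By \Cref{lem:ferrers-interval-replay},
\[
  e_{i_j}(a_E)=a_1-\des(a_{i_1}\cdots a_{i_j}).
\]
A diagonal successor \(S_{i_{k-1}}\rad S_{i_k}\) means \(a_{i_{k-1}}>a_{i_k}\), so position \(k\) is a descent and \(e_{i_k}=e_{i_{k-1}}-1\). Combined with \(e_{i_k}\leq0\) and \(e_{i_{k-1}}\geq1\), this forces \(e_{i_{k-1}}(a_E)=1\). The bound \eqref{eq:mixed-label-columns} for row \(i_{k-1}<r\) then reads \(\col_{i_{k-1}}(Q)<2\), so \(\col_{i_{k-1}}(Q)=1\), which is an odd column.

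Next I will use the diagonal successor condition in \eqref{arr:csel_cond} for \(S_{i_{k-1}}\rad S_{i_k}\). If \(\col_{i_k}(Q)\) is odd, the condition requires \(\col_{i_{k-1}}(Q)>\col_{i_k}(Q)\geq1\). If \(\col_{i_k}(Q)\) is even, it requires \(\col_{i_{k-1}}(Q)\geq\col_{i_k}(Q)\geq2\). Either way this contradicts \(\col_{i_{k-1}}(Q)=1\). Hence no nonzero selection exists, and \(\FL_{E,i_k}=0\). The same argument also applies to \(r=i_k+1\), although the zero convention already settles that case.

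The step I expect to be the obstacle is justifying the use of \eqref{eq:mixed-label-columns} for \(r=i_k\). Its derivation assumed that no earlier endpoint is nonpositive and that the labels \(\kappa_{E,r}\) are defined. I will check that \(e_s(a_E)>0\) for every \(s<i_k\), using monotonicity and minimality. I will also check that the bound for row \(i_{k-1}\), which comes at a descent or via the run-start argument, yields exactly \(\col<2e_{i_{k-1}}\).
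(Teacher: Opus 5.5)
Your argument is essentially the paper's proof. You force \(e_{i_{k-1}}(a_E)=1\), deduce \(\col_{i_{k-1}}(Q)=1\) from \eqref{eq:mixed-label-columns}, rule out both parities of \(\col_{i_k}(Q)\) via the diagonal condition in \eqref{arr:csel_cond}, and get \(\FL_{E,i_k+1}=0\) from the zero convention. One slip: the hypotheses do not say \(e_{i_k}(a_E)\) is the \emph{first} nonpositive endpoint, since \(e_{i_{k-1}}(a_E)\leq0\) is allowed, so instead of appealing to minimality you should split off that case, where both polynomials vanish directly by the zero convention, exactly as the paper does.
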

\begin{proof}
If an earlier endpoint is nonpositive, both polynomials vanish by
convention. Otherwise,
\(e_{i_{k-1}}(a_E)=e_{i_k}(a_E)+1>0\), so
\(e_{i_{k-1}}(a_E)=1\) and \(e_{i_k}(a_E)=0\).
By \eqref{eq:mixed-label-columns}, a nonzero selection \( Q \) for
\(\FL_{E,i_k}\) would have \(\col_{i_{k-1}}(Q)=1\).
Neither an odd nor an even column in row \(i_k\) can then satisfy
the diagonal successor condition in \eqref{arr:csel_cond}.
Thus \(\FL_{E,i_k}=0\), and \(\FL_{E,i_k+1}=0\) by the zero convention.
\end{proof}

For the next two lemmas, fix a deletion set \(D\) of \(a\), and put
\(E=[m]\setminus D=\{1=i_1<\cdots<i_\ell\}\).
For \(2\leq k\leq\ell\), by \Cref{lem:ferrers-interval-replay}, we write 
\[
  J_{i_k}(a_E)=[a_{1}-\des(a_{i_1}\cdots a_{i_k}),
  a_{1}+\asc(a_{i_1}\cdots a_{i_k})]=:[L,R],
\]
 and let
\(p\) be the position of the block inserted at \(i_k\) in
\(\Omega\)-parking for \(a_E\).

\begin{lem}\label{lem:deletable-double-pattern}
If \(L>0\) and \(p\ne a_{i_k}\), then \(k\) is deletable in
\(a_E\) if and only if \(k\) is a double ascent or a double descent
in \(a_E\). For a deletable position \(k\),
\[
  \Delta_{a_E}(k)=
  \begin{cases}
    t_R-t_{a_{i_k}},&\text{if }k\text{ is a double ascent in }a_E,\\
    t_{a_{i_k}}-t_L,&\text{if }k\text{ is a double descent in }a_E.
  \end{cases}
\]
\end{lem}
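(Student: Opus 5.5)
The plan is to unwind the definitions in \eqref{eq:successor-replay-delete} and \eqref{eq:successor-replay-root} for the word \(b=a_E\) and compare them with the parking rule. Write \(b=(b_1,\ldots,b_\ell)\) with \(b_j=a_{i_j}\). Since \(i_1=1\), we have \(b_1=a_1\) and \(b_k=a_{i_k}\). By \Cref{lem:ferrers-interval-replay}, the two quantities appearing in \eqref{eq:successor-replay-delete} are exactly
\[
  b_1+\asc(b_1\cdots b_k)=R,
  \qquad
  b_1-\des(b_1\cdots b_k)=L.
\]
Thus the deletability conditions read as follows: either \(k\) is a double ascent and \(a_{i_k}<R\), or \(k\) is a double descent and \(1\leq L<a_{i_k}\). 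The weights in \eqref{eq:successor-replay-root} become \(t_R-t_{a_{i_k}}\) and \(t_{a_{i_k}}-t_L\), respectively. This already gives the displayed formula for \(\Delta_{a_E}(k)\) once deletability is settled.

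Next I translate the hypothesis \(p\neq a_{i_k}\) using the proof of \Cref{lem:ferrers-interval-replay}. Let \([L',R']=J_{i_{k-1}}(a_E)\) denote the previous interval. That proof shows that \(a_{i_k}\in[L'-1,R'+1]\) and that the component is extended on one side. If \(b_{k-1}<b_k\), parking adds \(I_{R'+1}=I_R\), so \(p=R\); if \(b_{k-1}>b_k\), it adds \(I_{L'-1}=I_L\), so \(p=L\). Adjacent letters are distinct, so exactly one case occurs. Moreover \(a_{i_k}\in J_{i_k}(a_E)=[L,R]\). Hence \(p\neq a_{i_k}\) means \(a_{i_k}<R\) when \(k\) is an ascent of \(b\), and \(a_{i_k}>L\) when \(k\) is a descent of \(b\).

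Now the equivalence follows. By definition, a deletable position is a double ascent or a double descent, which gives one direction. For the other, suppose first that \(k\) is a double ascent; this includes the case \(k=\ell\) with the convention \(b_{\ell+1}=+\infty\). Then \(b_{k-1}<b_k\), so the previous paragraph gives \(a_{i_k}<R\), and the first condition in \eqref{eq:successor-replay-delete} holds. Suppose instead that \(k\) is a double descent. Then \(b_{k-1}>b_k\), so \(L<a_{i_k}\), and the hypothesis \(L>0\) gives \(L\geq1\). Hence the second condition holds. No step here is a genuine obstacle. The only point needing care is correctly matching \(p\) with \(R\) or \(L\) according to whether \(k\) is an ascent or a descent, which relies on the ascent/descent extension rule in the proof of \Cref{lem:ferrers-interval-replay}.
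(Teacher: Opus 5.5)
Your proposal is correct and follows essentially the same route as the paper's proof. Both identify \(b_1+\asc(b_1\cdots b_k)=R\) and \(b_1-\des(b_1\cdots b_k)=L\) via \Cref{lem:ferrers-interval-replay}, and both use that the parking rule gives \(p=R\) at an ascent and \(p=L\) at a descent. Combined with \(a_{i_k}\in[L,R]\), the hypotheses \(p\ne a_{i_k}\) and \(L>0\) then turn into the inequalities in \eqref{eq:successor-replay-delete}.
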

\begin{proof}
The insertion rule and \Cref{lem:ferrers-interval-replay} give
\begin{equation}\label{eq:deletable-canonical}
  p=
  \begin{cases}
    R,&a_{i_{k-1}}<a_{i_k},\\
    L,&a_{i_{k-1}}>a_{i_k}.
  \end{cases}
\end{equation}
Since \(a_{i_k}\in[L,R]\), the assumptions \(L>0\) and
\(p\ne a_{i_k}\) imply \(a_{i_k}<R\) when \(p=R\), and
\(1\leq L<a_{i_k}\) when \(p=L\).
By \eqref{eq:replay-interval}, these are the inequalities in
\eqref{eq:successor-replay-delete}, giving the equivalence.
Substitution into \eqref{eq:successor-replay-root} gives the weights.
\end{proof}

For fixed \( 2\le k \le \ell \), assume further that \(D\subseteq[i_k-1]\), that is \( \{i_k<i_{k+1}< \cdots <i_\ell\}=\{i_k<i_{k}+1< \cdots <i_k+\ell-k\} \), and \(L>0\).
Let \(\mathcal U_{E,k}\) consist of the cell selections
\(Q'\in\CSel(P_{a_E})\) for which
\(\wt_{\kappa_{E,i_k}}(Q')\ne0\) and the following column condition
holds. If \(k=\ell\), then we take \(\col_{i_k}(Q')=1\).
If \(k<\ell\), then we take
\begin{equation}\label{eq:deletion-column}
  \col_{i_k}(Q')=
    2\bigl\lfloor\tfrac{1}{2}\col_{i_{k+1}}(Q')\bigr\rfloor+1
    \;(a_{i_k}<a_{i_{k+1}}),
  \quad
  \col_{i_k}(Q')=
    2\bigl\lceil\tfrac{1}{2}\col_{i_{k+1}}(Q')\bigr\rceil
    \;(a_{i_k}>a_{i_{k+1}}).
\end{equation}

\begin{lem}\label{lem:successor-replay-deletion-bijection}
Suppose \(k\) is deletable in \(a_E\). Deleting the cell in row
\(i_k\) defines a bijection
\[
  \mathcal U_{E,k}\longrightarrow
  \bigl\{\widetilde{Q}\in\CSel(P_{a_{E\setminus\{i_k\}}}) :
    \wt_{\kappa_{E\setminus\{i_k\},i_k+1}}(\widetilde{Q})\ne0\bigr\}.
\]
Under this map, the weight of every remaining cell is unchanged.
\end{lem}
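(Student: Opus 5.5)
The plan is to compare the two labeled paths row by row, and then show that the column rule \eqref{eq:deletion-column} is exactly what is needed to splice row \(i_k\) in or out. Put \(E'=E\setminus\{i_k\}\).

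\textbf{Step 1: the two labelings agree outside row \(i_k\).} The words \(a_E\) and \(a_{E'}\) share the prefix \(a_{i_1}\cdots a_{i_{k-1}}\). Hence \(\Omega\)-parking produces the same inserted block positions and the same endpoints \(e_s\) for every retained \(s<i_k\). So \(\kappa_{E,i_k}\) and \(\kappa_{E',i_k+1}\) agree on these rows. Since \(D\subseteq[i_k-1]\), the rows \(s>i_k\) carry the word label \(a_s\) in both labelings.

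By \Cref{lem:ferrers-interval-replay}, both words give a single chain of successors. Consecutive retained rows are joined by a vertical arrow at an ascent and a diagonal arrow at a descent. Because \(k\) is deletable, it is a double ascent or a double descent in \(a_E\). Therefore the arrows \(S_{i_{k-1}}\to S_{i_k}\to S_{i_{k+1}}\) have the same type, and that type is also the type of the new arrow \(S_{i_{k-1}}\to S_{i_{k+1}}\) in \(P_{a_{E'}}\).

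The horizontal positions \(h_s\) may differ between \(P_{a_E}\) and \(P_{a_{E'}}\), since the shifts differ. By \eqref{eq:mixed-label-columns} and \Cref{rem:word-path-shift-choice}, however, the nonzero selections are described purely by the successor inequalities \eqref{arr:csel_cond} together with the column bounds \(\col_s<2e_s\) for \(s<i_k\) and \(\col_s<2a_s\) for \(s>i_k\). These bounds coincide for the two paths, so deleting a cell leaves the weights of all other rows unchanged.

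\textbf{Step 2: the forward map.} Take \(Q'\in\mathcal U_{E,k}\) and write \(c=\col_{i_{k+1}}(Q')\).

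In the double ascent case the arrows are vertical and \(\col_{i_k}(Q')=2\lfloor c/2\rfloor+1\) is odd. It equals \(c\) if \(c\) is odd and \(c+1\) if \(c\) is even. The inequality \(\col_{i_{k-1}}\geq\col_{i_k}\) then yields \(\col_{i_{k-1}}\geq c\) when \(c\) is odd, and \(\col_{i_{k-1}}>c\) when \(c\) is even. This is exactly the vertical condition in \eqref{arr:csel_cond} between rows \(i_{k-1}\) and \(i_{k+1}\).

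The double descent case is the mirror image. There \(\col_{i_k}=2\lceil c/2\rceil\) is even, and the diagonal inequalities compose in the same way.

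For \(k=\ell\), the last position is never a double descent, so \(k\) is a double ascent. Deleting row \(i_\ell\) simply removes a terminal row with no condition to transfer.

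\textbf{Step 3: the inverse map.} Given \(\widetilde Q\), insert the cell in row \(i_k\) at the column prescribed by \eqref{eq:deletion-column}, or at column \(1\) when \(k=\ell\). This cell is uniquely determined, which gives injectivity.

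The parity computation of Step 2 run backwards shows that both new inequalities \(i_{k-1}\to i_k\) and \(i_k\to i_{k+1}\) hold. The inserted cell has nonzero weight by the following chain. In the ascent case, \(\col_{i_k}\leq\col_{i_{k-1}}<2\kappa(S_{i_{k-1}})\leq 2R=2p\); in the descent case, one uses \(p=L\) and the bound in \Cref{lem:deletable-double-pattern}. In each case \(\col_{i_k}\) is below \(2p\), with \(p\) the canonical label from \eqref{eq:deletable-canonical}, so the cell lies above the path.

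\textbf{Expected obstacle.} I expect the main difficulty to be the boundary bookkeeping rather than the parity argument. Two points need care. First, the column bound for row \(i_k\) must be checked against \(p\) in both the ascent and the descent case, using \(L>0\) together with the strict inequalities from \Cref{lem:deletable-double-pattern}. Second, the endpoint-based bounds \eqref{eq:mixed-label-columns} for earlier rows must be verified to be unaffected by removing \(i_k\). The parity casework itself is routine once it is organized by the parity of \(c\).
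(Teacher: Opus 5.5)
Your construction and parity argument are the same as the paper's. You splice the cell in row \(i_k\) in or out and observe that \(\kappa_{E,i_k}\) and \(\kappa_{E\setminus\{i_k\},i_k+1}\) agree off row \(i_k\). You then check that \eqref{eq:deletion-column} makes the two successor inequalities through row \(i_k\) equivalent to the single inequality between rows \(i_{k-1}\) and \(i_{k+1}\). Steps 1 and 2 are fine.

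The gap is in Step 3, where you verify that the inserted cell has nonzero weight. The set \(\mathcal U_{E,k}\) is defined using \(\kappa_{E,i_k}\), and under that labeling row \(i_k\) still carries its word label \(a_{i_k}\). The block position \(p\) is installed only in \(\kappa_{E,i_k+1}\). So you must show \(\col_{i_k}(Q')<2a_{i_k}\), and a bound by \(2p\) does not suffice. At a deletable double ascent, \(p=R\), and deletability says exactly that \(a_{i_k}<R\). Your chain \(\col_{i_k}\le\col_{i_{k-1}}<2\kappa(S_{i_{k-1}})\le 2R\) is therefore too weak. If \(i_{k-1}\) is itself an ascent, its label is the previous right end \(R-1\), which can exceed \(a_{i_k}\). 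Your ``expected obstacle'' paragraph points at the same wrong target.

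The fix is to use the sharper endpoint bound from \eqref{eq:mixed-label-columns} for row \(i_{k-1}\), not its label bound.
\begin{itemize}
\item At an ascent, \(e_{i_{k-1}}(a_E)=e_{i_k}(a_E)=L\). Hence \(\col_{i_k}(Q')\le\col_{i_{k-1}}(\widetilde Q)<2L\le 2a_{i_k}\).
\item At a descent, \(e_{i_{k-1}}(a_E)=L+1\), so \(\col_{i_{k-1}}(\widetilde Q)\le 2L+1\). The inserted column is even, so \(\col_{i_k}(Q')\le 2L\), and \(2L<2a_{i_k}\) because \(L<a_{i_k}\). Here \(\col_{i_k}(Q')=2L=2p\) can occur, so your claim that the column lies strictly below \(2p\) is also false in this case. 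The cell has nonzero weight only because the relevant label is \(a_{i_k}\), not \(p\).
\item For \(k=\ell\), the bound \(1<2a_{i_\ell}\) is immediate.
\end{itemize}
With this correction, the rest of your argument goes through as in the paper.
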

\begin{proof}
The rows of \(P_{a_E}\) are indexed by \(E\), and those of
\(P_{a_{E\setminus\{i_k\}}}\) by \(E\setminus\{i_k\}\).
To construct the inverse, choose
\(\widetilde{Q}\in\CSel(P_{a_{E\setminus\{i_k\}}})\) such that
\(\wt_{\kappa_{E\setminus\{i_k\},i_k+1}}(\widetilde{Q})\ne0\).
Form a set of cells \(Q'\) by keeping every cell of \(\widetilde{Q}\)
in its original row and column and adding one cell in row \(i_k\).
For \(k<\ell\), choose the added column by \eqref{eq:deletion-column},
using \(\col_{i_{k+1}}(\widetilde{Q})\) on the right-hand side.
For \(k=\ell\), set \(Q'=\widetilde{Q}\cup\{C_{1,i_k}\}\).

Since \(k\) is deletable, it is a double ascent or a double descent.
Thus, for \(k<\ell\), \Cref{lem:block-path-successors} and
\eqref{arr:csel_cond} make the successor inequality between rows
\(i_{k-1}\) and \(i_{k+1}\) of \(\widetilde{Q}\) equivalent to
\(\col_{i_{k-1}}(\widetilde{Q})\geq\col_{i_k}(Q')\).
Together with \eqref{eq:deletion-column}, this is also equivalent
to the two successor inequalities involving row \(i_k\) in \(Q'\).
For \(k=\ell\), we have \(a_{i_{k-1}}<a_{i_k}\), so column \(1\)
satisfies \eqref{arr:csel_cond}.

If \(k<\ell\) and \(k\) is a double ascent, \eqref{eq:mixed-label-columns}
gives
\(\col_{i_k}(Q')\leq\col_{i_{k-1}}(\widetilde{Q})<2L\leq2a_{i_k}\).
If \(k\) is a double descent, then
\(\col_{i_{k-1}}(\widetilde{Q})<2L+2\); since the inserted column
is even and \(L<a_{i_k}\), we have
\(\col_{i_k}(Q')\leq2L<2a_{i_k}\).
For \(k=\ell\), \(\col_{i_k}(Q')=1<2a_{i_k}\).
Thus the inserted cell has nonzero weight.
All other columns and row labels are unchanged, so
\(Q'\in\mathcal U_{E,k}\), and the same successor equivalences and
\eqref{eq:mixed-label-columns} verify deletion in the reverse direction.
The weights of the retained cells are unchanged.
By construction, insertion and deletion are inverse.
\end{proof}

We now compare the polynomials for two consecutive labelings.

\begin{lem}\label{lem:successor-replay-straightening}
Let \(D\) be a deletion set of \(a\), and put
\(E=[m]\setminus D=\{1=i_1<\cdots<i_\ell\}\).
Suppose \(k\in[\ell]\) and \(D\subseteq[i_k-1]\). Then
\begin{equation}\label{eq:successor-replay-recurrence}
  \FL_{E,i_k}-\FL_{E,i_k+1}
  =\begin{cases}
    \Delta_{a_E}(k)\FL_{E\setminus\{i_k\},i_k+1},
      &k\text{ is deletable in }a_E,\\
    0,&\text{otherwise.}
  \end{cases}
\end{equation}
\end{lem}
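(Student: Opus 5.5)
The plan is to compare the two labelings \(\kappa_{E,i_k}\) and \(\kappa_{E,i_k+1}\), which differ only in row \(i_k\). There the first carries \(a_{i_k}\) and the second the parking position \(p\). Both polynomials are sums over \(\CSel(P_{a_E})\) on the same path, so I would expand \(\FL_{E,i_k}-\FL_{E,i_k+1}\) cell by cell, grouping selections that agree outside row \(i_k\).

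First I would dispose of the degenerate cases.
\begin{enumerate}[(i)]
\item If \(k=1\), the first insertion creates \(I_{a_1}\), so \(p=a_1\). The labelings coincide, and position \(1\) is never deletable.
\item If some earlier endpoint is nonpositive, both sides vanish by the zero convention. Deleting \(i_k\) leaves the earlier endpoints unchanged, so the right-hand side vanishes too.
\item If \(L\le0\) at a descent, \Cref{lem:nonpositive-D-boundary} gives zero. The right-hand side is zero because deletability at a double descent requires \(L\ge1\). At an ascent, \(L=e_{i_{k-1}}(a_E)\) is an earlier endpoint, so case (ii) applies.
\item If \(L>0\) and \(p=a_{i_k}\), the labelings agree. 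The strict inequalities in \eqref{eq:successor-replay-delete} then fail, since they read \(a_{i_k}<R\) or \(L<a_{i_k}\), so \(k\) is not deletable.
\end{enumerate}
This leaves \(L>0\) and \(p\ne a_{i_k}\). By \Cref{lem:deletable-double-pattern}, deletability is then equivalent to \(k\) being a double ascent or double descent, with \(\Delta_{a_E}(k)=t_R-t_{a_{i_k}}\) or \(t_{a_{i_k}}-t_L\).

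In the main case, fix the cells of a selection in all rows other than \(i_k\). The admissible columns in row \(i_k\) then form an interval determined by \eqref{arr:csel_cond}, applied to the relation \(i_{k-1}\to i_k\) and (if \(k<\ell\)) \(i_k\to i_{k+1}\), together with the bounds \eqref{eq:mixed-label-columns}. The key identity is that for any label \(\kappa\) and any \(q\le\kappa\),
\[
  (x_q-t_\kappa)+(t_\kappa-t_q)=x_q-t_q .
\]
So an adjacent pair of columns \(2q-1,2q\) contributes the same amount under either label. In the difference, every complete odd--even pair cancels. By \Cref{fig:fixed-selection-relabeling}, an unpaired boundary column contributes the other factors times \(\pm(t_p-t_{a_{i_k}})\).

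The main obstacle is the boundary bookkeeping. I must check that the allowed interval in row \(i_k\) consists of complete pairs, except for one unpaired endpoint cell. This should happen exactly when \(k\) is a double ascent or double descent, and that cell should be the one prescribed by \eqref{eq:deletion-column}; when \(k=\ell\) it should be column \(1\).

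Concretely, I would separate four patterns: ascent--ascent, descent--descent, ascent--descent, and descent--ascent at \(k\). The parity asymmetry in \eqref{arr:csel_cond} (\(\ge\) versus \(>\) depending on the parity of the successor's column) then determines whether the lower end of the interval starts on an odd or even column. The upper end must be handled separately. It is cut off by \(2\min(a_{i_k},p)\) from the labels, or by the parent row \(i_{k-1}\), whose column is bounded by \(2L\) or \(2L+2\) via \eqref{eq:mixed-label-columns}. I also need to check that columns lying strictly between \(2\min(a_{i_k},p)\) and \(2\max(a_{i_k},p)\) pair off or are excluded by the constraint from row \(i_{k-1}\). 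This mirrors the estimates in the proof of \Cref{lem:successor-replay-deletion-bijection}.

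In the mixed patterns (ascent--descent or descent--ascent) everything pairs, and the difference is \(0\). In the double patterns the surviving cells are exactly those of \(\mathcal U_{E,k}\), and each contributes the product of its other cell weights times \(\Delta_{a_E}(k)\), with the correct sign. Finally, \Cref{lem:successor-replay-deletion-bijection} identifies \(\mathcal U_{E,k}\) with the nonzero selections for \(\kappa_{E\setminus\{i_k\},i_k+1}\), preserving the remaining weights. Summing gives \(\Delta_{a_E}(k)\,\FL_{E\setminus\{i_k\},i_k+1}\).
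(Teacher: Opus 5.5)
Your outline is the paper's argument: the same degenerate cases, the same grouping of selections that agree outside row \(i_k\), the alternating difference \((-1)^{\col_{i_k}(Q')}(t_{a_{i_k}}-t_p)\), and the same final appeal to \Cref{lem:successor-replay-deletion-bijection}. Two points in the bookkeeping need correcting, and the second is a step that fails as you wrote it.

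\textbf{The label bounds never bind.} By \eqref{eq:mixed-label-columns}, \(\col_{i_{k-1}}(Q')<2e_{i_{k-1}}(a_E)\), where \(e_{i_{k-1}}(a_E)=L\) at an ascent and \(L+1\) at a descent. The successor condition \eqref{arr:csel_cond} then gives \(\col_{i_k}(Q')<2L\) at an ascent and \(\col_{i_k}(Q')\leq 2L\) at a descent. At an ascent \(p=R\), and at a descent \(p=L<a_{i_k}\). In both cases every admissible column is at most \(2\min(a_{i_k},p)\) and strictly below \(2a_{i_k}\). So no column between \(2\min(a_{i_k},p)\) and \(2\max(a_{i_k},p)\) is admissible, and you have nothing to check there. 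Consequently the top of the interval is determined by row \(i_{k-1}\) alone. It is odd at an ascent and even at a descent.

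\textbf{The fixed pairing \((2q-1,2q)\) does not work at an ascent.} When \(a_{i_{k-1}}<a_{i_k}\), the interval ends at an odd column, and this breaks both of your claims.
\begin{enumerate}[(i)]
\item At a double ascent, including \(k=\ell\), your unpaired cell is the top one. Its column depends on row \(i_{k-1}\), not on the deletion column \eqref{eq:deletion-column}. So the survivors are not ``exactly those of \(\mathcal U_{E,k}\)''.
\item When \(a_{i_{k-1}}<a_{i_k}>a_{i_{k+1}}\), two cells of opposite sign remain unpaired, so ``everything pairs'' is false.
\end{enumerate}
The totals are still correct, because both ends of the interval have the same parity at a double ascent. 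To land exactly on \(\mathcal U_{E,k}\), pair \((2q,2q+1)\) at an ascent, as the paper does. This works because \((t_\kappa-t_q)+(x_{q+1}-t_\kappa)=x_{q+1}-t_q\) is also independent of the label. Alternatively, note that the alternating sum over an interval equals its lower-endpoint term when both ends have the same parity, and vanishes otherwise. With these two adjustments your argument is complete.
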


\begin{proof}
Since \(i_1=1\) and \(a_1=n\), the case \(k=1\) is immediate.

Assume \(k>1\). If an earlier endpoint is nonpositive, all terms
vanish because deletion at \(i_k\) leaves earlier insertions unchanged.
If the current endpoint is the first nonpositive one, it is zero
and follows a descent. Then \Cref{lem:nonpositive-D-boundary}
gives zero on the left, and the deletion condition fails.

Assume \(J_{i_k}(a_E)=[L,R]\) with \(L>0\), and let
\(p=\kappa_{E,i_k+1}(S_{i_k})\) be the position of the block inserted
at \(i_k\).
If \(p=a_{i_k}\), the labelings coincide and deletion is not allowed,
so assume \(p\ne a_{i_k}\).

Let \(Q\in\CSel(P_{a_E})\), and let
\(\mathcal C\subseteq\CSel(P_{a_E})\) be the collection of all cell
selections that select the same cells as \(Q\) outside row \(i_k\).
The weights outside row \(i_k\) are common to both labelings;
if their product is zero, \(\mathcal C\) contributes zero.
Otherwise, for \(Q'\in\mathcal C\), \eqref{eq:mixed-label-columns} gives
\(\col_{i_{k-1}}(Q')<2e_{i_{k-1}}(a_E)\).
Using \eqref{arr:csel_cond} and \eqref{eq:replay-interval}, we obtain
\[
  \begin{aligned}
    \col_{i_k}(Q')&<2L&&\text{if }a_{i_{k-1}}<a_{i_k},\\
    \col_{i_k}(Q')&\leq2L&&\text{if }a_{i_{k-1}}>a_{i_k}.
  \end{aligned}
\]
Together with \eqref{eq:deletable-canonical}, these imply
\(\col_{i_k}(Q')<2a_{i_k}\) and
\(\col_{i_k}(Q')\leq2p\).
Thus \(Q'\) has nonzero weight in \(\FL_{E,i_k}\), and
\Cref{fig:fixed-selection-relabeling} gives
\[
   \wt_{\kappa_{E,i_k}}(Q')-\wt_{\kappa_{E,i_k+1}}(Q')
   =(-1)^{\col_{i_k}(Q')}(t_{a_{i_k}}-t_p)
      \prod_{\substack{C_{j,s}\in Q\\s\ne i_k}}
        \wt_{\kappa_{E,i_k}}(C_{j,s}).
\]

For \(q\geq1\), pair columns \((2q,2q+1)\) if
\(a_{i_{k-1}}<a_{i_k}\), and \((2q-1,2q)\) if
\(a_{i_{k-1}}>a_{i_k}\). The weight differences in each pair cancel.
For \(k<\ell\), \eqref{arr:csel_cond} shows that the admissible column
interval starts at
the right-hand side of \eqref{eq:deletion-column} and ends at an
odd column if \(a_{i_{k-1}}<a_{i_k}\) and an even column if
\(a_{i_{k-1}}>a_{i_k}\).
Hence only the lower endpoint remains unpaired at a double ascent
or double descent; all columns cancel at the other two patterns.
For \(k=\ell\), column \(1\) remains only when
\(a_{i_{k-1}}<a_{i_k}\).
By \eqref{eq:deletable-canonical} and
\Cref{lem:deletable-double-pattern}, an unpaired choice occurs
exactly when \(k\) is deletable, and its row weight difference is
\(\Delta_{a_E}(k)\); see \Cref{fig:local-deletion-cancellation}.

\begin{figure}
  \centering
  \begin{tikzpicture}[x=0.83cm,y=0.78cm]
    \begin{scope}
      \node[font=\small] at (2.5,2.5) {Double ascent};
      \draw[black!65,fill=black!10] (4,0.8375) rectangle (5,1.9625);
      \node[font=\small] at (4.5,1.4) {$\bullet$};
      \node[font=\scriptsize] at (4.5,1.75) {$7$};
      \draw[black!65,fill=black!10] (0,-1.4125) rectangle (1,-0.2875);
      \node[font=\small] at (0.5,-0.85) {$\bullet$};
      \node[font=\scriptsize] at (0.5,-1.18) {$3$};
      \fill[blue!12] (0,-0.2875) rectangle (1,0.8375);
      \foreach \j/\d/\sgn in {0/3/+,1/4/-,2/5/+,3/6/-,4/7/+} {
        \draw[black!65] (\j,-0.2875) rectangle (\j+1,0.8375);
        \node[font=\small] at (\j+0.5,0.45) {$\sgn\Delta$};
        \node[font=\scriptsize] at (\j+0.5,0.05) {$\d$};
      }
      \draw[black!60] (1.5,-0.2875)--(1.5,-0.5375)--(2.5,-0.5375)--(2.5,-0.2875);
      \draw[black!60] (3.5,-0.2875)--(3.5,-0.5375)--(4.5,-0.5375)--(4.5,-0.2875);
      \node[font=\small] at (2.5,-1.95) {$\Delta=t_R-t_{a_{i_k}}$};
      \node[font=\small,inner sep=1pt] (ascent-r) at (5.75,0.275) {$S_{i_k}$};
      \node[font=\small,inner sep=1pt] (ascent-prev) at (5.75,1.4)
        {$S_{i_{k-1}}$};
      \node[font=\small,inner sep=1pt] (ascent-next) at (5.75,-0.85)
        {$S_{i_{k+1}}$};
      \draw[->,black!70] (ascent-prev.south)--(ascent-r.north)
        node[midway,right,font=\scriptsize] {$\mathrm{v}$};
      \draw[->,black!70] (ascent-r.south)--(ascent-next.north)
        node[midway,right,font=\scriptsize] {$\mathrm{v}$};
    \end{scope}
    \begin{scope}[xshift=7.1cm]
      \node[font=\small] at (2.5,2.5) {Double descent};
      \draw[black!65,fill=black!10] (4,0.8375) rectangle (5,1.9625);
      \node[font=\small] at (4.5,1.4) {$\bullet$};
      \node[font=\scriptsize] at (4.5,1.75) {$7$};
      \draw[black!65,fill=black!10] (0,-1.4125) rectangle (1,-0.2875);
      \node[font=\small] at (0.5,-0.85) {$\bullet$};
      \node[font=\scriptsize] at (0.5,-1.18) {$3$};
      \fill[blue!12] (1,-0.2875) rectangle (2,0.8375);
      \foreach \j/\d/\sgn in {1/4/+,2/5/-,3/6/+} {
        \draw[black!65] (\j,-0.2875) rectangle (\j+1,0.8375);
        \node[font=\small] at (\j+0.5,0.45) {$\sgn\Delta$};
        \node[font=\scriptsize] at (\j+0.5,0.05) {$\d$};
      }
      \draw[black!60] (2.5,-0.2875)--(2.5,-0.5375)--(3.5,-0.5375)--(3.5,-0.2875);
      \node[font=\small] at (2.5,-1.95) {$\Delta=t_{a_{i_k}}-t_L$};
      \node[font=\small,inner sep=1pt] (descent-r) at (5.75,0.275) {$S_{i_k}$};
      \node[font=\small,inner sep=1pt] (descent-prev) at (5.75,1.4)
        {$S_{i_{k-1}}$};
      \node[font=\small,inner sep=1pt] (descent-next) at (5.75,-0.85)
        {$S_{i_{k+1}}$};
      \draw[->,black!70] (descent-prev.south)--(descent-r.north)
        node[midway,right,font=\scriptsize] {$\mathrm{d}$};
      \draw[->,black!70] (descent-r.south)--(descent-next.north)
        node[midway,right,font=\scriptsize] {$\mathrm{d}$};
    \end{scope}
  \end{tikzpicture}
  \caption{Black dots mark the fixed selected cells. Contributions from paired choices
  cancel; the blue cell gives the deletion weight.
  }
  \label{fig:local-deletion-cancellation}
\end{figure}

If \(k\) is not deletable, all contributions cancel.
Otherwise, the unpaired selections over all \(\mathcal C\) form
\(\mathcal U_{E,k}\). Applying
\Cref{lem:successor-replay-deletion-bijection} to their weights
outside row \(i_k\), and multiplying by \(\Delta_{a_E}(k)\), gives
the first case of \eqref{eq:successor-replay-recurrence}.
\end{proof}

\begin{proof}[Proof of \Cref{thm:double-schur-successor-replay}]
The empty word satisfies the formula by convention. For \(m>0\),
start with \eqref{eq:mixed-initial-state} and apply
\eqref{eq:successor-replay-recurrence} in the order \(r=1,\ldots,m\),
with \(i_k=r\) at each step. The term \(\FL_{E,r+1}\) keeps the letter and replaces
its row label. When the current position is deletable, the second
term removes that letter and multiplies the polynomial for the
shorter word by its deletion weight.
Thus the choices are precisely those defining a deletion set \(D\).

At \(r=m+1\), \eqref{eq:mixed-terminal-state} gives
\(\mathfrak F_{c_D}(x_1,\ldots,x_n;\vec t)\) for an admissible
\(D\), and zero otherwise. The product of the deletion weights is
\(\wt_a(D)\), proving \eqref{eq:word-successor-replay-expansion}.
\end{proof}

Specializing \Cref{thm:double-schur-successor-replay} at
\(\vec t=\vec0\) eliminates every term with a nonempty deletion set.
For a nonempty \(a\in\Red(w_\lambda^{-1})\), let \(q(a)\) be its
number of increasing runs, with lengths
\((\ell_1,\ldots,\ell_{q(a)})\) from left to right.
The empty deletion set is admissible exactly when \(q(a)\leq n\),
in which case its code is
\[
  c_\varnothing(a)=(0^{n-q(a)},\ell_{q(a)},\ldots,\ell_1).
\]
For the empty word, put \(q(\varnothing)=0\) and
\(c_\varnothing(\varnothing)=(0^n)\).
Summing the surviving terms in \eqref{eq:double-schur-lattice-path}
gives the following expansion.

\begin{cor}\label{rem:Gessel-recover}
For every partition \(\lambda\) with at most \(n\) parts,
\[
  s_\lambda(\vec x_n)
  =\sum_{\substack{a\in\Red(w_\lambda^{-1})\\q(a)\leq n}}
     \mathfrak F_{c_\varnothing(a)}(\vec x_n;\vec0).
\]
\end{cor}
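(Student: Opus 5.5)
The plan is to specialize \Cref{thm:double-schur-successor-replay} at \(\vec t=\vec0\) and sum over reduced words using \eqref{eq:double-schur-lattice-path}. That equation at \(\vec t=\vec0\) reads \(s_\lambda(\vec x_n)=\sum_{a\in\Red(w_\lambda^{-1})}\FL_a(\vec x;\vec0)\), where \Cref{prop:grassmannian-block-construction} is what lets the sum run over all of \(\Red(w_\lambda^{-1})\). For each nonempty \(a\), \eqref{eq:word-successor-replay-expansion} expresses \(\FL_a(\vec x;\vec t)\) as \(\sum_{D\in\mathcal A_n(a)}\wt_a(D)\,\mathfrak F_{c_D}(x_1,\ldots,x_n;\vec t)\). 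I substitute \(\vec t=\vec0\) termwise; this is legitimate because the identity is polynomial in \(\vec t\).

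The key observation is that every nonempty deletion set \(D\) has \(\wt_a(D)\) equal to a product of \(|D|\) deletion weights \(\Delta_b(k)\). By \eqref{eq:successor-replay-root}, each such weight is a difference \(t_q-t_p\), and this vanishes at \(\vec t=\vec0\). Only \(D=\varnothing\) survives, with weight \(1\). It remains to decide when \(\varnothing\in\mathcal A_n(a)\) and what \(c_\varnothing\) is.

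The empty set is always a deletion set, since one may choose to keep every letter. It is admissible exactly when \(a_{[m]}=a\) has at most \(n\) increasing runs, that is, when \(q(a)\leq n\). In that case \eqref{eq:replay-code} gives \(c_\varnothing=(0^{n-q(a)},\ell_{q(a)},\ldots,\ell_1)=c_\varnothing(a)\). The empty word contributes \(\mathfrak F_{(0^n)}=1\), which matches the convention \(q(\varnothing)=0\). Collecting terms gives the stated sum.

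There is no real obstacle here. The only point needing care is that \(\mathfrak F_{c}(\vec x_n;\vec0)\) is obtained by specializing the same double fundamental polynomial, so the substitution commutes with the expansion.
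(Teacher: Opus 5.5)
Your proposal is correct and is essentially the paper's own argument. You specialize \Cref{thm:double-schur-successor-replay} at \(\vec t=\vec0\), so every nonempty deletion set vanishes because its weight is a product of positive roots, and \(D=\varnothing\) survives exactly when \(q(a)\leq n\), with code \(c_\varnothing(a)\); you then sum over \(\Red(w_\lambda^{-1})\) using \eqref{eq:double-schur-lattice-path} and \Cref{prop:grassmannian-block-construction}.
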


We compare this expansion with Gessel's tableau formula
\cite{Gessel1984}. Write \(\mathfrak F_\gamma(\vec x_n)\) for the
fundamental quasisymmetric polynomial indexed by a composition
\(\gamma\), with \(\mathfrak F_\varnothing=1\) and
\(\mathfrak F_\gamma=0\) when \(\gamma\) has more than \(n\) parts.
A \emph{standard Young tableau} of shape \(\lambda\) fills
\(D(\lambda)\) with \(1,\ldots,m\), each used once, so that entries
increase along rows and columns. Let
\(\operatorname{SYT}(\lambda)\) denote the set of these tableaux.
Under the heap correspondence in \Cref{subsec:double-schubert-path},
place \(k\) in the box selected at step \(k\) of the linear
extension. This gives a bijection between reduced words
\(a\in\Red(w_\lambda^{-1})\) and tableaux
\(T\in\operatorname{SYT}(\lambda)\).

The box labels \(n-i+j\) give
\[
  a_k>a_{k+1}
  \quad\Longleftrightarrow\quad
  k+1\text{ lies in a lower row than }k\text{ in }T.
\]
Define \(\operatorname{DesComp}(T)\) as the composition of \(m\) whose partial
sums, excluding the total \(m\), are exactly these tableau descent
positions; for the empty tableau, use the empty composition.
The increasing run lengths of \(a\) are therefore the parts of
\(\operatorname{DesComp}(T)\). When \(q(a)\leq n\), its summand
in \Cref{rem:Gessel-recover} is
\(\mathfrak F_{\rev(\operatorname{DesComp}(T))}(\vec x_n)\).

Reversing the order of the variables reverses the composition index:
\[
  \mathfrak F_{\rev(\gamma)}(x_n,\ldots,x_1)
  =\mathfrak F_\gamma(x_1,\ldots,x_n).
\]
Applying \Cref{rem:Gessel-recover} at \((x_n,\ldots,x_1)\) and
using the symmetry of \(s_\lambda\) therefore gives
\[
  s_\lambda(x_1,\ldots,x_n)
  =\sum_{T\in\operatorname{SYT}(\lambda)}
     \mathfrak F_{\operatorname{DesComp}(T)}(x_1,\ldots,x_n).
\]
Tableaux with more than \(n\) parts in their descent composition
contribute zero, so the sum may range over all of
\(\operatorname{SYT}(\lambda)\).

\begin{remark}\label{rem:double-schubert-cancellation}
For a general permutation, a word polynomial need not have
Graham-positive coefficients in the double forest basis.
The words \((3,4,3,1)\) and \((3,4,1,3)\) differ by a commutation
move. With \(\alpha_i=t_{i+1}-t_i\), their expansions are
\begin{align*}
  \FL_{(3,4,3,1)}
  &=\PP_{(1,1,2)}
    +\alpha_2\PP_{(1,0,2)}
    -\alpha_2\PP_{(2,0,1)}
   -\alpha_2\PP_{(3)}
    +\alpha_2(\alpha_2+\alpha_3)\PP_{(1,0,1)},\\
  \FL_{(3,4,1,3)}
  &=\PP_{(2,0,2)}
    -\alpha_2\PP_{(1,0,2)}
    +\alpha_2\PP_{(2,0,1)}
   +\alpha_2\PP_{(3)}
    -\alpha_2(\alpha_2+\alpha_3)\PP_{(1,0,1)}.
\end{align*}
Adding the two expansions cancels every term with a positive-root
factor:
\[
  \FL_{(3,4,3,1)}+\FL_{(3,4,1,3)}
  =\PP_{(1,1,2)}+\PP_{(2,0,2)}.
\]
The negative terms cancel between distinct word paths for the same
permutation. A weight-preserving sign-reversing involution on the
contributing cell selections would give a combinatorial explanation
of such cancellations.
\end{remark}

\subsection{From Grothendieck to grove polynomials}
\label{subsec:grothendieck-grove-expansion}

In the grove basis, Grothendieck polynomials have nonnegative
coefficients by \cite[Corollary~4.8]{NST2026Grove}.
We interpret these coefficients by a bijection between the injective
cell selections of \Cref{thm:grothendieck-lattice-path} and the
multi-cell selections of \Cref{thm:grove-multicell-model}.

Fix \(n\geq1\). For an endpoint-decreasing word
\(b=(b_1,\ldots,b_\ell)\) with positive endpoints, define its
\emph{endpoint code} by
\[
  c_i^{\mathrm{end}}(b)
  =\bigl|\{r\in[\ell]:e_r(b)=i\}\bigr|,
  \qquad
  c^{\mathrm{end}}(b)=(c_i^{\mathrm{end}}(b))_{i\geq1},
\]
and set \(P_b^{\mathrm{end}}=P(c^{\mathrm{end}}(b))\).
For the empty word, set \(c^{\mathrm{end}}(\varnothing)=0\) and
\(P_\varnothing^{\mathrm{end}}=P(0)\).
Since the endpoints are weakly decreasing, the \(r\)-th south step
of \(P_b^{\mathrm{end}}\) lies on \(x=2e_r(b)\).
The \(r\)-th south step of the word path \(P_b\) lies on
\(x=2(e_r(b)+s(b))\), so the two paths have the same successor relations.
Thus \(c^{\mathrm{end}}(b)\) uses the endpoints before translation,
whereas \(c(b)\) is the code of the translated word path; see
\Cref{tab:word-endpoint-codes}.
The grove polynomial in the expansion will be indexed by
\(c^{\mathrm{end}}(b)\).

\begin{table}[htbp]
  \centering
  \small
  \renewcommand{\arraystretch}{1.2}
  \begin{tabular}{@{}lll@{}}
    \toprule
    Quantity&Definition&Role\\
    \midrule
    \(e_r(b)\)&\(\min J_r(b)\)&Endpoint before translation\\
    \(h_r(b)\)&\(e_r(b)+s(b)\)&Position \(x=2h_r(b)\) of \(S_r\) in \(P_b\)\\
    \(c_i(b)\)&\(\#\{r:h_r(b)=i\}\)&Code of \(P_b\)\\
    \(c_i^{\mathrm{end}}(b)\)&\(\#\{r:e_r(b)=i\}\)&Index of the grove polynomial\\
    \bottomrule
  \end{tabular}
  \caption{Endpoints and codes for an endpoint-decreasing word
  with positive endpoints.}
  \label{tab:word-endpoint-codes}
\end{table}

For \(w\in S_{n+1}\), recall that \(\EH_n(w)\) consists of the
endpoint-decreasing Hecke words in \Cref{thm:grothendieck-lattice-path}.
For the grove expansion, we use the subset
\begin{equation}\label{eq:stable-hecke-cores}
  \EH_n^+(w)
  =\{b\in\EH_n(w):
      e_r(b)\geq1 \ (1\leq r\leq|b|),\quad
      b_r\neq b_{r+1} \ (1\leq r<|b|)
  \}.
\end{equation}
Positive endpoints ensure that \(P_b^{\mathrm{end}}\in\Path\).
Since a vertical successor joins consecutive rows, the
condition \( b_r\neq b_{r+1} \) makes the letter inequality in
\Cref{lem:block-path-successors} strict along vertical arrows.
We will use this strict inequality to construct the inverse.
Both conditions in \eqref{eq:stable-hecke-cores} are vacuous for the empty word, so
\(\EH_n^+(\mathrm{id})=\{\varnothing\}\).

\begin{thm}\label{thm:grothendieck-grove-labeling-positivity}
For every \(w\in S_{n+1}\),
\begin{equation}\label{eq:grothendieck-grove-labeling-expansion}
  G_w^{(\beta)}(\vec x)
  =\sum_{b\in\EH_n^+(w^{-1})}
     \beta^{|b|-\ell(w)}
     \widetilde\PP_{c^{\mathrm{end}}(b)}^{(\beta)}(\vec x).
\end{equation}
Consequently, for every \(c\in\Codes_n\) with \(|c|\geq\ell(w)\),
\begin{equation}\label{eq:grothendieck-grove-core-count}
  [\widetilde\PP_c^{(\beta)}]G_w^{(\beta)}
  =\beta^{|c|-\ell(w)}
    \bigl|\{b\in\EH_n^+(w^{-1}):
              c^{\mathrm{end}}(b)=c\}\bigr|.
\end{equation}
For \(|c|<\ell(w)\), we have
\([\widetilde\PP_c^{(\beta)}]G_w^{(\beta)}=0\).
\end{thm}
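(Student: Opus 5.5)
The plan is to start from \Cref{thm:grothendieck-lattice-path} and regroup its right-hand side according to a \emph{core} word in \(\EH_n^+(w^{-1})\). Concretely, I will aim for a weight-preserving bijection
\[
  \bigsqcup_{a\in\EH_n(w^{-1})}\ICSel_a^\times
  \;\longrightarrow\;
  \bigsqcup_{b\in\EH_n^+(w^{-1})}\MCSel\bigl((P_b^{\mathrm{end}})^\circ\bigr),
  \qquad (a,Q)\mapsto (b,M),
\]
with \(|M|=|a|\) and with the \(x\)-weights preserved. Since
\(\beta^{|a|-\ell(w)}=\beta^{|b|-\ell(w)}\beta^{|M|-|b|}\), summing and applying \Cref{thm:grove-multicell-model} to \(P_b^{\mathrm{end}}\) gives \eqref{eq:grothendieck-grove-labeling-expansion}. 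This uses that \(P_b^{\mathrm{end}}\) and \(P_b\) have the same successor relations, as noted before \Cref{tab:word-endpoint-codes}.

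For the forward map, I first discard the pairs that cannot contribute. If \(e_r(a)\le 0\) for some \(r\), I expect the nonzero selections to be empty. The argument would mirror \Cref{lem:nonpositive-D-boundary}, using the compressed conditions \eqref{eq:compressed-cell-inequ} and the bound \(\col_r(Q)\le a_r\) from \eqref{eq:nonzero-injective-cell-selection}. Next I group the rows of \(P_a\) into maximal vertical chains of equal labels, that is, consecutive repeated letters \(a_r=a_{r+1}\). A repeated letter is a vertical successor by \Cref{lem:block-path-successors}, and the selected columns weakly decrease along such a chain. Injectivity in \eqref{eq:injective-hecke-cell-selection} makes them strictly decreasing, so the chain's cells form a genuine set of distinct columns, and I merge the chain into one row of \(M\) carrying this set. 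Repeated adjacent letters do not change the Demazure product. Hence the word \(b\) with one letter per chain lies in \(\EH_n^+(w^{-1})\), provided its endpoints are positive and decreasing. The conditions \eqref{eq:compressed-cell-inequ} between distinct chains translate into \eqref{eq:grove-multicell-compatibility}: the vertical condition \(\max\col(Q_s)\le\min\col(Q_r)\) and the diagonal condition with strict inequality. The strict letter inequality along vertical arrows of \(b\), guaranteed by \(b_r\ne b_{r+1}\), is what lets the inverse map read off the chain boundaries unambiguously. The inverse expands each multi-cell row of \(M\) into a chain of repeated letters with columns listed in decreasing order.

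Two remaining points are the containment of cells and the coefficients. For containment, I must show that every cell lies above \((P_b^{\mathrm{end}})^\circ\), i.e.\ \(\col\le e_r(b)\), given only \(\col\le a_r\). I will argue along the successor forest as in the column-bound argument preceding \eqref{eq:mixed-label-columns}. Columns weakly decrease along vertical chains and strictly decrease along diagonal arrows. The endpoints satisfy exactly \(e_r=e_s\) along vertical arrows and drop by the left-subtree size along diagonal ones, by \Cref{cor:1} and \eqref{eq:omega-insertion-data}. The bound then propagates from the first row of each vertical line, whose inserted block is leftmost. For the coefficient statement \eqref{eq:grothendieck-grove-core-count}, I compare coefficients using the \(\mathbb Z[\beta]\)-basis property of grove polynomials from \Cref{tab:basis-families}, together with \(|c^{\mathrm{end}}(b)|=|b|\). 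The vanishing for \(|c|<\ell(w)\) follows because a Hecke word of \(w^{-1}\) has length at least \(\ell(w)\).

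The main obstacle is the compatibility of merging with \(\Omega\)-parking. A repeated letter inserts a new block at \(R+1\) and enlarges the component, so the endpoints and intervals of later letters in \(a\) may differ from those in \(b\). My first attempt will be to show directly from \eqref{eq:block-rule} that the insertions of \(a\) and of the collapsed word \(b\) produce the same endpoints \(e_r\) on corresponding rows. If that fails, I would instead define \(b\) as the core of the \(\Omega\)-class, via \ESort applied after removing the repeats and \eqref{eq:endpoint-sort-omega-classes}. I would then verify the successor translation through the \(P\)-symbol dictionary in \Cref{tab:omega-dictionary}.
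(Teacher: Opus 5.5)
Your outer framework matches the paper's: a weight-preserving bijection $\bigsqcup_{a}\ICSel_a^\times\to\bigsqcup_{b}\MCSel((P_b^{\mathrm{end}})^\circ)$ with $|M|=|a|$, then \Cref{thm:grove-multicell-model} and a coefficient comparison. The gap is in the bijection itself. Merging only runs of consecutive equal letters is the wrong regrouping, and your inverse does not land in $\ICSel$.

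Here is a counterexample. Take $n=4$ and $a=(4,2,2,4)$.
\begin{itemize}
\item Its $\Omega$-parking has endpoints $(4,2,2,2)$ and successor relations $S_1\rad S_3$, $S_2\rav S_3$, $S_3\rav S_4$. Also $\operatorname{Dem}(a)=s_2s_4$, so $a\in\EH_4(s_2s_4)$.
\item The columns $(2,2,1,1)$ define an element $Q\in\ICSel_a^\times$.
\item Collapsing rows $2,3$ gives $(4,2,4)$, whose endpoints $(4,2,4)$ are not weakly decreasing. So your first attempt, matching endpoints of $a$ and of the collapsed word, fails.
\item The endpoint sort of $(4,2,4)$ is $(4,4,2)$, which again has an adjacent repeat. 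So your fallback does not reach $\EH_4^+$ without further merging that you have not specified.
\item The correct image is $b=(4,2)$ with row sets $\{1,2\},\{1,2\}$. This merges the two \emph{non-adjacent} occurrences of $4$ in $a$.
\item Conversely, your inverse sends this $(b,R)$ to $(4,4,2,2)$. Injectivity forces the columns to be $(2,1,2,1)$. In that word $S_2\rad S_4$, so the required strict inequality between rows $2$ and $4$ reads $1>1$, which fails.
\end{itemize}
So your forward map leaves the target, and this $(b,R)$ has no preimage.

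The missing idea is that the columns, not adjacency in $a$, decide which occurrences of a letter are absorbed. The paper proceeds as follows.
\begin{itemize}
\item List the pairs $(a_t,\col_t(Q))$ in decreasing order of column, breaking ties by increasing letter.
\item Run $\Omega$-multiparking on this list. A pair landing in an occupied component whose marked bottom block carries the same letter is stacked on that block; otherwise a bottom block is parked by the usual rule.
\item \Cref{lem:stacking-bottom-blocks} shows each stacked letter commutes with every intervening factor, so stacking does not change the Demazure product.
\item Endpoint-sorting the bottom letters gives $b$, and the stacks give the sets $A_r$.
\item The inverse interleaves the pairs of $(b,R)$ by the same column reading and then applies $\ESort$. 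In the example this reads $(2,4,2,4)$ and returns $(4,2,2,4)$ with columns $(2,2,1,1)$.
\end{itemize}
Your remaining steps are fine in outline: discarding words with a nonpositive endpoint, propagating the bound $\col\le e_r$ through the successor recursion, and extracting coefficients. However, the containment bound must be proved for this $b$, using the stack inequalities between rows related by $\trl_b$, not for a word obtained by collapsing consecutive repeats.
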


\begin{exam}\label{ex:grothendieck-grove-expansion}
Let \(w=24513\in S_5\), so \(w^{-1}=41523\) and \(\ell(w)=5\).
For \(b=(3,2,4,1,3,1)\), we have
\( \operatorname{Dem}(b)=41523\) and \((e_1(b),\ldots,e_6(b))=(3,2,2,1,1,0) \).
Thus \(b\in\EH_4(w^{-1})\), but
\(b\not\in\EH_4^+(w^{-1})\).
The words that do belong to \(\EH_4^+(w^{-1})\),
together with their endpoints and endpoint codes, are
\[
\begin{array}{c|c|c}
  b & (e_1(b),\ldots,e_{|b|}(b)) & c^{\mathrm{end}}(b)\\ \hline
  (3,2,4,1,3)   & (3,2,2,1,1)   & (2,2,1)\\
  (3,4,2,1,3)   & (3,3,2,1,1)   & (2,1,2)\\
  (3,4,2,3,1)   & (3,3,2,2,1)   & (1,2,2)\\
  (3,4,2,3,1,3) & (3,3,2,2,1,1) & (2,2,2)\\
  (3,4,2,4,1,3) & (3,3,2,2,1,1) & (2,2,2).
\end{array}
\]
Thus \Cref{thm:grothendieck-grove-labeling-positivity} gives
\[
  G_{24513}^{(\beta)}
  =\widetilde\PP_{(2,2,1)}^{(\beta)}
     +\widetilde\PP_{(2,1,2)}^{(\beta)}
     +\widetilde\PP_{(1,2,2)}^{(\beta)}
     +2\beta\,\widetilde\PP_{(2,2,2)}^{(\beta)}.
\]
The coefficient \(2\beta\) comes from the two words of length \(6\)
with endpoint code \((2,2,2)\).
\end{exam}

We construct the bijection from \( \bigsqcup_{a\in\EH_n(w)}\ICSel_a^\times \) onto \( \bigsqcup_{b\in\EH_n^+(w)}\MCSel((P_b^{\mathrm{end}})^\circ) \).
Fix \(w\in S_{n+1}\), \(a\in\EH_n(w)\), and
\(Q\in\ICSel_a^\times\), and put \(m=|a|\).
The cell selected in row \(t\) of the path \(P_a^\circ\)
gives the pair \((a_t,\col_t(Q))\). Set
\[
  X(a,Q)=\{(a_t,\col_t(Q)):1\leq t\leq m\}.
\]
By \eqref{eq:injective-hecke-cell-selection}, the pairs in \(X(a,Q)\)
are distinct. Equation \eqref{eq:nonzero-injective-cell-selection}
gives \(1\leq q\leq j\leq n\) for every \((j,q)\in X(a,Q)\).
Read them in decreasing order of \(q\), resolving ties by increasing
\(j\), and write this ordered list as
\[
  (j_1,q_1),\ldots,(j_m,q_m).
\]
The index \(k\) in \((j_k,q_k)\) refers to this reading order,
which can differ from the order of the rows of \(a\).

For \(i\in\ZZ\) and \(h\in\mathbb Z_{\geq0}\), let
\[
  I_{i,h}=[i,i+1]\times[h,h+1],
  \qquad I_{i,0}=I_i.
\]
A \emph{stack} at position \(i\) consists of the blocks
\(I_{i,0},\ldots,I_{i,h-1}\) for \(h\geq1\); its height is \(h\), and its bottom
block is \(I_{i,0}\).
Components correspond to maximal intervals of occupied bottom positions,
as in \Cref{subsec:endpoint-decreasing}.
Each component has one marked bottom block.
Every inserted block will be labeled by its pair \((j_k,q_k)\).

We define \emph{\(\Omega\)-multiparking} of the ordered pairs as follows.
Let \(\mathcal I_k\) be the set of blocks after the first \(k\)
pairs have been read, with \(\mathcal I_0=\varnothing\).
If \(I_{j_k,0}\notin\mathcal I_{k-1}\), insert \(I_{j_k,0}\).
Otherwise, let \(\{I_{L,0},\ldots,I_{R,0}\}\) be the component
containing \(I_{j_k,0}\).
Suppose its marked block is at position \(p\), has label
\((j_s,q_s)\) with \(s<k\), and belongs to a stack of height \(h\).
Define
\begin{equation}\label{eq:stacked-block-rule}
  \mathcal I_k=
  \begin{cases}
    \mathcal I_{k-1}\cup\{I_{L-1,0}\},&j_k<j_s,\\
    \mathcal I_{k-1}\cup\{I_{p,h}\},&j_k=j_s,\\
    \mathcal I_{k-1}\cup\{I_{R+1,0}\},&j_k>j_s.
  \end{cases}
\end{equation}
Label the new block by \((j_k,q_k)\).
Whenever a bottom block is inserted, remove the existing marks
from its component and mark that new block.
The marks in other components are unchanged.
When a block is added above a marked block, all marks stay in place.

When a bottom block is inserted, assign to it the leftmost occupied
position in its component immediately after insertion.
We define the \emph{endpoint sequence} of the \(\Omega\)-multiparking
to be these values in bottom-block creation order.

\begin{exam}\label{ex:bundle-labeling-change}
Take \(n=5\), \(w=124653\), and
\(a=(5,3,3,5,3,4,5)\in\EH_5(w)\).
Let \(Q\in\ICSel_a^\times\) have selected columns
\((3,3,2,2,1,1,1)\). The pairs of \(X(a,Q)\) are read in the order
\[
  (3,3),\ (5,3),\ (3,2),\ (5,2),\ (3,1),\ (4,1),\ (5,1).
\]
\Cref{fig:stacked-block-insertion} shows the seven insertions.
The first two pairs create stacks at positions \(3\) and \(5\).
The next three pairs add two blocks to the stack at \(3\) and one
to the stack at \(5\).
Inserting \((4,1)\) joins the two components and marks the new
bottom block at \(4\).
For the last pair \((5,1)\), the marked block has letter \(4\).
Since \(5>4\), the rule adds a bottom block at position \(6\);
it does not add a block to the existing stack at \(5\).
\end{exam}

\begin{figure}
  \centering
  \begin{tikzpicture}[x=.74cm,y=.45cm]
    \tikzset{
      stackblock/.style={draw=black!65,line width=.35pt,fill=white,
        minimum width=.74cm,minimum height=.45cm,inner sep=0pt,font=\scriptsize},
      marked/.style={fill=black!15},
      newblock/.style={draw=blue!70!black,line width=.9pt}
    }
    \begin{scope}[xshift=0.0cm,yshift=2.9cm]
      \node[font=\small] at (2,4.05) {\(1.\quad(3,3)\)};
      \draw[black!35,line width=.3pt] (0,0)--(4,0);
      \node[font=\scriptsize] at (-.25,-.45) {\(i\)};
      \draw[black!35,line width=.3pt] (0.5,0)--(0.5,-.13);
      \node[font=\scriptsize] at (0.5,-.45) {$ 3 $};
      \draw[black!35,line width=.3pt] (1.5,0)--(1.5,-.13);
      \node[font=\scriptsize] at (1.5,-.45) {$ 4 $};
      \draw[black!35,line width=.3pt] (2.5,0)--(2.5,-.13);
      \node[font=\scriptsize] at (2.5,-.45) {$ 5 $};
      \draw[black!35,line width=.3pt] (3.5,0)--(3.5,-.13);
      \node[font=\scriptsize] at (3.5,-.45) {$ 6 $};
      \node[stackblock,marked,newblock] at (0.5,0.5) {$(3,3)$};
    \end{scope}
    \begin{scope}[xshift=3.8cm,yshift=2.9cm]
      \node[font=\small] at (2,4.05) {\(2.\quad(5,3)\)};
      \draw[black!35,line width=.3pt] (0,0)--(4,0);
      \node[font=\scriptsize] at (-.25,-.45) {\(i\)};
      \draw[black!35,line width=.3pt] (0.5,0)--(0.5,-.13);
      \node[font=\scriptsize] at (0.5,-.45) {$ 3 $};
      \draw[black!35,line width=.3pt] (1.5,0)--(1.5,-.13);
      \node[font=\scriptsize] at (1.5,-.45) {$ 4 $};
      \draw[black!35,line width=.3pt] (2.5,0)--(2.5,-.13);
      \node[font=\scriptsize] at (2.5,-.45) {$ 5 $};
      \draw[black!35,line width=.3pt] (3.5,0)--(3.5,-.13);
      \node[font=\scriptsize] at (3.5,-.45) {$ 6 $};
      \node[stackblock,marked] at (0.5,0.5) {$(3,3)$};
      \node[stackblock,marked,newblock] at (2.5,0.5) {$(5,3)$};
    \end{scope}
    \begin{scope}[xshift=7.6cm,yshift=2.9cm]
      \node[font=\small] at (2,4.05) {\(3.\quad(3,2)\)};
      \draw[black!35,line width=.3pt] (0,0)--(4,0);
      \node[font=\scriptsize] at (-.25,-.45) {\(i\)};
      \draw[black!35,line width=.3pt] (0.5,0)--(0.5,-.13);
      \node[font=\scriptsize] at (0.5,-.45) {$ 3 $};
      \draw[black!35,line width=.3pt] (1.5,0)--(1.5,-.13);
      \node[font=\scriptsize] at (1.5,-.45) {$ 4 $};
      \draw[black!35,line width=.3pt] (2.5,0)--(2.5,-.13);
      \node[font=\scriptsize] at (2.5,-.45) {$ 5 $};
      \draw[black!35,line width=.3pt] (3.5,0)--(3.5,-.13);
      \node[font=\scriptsize] at (3.5,-.45) {$ 6 $};
      \node[stackblock,marked] at (0.5,0.5) {$(3,3)$};
      \node[stackblock,newblock] at (0.5,1.5) {$(3,2)$};
      \node[stackblock,marked] at (2.5,0.5) {$(5,3)$};
    \end{scope}
    \begin{scope}[xshift=11.4cm,yshift=2.9cm]
      \node[font=\small] at (2,4.05) {\(4.\quad(5,2)\)};
      \draw[black!35,line width=.3pt] (0,0)--(4,0);
      \node[font=\scriptsize] at (-.25,-.45) {\(i\)};
      \draw[black!35,line width=.3pt] (0.5,0)--(0.5,-.13);
      \node[font=\scriptsize] at (0.5,-.45) {$ 3 $};
      \draw[black!35,line width=.3pt] (1.5,0)--(1.5,-.13);
      \node[font=\scriptsize] at (1.5,-.45) {$ 4 $};
      \draw[black!35,line width=.3pt] (2.5,0)--(2.5,-.13);
      \node[font=\scriptsize] at (2.5,-.45) {$ 5 $};
      \draw[black!35,line width=.3pt] (3.5,0)--(3.5,-.13);
      \node[font=\scriptsize] at (3.5,-.45) {$ 6 $};
      \node[stackblock,marked] at (0.5,0.5) {$(3,3)$};
      \node[stackblock] at (0.5,1.5) {$(3,2)$};
      \node[stackblock,marked] at (2.5,0.5) {$(5,3)$};
      \node[stackblock,newblock] at (2.5,1.5) {$(5,2)$};
    \end{scope}
    \begin{scope}[xshift=0.0cm,yshift=0.0cm]
      \node[font=\small] at (2,4.05) {\(5.\quad(3,1)\)};
      \draw[black!35,line width=.3pt] (0,0)--(4,0);
      \node[font=\scriptsize] at (-.25,-.45) {\(i\)};
      \draw[black!35,line width=.3pt] (0.5,0)--(0.5,-.13);
      \node[font=\scriptsize] at (0.5,-.45) {$ 3 $};
      \draw[black!35,line width=.3pt] (1.5,0)--(1.5,-.13);
      \node[font=\scriptsize] at (1.5,-.45) {$ 4 $};
      \draw[black!35,line width=.3pt] (2.5,0)--(2.5,-.13);
      \node[font=\scriptsize] at (2.5,-.45) {$ 5 $};
      \draw[black!35,line width=.3pt] (3.5,0)--(3.5,-.13);
      \node[font=\scriptsize] at (3.5,-.45) {$ 6 $};
      \node[stackblock,marked] at (0.5,0.5) {$(3,3)$};
      \node[stackblock] at (0.5,1.5) {$(3,2)$};
      \node[stackblock,newblock] at (0.5,2.5) {$(3,1)$};
      \node[stackblock,marked] at (2.5,0.5) {$(5,3)$};
      \node[stackblock] at (2.5,1.5) {$(5,2)$};
    \end{scope}
    \begin{scope}[xshift=3.8cm,yshift=0.0cm]
      \node[font=\small] at (2,4.05) {\(6.\quad(4,1)\)};
      \draw[black!35,line width=.3pt] (0,0)--(4,0);
      \node[font=\scriptsize] at (-.25,-.45) {\(i\)};
      \draw[black!35,line width=.3pt] (0.5,0)--(0.5,-.13);
      \node[font=\scriptsize] at (0.5,-.45) {$ 3 $};
      \draw[black!35,line width=.3pt] (1.5,0)--(1.5,-.13);
      \node[font=\scriptsize] at (1.5,-.45) {$ 4 $};
      \draw[black!35,line width=.3pt] (2.5,0)--(2.5,-.13);
      \node[font=\scriptsize] at (2.5,-.45) {$ 5 $};
      \draw[black!35,line width=.3pt] (3.5,0)--(3.5,-.13);
      \node[font=\scriptsize] at (3.5,-.45) {$ 6 $};
      \node[stackblock] at (0.5,0.5) {$(3,3)$};
      \node[stackblock] at (0.5,1.5) {$(3,2)$};
      \node[stackblock] at (0.5,2.5) {$(3,1)$};
      \node[stackblock,marked,newblock] at (1.5,0.5) {$(4,1)$};
      \node[stackblock] at (2.5,0.5) {$(5,3)$};
      \node[stackblock] at (2.5,1.5) {$(5,2)$};
    \end{scope}
    \begin{scope}[xshift=7.6cm,yshift=0.0cm]
      \node[font=\small] at (2,4.05) {\(7.\quad(5,1)\)};
      \draw[black!35,line width=.3pt] (0,0)--(4,0);
      \node[font=\scriptsize] at (-.25,-.45) {\(i\)};
      \draw[black!35,line width=.3pt] (0.5,0)--(0.5,-.13);
      \node[font=\scriptsize] at (0.5,-.45) {$ 3 $};
      \draw[black!35,line width=.3pt] (1.5,0)--(1.5,-.13);
      \node[font=\scriptsize] at (1.5,-.45) {$ 4 $};
      \draw[black!35,line width=.3pt] (2.5,0)--(2.5,-.13);
      \node[font=\scriptsize] at (2.5,-.45) {$ 5 $};
      \draw[black!35,line width=.3pt] (3.5,0)--(3.5,-.13);
      \node[font=\scriptsize] at (3.5,-.45) {$ 6 $};
      \node[stackblock] at (0.5,0.5) {$(3,3)$};
      \node[stackblock] at (0.5,1.5) {$(3,2)$};
      \node[stackblock] at (0.5,2.5) {$(3,1)$};
      \node[stackblock] at (1.5,0.5) {$(4,1)$};
      \node[stackblock] at (2.5,0.5) {$(5,3)$};
      \node[stackblock] at (2.5,1.5) {$(5,2)$};
      \node[stackblock,marked,newblock] at (3.5,0.5) {$(5,1)$};
    \end{scope}
    \begin{scope}[xshift=11.4cm]
      \node[font=\small] at (2,4.05) {Final stacks};
      \node[inner sep=0pt,font=\scriptsize] at (2,1.7) {
        \renewcommand{\arraystretch}{.85}
        \begin{tabular}{@{}ccc@{}}
          position&letter&endpoint\\ \hline
          \(3\)&\(3\)&\(3\)\\
          \(4\)&\(4\)&\(3\)\\
          \(5\)&\(5\)&\(5\)\\
          \(6\)&\(5\)&\(3\)
        \end{tabular}
      };
    \end{scope}
    \begin{scope}[shift={(3.0cm,-.9cm)}]
      \draw[black!65,fill=black!15,line width=.35pt] (0,0) rectangle (.4,.55);
      \node[anchor=west,font=\small] at (.6,.275) {marked bottom block};
    \end{scope}
    \begin{scope}[shift={(9.2cm,-.9cm)}]
      \draw[blue!70!black,line width=.9pt] (0,0) rectangle (.4,.55);
      \node[anchor=west,font=\small] at (.6,.275) {new block};
    \end{scope}
  \end{tikzpicture}
  \caption{\(\Omega\)-multiparking for \Cref{ex:bundle-labeling-change}.}
  \label{fig:stacked-block-insertion}
\end{figure}

After all pairs have been inserted, let \(\ell\) be the number of
stacks. Since only the equality case adds a block above a bottom
block, all blocks in a stack have the same letter in their labels.
Let \(k_1<\cdots<k_\ell\) be the insertion steps that create bottom
blocks. Define
\[
  b(a,Q):=\ESort(j_{k_1},\ldots,j_{k_\ell}),
\]
and reorder the stacks by the permutation used in this endpoint sorting.
Write \(b=b(a,Q)=(b_1,\ldots,b_\ell)\).
Set
\[
  A_r=\{q:\text{a block in the \(r\)-th stack has label }(b_r,q)\},
\]
and form the set of cells
\begin{equation}\label{eq:rectification-output-selection}
  R=\{C_{q,r}:q\in A_r,\ 1\leq r\leq \ell\}.
\end{equation}
Each pair gives one cell of \(R\), so
\(|R|=|a|\). Each stack gives one row, so there are \(|b|\) rows.
We write \(\Xi_w(a,Q)=(b(a,Q),R)\).
In \Cref{prop:hecke-multicell-bijection}, we will show that \(b\)
has positive endpoints and that \(R\) is a multi-cell selection of
\((P_b^{\mathrm{end}})^\circ\), and prove that this assignment is a
weight-preserving bijection
\begin{equation}\label{eq:path-rectification-bijection-goal}
  \Xi_w:
  \bigsqcup_{a\in\EH_n(w)}\ICSel_a^\times
  \longrightarrow
  \bigsqcup_{b\in\EH_n^+(w)}
    \MCSel((P_b^{\mathrm{end}})^\circ).
\end{equation}

Continuing \Cref{ex:bundle-labeling-change}, the letters on the bottom blocks in creation order are
\((3,5,4,5)\), and the endpoint sequence is \((3,5,3,3)\).
Endpoint sorting orders the stacks as \(2,1,3,4\), giving
\[
  b(a,Q)=(5,3,4,5),\qquad
  (A_1,A_2,A_3,A_4)
    =(\{2,3\},\{1,2,3\},\{1\},\{1\}).
\]
Hence \(\Xi_{124653}(a,Q)=(b,R)\).
The endpoints of \(b\) are \((5,3,3,3)\), so
\(b\in\EH_5^+(w)\) and
\(c^{\mathrm{end}}(b)=(0,0,3,0,1)\).
Both \(Q\) and \(R\) select seven cells and have weight
\(x_1^3x_2^2x_3^2\).
\Cref{fig:multiparking-cell-selection} shows how each reordered stack gives the selected cells in one row.

\begin{figure}
  \centering
  \begin{tikzpicture}[x=1cm,y=1cm,font=\scriptsize]
    \tikzset{pairblock/.style={draw=black!65,line width=.35pt,fill=white,
      minimum width=.72cm,minimum height=.43cm,inner sep=0pt}}
    \node[font=\small] at (1.70,2.55) {Creation order};
    \node at (1.70,2.13) {endpoints \((3,5,3,3)\)};
    \node[font=\small] at (6.19,2.55) {Endpoint order};
    \node at (6.19,2.13) {endpoints \((5,3,3,3)\)};
    \node[font=\small] at (11.05,2.55) {Multi-cell selection};
    \node[pairblock] at (0.360,0.665) {\((3,3)\)};
    \node[pairblock] at (0.360,1.095) {\((3,2)\)};
    \node[pairblock] at (0.360,1.525) {\((3,1)\)};
    \node at (0.360,.15) {\(1\)};
    \node[pairblock] at (1.250,0.665) {\((5,3)\)};
    \node[pairblock] at (1.250,1.095) {\((5,2)\)};
    \node at (1.250,.15) {\(2\)};
    \node[pairblock] at (2.140,0.665) {\((4,1)\)};
    \node at (2.140,.15) {\(3\)};
    \node[pairblock] at (3.030,0.665) {\((5,1)\)};
    \node at (3.030,.15) {\(4\)};
    \node[pairblock] at (4.850,0.665) {\((5,3)\)};
    \node[pairblock] at (4.850,1.095) {\((5,2)\)};
    \node at (4.850,.15) {\(1\)};
    \node[pairblock] at (5.740,0.665) {\((3,3)\)};
    \node[pairblock] at (5.740,1.095) {\((3,2)\)};
    \node[pairblock] at (5.740,1.525) {\((3,1)\)};
    \node at (5.740,.15) {\(2\)};
    \node[pairblock] at (6.630,0.665) {\((4,1)\)};
    \node at (6.630,.15) {\(3\)};
    \node[pairblock] at (7.520,0.665) {\((5,1)\)};
    \node at (7.520,.15) {\(4\)};
    \draw[->,line width=.55pt] (3.65,1.05)--(4.25,1.05);
    \node at (3.95,1.34) {sort};
    \draw[->,line width=.55pt] (8.20,1.05)--(9.10,1.05);
    \node at (8.65,1.34) {form \(R\)};
    \draw[draw=black!35,line width=.3pt,fill=white] (9.80,1.50) rectangle ++(.5,.5);
    \draw[draw=black!35,line width=.3pt,fill=blue!14] (10.30,1.50) rectangle ++(.5,.5);
    \fill[blue!65!black] (10.55,1.75) circle (1.2pt);
    \draw[draw=black!35,line width=.3pt,fill=blue!14] (10.80,1.50) rectangle ++(.5,.5);
    \fill[blue!65!black] (11.05,1.75) circle (1.2pt);
    \draw[draw=black!35,line width=.3pt,fill=white] (11.30,1.50) rectangle ++(.5,.5);
    \draw[draw=black!35,line width=.3pt,fill=white] (11.80,1.50) rectangle ++(.5,.5);
    \node at (9.55,1.75) {\(1\)};
    \draw[draw=black!35,line width=.3pt,fill=blue!14] (9.80,1.00) rectangle ++(.5,.5);
    \fill[blue!65!black] (10.05,1.25) circle (1.2pt);
    \draw[draw=black!35,line width=.3pt,fill=blue!14] (10.30,1.00) rectangle ++(.5,.5);
    \fill[blue!65!black] (10.55,1.25) circle (1.2pt);
    \draw[draw=black!35,line width=.3pt,fill=blue!14] (10.80,1.00) rectangle ++(.5,.5);
    \fill[blue!65!black] (11.05,1.25) circle (1.2pt);
    \node at (9.55,1.25) {\(2\)};
    \draw[draw=black!35,line width=.3pt,fill=blue!14] (9.80,0.50) rectangle ++(.5,.5);
    \fill[blue!65!black] (10.05,0.75) circle (1.2pt);
    \draw[draw=black!35,line width=.3pt,fill=white] (10.30,0.50) rectangle ++(.5,.5);
    \draw[draw=black!35,line width=.3pt,fill=white] (10.80,0.50) rectangle ++(.5,.5);
    \node at (9.55,0.75) {\(3\)};
    \draw[draw=black!35,line width=.3pt,fill=blue!14] (9.80,0.00) rectangle ++(.5,.5);
    \fill[blue!65!black] (10.05,0.25) circle (1.2pt);
    \draw[draw=black!35,line width=.3pt,fill=white] (10.30,0.00) rectangle ++(.5,.5);
    \draw[draw=black!35,line width=.3pt,fill=white] (10.80,0.00) rectangle ++(.5,.5);
    \node at (9.55,0.25) {\(4\)};
    \node at (10.05,2.13) {\(1\)};
    \node at (10.55,2.13) {\(2\)};
    \node at (11.05,2.13) {\(3\)};
    \node at (11.55,2.13) {\(4\)};
    \node at (12.05,2.13) {\(5\)};
    \node at (9.55,2.13) {\(r\backslash q\)};
    \draw[line width=.8pt] (12.60,2.00)--(12.30,2.00)--(12.30,1.50)--(11.30,1.50)--(11.30,0)--(9.80,0)--(9.80,-.30);
    \node at (11.05,-.38) {\((P_b^{\mathrm{end}})^\circ\)};
  \end{tikzpicture}
  \caption{Endpoint sorting of the stacks and the resulting multi-cell selection.}
  \label{fig:multiparking-cell-selection}
\end{figure}

\begin{prop}\label{lem:stacking-bottom-blocks}
For \(w\in S_{n+1}\), let \(a\in\EH_n(w)\) and
\(Q\in\ICSel_a^\times\).
Let \((j_{k_1},q_{k_1}),\ldots,(j_{k_\ell},q_{k_\ell})\) be the
subsequence of \(X(a,Q)=\{(j_1,q_1),\ldots,(j_m,q_m)\}\) that creates bottom blocks in its \(\Omega\)-multiparking. Then
\[
  \operatorname{Dem}(j_1,\ldots,j_m)
  =\operatorname{Dem}(j_{k_1},\ldots,j_{k_\ell})
  =\operatorname{Dem}(b(a,Q)).
\]
\end{prop}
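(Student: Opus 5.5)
The plan is to prove the two equalities separately. The second is formal; the first uses the marks of \(\Omega\)-multiparking to show that every stacked pair repeats a letter already at the right end of the current bottom word, up to commutation.

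For the second equality, \(b(a,Q)=\ESort(j_{k_1},\ldots,j_{k_\ell})\) is commutation-equivalent to \((j_{k_1},\ldots,j_{k_\ell})\) by \Cref{lem:endpoint-sort-well-defined}. The Demazure product is a monoid homomorphism to the \(0\)-Hecke monoid. In that monoid, \(s_is_j=s_js_i\) for \(|i-j|>1\). Hence \(\operatorname{Dem}\) is invariant under commutation moves, as already used in the proof of \Cref{lem:hecke-path-bijection}.

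For the first equality, I argue by induction on \(k\). Write \(u^{(k)}\) for the subsequence of \((j_1,\ldots,j_k)\) consisting of the pairs that create bottom blocks. The claim is \(\operatorname{Dem}(j_1,\ldots,j_k)=\operatorname{Dem}(u^{(k)})\). If step \(k\) creates a bottom block, both words gain the letter \(j_k\), and the claim follows from the inductive hypothesis, since \(\operatorname{Dem}(xj)\) depends only on \(\operatorname{Dem}(x)\) and \(j\). If step \(k\) is stacked, then \(j_k=j_s\), where \((j_s,q_s)\) labels the marked bottom block of the component containing \(I_{j_k,0}\). It then suffices to show that \(\operatorname{Dem}(u^{(k-1)})\) has a right descent at \(j_s\). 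In that case \(\operatorname{Dem}(u^{(k-1)}j_s)=\operatorname{Dem}(u^{(k-1)})\), so appending \(j_k\) changes neither side.

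To get the descent, I will show that \(u^{(k-1)}\) is commutation-equivalent to a word ending in \(j_s\). Two invariants of the procedure are needed, both checked by an auxiliary induction on the insertion steps.
\begin{enumerate}[(1)]
\item Every letter whose bottom block lies in a component lies in that component's position interval. The letter either creates its own block or lies in the component it extends.
\item Distinct components are separated by at least one unoccupied position. Their intervals are maximal.
\end{enumerate}
Now take any bottom letter \(j_t\) inserted after \(s\) and up to step \(k-1\). It cannot lie in the component of \(I_{j_s,0}\), or in a component later merged with it. Otherwise its insertion would have removed the mark \(s\), and a merge always re-marks. So \(j_t\) lies in a different component, whose interval stays disjoint and non-adjacent to the one containing \(j_s\) throughout. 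Hence \(|j_t-j_s|\geq2\), and \(j_s\) can be commuted to the end of \(u^{(k-1)}\). Writing \(u^{(k-1)}\sim vj_s\), the element \(\operatorname{Dem}(vj_s)\) has a right descent at \(j_s\), because either \(s_{j_s}\) was multiplied on or it already was a descent.

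The main obstacle is making invariant (1) and the ``no later letter touches the component of \(s\)'' argument rigorous when components merge. I would handle this by tracking, for each component, the set of letters of bottom blocks created since its current mark, and checking under the three cases of \eqref{eq:stacked-block-rule} that this set is \(\{j_s\}\) followed only by letters from components separate at their insertion time.
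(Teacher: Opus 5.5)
Your proposal is correct and is essentially the paper's argument: the mark from step \(s\) persists until step \(k\), so no intervening letter is adjacent to \(j_s\), \(s_{j_s}\) commutes into place and is absorbed by the \(0\)-Hecke relation, and the second equality is commutation invariance of \(\operatorname{Dem}\). The only difference is that the paper works directly in the full word, \(u_k=u_{h-1}*s_{j_h}*s_{j_k}*s_{j_{h+1}}*\cdots*s_{j_{k-1}}=u_{k-1}\), and notes in one line that an intervening letter \(j_k\pm1\) would insert a bottom block into that component and remove the mark, so the invariant bookkeeping you flag as the main obstacle is not really needed.
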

\begin{proof}
Write \(*\) for the \(0\)-Hecke product, and set
\(u_k=\operatorname{Dem}(j_1,\ldots,j_k)\), with
\(u_0=\mathrm{id}\).
Suppose the \(k\)-th insertion does not create a bottom block.
The new block is then placed above a marked bottom block created
at a step \(h<k\), and the insertion rule gives \(j_h=j_k\).
This bottom block remains marked from step \(h\) through step \(k\).
Since \(I_{j_k}\) is occupied after step \(h\), an intervening
insertion with letter \(j_k-1\) or \(j_k+1\) would create a new
bottom block in the component containing \(I_{j_k}\) and remove
that mark. Hence \(|j_t-j_k|\ne1\) for \(h<t<k\).
Thus \(s_{j_k}\) commutes with every intervening factor, and the
\(0\)-Hecke relations give
\[
  u_k
  =u_{h-1}*s_{j_h}*s_{j_k}*
    s_{j_{h+1}}*\cdots*s_{j_{k-1}}
  =u_{k-1}.
\]
At each step \(k\) that creates a bottom block, we have
\(u_k=u_{k-1}*s_{j_k}\).
Starting from \(u_0=\mathrm{id}\), these relations give
\(u_m=\operatorname{Dem}(j_{k_1},\ldots,j_{k_\ell})\),
which proves the first equality.
The second follows from the definition of \(b(a,Q)\) and
\Cref{lem:endpoint-sort-well-defined}.
\end{proof}

\begin{lem}\label{prop:hecke-multicell-bijection}
For every \(w\in S_{n+1}\), the map \(\Xi_w\) is a
weight-preserving bijection.
\end{lem}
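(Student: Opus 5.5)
We show that \(\Xi_w\) is well defined, weight-preserving, and invertible, with the inverse built by an explicit \emph{unstacking} procedure.

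\textbf{Well-definedness and weight.} The first task is to check that for \(Q\in\ICSel_a^\times\) the output \((b,R)\) lands in the target.
\begin{enumerate}[(i)]
\item By \Cref{lem:stacking-bottom-blocks}, \(\operatorname{Dem}(b)=\operatorname{Dem}(j_1,\ldots,j_m)\). The pairs \((j_k,q_k)\) are the letters \(j_k^{(q_k)}\) of \(\omega_n^+\), read in vine order. Comparing with \Cref{lem:hecke-path-bijection} gives \(\operatorname{Dem}(j_1,\ldots,j_m)=\operatorname{Dem}(\rev(\val(\pi)))\), which equals \(w\) (up to the paper's inversion convention).
\item Each stack has constant letter, and a stacked block is placed only on a marked block with \(|j_t-j_k|\ne1\) in between. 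Hence the bottom-block subsequence is exactly ordinary \(\Omega\)-parking of \((j_{k_1},\ldots,j_{k_\ell})\), and its endpoint sequence is \(e_r\) of that word. After \(\ESort\), the sequence \(b\) is endpoint-decreasing with the same multiset of endpoints by \eqref{eq:linear-extension-induction-data}.
\item Consecutive equal letters \(b_r=b_{r+1}\) cannot occur. Equal consecutive letters in the sorted word would be \(\trl\)-related with no intervening insertion, so the second would have been stacked rather than creating a bottom block.
\item Positivity of endpoints comes from \(q\le j\). For a component with leftmost endpoint \(e\), the decreasing reading of \(q\) forces every pair inserted leftward from \(j\) to have \(q\le e\). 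I expect \(e\ge \min q\ge 1\) to follow by induction, mirroring \eqref{eq:mixed-label-columns}.
\item \(R\) satisfies \eqref{eq:grove-multicell-compatibility}. Along \(r\trl_b s\), the later insertion has smaller or equal \(q\) (pairs are read by decreasing \(q\)), with strictness when the letter decreases, because of the tie-break by increasing \(j\). The column bound \(q\le e_r(b)\) places the cells above \((P_b^{\mathrm{end}})^\circ\).
\item Weights agree since both sides are \(\prod_k x_{q_k}\), and \(|R|=|a|\) matches the power of \(\beta\) via \(|b|\) versus \(|a|\).
\end{enumerate}

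\textbf{Inverse map.} Given \(b\in\EH_n^+(w)\) and \(R\in\MCSel((P_b^{\mathrm{end}})^\circ)\), form the pairs \((b_r,q)\) for \(q\in A_r\), and read them in decreasing \(q\), with ties broken by increasing \(j\). This gives a word \(a'=(j_1,\ldots,j_m)\). We then want \(a=\ESort(\rev\text{-appropriate word})\) and \(Q\), the selection assigning column \(q_k\) to the row of the \(k\)-th pair after sorting, exactly as in the converse direction of \Cref{lem:hecke-path-bijection}.

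The crux is to show that re-running \(\Omega\)-multiparking on these pairs reproduces the stacks \(A_r\): the minimal \(q\) of each row must create the bottom block, and the others must stack on it. This is where the strictness \(b_r\ne b_{r+1}\) along vertical arrows and the strict inequalities in \eqref{eq:grove-multicell-compatibility} are used. Together they ensure that when a pair of row \(r\) arrives, the currently marked block of its component is the bottom block of row \(r\) itself (for the equality case), or the one dictated by \(\trl_b\).

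\textbf{Main obstacle.} The hardest step is the simultaneous induction over the reading order. This induction must show that the multiparking state at each step matches the state of ordinary \(\Omega\)-parking on \(b\), restricted to rows whose minimal \(q\) has already been read, with marks agreeing. The plan is to induct on \(k\), using \Cref{lem:block-path-successors} to translate successor relations into the marked-block rule and \eqref{eq:stacked-block-rule}. Injectivity of \(Q\) is what guarantees that no two rows of \(a\) produce the same pair, so stacks never merge. Composing the two maps in either order then returns the identity by the same induction, which completes the proof.
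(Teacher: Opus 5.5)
Your outline follows the paper: the forward map is checked via \Cref{lem:stacking-bottom-blocks} and the fact that bottom blocks undergo ordinary \(\Omega\)-parking, and the inverse re-multiparks \(X(b,R)\) read in decreasing \(q\). Items (i)--(iii) and (vi) are essentially fine. In (i), note that this reading order is the \emph{reverse} of the vine order, so the word is \(\rev(\val(\pi))\), whose endpoint sort is \(a\); \(\operatorname{Dem}=w\) then holds with no inversion convention.

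Two forward steps are not yet proofs.
\begin{itemize}
\item In (v) you must compare every element of \(A_r\) with every element of \(A_s\). This needs the observation that once the insertion of \(B_s\) removes the mark on \(B_r\), nothing can be stacked on \(B_r\) again, so the whole stack of row \(r\) is read before \(B_s\).
\item In (iv) the inequality you need is \(\max A_s\le e_s(b)\), not \(e\geq\min q\). It is what places the cells above \((P_b^{\mathrm{end}})^\circ\), as you use in (v), and it gives \(e_s(b)\geq1\). It follows by induction over rows from \(e_s(b)=\min\bigl(\{b_s\}\cup\{e_r(b):S_r\rav S_s\}\cup\{e_r(b)-1:S_r\rad S_s\}\bigr)\), using \(q\le b_s\) and the inequalities of (v). So (v) must be proved first.
\end{itemize}

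The genuine gap is surjectivity. It remains a plan, and its stated crux is backwards. Pairs are read in decreasing \(q\), so the first pair of row \(r\) to appear is \((b_r,\max A_r)\). It is the \emph{maximal} \(q\) that must create \(B_r\), with the smaller ones stacked above it. Three ingredients are missing.
\begin{itemize}
\item Distinctness. You must prove that distinct rows of \(b\) contribute distinct pairs, and that \(q\le b_r\). Then \(X(b,R)\) is a set of \(|R|\) letters of \(\omega_n^+\) and the reconstructed \(Q\) is injective. Invoking injectivity of \(Q\) here is circular, since \(Q\) is what you are constructing. The argument runs as follows. First, \(q\le e_r(b)\le b_r\). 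Next, \(b_r<b_s\) along vertical arrows, because these join consecutive rows, \Cref{lem:block-path-successors} gives \(b_r\le b_s\), and \(b_r\ne b_{r+1}\). Finally, if \(b_r=b_s\) with \(r<s\), then \(r\prec_b s\) by \eqref{eq:incomparable-supports-commute}. A successor chain from \(r\) to \(s\) must contain a diagonal arrow, and \eqref{eq:grove-multicell-compatibility} gives \(\max A_s<\min A_r\).
\item The ordering fact. Your ``simultaneous induction'' is unlocked by one fact you do not state: if \(r\trl_b s\), every pair of row \(r\) precedes every pair of row \(s\) in the reading. On diagonal arrows the column inequality is strict; on vertical arrows, equal columns are ordered by the tie-break because \(b_r<b_s\). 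Hence the bottom labels are read in a linear extension of \(\mathcal H(b)\). By \eqref{eq:linear-extension-induction-data}, the bottom blocks are parked with marks removed exactly along \(\trl_b\). The equality case never fires for them, since \(b_r\ne b_s\) on such relations. The remaining pairs of row \(r\) arrive while \(B_r\) is still marked, so they stack on it. \Cref{prop:endpoint-sort-linear-extensions} then returns the stacks to row order, giving \(b(a,Q)=b\) with output \(R\), and \Cref{lem:stacking-bottom-blocks} gives \(\operatorname{Dem}(a)=w\).
\item Injectivity. This needs no second induction: \((b,R)\) determines the set \(X(a,Q)\), which determines \((a,Q)\) by \Cref{lem:hecke-path-bijection}.
\end{itemize}
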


\begin{proof}
Fix a nonempty \(a\in\EH_n(w)\) and \(Q\in\ICSel_a^\times\).
Put \(m=|a|\), and write
\(X(a,Q)=\{(j_1,q_1),\ldots,(j_m,q_m)\}\),
where the pairs are listed in decreasing order of \(q\), resolving
ties by increasing \(j\).
Let \(k_1<\cdots<k_\ell\) be the steps that create bottom blocks in
the \(\Omega\)-multiparking of this list.
The letters on the bottom blocks, in creation order, are
\(j_{k_1},\ldots,j_{k_\ell}\).
Set \(b=b(a,Q)\), and let \(R\) be the set of cells defined in
\eqref{eq:rectification-output-selection}.
We first show that \(b\in\EH_n^+(w)\) and that \(R\) is a multi-cell
selection of \((P_b^{\mathrm{end}})^\circ\).

Since the word read from \(X(a,Q)\) has endpoint sort \(a\),
\Cref{lem:stacking-bottom-blocks} gives
\(\operatorname{Dem}(b)=w\).
Since \(b\) is endpoint-decreasing, its letters lie in \([n]\), and
\(|b|\leq |a|\leq\binom{n+1}{2}\), we have \(b\in\EH_n(w)\).

Let \(B_r\) denote the bottom block of the \(r\)-th stack after
endpoint sorting, so its letter is \(b_r\).
Whenever the insertion of \(B_s\) removes the mark on \(B_r\),
the \(\Omega\)-multiparking rule gives \(b_r\ne b_s\).
By construction, ordinary \(\Omega\)-parking of
\((j_{k_1},\ldots,j_{k_\ell})\) has the same occupied and marked
bottom positions as \(\Omega\)-multiparking of \(X(a,Q)\) at each
bottom-block insertion.
Together with \eqref{eq:linear-extension-induction-data}, this gives
\(b_r\ne b_s\) whenever \(r\trl_b s\).
Equal adjacent letters in ordinary \(\Omega\)-parking would give
\(r\trl_b r+1\), so the adjacent letters of \(b\) are distinct.

We next show that \(R\) satisfies the inequalities in
\eqref{eq:grove-multicell-compatibility}.
Let \(S_r\) denote the \(r\)-th south step of the word path \(P_b\).
Suppose the insertion of \(B_s\) removes the mark on \(B_r\).
No further blocks can then be added above \(B_r\).
Thus every block in the stack of \(B_r\) is inserted before \(B_s\). Thus for all \(p\in A_r\) and \(q\in A_s\), we have \(q\leq p\) with strict inequality if \(b_r>b_s\).
Together with \eqref{eq:linear-extension-induction-data} and
\Cref{lem:block-path-successors}, these inequalities give
\begin{equation}\label{eq:insertion-multicell-inequalities}
\begin{aligned}
  \max A_s&\leq\min A_r&&\text{if }S_r\rav S_s,\\
  \max A_s&<\min A_r&&\text{if }S_r\rad S_s.
\end{aligned}
\end{equation}

By \eqref{eq:block-rule} and \Cref{lem:block-path-successors},
for every \(s\in[|b|]\),
\begin{equation}\label{eq:effective-flag-recursion}
  e_s(b)=\min\bigl(
    \{b_s\}\cup\{e_r(b):S_r\rav S_s\}
    \cup\{e_r(b)-1:S_r\rad S_s\}\bigr).
\end{equation}
Every \(q\in A_s\) satisfies \(1\leq q\leq b_s\).
For a row with no incoming successor arrows, this gives
\(e_s(b)=b_s\).
Inductively, \eqref{eq:insertion-multicell-inequalities} bounds
\(\max A_s\) by \(e_r(b)\) for a vertical successor and by
\(e_r(b)-1\) for a diagonal successor.
Hence \(\max A_s\) is bounded by every entry in the minimum in
\eqref{eq:effective-flag-recursion},
and \(1\leq\max A_s\leq e_s(b)\).
This proves \(b\in\EH_n^+(w)\) and
\(R\in\MCSel((P_b^{\mathrm{end}})^\circ)\).

Each input pair \((j,q)\) gives one output cell in column \(q\),
with the same weight \(x_q\), so \(\Xi_w\) preserves weights.
Moreover, \((b,R)\) determines
\[
  X(a,Q)=\{(b_r,q):q\in A_r,\ 1\leq r\leq |b|\}.
\]
By \Cref{lem:hecke-path-bijection}, \(X(a,Q)\) determines the input
\((a,Q)\) uniquely. Thus \(\Xi_w\) is injective.

For surjectivity, fix \(b\in\EH_n^+(w)\) and
\(R\in\MCSel((P_b^{\mathrm{end}})^\circ)\).
Let \(S_r\) denote the \(r\)-th south step of
\((P_b^{\mathrm{end}})^\circ\).
Set \(A_r=\{q:C_{q,r}\in R\}\).
For each row \(r\), form a stack of blocks labeled
\((b_r,q)\), \(q\in A_r\), with larger \(q\) below smaller \(q\).
Let \(B_r\) denote its bottom block, labeled \((b_r,\max A_r)\).
List these stacks in row order, as required after endpoint
sorting. The set of all block labels is
\[
  X(b,R)=\{(b_r,q):q\in A_r,\ 1\leq r\leq |b|\}.
\]
We first show that different rows contribute distinct pairs to
\(X(b,R)\).
For every \(q\in A_r\), we have
\(1\leq q\leq e_r(b)\leq b_r\leq n\).
\Cref{lem:block-path-successors} and \(b\in\EH_n^+(w)\) give
\(b_r<b_s\) whenever \(S_r\rav S_s\).
If \(r<s\) and \(b_r=b_s\), then
\eqref{eq:incomparable-supports-commute} gives \(r\prec_b s\).
A successor chain from \(r\) to \(s\) must contain a diagonal
arrow, since the letters strictly increase along vertical arrows.
Applying \eqref{eq:grove-multicell-compatibility} along this chain
gives \(\max A_s<\min A_r\).
Thus different rows contribute distinct pairs, as claimed.

Read \(X(b,R)\) in decreasing order of \(q\), resolving ties by
increasing \(j\), and write the resulting list as
\((j_1,q_1),\ldots,(j_m,q_m)\), where \(m=|R|\).
For \(r\trl_b s\), all pairs from row \(r\) precede all pairs from
row \(s\): the column inequality is strict for a diagonal successor,
and an equal-column tie for a vertical successor is resolved by
\(b_r<b_s\).
Thus the bottom labels \((b_r,\max A_r)\) appear in an order
of rows that is a linear extension of \(\mathcal H(b)\).
Place the bottom blocks \(B_r\) by ordinary \(\Omega\)-parking
of the letters \(b_r\) in this order.
By \eqref{eq:linear-extension-induction-data}, insertion of \(B_s\)
removes the mark on \(B_r\) exactly when \(r\trl_b s\).
\Cref{lem:block-path-successors}, together with \(b_r<b_s\) for
\(S_r\rav S_s\), gives \(b_r\ne b_s\) on every such relation.
Apply \(\Omega\)-multiparking to the pairs \((b_r,\max A_r)\)
for \(r\in[|b|]\), in the same order of \( X(b,R) \).
The equality case of \eqref{eq:stacked-block-rule} therefore never
occurs, so the bottom blocks and marked positions are the same
as in ordinary parking.

All pairs from row \(r\) precede \((b_s,\max A_s)\) whenever
\(r\trl_b s\).
Each pair \((b_r,q)\) with \(q<\max A_r\) can therefore be
inserted at its prescribed place in the reading, above the
still marked \(B_r\).
These additions do not change the bottom positions or marks.
By \Cref{prop:endpoint-sort-linear-extensions}, endpoint sorting
returns these stacks to their original row order.

Finally, apply ordinary \(\Omega\)-parking and endpoint sorting
to the full word \((j_1,\ldots,j_m)\), carrying each \(q_i\)
with its letter \(j_i\).
Let \(\sigma\) be the sorting permutation, and set
\[
  a=(j_{\sigma(1)},\ldots,j_{\sigma(m)}),\qquad
  Q=\{C_{q_{\sigma(t)},t}:t\in[m]\}.
\]
The distinct pairs in \(X(b,R)\) determine a subword of
\(\omega_n^+\), so the construction in
\Cref{lem:hecke-path-bijection} gives \(Q\in\ICSel_a^\times\)
and \(X(a,Q)=X(b,R)\).
The stack construction above therefore gives \(b(a,Q)=b\) and
the output cell selection \(R\).
By \Cref{lem:stacking-bottom-blocks},
\(\operatorname{Dem}(a)=\operatorname{Dem}(b)=w\).
Thus \(a\in\EH_n(w)\) and \(\Xi_w(a,Q)=(b,R)\), proving surjectivity.
\end{proof}

\begin{proof}[Proof of \Cref{thm:grothendieck-grove-labeling-positivity}]
The lattice path formula in \Cref{thm:grothendieck-lattice-path}
uses Hecke words of \(w^{-1}\), so we apply \(\Xi_{w^{-1}}\).
Each input cell gives one output cell, hence
\(|R|=|a|\) whenever \(\Xi_{w^{-1}}(a,Q)=(b,R)\).
Reindexing the sum by this bijection and using
\Cref{thm:grove-multicell-model}, we obtain
\[
\begin{aligned}
  G_w^{(\beta)}(\vec x)
  &=\sum_{a\in\EH_n(w^{-1})}
      \ \sum_{Q\in\ICSel_a^\times}
        \beta^{|a|-\ell(w)}
        \prod_{r=1}^{|a|}x_{\col_r(Q)}\\
  &=\sum_{b\in\EH_n^+(w^{-1})}
      \beta^{|b|-\ell(w)}
      \sum_{R\in\MCSel((P_b^{\mathrm{end}})^\circ)}
        \beta^{|R|-|b|}
        \prod_{C_{q,r}\in R}x_q\\
  &=\sum_{b\in\EH_n^+(w^{-1})}
      \beta^{|b|-\ell(w)}
      \widetilde\PP_{c^{\mathrm{end}}(b)}^{(\beta)}(\vec x).
\end{aligned}
\]
Since \(|c^{\mathrm{end}}(b)|=|b|\geq\ell(w)\), collecting the terms
with \(c^{\mathrm{end}}(b)=c\) gives
\eqref{eq:grothendieck-grove-core-count} and the vanishing for
\(|c|<\ell(w)\).
\end{proof}

\subsection{From Schubert to forest polynomials}
\label{subsec:ordinary-schubert-forest-expansion}

For \(w\in S_\infty\setminus\{\mathrm{id}\}\), set
\[
  \ERed^+(w)
  =\{a\in\ERed(w):e_{\ell(w)}(a)\geq1\},
\]
and set \(\ERed^+(\mathrm{id})=\{\varnothing\}\).
The words in \(\ERed^+(w)\) have positive endpoints, so their endpoint
codes \(c^{\mathrm{end}}(a)\) are defined as in
\Cref{subsec:grothendieck-grove-expansion}.

The following restates \cite[Theorem~1.4]{NT2024}, indexing each
\(\Omega\)-class by its unique representative in \(\ERed(w^{-1})\),
as in \eqref{eq:endpoint-sort-omega-classes}. It is also a
specialization of the grove expansion.
Choose \(n\geq1\) such that \(w\in S_{n+1}\). A Hecke word of
\(w^{-1}\) has length at least \(\ell(w)\), with equality if and
only if it is reduced. Since reduced words have distinct adjacent
letters, setting \(\beta=0\) in
\eqref{eq:grothendieck-grove-labeling-expansion} retains precisely
the words in \(\ERed^+(w^{-1})\). Using
\(G_w^{(0)}=\mathfrak S_w\) and
\(\widetilde\PP_c^{(0)}=\PP_c\), together with the linear
independence of forest polynomials, we obtain the following theorem.

\begin{thm}\label{thm:ordinary-forest-expansion}
For every \(w\in S_\infty\),
\[
  \mathfrak S_w(\vec{x})
  =\sum_{a\in\ERed^+(w^{-1})}
    \PP_{c^{\mathrm{end}}(a)}(\vec{x}).
\]
Consequently, for every \(c\in\Codes\),
\[
  [\PP_c]\mathfrak S_w
  =\bigl|\{a\in\ERed^+(w^{-1}):c^{\mathrm{end}}(a)=c\}\bigr|.
\]
In particular, Schubert polynomials expand positively in forest
polynomials.
\end{thm}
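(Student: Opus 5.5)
The plan is to specialize \Cref{thm:grothendieck-grove-labeling-positivity} at \(\beta=0\), exactly as indicated in the paragraph preceding the statement. Fix \(w\in S_\infty\) and choose \(n\) with \(w\in S_{n+1}\). Every \(b\in\EH_n^+(w^{-1})\) is a Hecke word of \(w^{-1}\), so \(|b|\geq\ell(w^{-1})=\ell(w)\), with equality exactly when \(b\) is reduced. Hence in \eqref{eq:grothendieck-grove-labeling-expansion} the factor \(\beta^{|b|-\ell(w)}\) is a nonnegative power of \(\beta\) and vanishes at \(\beta=0\) unless \(b\in\Red(w^{-1})\). Using \(G_w^{(0)}=\mathfrak S_w\) and \(\widetilde\PP_c^{(0)}=\PP_c\), I would then obtain
\[
  \mathfrak S_w=\sum_{b\in\EH_n^+(w^{-1})\cap\Red(w^{-1})}\PP_{c^{\mathrm{end}}(b)}.
\]

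Next I would identify the index set with \(\ERed^+(w^{-1})\). A reduced word of \(w^{-1}\in S_{n+1}\) has letters in \([n]\), has length \(\ell(w)\leq\binom{n+1}{2}\), and has distinct adjacent letters. Therefore \(b\in\EH_n^+(w^{-1})\cap\Red(w^{-1})\) if and only if \(b\) is an endpoint-decreasing reduced word of \(w^{-1}\) whose endpoints are all positive. Since the endpoints are weakly decreasing, positivity of all of them is equivalent to \(e_{\ell(w)}(b)\geq1\), as noted after \eqref{eq:omega-positive-support}. This is precisely the definition of \(\ERed^+(w^{-1})\). For \(w=\mathrm{id}\), both sides reduce to \(\{\varnothing\}\), giving \(\mathfrak S_{\mathrm{id}}=1=\PP_0\). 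This establishes the displayed expansion.

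For the coefficient formula, I would group the summands by their endpoint code and invoke the linear independence of forest polynomials, as recorded in \Cref{tab:basis-families}. For codes \(c\notin\Codes_n\), the coefficient is zero on both sides: every \(a\in\ERed^+(w^{-1})\) has \(e_r(a)\leq a_r\leq n\), so \(c^{\mathrm{end}}(a)\in\Codes_n\). Positivity is then immediate, since the coefficients count a set.

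The only delicate point is checking that specializing \(\beta\) commutes with extracting coefficients. This holds because \eqref{eq:grothendieck-grove-labeling-expansion} is an identity in \(\mathbb Z[\beta][\vec x]\) in which all exponents of \(\beta\) are nonnegative, so evaluating at \(\beta=0\) is legitimate term by term. Beyond this, the argument is bookkeeping.
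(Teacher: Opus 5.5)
Your proposal is correct and follows the paper's own argument. You specialize \Cref{thm:grothendieck-grove-labeling-positivity} at \(\beta=0\), note that the factor \(\beta^{|b|-\ell(w)}\) kills every non-reduced Hecke word, identify the surviving index set \(\EH_n^+(w^{-1})\cap\Red(w^{-1})\) with \(\ERed^+(w^{-1})\), and extract coefficients by linear independence of forest polynomials. Your extra checks (letters in \([n]\), the length bound, positivity via the final endpoint, and \(c^{\mathrm{end}}(a)\in\Codes_n\) from \(e_r(a)\leq a_r\)) supply details the paper leaves implicit.
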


By \eqref{eq:omega-insertion-data}, the forest of the class
represented by \(a\) has code \(c^{\mathrm{end}}(a)\).
The omitted \(\Omega\)-classes have a nonpositive final endpoint.
By \eqref{eq:omega-positive-support}, their forests lie outside
\(\mathsf{IndFor}_+\), so their forest polynomials vanish
\cite[Definition~3.4]{NT2024}.

\begin{exam}\label{ex:ordinary-forest-expansion}
Let \(w=246135\in S_6\), so \(w^{-1}=415263\) and \(\ell(w)=6\).
The reduced word \(a=(3,4,2,5,3,1)\) has endpoint sequence
\((3,3,2,2,1,0)\), so
\(a\in\ERed(w^{-1})\setminus\ERed^+(w^{-1})\).
The words in \(\ERed^+(w^{-1})\), with their endpoints and codes, are
\[
\begin{array}{c|c|c}
  a & (e_1(a),\ldots,e_6(a)) & c^{\mathrm{end}}(a)\\ \hline
  (3,2,4,1,3,5)&(3,2,2,1,1,1)&(3,2,1)\\
  (3,2,4,5,1,3)&(3,2,2,2,1,1)&(2,3,1)\\
  (3,4,2,1,3,5)&(3,3,2,1,1,1)&(3,1,2)\\
  (3,4,2,3,1,5)&(3,3,2,2,1,1)&(2,2,2)\\
  (3,4,2,3,5,1)&(3,3,2,2,2,1)&(1,3,2)\\
  (3,4,2,5,1,3)&(3,3,2,2,1,1)&(2,2,2)\\
  (3,4,5,2,1,3)&(3,3,3,2,1,1)&(2,1,3)\\
  (3,4,5,2,3,1)&(3,3,3,2,2,1)&(1,2,3).
\end{array}
\]
Thus \Cref{thm:ordinary-forest-expansion} gives
\[
  \mathfrak S_{246135}
  =\PP_{(3,2,1)}+\PP_{(2,3,1)}+\PP_{(3,1,2)}
      +2\PP_{(2,2,2)}
     +\PP_{(1,3,2)}+\PP_{(2,1,3)}+\PP_{(1,2,3)}.
\]
The coefficient \(2\) counts the two words with endpoint code
\((2,2,2)\).
\end{exam}

\end{document}